\newcommand{\avint}{{\mathop{\,\rlap{-}\!\!\int}\nolimits}}
\newcommand{\N}{\mathbb{N}}
\newcommand{\R}{\mathbb{R}}
\newcommand{\ppi}{{\mbox{\boldmath$\pi$}}}
\newcommand{\sfd}{{\sf d}}
\newcommand{\sfq}{{\sf q}}
\newcommand{\supp}{\mathop{\rm supp}\nolimits}   
\renewcommand{\d}{{\mathrm d}}
\newcommand{\restr}[1]{\lower3pt\hbox{$|_{#1}$}}
\newcommand{\la}{{\langle}}                  
\newcommand{\ra}{{\rangle}} 
\newcommand{\weakto}{\rightharpoonup}
\newcommand{\limi}{\varliminf}
\newcommand{\lims}{\varlimsup}
\newcommand{\e}{{\rm{e}}}                          
\newcommand{\X}{{\rm X}}
\newcommand{\Y}{{\rm Y}}
\newcommand{\lip}{{\rm lip}}
\newcommand{\Lip}{{\rm Lip}}
\renewcommand{\div}{{\rm div}}
\newcommand{\mm}{\mathfrak m}                             
\newcommand{\mIGC}[1]{({\mm}{\sf IGC})_{#1}}
\newcommand{\mEGC}[1]{({\mm}{\sf EGC})_{#1}}
\newcommand{\Hil}{\mathcal{H}}
\newcommand{\dom}{\mathrm{Dom}}
\newcommand{\PI}{{\sf PI}}
\newcommand{\proj}{\pi_{\rm f}}
\newcommand{\mytag}[2]{%
  \text{#1}%
  \@bsphack
  \begingroup
    \@onelevel@sanitize\@currentlabelname
    \edef\@currentlabelname{%
      \expandafter\strip@period\@currentlabelname\relax.\relax\@@@%
    }%
    \protected@write\@auxout{}{%
      \string\newlabel{#2}{%
        {\color{black}#1}%
        {\thepage}%
        {\@currentlabelname}%
        {\@currentHref}{}%
      }%
    }%
  \endgroup
  \@esphack
}
\newtheorem{theorem}{Theorem}[section]
\newtheorem{corollary}[theorem]{Corollary}
\newtheorem{lemma}[theorem]{Lemma}
\newtheorem{proposition}[theorem]{Proposition}
\theoremstyle{definition}
\newtheorem{definition}[theorem]{Definition}
\newtheorem{example}[theorem]{Example}
\newcounter{Counter}
\newtheorem{remark}[theorem]{Remark}
\newcommand{\RCD}{{\sf RCD}}
\newcommand{\CD}{{\sf CD}}
\title{First-order heat content asymptotics on \texorpdfstring{$\RCD(K,N)$}{RCD(K,N)} spaces}
\author[Emanuele Caputo]{Emanuele Caputo}\address[Emanuele Caputo]{University of Jyvaskyla, Department of Mathematics and Statistics, P.O. Box 35 (MaD), FI-40014
University of Jyvaskyla, Finland}\email{emanuele.e.caputo@jyu.fi}
\author[Tommaso Rossi]{Tommaso Rossi}\address[Tommaso Rossi]{Institut f\"ur Angewandte Mathematik, Universit\"at Bonn, Endenicher Allee 60, 53115 Bonn}\email{rossi@iam.uni-bonn.de}
\date{\today}
\begin{document}

\begin{abstract}
In this paper, we prove first-order asymptotics on a bounded open set of the heat content when the ambient space is an ${\sf RCD}(K,N)$ space, under a regularity condition for the boundary that we call measured interior geodesic condition of size $\epsilon$. We carefully study such a condition, relating it to the properties of the disintegration of the signed distance function from $\partial \Omega$ studied in \cite{CM20}.
\end{abstract}

\keywords{Heat content, $\sf RCD$ spaces, Sets of finite perimeter}
\subjclass[2020]{53C23, 49J52, 35B40}

\maketitle
\tableofcontents

\section{Introduction}
The goal of this paper is to show a first-order heat content asymptotics in the setting of $\RCD(K,N)$ spaces. Let us state the problem in the setting of a smooth Riemannian manifold $(M,g)$ and $\Omega\subset M$ open and bounded. We consider the function $u \colon [0,\infty) \times \Omega \to \mathbb{R}$ satisfying (formally)
\begin{equation*}
\begin{cases}
\partial_t u(t,x)-\Delta u(t,x) = 0 &\text{ for all }(t,x) \in (0,\infty) \times \Omega,\\
u(t,x) = 0 & \text{ for all }(t,x) \in (0,\infty) \times \partial \Omega,\\
u(0,x) = 1 &\text{ for all } x \in \Omega.\\
\end{cases}
\end{equation*}
The heat content is the function defined as
\begin{equation}
\label{eq:heatcontent_introduction}
Q_{\Omega}(t):= \int_{\Omega} u(t,x)\,\d {\rm Vol}_g(x)\qquad\text{for every }t > 0.    
\end{equation}
In \cite{vdBG94}, Van den Berg and Gilkey proved the existence of a complete asymptotic expansion in $\sqrt{t}$ as $t\to0$, for open and bounded subsets $\Omega\subset M$ with smooth boundary. Moreover, they computed explicitly the coefficients of the expansion up to order $4$. Here, we report the expansion up to order $1$, at time $t = 0$:
\begin{equation}
\label{eq:heat_content_riemannian}
Q_{\Omega}(t)= {\rm Vol}_g(\Omega)-\sqrt{\frac{4t}{\pi}} \sigma_g(\partial \Omega)+\frac{t}{2}\int_{\partial \Omega}H\,\d \sigma_g+ o(t),\qquad\text{as }t\to 0,
\end{equation}
where $\sigma_g$ is the surface measure of $\partial \Omega$ and $H \colon \partial \Omega \to \mathbb{R}$ is the mean curvature of $\partial \Omega$ in $M$. The asymptotics \eqref{eq:heat_content_riemannian} unveil a deep connection between the geometry of $\partial\Omega$ and the small-time behavior of the heat content of $\Omega$, suggesting an effective strategy to investigate the curvature invariants of $\partial\Omega$, such as its mean curvature. This motivates our interest in studying such a problem in the non-smooth setting of $\RCD$ spaces. The asymptotic analysis of the heat content was initiated in the Euclidean setting in \cite{vdB-D,vdB-LG}. A first non-flat case was studied in \cite{vdB-hemisphere}, where the authors computed the heat content asymptotics to order $2$ for the upper hemisphere. For smooth domains in a Riemannian manifold, the existence of an asymptotic expansion in $\sqrt{t}$ at arbitrary order was established in \cite{vdBG94}, and the coefficients were computed iteratively in \cite{S98}. Recently, the heat content asymptotics has been proved for non-characteristic domains in the sub-Riemannian setting in \cite{RR21}. Finally, we also mention that similar problems have been studied in relation with different boundary conditions, see for example \cite{vdB-D-G,D-G,MR3358065,ARR21}.



\subsubsection*{\texorpdfstring{$\RCD$}{RCD} spaces}
In recent years, the theory of $\RCD$ spaces has experienced a surge of interest. Such spaces form a regular class of metric measure spaces where second-order calculus tools are available. The definition of $\RCD(K,N)$, for a metric measure space $(\X,\sfd,\mm)$, has been given in \cite{Gigli12}, by enforcing the curvature-dimension condition $\CD(K,N)$ with infinitesimal Hilbertianity. The $\CD$ condition has been introduced independently in the seminal works by Sturm in \cite{Sturm06I}, \cite{Sturm06II} and 
Lott-Villani in \cite{Lott-Villani09}, and it can be regarded as a synthetic notion for $(\X,\sfd,\mm)$ of having the Ricci curvature bounded below by $K\in\R$ and the dimension bounded above by $N\in (1,\infty]$.
While, the infinitesimal Hilbertianity ensures the linearity of the heat flow, ruling out Finsler geometries. Among its many merits, the $\RCD$ condition is consistent with the smooth Riemannian setting, and stable with respect to $pmGH$-convergence. Moreover, as a consequence of the results of \cite{Erbar-Kuwada-Sturm13} and \cite{AMS16}, the definition of $\RCD$ space can be equivalently formulated in terms of a distributional version of the $N$-Bochner inequality. 
For a complete historical account to the subject, we refer to the survey \cite{Ambrosio2018} and the previous theory on $\RCD(K,\infty)$ (introduced in \cite{AmbrosioGigliSavare11-2}). We recall that the class of spaces we consider includes the class of Ricci limit spaces, as introduced and studied in \cite{Cheeger-Colding97I},\cite{Cheeger-Colding97II},\cite{Cheeger-Colding97III}, and the class of Alexandrov spaces, when endowed with the appropriate Hausdorff measure, as proved in \cite{Petrunin11}.

\subsubsection*{The heat content on \texorpdfstring{$\RCD$}{RCD} spaces and main result}
For an $\RCD(K,N)$ space $(\X,\sfd,\mm)$, the Dirichlet heat flow of an open and bounded set $\Omega\subset\X$ can be defined in a classical way as the gradient flow of the local energy $E_{\Omega} \colon L^2(\Omega,\mm) \to [0,+\infty]$, where
\begin{equation*}
E_{\Omega}(u) := \begin{cases}
\int_{\Omega} |D u|^2\,\d \mm &\text{ if }u \in W^{1,2}_0(\Omega),\\
+\infty&\text{ otherwise.}
\end{cases}
\end{equation*}
Here $W^{1,2}_0(\Omega)$ is the set of local Sobolev functions with zero boundary condition. Then, letting $(0,\infty)\ni t\mapsto u_t\in W_0^{1,2}(\Omega)$ be the Dirichlet heat flow starting from $\chi_\Omega\in L^2(\Omega,\mm)$, we may define 
$Q_\Omega$ is defined in an analogous way to \eqref{eq:heatcontent_introduction}, namely 
\begin{equation}
    Q_{\Omega}(t):= \int_{\Omega} u_t(x)\,\d \mm(x)\qquad\text{for every }t > 0.  
\end{equation}
%
Our goal is to generalize \eqref{eq:heat_content_riemannian} to the $\RCD$ setting, however, already in smooth Riemannian manifolds, the regularity of $\partial \Omega$ comes into play. Therefore, we introduce a notion, called \emph{measured interior geodesic condition} at scale $\epsilon$ (in short $\mIGC{\epsilon}$ condition), which quantifies the regularity of the boundary of $\Omega$. More precisely, we say that $\partial\Omega$ satisfies the $\mIGC{\epsilon}$ condition if the geodesics minimizing the distance from the boundary, and of length at least $\epsilon$, cover the tubular neighborhood of size $\epsilon$, up to $\mm$-null sets (see Definition \ref{def:migc} for a precise statement). Define the signed distance from $\partial\Omega$, i.e. 
\begin{equation}
    \delta(x) := (\chi_{\Omega} -\chi_{\X \setminus \Omega})\,\sfd(x,\partial \Omega),\qquad\forall\,x\in\X,
\end{equation} 
and let us give further insights about the $\mIGC{\epsilon}$ condition, see Section \ref{sec:1D-localization} for details. 
\begin{itemize}
    \item The $\mIGC{\epsilon}$ condition of $\partial \Omega$ is an intermediate condition between an $\epsilon$-uniform interior ball condition and $\epsilon$-uniform interior ball condition plus an exterior ball condition at every point of $\partial \Omega$, cf. Proposition \ref{prop:ball_vs_IGC_epsilon};
    \item Consider the disintegration of $\mm$ associated with $\delta$ as studied in \cite{CM20}. This gives a `parameterization of the tubular neighborhood in normal directions' and we can characterize the $\mIGC{\epsilon}$ condition in terms of the fact that almost every transport rays inside $\Omega$ has length at least $\epsilon$, cf. Proposition \ref{lem:equivalence_migceps_lengthtransp}; 
    \item Under the $\mIGC{\epsilon}$ condition, the Laplacian of $\delta$, defined as a Radon functional can be represented as an $L^1$ function, in tubular neighborhood of size $\epsilon$ inside $\Omega$, cf. Corollary \ref{cor:mild_integrability_laplacian}.
\end{itemize}
Let state our main result, see Theorem \ref{thm:first_order_asymptotics_main}.
\begin{theorem}[First-order asymptotics]
\label{thm:first_order_asymptotics}
Let $(\X,\sfd,\mm)$ be an $\RCD(K,N)$ space for $K\in\R$ and $N\in (1,\infty)$ and let $\Omega\subset\X$ be open and bounded. 
Assume $\partial\Omega$ satisfies the $\mIGC{\epsilon}$ condition. Moreover, assume that there exists $\rho >0$ such that 
\begin{equation}
\label{eq:higher_int_delta_intro}
    \Delta\delta \in L^{1+\rho}(\{ 0 < \delta < \epsilon \}).
\end{equation}
Then, the heat content associated with $\Omega$ admits the following asymptotic expansion
\begin{equation}
    Q_\Omega(t)=\mm(\Omega)-\sqrt{\frac{4t}{\pi}}{\rm Per}(\Omega)+O\left(t^{\frac{2(1+\rho)-1}{2(1+\rho)}}\right)\qquad\text{as }t\to 0^+.
\end{equation}
\end{theorem}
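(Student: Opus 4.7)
The plan is to start from the identity $\mm(\Omega) - Q_\Omega(t) = \int_\Omega (1-u_t)\,\d\mm$ and to split the integration domain as $\{0<\delta<\epsilon\}\cup\{\delta\geq\epsilon\}$. On the interior region $\{\delta\geq\epsilon\}$ the Gaussian-type upper bound for the Dirichlet heat kernel on $\RCD(K,N)$ spaces gives $1-u_t(x)=O(e^{-\epsilon^2/(Ct)})$ uniformly, so this contribution is absorbed into the error. The whole work is therefore concentrated on the tubular neighborhood $\{0<\delta<\epsilon\}$, which is precisely where the $\mIGC{\epsilon}$ assumption provides a usable disintegration.

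The next step is to exploit the disintegration of $\mm\restr{\{0<\delta<\epsilon\}}$ along transport rays of $\delta$, guaranteed by the $\mIGC{\epsilon}$ condition (cf.\ Proposition~\ref{lem:equivalence_migceps_lengthtransp}) and the one-dimensional localization of \cite{CM20}; the transverse parameter of the disintegration identifies with the perimeter measure of $\partial\Omega$. On each ray I would compare $u_t$ with the Dirichlet heat flow on the half-line $(0,\infty)$ starting from the indicator of $(0,\infty)$, whose solution is $\mathrm{erf}(s/(2\sqrt{t}))$ and satisfies $\int_0^\infty \mathrm{erfc}(s/(2\sqrt{t}))\,\d s = 2\sqrt{t/\pi}=\sqrt{4t/\pi}$. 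Integrating this model profile against the perimeter produces the leading term $\sqrt{4t/\pi}\,{\rm Per}(\Omega)$.

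The error analysis is the technical core. The radial extension $v_t(x)=\mathrm{erf}(\delta(x)/(2\sqrt{t}))$ of the model satisfies $\partial_t v_t-\Delta v_t = -(\Delta\delta)\,\partial_s v_t$ away from $\partial\Omega$, so the deviation $u_t-v_t$ solves the heat equation with source $(\Delta\delta)\,\partial_s v_t$ and, via Duhamel, its integral on $\Omega$ is controlled by $\int_0^t\!\!\int_\Omega |\Delta\delta|\,|\partial_s v_s|\,\d\mm\,\d s$, where $|\partial_s v_s|$ is the Gaussian kernel $e^{-\delta^2/(4s)}/\sqrt{4\pi s}$. Applying H\"older's inequality with exponents $1+\rho$ and $(1+\rho)/\rho$, together with the elementary estimate $\|e^{-\delta^2/(4s)}\|_{L^{(1+\rho)/\rho}(\{0<\delta<\epsilon\})}\lesssim s^{\rho/(2(1+\rho))}$, yields an integrand of order $s^{-1/(2(1+\rho))}$; integration in $s$ from $0$ to $t$ produces exactly the claimed rate $O(t^{(2(1+\rho)-1)/(2(1+\rho))})$.

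The main obstacle I expect is making the comparison between the intrinsic Dirichlet heat flow on $\Omega$ and the radial one-dimensional model rigorous in the nonsmooth $\RCD$ setting: normal coordinates are not smooth, and $\Delta\delta$ is \emph{a priori} only a Radon functional. The upgrade of $\Delta\delta$ to an $L^1$ (and by hypothesis $L^{1+\rho}$) function on $\{0<\delta<\epsilon\}$ supplied by Corollary~\ref{cor:mild_integrability_laplacian} is exactly what allows the Duhamel and energy arguments to go through without any pointwise smoothness of transport rays; carrying this out with the correct handling of the boundary behavior of $v_t$ and of the Dirichlet semigroup will be the most delicate part.
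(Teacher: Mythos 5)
Your proposal takes a genuinely different route from the paper. You propose the classical ``radial-model comparison'' strategy: build the explicit half-line solution $v_t(x)=\mathrm{erf}\left(\delta(x)/(2\sqrt{t})\right)$ radially off $\partial\Omega$, observe that the defect $w_t:=u_t-v_t$ solves a heat equation with source $(\Delta\delta)\,\partial_s v_t$ and zero initial and boundary data, and control $\int_\Omega w_t$ via the $L^1$-contraction of the Dirichlet semigroup plus H\"older. The paper instead follows Savo's mean-value-lemma route: it works with the scalar function $F(t,r)=\int_{\{\delta>r\}}(1-u_t)\phi\,\d\mm$ (with $\phi$ a cutoff supported in $\{\delta<\epsilon\}$), shows via Proposition~\ref{prop:Fsecond} and Corollary~\ref{coro:Fsecond} that $F$ solves a one-dimensional non-homogeneous heat problem on $(0,\infty)$, and applies the half-line Duhamel formula of Lemma~\ref{prop:duhamel}. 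The perimeter enters through the boundary condition $\partial_r F(t,0)=-{\rm Per}(\Omega)$, obtained from the trace result Proposition~\ref{prop:trace_W120}, rather than from the model profile. In particular the paper never needs to give meaning to $\Delta v_t$ on $\Omega$, which is precisely where your approach is exposed.

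The concrete gap: the identity $\partial_t v_t-\Delta v_t=-(\Delta\delta)\partial_s v_t$ is not rigorous on $\Omega$. Under $\mIGC{\epsilon}$, Corollary~\ref{cor:mild_integrability_laplacian} and Remark~\ref{rmk:IGCeps_to_reg_Deltadelta} only upgrade $\pmb\Delta\delta$ to an $L^1$ function on $\{0<\delta<\epsilon\}$; on $\{\delta\ge\epsilon\}$ the representation formula \eqref{eq:repr_laplacian_delta} still carries a singular part from the initial points $a(X_\alpha)$, so $\mathrm{erf}(\delta/(2\sqrt t))$, defined on all of $\Omega$, is not in the domain of any pointwise Laplacian there. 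To repair this you would have to multiply by a cutoff $\phi$ supported in the tubular neighborhood (as the paper does), track the commutator terms $2\la\nabla\phi,\nabla v_t\ra$ and $v_t\Delta\phi$ (these correspond to the paper's terms \ref{termR11} and \ref{termR12}), and justify a distributional Duhamel formula for the Dirichlet semigroup on $\Omega$ with an $L^1_t L^1_x$ source; none of this is in your outline. Additionally, replacing Kac's principle (Corollary~\ref{coro:kacs_principle}) by a Gaussian upper bound for the Dirichlet heat kernel is a strictly stronger, non-Eulerian input that the paper deliberately avoids. That said, the quantitative core you identify — the disintegration identifying the transverse measure with ${\rm Per}(\Omega)$, the H\"older split into $L^{1+\rho}\times L^{(1+\rho)/\rho}$, and the resulting exponent $\tfrac{2(1+\rho)-1}{2(1+\rho)}$ — matches the paper's estimate for \ref{termR2}, so once the preceding steps are made rigorous the rate you would obtain agrees with the theorem.
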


Let us remark that, under the $\mIGC{\epsilon}$ condition for $\partial \Omega$, we have $\Delta \delta \in L^1(\{ 0 < \delta < \epsilon \})$ (see Corollary \ref{cor:mild_integrability_laplacian}); without any further assumption on the integrability of $\Delta \delta$, the error term appearing in the asymptotic expansion would be of order $\sqrt{t}$, preventing access to the first-order asymptotics. For this reason, it is natural to assume a slightly better integrability. On the other hand, from Theorem \ref{thm:first_order_asymptotics}, it seems necessary to assume $\Delta \delta \in L^{\infty}(\{ 0 < \delta < \epsilon \})$ in order to get a second-order asymptotics. Such a condition, together with the $\mIGC{\epsilon}$, is satisfied by a domain with a uniform interior and exterior ball condition, cf. Section \ref{sec:1D-localization}. 

The strategy we pursue was firstly proposed by Savo in \cite{S98} in the Riemannian setting and recently adapted in \cite{RR21} for obtaining the heat content asymptotics associated with non-characteristic domains in subriemannian manifolds. Indeed, the assumption of absence of characteristic points in \cite{RR21} acts as a regularity assumption on $\Omega$; in the non-smooth setting of $\RCD(K,N)$ spaces, a similar role is played by the $\mIGC{\epsilon}$ condition.

\subsubsection*{Strategy of the proof} 
Let us describe the main tools needed in order to prove Theorem \ref{thm:first_order_asymptotics}.
First of all, an important property of the solution to the Dirichlet heat equation $u_t$ in $\Omega$ concerns its relation with the heat flow (over the whole space) of $\chi_\Omega \in L^2(\mm)$. More generally, we prove a version of the Kac's principle of `not feeling the boundary' in infinitesimally Hilbertian metric measure spaces (see Corollary \ref{coro:kacs_principle}): for a nonnegative function $f\in L^\infty(\mm)$, given a compact set $K \Subset \Omega$, then
\begin{equation*}
\|h_t f- h_t^\Omega f \|_{L^1(K,\mm)} = o(t),\qquad\text{as }t\to 0^+,
\end{equation*}
where $h_tf$ denotes the heat flow of $f$ over $\X$. For details in the Riemannian setting, we refer the reader to \cite{H95}. Second of all, this property allows to decompose the term $Q_\Omega(t)$, and so the study of its asymptotics, in two terms: in a compact set in $\Omega$ and in a neighborhood the boundary. In a compact set inside $\Omega$, the Dirichlet heat flow `behaves' as the heat flow due to the aforementioned Kac's principle, hence it is negligible in the asymptotics. Thus, the only contribution to the asymptotic expansion of $Q_\Omega(t)$ comes from a neighborhood of the boundary, namely it is enough to study the small-time asymptotics of 
\begin{equation}
\label{eq:int_u_t_tub_neigh}
    \int_{\{0<\delta<\epsilon\}} u_t\,\d\mm,
\end{equation}
which depends on the regularity of $\partial \Omega$. To do so, we study a PDE associated with the quantity \eqref{eq:int_u_t_tub_neigh}. In particular, we show that a mean value lemma (cf. Proposition \ref{prop:Fsecond}) holds for the function 
\begin{equation}
    F(t,r):=\int_{\{0<\delta<r\}} u_t\,\d\mm,\qquad\text{for }t>0,\ r\in (0,\epsilon].
\end{equation}
We refer to \cite{S01} for the mean value lemma in the Riemannian setting. The former is a statement on the second distributional derivative of $F$ in the $r$-direction and permits to show that $F$ satisfies a suitable non-homogeneous heat equation, whose solution can be explicitly expressed via the Duhamel's principle (cf. Lemma \ref{prop:duhamel}).

\subsubsection*{Final comments}
The idea of considering the relation between the heat equation and the theory of sets of finite in perimeter is not new in the literature. In the setting of ${\sf RCD}$ spaces, it has been recently proved in \cite{BrenaPasqualettoPinamonti2022} that the short-time behaviour of the heat flow (over the whole space $\X$) can be used to characterize functions of bounded variation. This improves a previous result in the more general setting of ${\sf PI}$ spaces obtained in \cite{MaMiSh16}.  
%

Regarding future directions, we would like to improve Theorem \ref{thm:first_order_asymptotics} and obtain a second-order asymptotics. As mentioned before, in the setting of Riemannian manifolds, this coefficient is the integral of the mean curvature of $\partial \Omega$ with respect to the surface measure, see \eqref{eq:heat_content_riemannian}. In the non-smooth setting, we expect to be able to express the second-order coefficient in terms of the notion of mean curvature given in \cite{Ket20} (see also \cite{BKMW20}).
\subsection*{Structure of the paper}
The presentation is organized as follows.
In Section \ref{sec:preliminaries}, we recall some preliminaries on calculus on metric measure spaces, the definition of $\PI$ and $\RCD(K,N)$ spaces with some relevant properties for our purposes. In Section \ref{sec:heat_on_Omega}, we present the heat equation on $\Omega$ with Dirichlet boundary conditions and we prove the Kac's principle. Section \ref{sec:1D-localization} is devoted to 1-dimensional localization technique applied to the signed distance function from $\partial \Omega$, the definition of $\mIGC{\epsilon}$ condition and their relation. In Section \ref{sec:mean_value_lemma} we prove the mean value lemma in the non-smooth setting. Finally, Section \ref{sec:first_order_asymptotics} contains the proof of Theorem \ref{thm:first_order_asymptotics}.
\subsection*{Acknowledgments}
E.C. acknowledges support from the Academy of Finland Grant No. 314789 and the kind hospitality of University of Bonn.
T.R. acknowledges support from the Deutsche Forschungsgemeinschaft (DFG, German Research Foundation) through the collaborative research centre ``The mathematics of emerging effects'' (CRC 
1060, Project-ID 211504053) and the kind hospitality of University of Jyväskylä.
\section{Preliminaries}
\label{sec:preliminaries}
Throughout the paper, we consider a metric measure space $(\X,\sfd,\mm)$, i.e.\ $(\X,\sfd)$ is a complete and separable metric space and $\mm$ is a non-negative Borel measure, finite on bounded sets.
\subsection{Calculus in metric measure spaces}
We recall the notion of absolutely continuous curves with values in a metric space. Let $I\subset\R$ be an interval and denote by $C(I,\X)$ the metric space of continuous curves $\gamma \colon I\to \X$, endowed with the supremum distance. Then, the space of \emph{absolutely continuous curves} $AC(I,\X)$ is defined as the space of all $\gamma \in C(I,\X)$ for which there exists $ g \in L^1(I)$ such that $g\ge 0$ and
\begin{equation}
\label{eq:absolutely_cont_def}
    \sfd(\gamma_t,\gamma_s) \le \int_t^s g_r\,\d r,\qquad\text{for every }t<s \text{ in } I.
\end{equation} 
We say that $\gamma\in C(I,\R)$ is locally absolutely continuous on $I$ if $\gamma\in AC(J,\R)$, for any interval $J\subsetneq I$.
Given $\gamma \in AC(I,\X)$, we can define the \emph{metric speed} $|\dot\gamma_t|$ of $\gamma$ for a.e. $t$, namely we can prove that
\begin{equation}
    \exists\,\lim_{h\to 0}\frac{\sfd(\gamma_{t+h},\gamma_t)}{|h|}=:|\dot\gamma_t|\quad \text{for a.e. }t\in I \qquad\text{and}\qquad |\dot\gamma_t| \in L^1(I).
\end{equation} 
Therefore, we can define the \emph{length} of a curve $\gamma\in AC(I,\R)$ as $\ell(\gamma) = \int_I |\dot\gamma_t|\,\d t$. Moreover, for $p\in (1,\infty)$, we say that $\gamma\in AC^p(I,\X)$ if \eqref{eq:absolutely_cont_def} holds with $g\in L^p(I)$. In that case, the metric speed is in $L^p(I)$. Finally, note that, if $\X=\Hil$ is a Hilbert space, then a curve $\gamma\in AC^p(I,\Hil)$ is differentiable (in a classical sense) for a.e. $t\in I$ and the metric speed is the norm of its derivative.
\subsubsection{Sobolev spaces on metric measure spaces} For any metric space $(\Y,\sfd_\Y)$, let $\mathscr{P}(\Y)$ be the set of probability measures on $\Y$. Denote by $\e_t \colon C([0,1],\X) \to \X$; $e_t(\gamma)=\gamma_t$ the evaluation map at time $t\in [0,1]$. We say that $\ppi \in \mathscr{P}(C([0,1],\X))$ is a \emph{test plan} if:
\begin{itemize}
    \item[i)] there exists a constant $C>0$ such that $(\e_t)_\#\ppi \le C\mm$, for any $t\in [0,1]$;
    \item[ii)] $\ppi$ is concentrated on $AC^2([0,1],\X)$ and 
    \begin{equation}
        \int \int_0^1 |\dot{\gamma}_t|^2\,\d t\,\d \ppi(\gamma)<+\infty.
    \end{equation}
\end{itemize}
Let $f \colon \X \to \R$ be a Borel function. We say that $f$ belongs to the \emph{Sobolev class} and write $f\in S^2(\X)$ if there exists $0 \le G \in L^2(\mm)$ such that, for every test plan $\ppi$, 
\begin{equation}
\label{eq:weak_up_grad_def}
    \int |f(\gamma_1)-f(\gamma_0)|\,\d \ppi(\gamma)\le \int \int_0^1 G(\gamma_t)|\dot{\gamma}_t|\,\d t\,\d \ppi(\gamma).
\end{equation} 
Such a function $G$ is called a weak upper gradient of $f$. We may define the \emph{minimal weak upper gradient} of $f$, denoted by $|Df|$, as the $\mm$-a.e. minimal non-negative function satisfying \eqref{eq:weak_up_grad_def}, namely:
\begin{equation}
    |D f| \le G \quad \mm\text{-a.e.}\quad \text{for every weak upper gradient }G.
\end{equation}
Finally, we define the \emph{Sobolev space} on $\X$ as $W^{1,2}(\X):=L^2(\mm) \cap S^2(\X)$, endowed with the norm:
\begin{equation}
\label{eq:2_sobolev_norm}
    \| f \|_{W^{1,2}(\X)}:= \| f \|_{L^2(\mm)}+ \| |D f| \|_{L^2(\mm)}.
\end{equation}
Note that, in general, $W^{1,2}(\X)$ is only a Banach space. Moreover, denoting by $\Lip(\X)$ the set of Lipschitz functions on $\X$, $\Lip(\X)\cap W^{1,2}(\X)$ is dense in $W^{1,2}(\X)$ with respect to the norm \eqref{eq:2_sobolev_norm}, as a consequence of the density in energy of Lipschitz functions, cf. \cite{AmbrosioGigliSavare11-3}. 
Define the \emph{slope} of a function $f \colon \X \to \mathbb{R}$ as follows:
\begin{equation}
\lip(f)(x):=    
    \begin{cases}
        \displaystyle{\lims_{y \to x} \frac{|f(x)-f(y)|}{\sfd(x,y)}} & \text{if } x\in\X \text{ is not isolated}, \\
        0 & \text{otherwise.} 
    \end{cases}
\end{equation}
One can check that if $f\in \Lip(\X)$, then $\lip (f) \le \Lip (f)$, where $\Lip (f)$ is the Lipschitz constant of $f$.
We recall the definition of the Cheeger energy by means of relaxation of the $L^2$-norm of the local Lipschitz constant with respect to $L^2$-convergence. We define ${\rm Ch} \colon L^2(\mm) \to [0,+\infty]$ as
\begin{equation}
\label{eq:cheeger_energy}
    {\rm Ch}(f):= \inf \left\{ \limi_{n \to +\infty} \frac{1}{2}\int (\lip f_n)^2\, \d \mm\,,\, f_n \to f \text{ in }L^2(\mm) \text{ and } f_n \in \Lip_{loc}(\X)\right\},
\end{equation} 
where $\Lip_{loc}(\X)$ is the set of locally Lipschitz functions on $\X$. It is proved in \cite[Thm.\ 6.2]{AmbrosioGigliSavare11} that $W^{1,2}(\X)= \{ f \in L^2(\mm) \,\text{s.t. }{\rm Ch}(f)< \infty \}$
and
\begin{equation}
\label{eq:representation_cheeger}
    {\rm Ch}(f) = \frac{1}{2}\int_\X |D f|^2\,\d \mm. 
\end{equation}

We conclude this paragraph recalling the definition of infinitesimally Hilbertian metric measure space, firstly introduced and studied in \cite{Gigli12}.

\begin{definition}[Infinitesimal Hilbertianity]
We say that $(\X,\sfd,\mm)$ is \emph{infinitesimally Hilbertian} if $W^{1,2}(\X)$ is a Hilbert space. 
\end{definition}

\subsubsection{The language of normed modules}
We introduce the notion of differential and gradient of a function in metric measure spaces. We assume the reader to be familiar with the language of $L^2(\mm)$-normed $L^\infty(\mm)$-module, see \cite{Gigli14} for further details. In particular, as proved in \cite[Thm.\ 2.8]{Gigli14}, there exists a couple $(L^2(T^*\X),\d)$ such that $L^2(T^*\X)$ is an $L^2(\mm)$-normed $L^\infty(\mm)$-module and
\begin{itemize}
    \item[i)] $\d \colon S^{2}(\X) \to L^2(T^*\X)$ is linear and $|\d f|_* = |D f|$ $\mm$-a.e., where $|\cdot|_*\colon L^2(T^*\X) \to L^2(\mm)$ denotes the pointwise norm;
    \item[ii)] The set $\{ \sum_{i=1}^n \chi_{E_i} \d f_i\mid f_i \in S^{2}(\X), E_i\subset\X \text{ Borel}\}$ is dense in $L^2(T^*\X)$.
\end{itemize}
Moreover, the couple is unique in the sense that, given another couple $(\mathscr{M},\tilde{\d})$ verifying items i) and ii) as above, then there exists an isomorphism $\Phi \colon L^2(T^*\X) \to \mathscr{M}$ of normed modules which preserves the pointwise norm. $L^2(T^*\X)$ is called the \emph{cotangent module} and $\d$ is the \emph{differential}.
We report below some of the calculus tools for the differential (see \cite[Sec.\ 2]{Gigli14}):
\begin{description}
    \item[Leibniz rule.] Let $f,g \in S^{2}(\X) \cap L^\infty(\mm)$, then $f g \in S^{2}(\X)$ and $\d (fg) = f \d g+ \d f g$;
    \item[Chain rule.] Let $f \in S^{2}(\X)$ and $\varphi \in \Lip(\mathbb{R})$, then $\varphi \circ f \in S^{2}(\X)$ and $\d (\varphi \circ f) = \varphi' \circ f\,\d f$;
    \item[Closure of the differential.] Let $\{f_n\} \subset S^{2}(\X)$ be such that $f_n \to f$ $\mm$-a.e., for some Borel $f$ and assume $\d f_n \weakto w$ weakly in $L^2(T^*\X)$ for some $w \in L^2(T^*\X)$, then $f \in S^{2}(\X)$ and $\d f =w$.
    \item[Locality.] Let $f,g \in S^{2}(\X)$, then $\chi_{\{f = g\}} \d f= \chi_{\{f = g\}} \d g$.
\end{description}
We define the \emph{tangent module} $L^2(T\X)$ as the dual (in the sense of modules) of $L^2(T^*\X)$. Note that $(\X,\sfd,\mm)$ is infinitesimally Hilbertian if and only if $L^2(T^*\X)$ and $L^2(T\X)$ are Hilbert $L^2(\mm)$-normed $L^\infty(\mm)$-module. In this case, there exists an isomorphism $I \colon L^2(T^*\X) \to L^2(T\X)$ of normed modules which preserves the pointwise norm and it is possible to define the gradient of a Sobolev function as the dual of its differential.

\begin{definition}[Gradient]
Let $(\X,\sfd,\mm)$ be an infinitesimally Hilbertian metric measure space. Then, the \emph{gradient} is the operator $\nabla:=I\circ\d\colon S^2(\X)\rightarrow L^2(T\X)$. Equivalently, for $f\in S^2(\X)$, its gradient $\nabla f\in L^2(T\X)$ is the unique vector field satisfying
\begin{equation}
    \d f(\nabla f)=|\nabla f|^2=|\d f|_*^2,\qquad\mm\text{-a.e.},
\end{equation}
where $|\cdot|\colon L^2(T\X) \to L^2(\mm)$ denotes the pointwise norm on $L^2(T\X)$.
\end{definition}
The operator $\nabla$, being the dual of the differential, enjoys the same properties as $\d$. Moreover, it is possible to prove that for $f,g\in S^2(\X)$, one has 
\begin{equation}
\label{eq:scalar_product_gradients}
    \d f(\nabla g)=\d g(\nabla f)=\lim_{\epsilon \to 0} \frac{|D (f+\epsilon g)|^2-|Df|^2}{\epsilon} ,\qquad\mm\text{-a.e.}.
\end{equation}
Identity \eqref{eq:scalar_product_gradients} provides a notion of pointwise scalar product of gradients setting $\la\nabla f,\nabla g\ra:=\d f(\nabla g)$, and in addition, $\la\nabla f,\nabla g\ra\in L^1(\mm)$. In particular, it holds that
\begin{equation}
    |\la \nabla f, \nabla g \ra| \le |D f|\,|D g|,\quad \mm\text{-a.e.,}\qquad \text{provided }f,g \in S^2(\X).
\end{equation}

Let $\mu$ be a Borel non-negative measure. We define $L^0(\mu)$ as the vector space of Borel measurable functions, modulo equivalence $\mu$-a.e.. We define $L^0(T\X)$ as the completion of $L^2(T\X)$ with respect to the distance
\begin{equation}
\sfd_{L^0}(v,w):= \int |v-w| \wedge 1\,\d \tilde{\mm}, \quad \text{for every }v,w \in L^2(T\X),
\end{equation}
where $\tilde{\mm} \in \mathscr{P}(\X)$ is such that $\tilde{\mm} \ll \mm \ll \tilde{\mm}$.
Note that we regard $L^0(T\X)$ as a topological vector space, with topology induced by $\sfd_{L^0}$ (which is affected by the choice of $\tilde{\mm}$ but not its induced topology).
In particular, $L^0(T\X)$ can be endowed with the structure of $L^0(\mm)$-normed $L^0(\mm)$-module. We define 
\begin{equation}
    L^p(T\X):= \{ v \in L^0(T\X) \mid |v| \in L^p(\mm) \},\qquad \text{for any }p \in [1,\infty].
\end{equation} 
This set can be endowed with the structure of $L^p(\mm)$-normed $L^\infty(\mm)$-module.
It is convenient to introduce the notion of \emph{restriction} of normed modules. Given $E \in \mathscr{B}(\X)$, we define \begin{equation}
    L^p(T\X)\restr{E}:=\{ \chi_E v\mid v \in L^p(T\X) \}\qquad\text{ for any }p \in \{ 0\} \cup [1,\infty].
\end{equation}
\subsubsection{Gradient flows on Hilbert spaces}
Let $\left(\Hil,\la\cdot,\cdot\ra\right)$ be a Hilbert space and $E\colon \Hil\rightarrow (-\infty,+\infty]$ be a convex functional on $\Hil$. We denote by $\dom(E)=\{u\in\Hil\mid E(u)<\infty\}$ the domain of $E$. The \emph{subdifferential} of $E$ at $u \in \dom(E)$, is defined as
\begin{equation}
    \partial^{-} E(u):=\{z \in \Hil \mid I(v) \ge I(u)+ \la z, v-u \ra \text{ for every }v \in \Hil \}.
\end{equation}
We recall here some properties of the theory of gradient flows of convex and lower semicontinuos functionals on Hilbert spaces, see \cite{B73book}, \cite{K67} for further details.
\begin{theorem}[Gradient flow on Hilbert spaces]
\label{thm:gradient_flows}
Let $E \colon \Hil \to [0,+\infty]$ be a convex and lower semicontinuous functional and let $x \in \overline{\dom(E)}$. Then, there exists a unique locally absolutely continuous curve $[0,+\infty) \ni t \mapsto x_t \in \Hil$ such that
\begin{equation}
    x_0=x \qquad\text{and}\qquad \dot x_t \in -\partial^- E(x_t)\quad \text{for a.e. } t \in [0,+\infty).
\end{equation}
Such a curve is called the \emph{gradient flow of $E$ starting from $x$}. 
\end{theorem}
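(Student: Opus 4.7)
The plan is to establish this classical Brezis--Komura theorem via the implicit Euler (minimizing movements) scheme. Fix $x\in\dom(E)$ and a time step $\tau>0$, and define the discrete trajectory by $x_0^\tau:=x$ and
\begin{equation}
x_{n+1}^\tau:=\argmin_{y\in\Hil}\left\{E(y)+\frac{|y-x_n^\tau|^2}{2\tau}\right\}.
\end{equation}
The convexity and lower semicontinuity of $E$, together with the strong convexity supplied by the quadratic penalty, guarantee existence and uniqueness of the minimizer. The Euler--Lagrange inclusion reads $v_n^\tau:=(x_n^\tau-x_{n+1}^\tau)/\tau\in\partial^-E(x_{n+1}^\tau)$, and comparing $x_{n+1}^\tau$ with $x_n^\tau$ in the variational problem yields the discrete energy dissipation $E(x_{n+1}^\tau)+\tau|v_n^\tau|^2\le E(x_n^\tau)$. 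Telescoping gives $\sum_n\tau|v_n^\tau|^2\le E(x)$. Defining the affine interpolant $\tilde x^\tau$ with $\tilde x^\tau_{n\tau}=x_n^\tau$, whose derivative on $(n\tau,(n+1)\tau)$ equals $-v_n^\tau$, one obtains a uniform Hölder-$\tfrac12$ bound $|\tilde x_t^\tau-\tilde x_s^\tau|\le\sqrt{E(x)|t-s|}$ and a uniform $L^2_{\loc}$ bound on $\dot{\tilde x}^\tau$.

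Next, a compactness argument (Ascoli--Arzelà combined with weak compactness in $L^2_{\loc}$) extracts, along some $\tau_k\to 0^+$, a locally absolutely continuous limit curve $t\mapsto x_t$, with $\dot{\tilde x}^{\tau_k}\weakto\dot x$ weakly in $L^2_{\loc}$. To identify $x_t$ as the gradient flow, denote by $\hat x^{\tau}_t$ the piecewise-constant right-endpoint value $x_{n+1}^\tau$ on $(n\tau,(n+1)\tau)$; the subdifferential inequality for $v_n^\tau$ yields
\begin{equation}
E(y)-E(\hat x^\tau_t)\ge-\la\dot{\tilde x}^\tau_t,y-\hat x^\tau_t\ra\qquad\text{for all }y\in\Hil.
\end{equation}
Integrating against a non-negative time test function and passing to the limit $\tau_k\to 0^+$ using lower semicontinuity of $E$ and the strong--weak pairing $\la\dot{\tilde x}^{\tau_k},\hat x^{\tau_k}-y\ra$ produces
\begin{equation}
E(y)-E(x_t)\ge-\la\dot x_t,y-x_t\ra\qquad\text{for a.e.\ }t\text{ and all }y\in\Hil,
\end{equation}
which is precisely $\dot x_t\in-\partial^-E(x_t)$.

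For uniqueness, the monotonicity of $\partial^-E$ (immediate from convexity) implies that, for any two solutions $x_t,y_t$ with the same initial datum,
\begin{equation}
\tfrac{1}{2}\tfrac{\d}{\d t}|x_t-y_t|^2=\la\dot x_t-\dot y_t,x_t-y_t\ra\le 0,
\end{equation}
so $|x_t-y_t|$ is non-increasing and vanishes identically. The same contraction handles $x\in\overline{\dom(E)}\setminus\dom(E)$: approximating $x$ by $x^{(k)}\in\dom(E)$ yields solutions $x^{(k)}_\cdot$ which form a Cauchy sequence in $C([0,T],\Hil)$, whose limit is the sought trajectory. The main obstacle I anticipate is the identification step above: passing the subdifferential inclusion through time integration and a Lebesgue differentiation argument requires care, because the test vector $\hat x^\tau_t-y$ only converges strongly, while $\dot{\tilde x}^\tau_t$ converges only weakly. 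A robust workaround, which I would fall back on if the direct argument proves delicate, is the De Giorgi energy--dissipation formulation: verify the inequality $E(x_t)+\tfrac{1}{2}\int_0^t|\dot x_r|^2\,\d r+\tfrac{1}{2}\int_0^t|\partial^-E|^2(x_r)\,\d r\le E(x_0)$ at the discrete level, pass to the limit by lower semicontinuity, and observe that Young's inequality forces equality and the inclusion $\dot x_t\in-\partial^-E(x_t)$.
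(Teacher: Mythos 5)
The paper does not actually prove Theorem \ref{thm:gradient_flows}: it is the classical Brezis--Komura theorem, cited from \cite{B73book} and \cite{K67}, so there is no internal argument to compare against. Your minimizing-movements (implicit Euler) sketch is one of the standard routes and a valid alternative to Brezis's Yosida-regularization / nonlinear-semigroup approach; all the main ingredients are in place: strong-convexity solvability of the resolvent step, discrete dissipation, uniform $1/2$-H\"older equicontinuity, weak $L^2_{\loc}$ compactness of the interpolants, identification through the subdifferential inequality, and the monotonicity contraction for uniqueness and the extension to $\overline{\dom(E)}$. A few small points worth tightening: the discrete dissipation obtained by comparing with $x_n^\tau$ actually reads $E(x_{n+1}^\tau)+\tfrac{\tau}{2}|v_n^\tau|^2\le E(x_n^\tau)$, so the uniform H\"older constant is $\sqrt{2E(x)}$; the conclusion ``for a.e.\ $t$ and all $y$'' in the identification step requires first fixing $y$ in a countable dense subset of $\Hil$ and then using continuity in $y$; and, as you yourself flag, in infinite dimensions the convergence $\hat x^{\tau}_t\to x_t$ is a priori only weak, so $\la\dot{\tilde x}^{\tau},\hat x^{\tau}\ra$ is not immediately a strong--weak product --- the fallback you propose via the De Giorgi energy--dissipation inequality (or equivalently a Minty argument on the maximal monotone graph $\partial^-E$) is indeed the robust way to close this step. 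One last remark: for $x\in\overline{\dom(E)}\setminus\dom(E)$ the limit curve is continuous at $0$ and locally absolutely continuous only on $(0,+\infty)$, so local absolute continuity on all of $[0,+\infty)$ is a slight overstatement --- but this imprecision is already present in the paper's phrasing of the theorem, not something your argument introduces.
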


Using Theorem \ref{thm:gradient_flows}, we can define the \emph{heat flow} on a metric measure space. We introduce the notion of divergence and Laplacian.

\begin{definition}[Divergence]
\label{def:divergence}
Let $(\X,\sfd,\mm)$ be infinitesimally Hilbertian. 
We say that an element $w \in L^2(T\X)$ belongs to $D(\div)$ if there exists $h \in L^2(\mm)$ such that
\begin{equation}
\label{eq:divergence}
    -\int g h \,\d \mm = \int \d g(w)\,\d \mm,\qquad\text{for every } g \in W^{1,2}(\X).
\end{equation}
We set $\div\, w:=h$. Note that $h$ is uniquely determined since $W^{1,2}(\X)$ is dense in $L^2(\mm)$.
\end{definition}
The divergence operator as defined above is linear. We recall a Leibniz formula for the divergence operator, whose proof is a straightforward modification of \cite[Prop.\ 4.2.7]{GP19}.
\begin{proposition}
\label{prop:leibniz_divergence}
Let $(\X,\sfd,\mm)$ be infinitesimally Hilbertian. Let $w\in L^\infty(T\X) \cap D(\div)$ and let $f \in L^\infty(\mm) \cap W^{1,2}(\X)$. Then, $fw\in D(\div)$ and
\begin{equation}
\label{eq:leibniz_divergence}
    \div(fw) = f\div\,w+\la \nabla f,w\ra, \qquad\mm\text{-a.e. on }\X.
\end{equation}
\end{proposition}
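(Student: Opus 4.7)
The strategy is to plug the natural candidate $h := f\,\div w + \la\nabla f, w\ra$ into Definition \ref{def:divergence} and verify the defining integration-by-parts identity, relying on the already-established Leibniz rule for the differential.

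First, I would check that all objects lie in the right spaces. Since $w\in D(\div)\subset L^2(T\X)$ and $|w|\in L^\infty(\mm)$, and $f\in L^\infty(\mm)$, we have $|fw|\le \|f\|_{L^\infty}|w|\in L^2(\mm)\cap L^\infty(\mm)$, so $fw\in L^2(T\X)\cap L^\infty(T\X)$. Likewise $f\,\div w\in L^2(\mm)$ and $|\la\nabla f,w\ra|\le |\nabla f|\,|w|\in L^2(\mm)$, so the candidate $h$ lies in $L^2(\mm)$ and both sides of the identity defining the divergence are well-defined.

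Next I would test against bounded Sobolev functions. Fix $g\in W^{1,2}(\X)\cap L^\infty(\mm)$; then by the Leibniz rule for the differential (recalled just before the definition of the divergence), $fg\in W^{1,2}(\X)\cap L^\infty(\mm)$ with $\d(fg)=f\,\d g+g\,\d f$. Using $fg$ as test function in the definition of $\div w$ gives
\begin{equation}
-\int fg\,\div w\,\d\mm \;=\; \int \d(fg)(w)\,\d\mm \;=\; \int f\,\d g(w)\,\d\mm + \int g\,\la\nabla f,w\ra\,\d\mm,
\end{equation}
and, since $\d g(fw)=f\,\d g(w)$ pointwise $\mm$-a.e.\ by $L^\infty(\mm)$-linearity of the differential, rearranging yields
\begin{equation}
\int \d g(fw)\,\d\mm \;=\; -\int g\bigl(f\,\div w + \la\nabla f,w\ra\bigr)\,\d\mm.
\end{equation}
This is exactly the divergence identity \eqref{eq:divergence} for $fw$ and $h$, restricted to bounded test functions.

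Finally, I would remove the boundedness assumption on $g$ by a truncation argument. For $g\in W^{1,2}(\X)$, set $g_n := (-n)\vee g\wedge n$, which belongs to $W^{1,2}(\X)\cap L^\infty(\mm)$ and converges to $g$ in $W^{1,2}(\X)$ (by the chain rule applied to the $1$-Lipschitz truncation and dominated convergence for both $g_n$ and $|D g_n|\le |D g|$). Then $\d g_n\to \d g$ in $L^2(T^*\X)$, hence $\int \d g_n(fw)\,\d\mm\to \int \d g(fw)\,\d\mm$ since $fw\in L^2(T\X)$; and $g_n\to g$ in $L^2(\mm)$ gives $\int g_n h\,\d\mm\to \int g h\,\d\mm$ since $h\in L^2(\mm)$. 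Passing to the limit in the identity proved for $g_n$ yields the identity for $g$, so $fw\in D(\div)$ with $\div(fw)=h$ as claimed.

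There is no real obstacle here: the argument is essentially a one-line consequence of the Leibniz rule for $\d$ and the definition of $\div$, with the only mild care being the density step to handle unbounded test functions. If one prefers to avoid the truncation, one can instead first obtain the identity on a dense subclass (e.g.\ bounded Lipschitz functions in $W^{1,2}(\X)$) and then argue by density in norm.
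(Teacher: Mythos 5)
Your proof is correct and follows essentially the same standard route the paper relies on (it refers to \cite{GP19}, Prop.\ 4.2.7): plug the candidate $h=f\,\div w+\la\nabla f,w\ra$ into the definition of divergence, test against $fg$, apply the Leibniz rule for the differential together with the $L^\infty$-linearity of the module action, and then pass from bounded to general $g\in W^{1,2}(\X)$ by truncation. The only point worth being explicit about is the convergence $\d g_n\to\d g$ in $L^2(T^*\X)$, which follows cleanly from the chain rule identity $\d g_n=\chi_{\{|g|<n\}}\,\d g$ rather than merely from the bound $|Dg_n|\le|Dg|$.
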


\begin{definition}[Laplacian]
\label{def:laplacian}
Let $(\X,\sfd,\mm)$ be infinitesimally Hilbertian and let $f \in W^{1,2}(\X)$. We say that $f \in D(\Delta)$, if there exists $h \in L^2(\mm)$ such that
\begin{equation}
    \label{eq:definition_laplacian}
    \int_{\X} h g\, \d \mm  = -\int_{\X} \la \nabla f, \nabla g \ra\,\d \mm \qquad \text{for every }g\in W^{1,2}(\X).
\end{equation}
We set $\Delta f:= h$, where $h$ is uniquely determined by the density.
\end{definition}
Consider the Cheeger energy ${\rm Ch}$ defined in \eqref{eq:cheeger_energy}, which is convex and $L^2(\mm)$-lower semicontinuous. 
Let $f \in L^2(\mm)$, then the gradient flow $t \mapsto h_t f$ of ${\rm Ch}$ starting from $f$ (which exists and is unique by Theorem \ref{thm:gradient_flows}) is called the \emph{heat flow} of $f$. In particular, we have that $\partial_t h_t f \in -\partial^- {\rm Ch}(h_t f)$ for a.e.\ $t$. We can characterize the subdifferential of the Cheeger energy in terms of the Laplacian. Indeed, if $f \in W^{1,2}(\X)$, then $f \in D(\Delta)$ if and only if $\partial^- {\rm Ch}(f) \neq \emptyset$. In this case, we have $\partial^{-} {\rm Ch}(f) = \{ - \Delta f \}$.
 Therefore, the curve $[0,\infty) \ni t \mapsto h_t f \in L^2(\mm)$ is locally absolutely continuous and
\begin{equation}
    \partial_t h_t f = \Delta h_t f\qquad \text{for a.e.\ }t>0.
\end{equation}
\subsection{\texorpdfstring{$\PI$}{PI} spaces}
We say that $(\X,\sfd,\mm)$ is \emph{locally uniformly doubling} if, for every $R >0$, there exists $C_D(R) >0$ such that for every $x \in \X$ and $r \le R$ we have
\begin{equation}
\label{eq:doubling_def}
\mm(B_{2r}(x)) \le C_D\mm(B_{r}(x))
\end{equation}
We refer to $C_R$ as the doubling constant up to scale $R$. A consequence of the definition of the doubling assumption is that $(\X,\sfd)$ is proper, i.e.\ closed and bounded sets are compact. We say that $(\X,\sfd,\mm)$ satisfies a \emph{weak local (1-1) Poincar\'{e} inequality} provided for every $R>0$ there exists $C_P(R)>0$ and $\lambda \ge 1$ such that for every $f \colon \X \to \R$ Lipschitz, $x \in \X$, $0<r<R$ we have 
$$ \avint_{B_r(x)} |f-f_{B_r(x)}|\,\d \mm \leq C_P r \avint_{B_{\lambda r}(x)} \lip(f)\,\d \mm,$$
where $f_{B_r(x)}:=\frac{1}{\mm(B_r(x))}\int_{B_r(x)} f \,\d \mm$.
\begin{definition}[$\PI$ space]
We say that $(\X,\sfd,\mm)$ is a $\PI$ space if it is locally uniformly doubling and satisfies a weak local (1-1) Poincar\'{e} inequality.
\end{definition}
This class of spaces is relevant for our presentation because the results in \cite{Cheeger00} apply. In particular, as proven therein, given $f \in W^{1,2}(\X) \cap \Lip(\X)$, we have 
\begin{equation}
|D f| = \lip f \qquad\mm\text{-a.e.,}    
\end{equation}
while the inequality $\leq$ holds in general metric measure spaces.

\subsubsection{Signed distance function in metric measure spaces}
We denote by $\sfd_x(y):= \sfd(x,y)$. Since the map is $1$-Lipschitz, we have that $\lip \,\sfd_x \le 1$; on the other hand, if $(\X,\sfd)$ is a length space, we have that $\lip \, \sfd_x \ge 1$, whence $ \lip \,\sfd_x  = 1$. If we assume that $(\X,\sfd,\mm)$ is a $\PI$ space, due to \cite{Cheeger00}, we get that $|D \sfd_x| = 1$.
By similar arguments, we can prove the same result for the signed distance function from the boundary of an open set.
\begin{definition}[Signed distance]
Let $(\X,\sfd)$ be a metric space and let $\Omega\subset \X$ be open. Denote by $\sfd(x,\partial\Omega)=\inf\{ \sfd(x,y)\mid y\in \partial\Omega\}$, for any $x\in \X$. Then, the signed distance function from $\partial\Omega$ is
\begin{equation}
\label{eq:signed_distance_fun}
    \delta\colon \X\rightarrow\R;\qquad \delta(x):=\sfd(x,\partial \Omega) \chi_{\Omega}(x)-\sfd(x,\partial\Omega) \chi_{\X \setminus {\Omega}}(x).
\end{equation}
\end{definition}
\begin{proposition}
\label{prop:eikonal_weak}
Let $(\X,\sfd,\mm)$ be a geodesic $\PI$ space. Let $\Omega\subset \X$ be open and consider $\delta$ defined as in \eqref{eq:signed_distance_fun}. Then, we have that $\delta \in \Lip(\X)$ and 
\begin{equation}
\label{eq:eikonal_weak}
|D \delta| = 1\quad\mm\text{-a.e.}
\end{equation}
\end{proposition}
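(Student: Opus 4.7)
The plan is to reduce the identity to a slope computation via Cheeger's theorem, and then to determine the slope pointwise using the geodesic structure of $(\X,\sfd)$.

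First, I would verify that $\delta$ is $1$-Lipschitz. Splitting into cases: if $x,y$ lie in the same region ($\Omega$ or $\X\setminus\Omega$), the triangle inequality for $\sfd(\cdot,\partial\Omega)$ yields $|\delta(x)-\delta(y)|\le\sfd(x,y)$ directly. For $x\in\Omega$ and $y\in\X\setminus\Omega$, I would use that $(\X,\sfd)$ is a length space: any near-minimizing curve from $x$ to $y$ must cross $\partial\Omega$ by continuity, giving $\sfd(x,y)\ge\sfd(x,\partial\Omega)+\sfd(y,\partial\Omega)=|\delta(x)-\delta(y)|$.

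Next, since $(\X,\sfd,\mm)$ is a $\PI$ space, I would invoke Cheeger's identification $|D\delta|=\lip\delta$ $\mm$-a.e.\ from \cite{Cheeger00} (applicable because $\delta\in\Lip(\X)$ is locally bounded, hence locally in $W^{1,2}$). The $1$-Lipschitz bound immediately gives $\lip\delta\le 1$, so the problem reduces to proving $\lip\delta\ge 1$ at $\mm$-a.e.\ point, which I would establish pointwise on $\X\setminus\partial\Omega$.

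For the lower bound, fix $x\in\X\setminus\partial\Omega$ and set $L:=\sfd(x,\partial\Omega)>0$. Local doubling makes $(\X,\sfd)$ proper, so the closed set $\partial\Omega$ admits a nearest point $z\in\partial\Omega$ to $x$; geodesicity then yields a unit-speed minimizing geodesic $\gamma\colon[0,L]\to\X$ from $x$ to $z$. The sandwich $L-s=\sfd(x,\partial\Omega)-\sfd(x,\gamma(s))\le\sfd(\gamma(s),\partial\Omega)\le\sfd(\gamma(s),z)=L-s$ forces $\sfd(\gamma(s),\partial\Omega)=L-s$ for every $s\in[0,L)$. Since $\gamma(s)\notin\partial\Omega$ for such $s$ and $\gamma$ is continuous, $\gamma(s)$ stays on the same side of $\partial\Omega$ as $x$, so $\delta(\gamma(s))$ keeps a constant sign and $|\delta(\gamma(s))-\delta(x)|=s=\sfd(\gamma(s),x)$. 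Letting $s\to 0^+$ gives $\lip\delta(x)\ge 1$, hence $\lip\delta=1$ on $\X\setminus\partial\Omega$; combined with Cheeger's identification, this proves $|D\delta|=1$ $\mm$-a.e., which is \eqref{eq:eikonal_weak}.

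The main obstacle I expect is the affine behaviour of $\delta$ along the geodesic $\gamma$: this is where all the structural hypotheses enter simultaneously (geodesicity for the existence of $\gamma$, local doubling for the existence of the projection $z$, and the length structure to keep $\gamma$ on the same side of $\partial\Omega$). Everything else reduces to the triangle inequality and an application of Cheeger's identification.
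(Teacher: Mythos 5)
Your proposal is correct and follows the paper's strategy in all essentials: apply Cheeger's identification $|D\delta|=\lip\delta$ on a $\PI$ space and obtain the slope lower bound along a unit-speed geodesic to a nearest boundary point (guaranteed by properness). The only organizational difference is that you work with $\delta$ throughout, handling the cross-region Lipschitz bound and the same-side/connectedness argument explicitly, whereas the paper establishes $\lip\delta^+\equiv 1$ for $\delta^+:=\sfd(\cdot,\partial\Omega)$ and then transfers to $\delta$ via locality of the minimal weak upper gradient; your direct verification that $\delta\in\Lip(\X)$ is a welcome extra degree of care, since the locality step in the paper implicitly presupposes $\delta$ is Sobolev.
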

\begin{proof}
For convenience, let us denote by $\delta^+:=\sfd(\cdot,\partial\Omega)$. We claim that $\lip\,\delta^+\equiv 1$.
By triangle inequality $\delta^+$ is $1$-Lipschitz, thus, as before, we have that $\lip\, \delta^+(y) \le 1$ for every $y\in \X$. Let us prove the converse inequality for fixed $y \in \X$. Let us consider $R>0$ such that $B_R(y) \cap \partial \Omega \neq \emptyset$. Since $\PI$ spaces are proper, $\overline{B_R(y)} \cap \partial \Omega$ is compact, hence there exists $z =z_y \in \partial\Omega$ such that $\sfd(y,z) = \delta^+(y)$. We consider a geodesic $\gamma\colon[0,1]\to \X$ such that $\gamma_0 = z$ and $\gamma_1 = y$. Then, since by definition, $\delta^+(\gamma_t)\le \sfd(\gamma_t,z)$, we deduce that
\begin{equation}
    1 \ge \frac{|\delta^+(\gamma_t)-\delta^+(y)|}{\sfd(\gamma_t,y)} \ge \frac{-\delta^+(\gamma_t)+\sfd(y,z)}{\sfd(\gamma_t,y)}\ge \frac{-\sfd(\gamma_t,z)+\sfd(y,z)}{\sfd(\gamma_t,y)} =1,
\end{equation} 
thus having that $\lip \,\delta^+(y) \ge 1$ for every $y\in\X$ and proving the claim. 
The fact that $|D \delta^+| = 1$ $\mm$-a.e.\ follows by the fact that on $\PI$ spaces $\lip\,\delta^+ = |D \delta^+|$ $\mm$-a.e.\ (see \cite{Cheeger00}).
To prove the result for $\delta$, we argue as follows. By definition, we have that 
\begin{equation}
    \delta(y) =
\begin{cases}
            \delta^+(y) &\text{if }y\in\Omega,\\            -\delta^+(y) &\text{if }y\in\X\setminus\Omega,
\end{cases}
\end{equation}
hence we conclude applying the locality of the minimal weak upper gradient on $\Omega$ and $\X \setminus \Omega$, yielding $|D \delta| = |D \delta^+|$ $\mm$-a.e.\ and thus concluding the proof.
\end{proof}

\subsubsection{Sets of finite perimeter on metric measure spaces}
We recall the definition of a set of finite perimeter, following \cite{Mir03}.
\begin{definition}[Perimeter and sets of finite perimeter]
Let $(\X,\sfd,\mm)$ be a metric measure space and let $E \subset \X$ be Borel, $U\subset \X$ be open. The perimeter of $E$ in $U$, ${\rm Per}(E,U)$ is defined as
\begin{equation*}
    {\rm Per}(E,U):= \inf \left\{ \limi_{n \to \infty} \int_U {\rm lip}\,f_n\,\d \mm:\,u_n \in {\rm Lip}_{loc}(U),\, u_n \to \chi_E \in L^1_{loc}(U,\mm)\right\}.
\end{equation*}
We say that $E$ is a set of finite perimeter if ${\rm Per}(E,\X) < \infty$.
\end{definition}
As shown \cite{Mir03}, the set function ${\rm Per}(E,\cdot)$ defined on open sets is the restriction of a Borel measure, defined as
\begin{equation*}
    {\rm Per}(E,B):= \inf \{ {\rm Per}(E,U)\mid B \subset U,\, U\subset\X \text{ open} \}\,\qquad\text{for every }B \in \mathscr{B}(\X).
\end{equation*}
In a metric measure space $(\X,\sfd,\mm)$, given a set $E \subset \X$, we define its capacity as
\begin{equation*}
    {\rm Cap}(E):=\inf \left\{ \| f \|_{W^{1,2}(\X)} : f \in W^{1,2}(\X) \text{ and } f \ge 1\quad \mm\text{-a.e. on a neighborhood of }E\right\}
\end{equation*}
If $(\X,\sfd,\mm)$ is a $\PI$ space, and $\Omega\subset\X$ is an open set of finite perimeter, then
\begin{equation}
    \label{eq:perllcap}
    {\rm Per}(\Omega,\cdot)\ll {\rm Cap}.
\end{equation}
This is a consequence of two facts:
\begin{enumerate}
    \item ${\rm Per}(\Omega,\cdot) \ll \mathscr{H}^{{\rm cod\text{-}1}}\restr{\partial^e \Omega} \ll \mathscr{H}^{{\rm cod\text{-}1}}$ from \cite[Thm.\ 5.3]{Ambrosio02};
    \item ${\mathscr{H}}^{{\rm cod\text{-}1}} \ll {\rm Cap}$ from \cite[Thm.\ 1.12]{BPS19}.
\end{enumerate}
The measure $\mathscr{H}^{{\rm cod\text{-}1}}$ is the codimension-one Hausdorff measure and $\partial^e\Omega$ is the essential boundary of $\Omega$, as considered in \cite{Ambrosio02}. Since, for what concerns this work, they enter into play only for the above mentioned results, we refer the reader to \cite{Ambrosio02} for their definitions and properties.

\subsection{\texorpdfstring{$\RCD(K,N)$}{RCD(K,N)} spaces and second onder calculus}
\label{sec:second_order_calculus}
We introduce the definition of a more regular class of metric measure spaces, the so-called $\RCD(K,N)$ spaces, firstly introduced in \cite{Gigli12} enforcing the $\CD(K,N)$ with infinitesimally Hilbertianity. For the present (equivalent) formulation, see \cite{Erbar-Kuwada-Sturm13}.

\begin{definition}[$\RCD(K,N)$ spaces]
\label{def:rcdKinfty}
Let $K \in \mathbb{R}$ and $N \in (1,\infty)$. Then a metric measure space $(\X,\sfd,\mm)$ is an $\RCD(K,N)$ space if the following conditions hold:
\begin{itemize}
\item[i)] there exists $C >0$ and a point $x \in \X$ such that $\mm(B_r(x)) \le Ce^{C r^2}$, for every $r>0$;
\item[ii)] Sobolev-to-Lipschitz property: for every $f \in W^{1,2}(\X)$ with $|D f| \in L^\infty(\mm)$, there exists a Lipschitz function $\tilde{f}$ which is a representative of $f$ and $\||D f|\|_{L^\infty(\mm)} = \Lip(\tilde{f})$;
\item[iii)] the space $(\X,\sfd,\mm)$ is infinitesimally Hilbertian;
\item[iv)] for every $f \in D(\Delta)$ and $0 \le g \in D(\Delta) \cap L^\infty(\mm)$ such that $\Delta f \in W^{1,2}(\X)$ and $\Delta g \in L^\infty(\mm)$, it holds that
\[ \frac{1}{2} \int |D f|^2\,\Delta g \, \d \mm \ge \int \left( \frac{(\Delta f)^2}{N}+\la \nabla f, \nabla \Delta f \ra + K |D f|^2 \right) g \, \d \mm. \]
\end{itemize}
\end{definition}
We point out that $\CD(K,N)$ spaces are $\PI$ spaces. This is a consequence of the fact that they are locally uniformly doubling, since in this setting Bishop-Gromov monotonicity formula holds (see \cite{Sturm06II}), and satisfy a local Poincar\'{e} inequality (see \cite{Rajala12}).
Moreover, it follows from the definition that ${\sf CD}(K,N)$ spaces are geodesic, hence also length spaces.
In the case of $\RCD(K,N)$ spaces, we have at disposal functions with more regularity.
In particular, we recall the existence of good cut-off functions, as proved in \cite{AMS16}.
\begin{proposition}[{\cite[Lem.\ 6.7]{AMS16}}]
\label{good_cut_off}
Let $(\X,\sfd,\mm)$ be an ${\sf RCD}(K,N)$ space with $K\in\R$ and $N \in (1,\infty)$. Then, for every compact $E \subset \X$ and open and relatively compact $G\subset \X$ such that $E \subset G$, there exists $\varphi \colon \X \to \mathbb{R}$ such that $0 \le \varphi \le 1$ and $\varphi = 1$ in a neighborhood of $E$ and ${\rm supp}\varphi \subset G$. Moreover, $\Delta \varphi \in L^\infty(\mm)$ and $|D \varphi| \in W^{1,2}(\X)$.
\end{proposition}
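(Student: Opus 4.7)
The plan is to build $\varphi$ in the form $\eta\circ h_t f$, where $f$ is a Lipschitz pre-cutoff supported between $E$ and $G$, $h_t$ is the heat semigroup at a small time $t$, and $\eta\colon\R\to[0,1]$ is a smooth scalar truncation used to pin the values $0$ and $1$ exactly on the desired regions. Since $(\X,\sfd)$ is proper (an $\RCD(K,N)$ space is locally uniformly doubling) and $E\subset G$ with $E$ compact and $G$ open, one has $r_0:=\sfd(E,\X\setminus G)>0$, and I fix three nested open sets $E\subset U_1\Subset U_2\Subset U_3\Subset G$, say $U_i:=\{x:\sfd(x,E)<i r_0/4\}$. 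Take $f:=\zeta(\sfd(\cdot,E))$ with $\zeta$ a $1$-Lipschitz scalar cut satisfying $\zeta\equiv 1$ on $(-\infty,r_0/4]$ and $\zeta\equiv 0$ on $[r_0/2,\infty)$; then $f$ is bounded Lipschitz with $\chi_{U_1}\le f\le \chi_{U_2}$ and compactly supported in $\overline{U_2}$.

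Setting $\psi:=h_t f$ for small $t>0$, I would exploit the regularising effect of the heat flow on $\RCD(K,N)$: the Bakry--\'Emery estimate gives $|D\psi|^2\le e^{-2Kt}h_t|Df|^2$, hence $|D\psi|\in L^\infty(\mm)$; iterating the dimensional Bochner inequality and integrating against good test functions then yields $\Delta\psi\in L^\infty(\mm)$ together with an $L^2$-Hessian bound on $\psi$, from which $|D\psi|\in W^{1,2}(\X)$ follows. At this stage $\psi$ already has the second-order regularity required of $\varphi$, and only the pointwise conditions $\varphi\equiv 1$ near $E$ and $\supp\varphi\subset G$ remain to be enforced.

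These I would enforce via the two-sided Gaussian heat kernel estimates on $\RCD(K,N)$ spaces (Sturm; Jiang--Li--Zhang): for any $x$ with $\sfd(x,U_2)>0$,
\[
\psi(x)\le \int_{U_2} p_t(x,y)\,\d\mm(y)\le C\exp\bigl(-c\,\sfd(x,U_2)^2/t\bigr),
\]
while for $x$ sufficiently inside $U_1$ the stochastic-completeness identity $h_t\mathbf 1\equiv 1$ combined with the same Gaussian tail gives $\psi(x)\ge 1-C\exp(-c\,\sfd(x,\partial U_1)^2/t)$. Fixing $\epsilon<1/4$ and then $t$ small enough I can arrange $\psi\le\epsilon$ on $\X\setminus U_3$ and $\psi\ge 1-\epsilon$ on some open neighborhood $V\supset E$ contained in $U_1$. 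Choosing $\eta\in C^\infty(\R,[0,1])$ with $\eta\equiv 0$ on $(-\infty,2\epsilon]$ and $\eta\equiv 1$ on $[1-2\epsilon,\infty)$, the function $\varphi:=\eta\circ\psi$ meets the range, neighborhood, and support requirements.

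For the regularity of $\varphi$ I would apply the chain rules on $\RCD$ spaces to get
\[
|D\varphi|=|\eta'(\psi)|\,|D\psi|,\qquad \Delta\varphi=\eta''(\psi)\,|D\psi|^2+\eta'(\psi)\,\Delta\psi,
\]
so $\Delta\varphi\in L^\infty(\mm)$ is immediate from $|D\psi|,\Delta\psi\in L^\infty(\mm)$ and the boundedness of $\eta',\eta''$, while $|D\varphi|=(|\eta'|\circ\psi)\,|D\psi|$ lies in $W^{1,2}(\X)$ because $|D\psi|\in W^{1,2}\cap L^\infty(\mm)$ and $|\eta'|\circ\psi\in W^{1,2}\cap L^\infty(\mm)$ by the Sobolev chain rule applied to the smooth bounded $|\eta'|$. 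The principal obstacle is the second-order step in the second paragraph: upgrading Bakry--\'Emery first-order bounds to simultaneous $L^\infty$-control on $\Delta h_t f$ and $L^2$-control on $\mathrm{Hess}(h_t f)$, which is the genuinely $\RCD(K,N)$ (as opposed to $\RCD(K,\infty)$) ingredient and requires integrating the dimensional Bochner inequality against $f$ itself and running a self-improvement argument \`a la Savar\'e; the quantitative support localisation, based on Gaussian kernel estimates, is the second technical input but is now standard in this setting.
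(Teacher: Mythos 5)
The paper does not actually prove this proposition: it is imported verbatim from \cite[Lem.\ 6.7]{AMS16}, so there is no internal argument to compare yours with. Your sketch follows the same strategy as the cited source (and as Mondino--Naber's cut-off lemma): mollify a Lipschitz pre-cutoff $f$ by the heat semigroup and compose with a smooth scalar truncation $\eta$ that vanishes near $0$. The first-order part of your outline is fine: the Bakry--\'Emery contraction gives $|D h_tf|\in L^\infty(\mm)$, the value-pinning can be done either with Gaussian kernel bounds as you propose or, more simply, from the uniform estimate $\|h_tf-f\|_{L^\infty}\le C(K,N)\sqrt{t}\,\Lip(f)$ (Kuwada duality plus the second-moment bound on $h_t\delta_x$), and the final chain rules for $\varphi=\eta\circ h_tf$ are legitimate once one observes that $\eta\equiv 0$ near $0$ keeps $\eta(h_tf)$ and $\eta'(h_tf)$ in $L^2(\mm)$.

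The genuine gap is exactly the step you flag: the claim $\Delta h_tf\in L^\infty(\mm)$ for a bounded Lipschitz $f$. The route you indicate (``iterating the dimensional Bochner inequality'' and a ``self-improvement \`a la Savar\'e'') does not deliver this: the Bochner/$\Gamma_2$ machinery controls ${\rm Hess}(h_tf)$ in $L^2$ and upgrades gradient estimates, but it gives no uniform bound on $\Delta h_tf$, and no such bound can hold for merely bounded data ($\Delta h_t$ is not bounded on $L^\infty$), so the argument must use the Lipschitz regularity of $f$ together with finite dimensionality. A clean way to close it is by duality: for $g\in L^1(\mm)$ one has $\int g\,\Delta h_tf\,\d\mm=-\int\la\nabla h_tg,\nabla f\ra\,\d\mm$, hence $\|\Delta h_tf\|_{L^\infty}\le\Lip(f)\,\sup_{\|g\|_{L^1}\le 1}\|\,|\nabla h_tg|\,\|_{L^1}\le C(K,N)\,t^{-1/2}\,\Lip(f)$, where the last inequality is the $L^1\to L^1$ gradient estimate for the heat semigroup, available on $\RCD(K,N)$ spaces through the pointwise gradient bounds for the heat kernel (Jiang--Li--Zhang); alternatively one can argue as in \cite{AMS16} or Mondino--Naber, choosing the pre-cutoff as a function of the distance so that Laplacian comparison provides the missing one-sided bounds. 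Once $\Delta h_tf\in L^\infty(\mm)$ is secured, the remaining second-order claims in your outline ($h_tf\in D(\Delta)$ hence ${\rm Hess}(h_tf)\in L^2$, so $|Dh_tf|\in W^{1,2}(\X)$ via $|\nabla|\nabla u||\le|{\rm Hess}\,u|$, and then the Leibniz/chain rules for $\eta\circ h_tf$) do yield the proposition.
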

We report here a result on the structure of geodesics in $\RCD$ spaces, improving a previous result of \cite{TapioKTS14}. Recall that we say two geodesics $\gamma_1,\gamma_2 \colon [0,1] \to \X$ branch if 
\begin{equation}
\gamma_1 \restr{[0,t]} = \gamma_{2} \restr{[0,t]},\qquad\text{for some }t\in (0,1), 
\end{equation}
but $\gamma_1 \neq \gamma_2$. We say that a metric space $(\X,\sfd)$ is \emph{non-branching} if no couple of geodesics branch. 

\begin{theorem}[{\cite[Thm.\ 1.3]{Deng20}}]
\label{thm:nonbranch_rcd}
Let $(\X,\sfd,\mm)$ be an $\RCD(K,N)$ space for some $K \in \mathbb{R}$ and $N \in (1,\infty)$. Then $(\X,\sfd,\mm)$ is non-branching.
\end{theorem}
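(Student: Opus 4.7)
The plan is to argue by contradiction, building on the weaker notion of essential non-branching already available for the synthetic curvature-dimension condition. Assume that two geodesics $\gamma_1, \gamma_2 \colon [0,1] \to \X$ coincide on some initial segment $[0,s]$ with $s \in (0,1)$, but differ on $(s,1]$, and derive a contradiction from the full $\RCD(K,N)$ hypothesis.

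\emph{Step 1: Essential non-branching.} The starting point is the result of Rajala--Sturm that every $\RCD(K,N)$ space is essentially non-branching: for any pair of absolutely continuous probability measures, every optimal dynamical plan $\ppi$ is concentrated on a set of non-branching geodesics. This is a consequence of the $K$-convexity of the relative entropy along $W_2$-geodesics combined with infinitesimal Hilbertianity (uniqueness of optimal plans). This gives the obstruction we must exploit, but it is a statement of $\ppi$-a.e. type; the goal is to upgrade it to a pointwise statement valid for every individual geodesic.

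\emph{Step 2: From essential to pointwise non-branching.} I would construct small absolutely continuous probability measures $\mu_0 \ll \mm$ supported in a tiny ball around the common starting point $\gamma_1(0)=\gamma_2(0)$, and $\mu_1 \ll \mm$ supported in a neighborhood of the two distinct endpoints $\gamma_1(1),\gamma_2(1)$, chosen so that the optimal $W_2$-geodesic between $\mu_0$ and $\mu_1$ is forced, on a set of positive mass, to use both $\gamma_1$ and $\gamma_2$. Applying the $1$-dimensional localization of $\mm$ along transport rays of the associated Kantorovich potential (essentially the tool reviewed in Section \ref{sec:1D-localization}), one obtains a disintegration whose conditional densities satisfy a $1$D $\CD(K,N)$ condition on each ray.

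\emph{Step 3: Rigidity on rays.} Two branching transport rays would share a common initial subray, on which the $1$D $\CD(K,N)$ densities must coincide by uniqueness, but then bifurcate; a careful balance argument, together with Laplacian comparison on the distance function and the good cut-off functions of Proposition \ref{good_cut_off}, shows that this is incompatible with the $1$D $\CD(K,N)$ condition, producing the desired contradiction.

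The main obstacle is Step 3: turning the qualitative violation of essential non-branching into a quantitative contradiction requires the delicate perturbation argument carried out in \cite{Deng20}, which controls the densities along colliding rays. Since the statement is cited from \cite{Deng20} and used here only as a black box for the analysis of the signed distance function in Section \ref{sec:1D-localization}, I would not reproduce the full technical construction but refer to \cite[Thm.\ 1.3]{Deng20}.
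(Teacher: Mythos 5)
The paper does not prove this theorem: it quotes it from \cite{Deng20} and uses it as a black box. You end up doing the same (your last paragraph defers to \cite[Thm.\ 1.3]{Deng20}), so in that narrow sense your approach agrees with the paper's. But the sketch you offer in Steps 2--3 is not a viable route, and it is worth saying why.

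The $1$D localization of Cavalletti--Mondino (Theorems \ref{thm:disintegration} and \ref{thm:1d_localization} here, and \cite[Thm.\ 3.6]{CM20}) is carried out \emph{on the non-branched transport set} $T_u^{nb}$, and the passage from $T_u$ to $T_u^{nb}$ is justified precisely by essential non-branching, which guarantees $\mm(T_u \setminus T_u^{nb}) = 0$ (Proposition \ref{prop:measure_non_branched_set}). If a pair of geodesics branches, the branching behaviour lives exactly on the $\mm$-null set that the disintegration discards. Consequently, the conditional densities $h_\alpha$ that you would want to ``balance'' in Step 3 carry no information about the hypothetical branching: the machinery has already thrown it away before you ever see the $1$D $\CD(K,N)$ condition on the rays. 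This is not a missing detail but a structural obstruction --- the tool you invoke is designed to work modulo null sets, and non-branching is a statement about a null set. The references to Laplacian comparison for the distance function and to the good cut-off functions of Proposition \ref{good_cut_off} are likewise irrelevant to the problem; they are used elsewhere in the present paper for the heat-content analysis, not for metric rigidity. Deng's actual argument does not pass through ray localization: very roughly, it exploits uniqueness and regularity of optimal maps between absolutely continuous marginals and a careful perturbation/selection argument to upgrade essential non-branching to a pointwise statement. Since the present paper only \emph{uses} the theorem, the honest move --- and what the paper does --- is to cite \cite[Thm.\ 1.3]{Deng20} and not to gesture at a proof outline that does not hold up.
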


\section{The Dirichlet heat flow on \texorpdfstring{$\Omega$}{Omega} and the Kac's principle}
\label{sec:heat_on_Omega}
\subsection{Cheeger energy and local Sobolev spaces}
Let $(\X,\sfd,\mm)$ be a metric measure space and let $\Omega \subset \X$ be open. 
For $f\in L^2(\Omega,\mm)$, we define its \emph{Cheeger energy} on $\Omega$ as 
\[ {\rm Ch}_{\Omega}(f):= \inf \left\{ \limi_{n \to \infty} \frac{1}{2} \int_\Omega (\lip f_n)^2\, \d \mm\,,\, f_n \to f \text{ in }L^2(\Omega,\mm) \text{ and } f_n \in \Lip_{loc}(\Omega)\right\}.\]
Then, the \emph{local Sobolev space} $W^{1,2}(\Omega)$ is the set where ${\rm Ch}_\Omega$ is finite, i.e.
$$W^{1,2}(\Omega):=\{ f \in L^2(\Omega,\mm) \,\text{s.t. }{\rm Ch}_{\Omega}(f)< \infty \}$$
endowed with the following norm:
\begin{equation}
\label{eq:loc_sob_norm}
    \|f\|^2_{W^{1,2}(\Omega)}:=\|f\|^2_{L^2(\Omega,\mm)}+{\rm Ch}_{\Omega}(f),\qquad f\in W^{1,2}(\Omega).
\end{equation}
Using the $L^2(\Omega,\mm)$-lower semicontinuity of ${\rm Ch}_{\Omega}$, arguing similarly as in the case of $W^{1,2}(\X)$, one can show that $(W^{1,2}(\Omega), \| \cdot \|_{W^{1,2}(\Omega)})$ is a Banach space. Our definition is equivalent to the one given in \cite[Def.\ 2.14]{AmbrosioHonda18}, as proven in the appendix in Theorem \ref{thm:equivalence_local_sobolev}.
Moreover, the equivalence between the definition by Ambrosio-Honda and the one due to Cheeger in \cite[Def.\ 2.2]{Cheeger00} is proved in \cite[Rmk.\ 2.15]{AmbrosioHonda18}.
We also set 
\begin{equation}
    W^{1,2}_0(\Omega):= \overline{\Lip_{bs}(\Omega)}^{W^{1,2}(\Omega)},
\end{equation}
where $\Lip_{bs}(\Omega)$ is the set of Lipschitz functions with bounded support in $\Omega$. Our goal is to define a Dirichlet energy on the space $W^{1,2}_0(\Omega)$. To do so, we introduce the following extension operator: 
\begin{equation}
\label{eq:ext_op}
    Tf=g, \qquad\text{where }g(x)=
    \begin{cases}
        f(x)\quad\text{if }x\in\Omega,\\
        0\qquad\ \text{otherwise},
    \end{cases}\qquad\forall\,f\in\Lip_{bs}(\Omega).
\end{equation}
%

\begin{proposition}
\label{prop:extension_op}
Let $(\X,\sfd,\mm)$ be a metric measure space and let $T\colon \Lip_{bs}(\Omega) \rightarrow W^{1,2}(\X)$ be the linear operator defined in \eqref{eq:ext_op}. Then, $T$ extends to an isometry from $\left(W^{1,2}_0(\Omega),\|\cdot\|_{W^{1,2}(\Omega)}\right)$ to $\left(W^{1,2}(\X),\|\cdot\|_{W^{1,2}(\X)}\right)$.
\end{proposition}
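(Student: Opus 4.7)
The plan is to verify the isometry on the dense subspace $\Lip_{bs}(\Omega)$ and extend by continuity, the isometry being understood as preservation of both the $L^2$-norm and the Cheeger energy. For $f \in \Lip_{bs}(\Omega)$, the support $\supp(f)$ lies at positive distance from $\X \setminus \Omega$, so the zero extension $Tf$ is Lipschitz on $\X$ with $\Lip(Tf)=\Lip(f)$ and has bounded support; hence $Tf \in W^{1,2}(\X)$, and the identity $\|Tf\|_{L^2(\X,\mm)} = \|f\|_{L^2(\Omega,\mm)}$ is immediate. The nontrivial content is therefore the Cheeger-energy identity $\Ch(Tf)=\Ch_\Omega(f)$, after which extension to $W^{1,2}_0(\Omega)$ follows from the completeness of $W^{1,2}(\X)$ together with the density (by definition) of $\Lip_{bs}(\Omega)$ in $W^{1,2}_0(\Omega)$.

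The easy direction $\Ch_\Omega(f) \le \Ch(Tf)$ goes by restriction: given any $\{g_n\} \subset \Lip_{loc}(\X)$ with $g_n \to Tf$ in $L^2(\X,\mm)$, one has $g_n\restr{\Omega} \in \Lip_{loc}(\Omega)$, $g_n\restr{\Omega} \to f$ in $L^2(\Omega,\mm)$, and the pointwise bound
\begin{equation*}
    \lip(g_n\restr{\Omega})(x) \le \lip(g_n)(x),\qquad x\in\Omega,
\end{equation*}
holds because the $\limsup$ defining the slope on $\Omega$ is taken over a smaller set of approach points. Passing to the $\liminf$ and taking the infimum over $\{g_n\}$ concludes this inequality.

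For the reverse $\Ch(Tf) \le \Ch_\Omega(f)$, I would fix a Lipschitz cut-off $\chi \in \Lip_{bs}(\Omega)$ with $0 \le \chi \le 1$ and $\chi \equiv 1$ on an open neighborhood $U$ of $\supp(f)$, which exists since $\sfd(\supp(f),\X\setminus\Omega)>0$. Given any recovery sequence $\{f_n\} \subset \Lip_{loc}(\Omega)$ for $f$, I would set $g_n := T(\chi f_n) \in \Lip_{bs}(\X)$; since $\chi f = f$, one has $g_n \to Tf$ in $L^2(\X,\mm)$. The Leibniz-type slope estimate
\begin{equation*}
    \lip(\chi f_n) \le \chi\,\lip(f_n) + |f_n|\,\lip(\chi)\qquad\mm\text{-a.e.}
\end{equation*}
together with the key observations that $\lip(\chi)=0$ on $U$ (since $\chi$ is locally constant there) and $f\equiv 0$ on $\X\setminus U$ yield
\begin{equation*}
    \||f_n|\lip(\chi)\|_{L^2} = \||f_n - f|\lip(\chi)\|_{L^2} \le \|\lip\chi\|_{L^\infty}\|f_n-f\|_{L^2(\Omega)} \xrightarrow{n\to\infty} 0.
\end{equation*}
Using $\chi \le 1$, the triangle inequality in $L^2$, and standard properties of $\liminf$, one infers $\Ch(Tf) \le \tfrac{1}{2}\liminf_n \int_\Omega (\lip f_n)^2\,\d\mm$, and taking the infimum over $\{f_n\}$ closes the inequality.

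The main obstacle is precisely this cut-off step: the choice of $\chi$ locally constant equal to $1$ near $\supp(f)$ is essential so that the Leibniz error term $|f_n|\lip(\chi)$ vanishes in $L^2$, and this argument relies crucially on the positive distance from $\supp(f)$ to $\X\setminus\Omega$ enjoyed by every element of $\Lip_{bs}(\Omega)$.
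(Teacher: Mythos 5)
Your proof is correct and follows essentially the same strategy as the paper's: verify the isometry on the dense core $\Lip_{bs}(\Omega)$ by showing $\Ch_\Omega(f)=\Ch(Tf)$, with the easy inequality by restriction and the hard one by cutting off a recovery sequence with a Lipschitz bump $\chi\equiv 1$ near $\supp f$ and controlling the Leibniz error term. The only minor differences are cosmetic: the paper handles the error term via Young's inequality with a parameter $\varepsilon\to 0$, whereas you use the $L^2$ triangle inequality and send the error to zero directly; both routes rely on the same observation that $f\,\lip\chi=0$ because the cut-off is locally constant where $f$ lives.
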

\begin{proof}
Let $f\in\Lip_{bs}(\Omega)$. Since outside of $\Omega$, the function $Tf$ is identically zero the $L^2$-norm is preserved, namely $\|Tf\|_{L^2(\mm)}=\|f\|_{L^2(\Omega,\mm)}$. Now, we claim that,
\begin{equation}
\label{eq:claim}
    {\rm Ch}_{\Omega}(f)={\rm Ch}(Tf),\qquad \forall\,f\in\Lip_{bs}(\Omega).
\end{equation}
The inequality ${\rm Ch}_{\Omega}(f)\leq{\rm Ch}_{\X,2}(Tf)$ is trivial since any sequence of locally Lipschitz function on $\X$ restricts to a sequence of locally Lipschitz functions on $\Omega$ with the same slope. The converse inequality can be proved as follows. Let $\{f_n\}\subset \Lip_{loc}(\Omega)$ such that
\begin{equation}
\label{eq:competitor_Ch}
    f_n \xrightarrow[L^2(\Omega,\mm)]{} f\qquad\text{and}\qquad {\rm Ch}_{\Omega}(f)=\limi_{n \to \infty} \frac{1}{2} \int_\Omega (\lip f_n)^2\, \d \mm.
\end{equation}
Consider a cut-off function $\varphi\in\Lip(\X)$ such that $\varphi\equiv 1$ on $\supp{f}$ and $\varphi\equiv 0$ on the complement of $\Omega$. Define the auxiliary sequence $\tilde f_n=\varphi f_n$ and we shall prove that the sequence $\{\tilde f_n\}$ is a competitor for the Cheeger energy of $Tf$ in $\X$. Indeed, first of all, $\tilde f_n\to Tf$ in $L^2(\mm)$. Secondly, $\tilde f_n$ satisfies the pointwise Leibniz rule
\begin{equation}
    \lip \tilde f_n(x)\leq \varphi(x) \lip f_n(x)+f_n(x)\lip\,\varphi(x),\qquad \forall\,x\in\Omega,
\end{equation}
therefore, applying the Young's inequality, we obtain for any $\varepsilon>0$, 
\begin{equation}
\label{eq:young_ineq}
    (\lip \tilde f_n(x))^2\leq (1+\varepsilon)\varphi^2(x) (\lip f_n(x))^2+\left(1+\frac{1}{\varepsilon}\right)f_n^2(x)(\lip\,\varphi(x))^2,\qquad \forall\,x\in\Omega.
\end{equation}
Finally, integrating \eqref{eq:young_ineq} and noticing that $\lip\tilde f_n$ is $0$ outside of $\Omega$ by locality, we have
\begin{equation}
    \int_\X(\lip\tilde f_n)^2\,\d\mm=\int_\Omega(\lip\tilde f_n)^2\,\d\mm\leq (1+\varepsilon)\int_\Omega\varphi^2 (\lip f_n)^2 \,\d\mm+\left(1+\frac{1}{\varepsilon}\right)\int_\Omega f_n^2(\lip\,\varphi)^2\,\d\mm.
\end{equation}
Using the properties of $\{f_n\}$ in \eqref{eq:competitor_Ch}, we conclude that 
\begin{equation}
    {\rm Ch}(Tf)\leq\limi_{n\to\infty}\frac{1}{2}\int_\X(\lip\tilde f_n)^2\,\d\mm\leq (1+\varepsilon){\rm Ch}_{\Omega}(f),   
\end{equation}
for any $\varepsilon>0$, proving the claim \eqref{eq:claim}. This proves that $T$ is a continuous isometry on $\Lip_{bs}(\Omega)$. We can extend the operator $T$ on $W_0^{1,2}(\Omega)$ by density, setting
\begin{equation}
    Tf:=\lim_{n\to\infty} Tf_n, \qquad \text{where }\ \{f_n\}\subset \Lip_{bs}(\Omega)\ \text{ and } \ f_n\xrightarrow[W^{1,2}(\Omega)]{} f.
\end{equation}
This concludes the proof.
\end{proof}

With a slight abuse of notation, in the rest of the paper, we will use the same name for a function $f\in W_0^{1,2}(\Omega)$ and its extension, if there is no confusion. A first byproduct of Proposition \ref{prop:extension_op} is an analogue to \eqref{eq:representation_cheeger} for ${\rm Ch}_\Omega$, indeed
\begin{equation}
    \label{eq:local_cheeger_to_global_cheeger}
    {\rm Ch}_{\Omega}(f) = {\rm Ch}(T f) = \frac{1}{2} \int_{\X} |D T f|^2\,\d \mm,\qquad\forall f \in W^{1,2}_0(\Omega).
\end{equation}
The locality of the minimal weak upper gradient on $\X$ and the definition of $|D f|$ in Appendix \ref{app:local_sobolev_spaces} gives that $|D Tf| = |D f|$ $\mm$-a.e.\ on $\Omega$, which, together with \eqref{eq:local_cheeger_to_global_cheeger}, yields that 
\begin{equation}
\label{eq:representation_cheegeromega}
    {\rm Ch}_{\Omega}(f) = \frac{1}{2} \int_{\Omega} |D f|^2\,\d \mm, \qquad\forall f \in W^{1,2}_0(\Omega).
\end{equation}
From \eqref{eq:local_cheeger_to_global_cheeger} we also deduce that ${\rm Ch}_\Omega$ is convex. Moreover, if $\X$ is infinitesimally Hilbertian, then $W^{1,2}(\Omega)$ is Hilbert, cf. Remark \ref{rmk:W12_Hilbert}, and the energy ${\rm Ch}_{\Omega}$ is quadratic and, in particular, we have
\begin{equation}
\label{eq:representation_cheegeromega_gradients}
    {\rm Ch}_{\Omega}(f) = \frac12\int_\Omega \la\nabla f,\nabla f\ra\,\d\mm, \qquad\forall f \in W^{1,2}_0(\Omega),
\end{equation}
where the equality follows from \eqref{eq:from_carre_to_scprod} of Appendix \ref{app:local_sobolev_spaces}. We refer the reader to Appendix \ref{app:local_sobolev_spaces} for the definition of $\nabla f \in L^2(T\X) \restr{\Omega}$ for $f \in W^{1,2}_0(\Omega)$.

A second consequence of Proposition \ref{prop:extension_op} is the following property, whose proof is based upon the results in \cite{DebinGigliPasqualetto21}. We refer to \cite{DebinGigliPasqualetto21} for the precise definitions and statements.

\begin{proposition}
\label{prop:trace_W120}
Let $(\X,\sfd,\mm)$ be a $\PI$ space and let $\Omega\subset \X$ be an open set of finite perimeter. Then, for every $f \in W^{1,2}_0(\Omega)$, it holds that ${\sf QCR}(f)=0$ ${\rm Per}(\Omega,\cdot)$-a.e., where 
\begin{equation}
    {\sf QCR} \colon W^{1,2}(\X) \to {\rm QC}(\X)
\end{equation}
is the map associating to $f$ is unique quasi-continuous representative. 
\end{proposition}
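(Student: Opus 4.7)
The plan is to first establish the claim on the dense subclass $\Lip_{bs}(\Omega)$, where it is essentially immediate from continuity, and then pass to the limit using the continuity of the map ${\sf QCR}$ from $W^{1,2}(\X)$ in the capacity sense, as developed in \cite{DebinGigliPasqualetto21}. By Proposition \ref{prop:extension_op}, I identify any $f\in W^{1,2}_0(\Omega)$ with its extension $Tf\in W^{1,2}(\X)$, and I shall work throughout in the ambient space $\X$.

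For the first step, let $f\in \Lip_{bs}(\Omega)$. Since $\supp(f)$ is a compact subset of the open set $\Omega$, it has positive distance from $\partial\Omega$. Extending by zero therefore yields a globally Lipschitz function $Tf$ on $\X$ that vanishes identically on $\X\setminus\Omega$. In particular $Tf$ is continuous, hence coincides ${\rm Cap}$-quasi-everywhere with its quasi-continuous representative, so ${\sf QCR}(Tf)=0$ holds ${\rm Cap}$-quasi-everywhere on $\X\setminus\Omega\supset\partial\Omega$.

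For the second step, I reduce the support of the perimeter to $\partial\Omega$. Recalling \eqref{eq:perllcap}, one has ${\rm Per}(\Omega,\cdot)\ll\mathscr{H}^{{\rm cod\text{-}1}}\restr{\partial^e\Omega}\ll{\rm Cap}$; since $\Omega$ is open, any interior point of $\Omega$ has density $1$ and any point of $\X\setminus\overline{\Omega}$ has density $0$, so $\partial^e\Omega\subset\partial\Omega$. It follows that it suffices to prove ${\sf QCR}(Tf)=0$ ${\rm Cap}$-q.e.\ on $\partial\Omega$, because then the conclusion holds ${\rm Per}(\Omega,\cdot)$-a.e.\ by absolute continuity.

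For the final step, let $f\in W^{1,2}_0(\Omega)$ and let $\{f_n\}\subset\Lip_{bs}(\Omega)$ satisfy $f_n\to f$ in $W^{1,2}(\Omega)$. By Proposition \ref{prop:extension_op}, $Tf_n\to Tf$ in $W^{1,2}(\X)$. Invoking the appropriate continuity of the quasi-continuous representative map from \cite{DebinGigliPasqualetto21}, up to passing to a non-relabelled subsequence one has ${\sf QCR}(Tf_n)\to{\sf QCR}(Tf)$ ${\rm Cap}$-quasi-everywhere on $\X$. Since each ${\sf QCR}(Tf_n)$ vanishes ${\rm Cap}$-q.e.\ on $\partial\Omega$ by Step~1, so does the pointwise limit ${\sf QCR}(Tf)$, and Step~2 then promotes this to ${\sf QCR}(Tf)=0$ ${\rm Per}(\Omega,\cdot)$-a.e. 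The main delicate point is the correct invocation of the ${\rm Cap}$-quasi-everywhere subsequential convergence of QCRs of Sobolev-convergent sequences from \cite{DebinGigliPasqualetto21}; granted that, the Lipschitz case is trivial and the reduction to $\partial^e\Omega\subset\partial\Omega$ is elementary for open sets.
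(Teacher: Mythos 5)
Your proof is correct and follows essentially the same route as the paper's: approximate $f$ by $\{f_n\}\subset\Lip_{bs}(\Omega)$, pass to $W^{1,2}(\X)$-convergence via the extension operator, observe each $Tf_n$ is Lipschitz and vanishes on $\partial\Omega$ so ${\sf QCR}(Tf_n)=Tf_n=0$ there, extract a ${\rm Cap}$-q.e.\ convergent subsequence from \cite{DebinGigliPasqualetto21}, and finally transfer from ${\rm Cap}$-null sets on $\partial\Omega\supset\partial^e\Omega$ to ${\rm Per}(\Omega,\cdot)$-null sets via \eqref{eq:perllcap}. The only difference is organizational (you split out the reduction to $\partial^e\Omega\subset\partial\Omega$ as a separate step), and the paper is slightly more explicit in chaining \cite[Thm.\ 1.20]{DebinGigliPasqualetto21}, \cite[Prop.\ 1.17(ii)]{DebinGigliPasqualetto21}, and \cite[Prop.\ 1.12]{DebinGigliPasqualetto21} for the subsequence extraction you flag as the delicate point.
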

\begin{proof}
Let $f\in W^{1,2}_0(\Omega)$ and let $\{f_n\}_{n\in\N} \subset \Lip_{bs}(\Omega)$ be a sequence such that $f_n \to f$ in $W^{1,2}(\Omega)$. Applying Proposition \ref{prop:extension_op}, we deduce that $f_n \to f$ in $W^{1,2}(\X)$. Moreover, by construction, $f_n = 0$ everywhere on $\partial \Omega$. We claim that, up to subsequences, $f_n \to {\sf QCR}(f)$, ${\rm Cap}$-a.e..
First of all, by \cite[Thm.\ 1.20]{DebinGigliPasqualetto21}, the map ${\sf QCR}$ is well-defined, linear and continuous, hence $\sfd_{{\rm QC}}(f_n,{\sf QCR}(f)) \to 0$. Note that ${\sf QCR}(f_n)=f_n$ since $f_n$ is Lipschitz. Second of all, applying \cite[Prop.\ 1.17, (ii)]{DebinGigliPasqualetto21}, the canonical embedding
\begin{equation}
    \big({\rm QC}(\X),\sfd_{{\rm QC}}\big)\hookrightarrow \big(L^0({\rm Cap}),\sfd_{{\rm Cap}}\big)
\end{equation}
is continuous, then 
 $\sfd_{{\rm Cap}}(f_n,{\sf QCR}(f)) \to 0$ as $n \to \infty$. Finally, applying \cite[Prop.\ 1.12]{DebinGigliPasqualetto21}, there exists a (not relabeled) subsequence for which $f_n \to {\sf QCR}(f)$ ${\rm Cap}$-a.e., proving the claim. As a consequence, ${\sf QCR}(f) = 0$ ${\rm Cap}$-a.e.\ on $\partial \Omega$. 
Since $(\X,\sfd,\mm)$ is a $\PI$ space, we have that ${\rm Per}(\Omega,\cdot)$ is concentrated on $\partial^e \Omega \subset \partial \Omega$. This fact, together with \eqref{eq:perllcap}, yields that ${\sf QCR}(f) = 0$ ${\rm Per}(\Omega,\cdot)$-a.e..
\end{proof}
\begin{remark}
\label{rmk:representative_set_dir_hf}
The local Sobolev space with zero boundary values remains unchanged if we remove from the open set $\Omega\subset\X$ a closed set with zero capacity. More precisely, let $E\subset\Omega$ be a closed set with ${\rm Cap}(E)=0$, then
\begin{equation}
    W_0^{1,2}(\Omega)=W_0^{1,2}(\Omega\setminus E).
\end{equation}
Analogously, by locality of the differential of a function, 
${\rm Ch}_{\Omega}(f)={\rm Ch}_{\Omega\setminus E}(f)$.
\end{remark}

\subsection{The Dirichlet heat flow}
We are in position to define the Dirichlet heat flow in $\Omega$ as the gradient flow of the Dirichlet energy associated to ${\rm Ch}_\Omega$ in infinitesimally Hilbertian metric measure spaces $(\X,\sfd,\mm)$. Precisely, define $E_\Omega \colon L^2(\Omega,\mm)\to [0,+\infty]$ as
\begin{equation}
\label{eq:energy}
    E_{\Omega}(f):= \begin{cases}
     {\rm Ch}_{\Omega}(f) & \text{if }f \in W^{1,2}_0(\Omega),\\
     +\infty  & \text{if }f \in L^2(\Omega,\mm) \setminus W^{1,2}_0(\Omega).
    \end{cases}
\end{equation}
On the one hand, using the convexity of ${\rm Ch}_\Omega$, it is straightforward to check that $E_{\Omega}$ is convex. On the other hand, $E_\Omega$ is lower semicontinuous with respect to $L^2(\Omega,\mm)$-topology. Indeed, let $f_n \to f$ in $L^2(\Omega,\mm)$ and we may assume that $\limi_n E_{\Omega}(f_n)< \infty$, otherwise there is nothing to prove. Hence for $n$ sufficiently large $f_n \in W^{1,2}_0(\Omega)$ and $\{f_n\}$ is bounded in the $W^{1,2}(\Omega)$-norm. Then, being $(\X,\sfd,\mm)$ infinitesimally Hilbertian, there exists a subsequence (not relabeled) such that $f_n\rightharpoonup f$ in $W_0^{1,2}(\Omega)$. Then, by Mazur's lemma, there exists a sequence $\{s_n\}\subset W_0^{1,2}(\Omega)$ of convex combinations of $f_n$ such that $s_n\to f$ in $W^{1,2}(\Omega)$. Since $\{s_n\}\subset W_0^{1,2}(\Omega)$, by a diagonal argument we can also assume that $\{s_n\}\subset \Lip_{bs}(\Omega)$, implying that $f\in W_0^{1,2}(\Omega)$. We conclude that $E_\Omega$ is $L^2(\Omega,\mm)$-lower semicontinuous, from the analogous property of ${\rm Ch}_\Omega$. Finally, by construction $D(E_{\Omega}) = W^{1,2}_0(\Omega)$ is dense in $L^2(\Omega,\mm)$, since $\Lip_{bs}(\Omega)$ is dense in $L^2(\Omega,\mm)$. Therefore, we can apply Theorem \ref{thm:gradient_flows}, obtaining the existence of the gradient flow of $E_\Omega$. This solves the Dirichlet heat equation on $\Omega$, in the sense explained below.

\begin{definition}[Dirichlet Laplacian]
Let $(\X,\sfd,\mm)$ be an infinitesimally Hilbertian metric measure space. Given $f \in W^{1,2}_0(\Omega)$, we say that $f \in D(\Delta_\Omega)$, if there exists $h \in L^2(\Omega,\mm)$ such that
\begin{equation}
    \label{eq:definition_dirichlet_laplacian}
    \int_{\Omega} h\, f\, \d \mm  = -\int_{\Omega} \la \nabla f, \nabla g \ra\,\d \mm \quad \text{for every }g\in W^{1,2}_0(\Omega).
\end{equation}
The element $h$, uniquely determined by the density of $W^{1,2}_0(\Omega)$ in $L^2(\Omega,\mm)$, will be denoted by $\Delta_\Omega f$.
\end{definition}
\begin{proposition}
\label{prop:diricletlaplacian_vs_subdiffEomega}
Let $(\X,\sfd,\mm)$ be infinitesimally Hilbertian. Define $E_{\Omega} \colon L^2(\mm) \to [0,+\infty]$ as in \eqref{eq:energy}. Given $f \in W^{1,2}_0(\X)$, we have
\begin{equation}
    f \in D(\Delta_\Omega) \Leftrightarrow \partial^- E_{\Omega}(f) \neq \emptyset.
\end{equation} 
In this case, it holds that $\partial^- E_{\Omega}(f) = \{- \Delta_{\Omega} f\}$.
\end{proposition}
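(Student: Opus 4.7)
The plan is to mimic the standard argument from the unconstrained case, exploiting the fact that, thanks to infinitesimal Hilbertianity together with the representation \eqref{eq:representation_cheegeromega_gradients}, the functional $E_\Omega$ is a quadratic form on $W^{1,2}_0(\Omega)$ with polarization $\int_\Omega \langle \nabla f,\nabla g\rangle\,\d\mm$. Throughout, I will implicitly use that if $g\in L^2(\Omega,\mm)\setminus W^{1,2}_0(\Omega)$ then $E_\Omega(g)=+\infty$, so the subdifferential inequality is automatic for such $g$ and the only meaningful test functions are in $W^{1,2}_0(\Omega)$.

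For the forward implication, assume $f\in D(\Delta_\Omega)$ and let $g\in W^{1,2}_0(\Omega)$. Writing $g=f+(g-f)$ and expanding the quadratic form \eqref{eq:representation_cheegeromega_gradients}, one obtains
\begin{equation}
E_\Omega(g)-E_\Omega(f)=\int_\Omega \langle\nabla f,\nabla(g-f)\rangle\,\d\mm+\tfrac12\int_\Omega|\nabla(g-f)|^2\,\d\mm.
\end{equation}
Since $g-f\in W^{1,2}_0(\Omega)$, the defining property \eqref{eq:definition_dirichlet_laplacian} of $\Delta_\Omega f$ rewrites the first term as $-\int_\Omega (\Delta_\Omega f)(g-f)\,\d\mm$, while the second term is nonnegative. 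This yields $E_\Omega(g)\ge E_\Omega(f)+\langle -\Delta_\Omega f,g-f\rangle_{L^2(\Omega,\mm)}$, i.e.\ $-\Delta_\Omega f\in\partial^-E_\Omega(f)$.

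For the converse, suppose $z\in\partial^- E_\Omega(f)$. Then $E_\Omega(f)<\infty$ forces $f\in W^{1,2}_0(\Omega)$, and for every $h\in W^{1,2}_0(\Omega)$ and every $\varepsilon\in\R$ the competitor $f+\varepsilon h$ lies in $W^{1,2}_0(\Omega)$, so the subdifferential inequality gives $E_\Omega(f+\varepsilon h)-E_\Omega(f)\ge \varepsilon\int_\Omega z h\,\d\mm$. Using the quadratic expansion again, the left-hand side equals $\varepsilon\int_\Omega\langle\nabla f,\nabla h\rangle\,\d\mm+\tfrac12\varepsilon^2\int_\Omega|\nabla h|^2\,\d\mm$; dividing by $\varepsilon$ and letting $\varepsilon\to 0^+$ and $\varepsilon\to 0^-$ separately yields
\begin{equation}
\int_\Omega \langle\nabla f,\nabla h\rangle\,\d\mm=\int_\Omega z h\,\d\mm\qquad\text{for every }h\in W^{1,2}_0(\Omega).
\end{equation}
Comparing with \eqref{eq:definition_dirichlet_laplacian} (applied to $-h$) shows $f\in D(\Delta_\Omega)$ with $\Delta_\Omega f=-z$. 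In particular the subdifferential is a singleton, and the identification $\partial^- E_\Omega(f)=\{-\Delta_\Omega f\}$ follows.

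There is no substantive obstacle: the only point requiring care is that all algebraic manipulations take place within $W^{1,2}_0(\Omega)$ (so that the testing in \eqref{eq:definition_dirichlet_laplacian} is legal), which is guaranteed by the fact that $W^{1,2}_0(\Omega)$ is a vector space and $E_\Omega$ is infinite outside of it. The quadratic nature of $E_\Omega$ coming from infinitesimal Hilbertianity and \eqref{eq:representation_cheegeromega_gradients} is what makes the one-sided derivative at $\varepsilon=0$ exist and coincide with the polarization, thereby turning the variational inequality into the equality defining $\Delta_\Omega f$.
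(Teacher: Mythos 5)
Your proof is correct and uses exactly the approach the paper alludes to (the authors omit the proof, citing it as a straightforward modification of \cite[Prop.\ 5.2.4]{GP19} via \eqref{eq:from_carre_to_scprod}): expand $E_\Omega$ as a quadratic form around $f$ using the polarization identity furnished by infinitesimal Hilbertianity, and read off the subdifferential inequality from the first-order term. Both directions are handled correctly; the parenthetical ``applied to $-h$'' near the end is unnecessary (the identity $\int_\Omega\langle\nabla f,\nabla h\rangle\,\d\mm=\int_\Omega zh\,\d\mm$ already says $-z$ verifies the defining relation for $\Delta_\Omega f$), but harmless.
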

We omit the proof of this proposition, being a straightforward modification of \cite[Prop.\ 5.2.4]{GP19} relying on \eqref{eq:from_carre_to_scprod}. This statement allows to build the Dirichlet heat flow as the gradient flow of $E_\Omega$. 

\begin{definition}[Dirichlet heat flow]
\label{def:dirichlet_heat_flow}
Let $(\X,\sfd,\mm)$ be an infinitesimally Hilbertian metric measure space and let $\Omega\subset\X$ be open. For every $f \in L^2(\Omega,\mm)$, the \emph{Dirichlet heat flow} of $f$ is the gradient flow of $E_{\Omega}$ starting from $f$, and it is denoted as $[0,1]\ni t \mapsto h_t^{\Omega} f\in W_0^{1,2}(\Omega)$. By Proposition \ref{prop:diricletlaplacian_vs_subdiffEomega}, the Dirichlet heat flow satisfies the following: 
\begin{equation}
\label{eq:dirichlet_heat_equation}
\begin{cases}
\partial_t h_t^{\Omega} f = \Delta_{\Omega} h_t^{\Omega} f, & \text{for a.e. }t >0,\\
h_t^\Omega f \xrightarrow{t\to 0^+} f, &\text{in }L^2(\Omega,\mm),
\end{cases}
\end{equation}
where the derivative $\partial_th_t^\Omega f$ has to be intended in the $L^2(\Omega,\mm)$-sense. 
\end{definition}
For a different construction of the Dirichlet heat flow using spectral theory, we refer the reader to \cite{ZhangZhu19}.
\begin{definition}[Heat content]
\label{def:heat_content}
Let $(\X,\sfd,\mm)$ be a metric measure space and let  $\Omega\subset \X$ be an open and bounded set. Let $u_t\in W^{1,2}_0(\Omega)$ be the Dirichlet heat flow \eqref{eq:dirichlet_heat_equation} with initial datum $f=\chi_\Omega\in L^2(\Omega,\mm)$, namely $u_t := h_t^\Omega \chi_\Omega$, for every $t>0$. Then, we define the \emph{heat content} as
\begin{equation}
    \label{eq:def_heat_content}
    Q_\Omega(t)=\int_\Omega u_t \,\d \mm, \qquad\forall\, t>0.
\end{equation}
\end{definition}

\begin{remark}
\label{rem:heat_content_under_changeofcaprep}
Note that, according to Remark \ref{rmk:representative_set_dir_hf}, if ${\rm Cap}(E)=0$, for $f\in L^2(\Omega,\mm)= L^2(\Omega\setminus E,\mm)$, we have $h_t^\Omega f=h_t^{\Omega \setminus E} f$ for every $t >0$. The analogous property holds for the heat content.
\end{remark}

\begin{proposition}[Weak maximum principle]
\label{prop:weak_max_principle}
Let $(\X,\sfd,\mm)$ be infinitesimally Hilbertian, let $\Omega \subset \X$ be open and bounded and let $C \ge 0$. Consider $f \in L^2(\Omega,\mm)$ such that $f \le C$ (respectively $f \ge -C$) $\mm$-a.e.. Then, for every $t \ge 0$, $h_t^{\Omega} f \le C$ (respectively $h_t^{\Omega} f \ge -C$) $\mm$-a.e..
\end{proposition}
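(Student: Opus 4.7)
The plan is to prove the upper bound $h_t^\Omega f \le C$ via an energy estimate and deduce the lower bound by linearity: since $E_\Omega$ is quadratic, $h_t^\Omega(-f) = -h_t^\Omega f$, so applying the upper bound to $-f$ gives $h_t^\Omega f \ge -C$ whenever $f \ge -C$. Assume therefore $f \le C$ $\mm$-a.e., set $u_t := h_t^\Omega f$, and consider
\[
\phi(t) := \frac{1}{2}\int_\Omega ((u_t - C)^+)^2 \, \d\mm, \qquad t \ge 0.
\]
By hypothesis $\phi(0) = 0$, and the goal is to show $\phi'(t) \le 0$ for a.e.\ $t > 0$, which forces $\phi \equiv 0$ and hence $u_t \le C$ $\mm$-a.e.

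The preliminary step is to verify that $(u_t - C)^+ \in W^{1,2}_0(\Omega)$, and this is where the hypothesis $C \ge 0$ enters crucially. Picking an approximating sequence $\{f_n\} \subset \Lip_{bs}(\Omega)$ with $f_n \to u_t$ in $W^{1,2}(\Omega)$, each $(f_n - C)^+$ is again Lipschitz and identically zero on $\X \setminus \supp f_n$, because there $f_n \equiv 0$ and hence $f_n - C \le -C \le 0$; thus $(f_n - C)^+ \in \Lip_{bs}(\Omega)$. The $1$-Lipschitz property of $x \mapsto (x - C)^+$ yields $L^2$-convergence to $(u_t - C)^+$, while the chain rule in $W^{1,2}(\X)$ applied to the extensions $T f_n \to T u_t$ of Proposition \ref{prop:extension_op} gives a uniform $W^{1,2}(\Omega)$-bound. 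Using reflexivity of $W^{1,2}(\Omega)$ (via infinitesimal Hilbertianity) and closedness of the subspace $W^{1,2}_0(\Omega)$, one concludes $(u_t - C)^+ \in W^{1,2}_0(\Omega)$. The chain rule for the gradient then delivers $\nabla (u_t - C)^+ = \chi_{\{u_t > C\}} \nabla u_t$ in $L^2(T\X)\restr{\Omega}$ (the ambiguity of $\varphi'$ at $x = C$ being absorbed by locality, since $\nabla u_t = 0$ $\mm$-a.e.\ on $\{u_t = C\}$).

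Next, the curve $t \mapsto u_t$ is locally absolutely continuous in $L^2(\Omega,\mm)$ by Theorem \ref{thm:gradient_flows}, and the convex $C^1$ functional $G(v) := \tfrac12 \|(v-C)^+\|_{L^2(\Omega,\mm)}^2$ has Fr\'echet derivative $G'(v) = (v-C)^+$. Hence $\phi = G \circ u_{(\cdot)}$ is locally absolutely continuous and, using the gradient-flow equation $\partial_t u_t = \Delta_\Omega u_t$,
\[
\phi'(t) = \int_\Omega (u_t - C)^+ \, \partial_t u_t \, \d\mm = \int_\Omega (u_t - C)^+ \Delta_\Omega u_t \, \d\mm \quad\text{for a.e.\ } t > 0.
\]
Using $(u_t - C)^+ \in W^{1,2}_0(\Omega)$ as a test function in the defining identity of $\Delta_\Omega$ and inserting the chain-rule formula for the gradient,
\[
\phi'(t) = -\int_\Omega \la \nabla (u_t - C)^+, \nabla u_t \ra \, \d\mm = -\int_{\{u_t > C\}} |\nabla u_t|^2 \, \d\mm \le 0,
\]
so $\phi(t) \le \phi(0) = 0$ for every $t \ge 0$, completing the proof.

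The main technical obstacle will be the preliminary step: verifying that $(u_t - C)^+ \in W^{1,2}_0(\Omega)$ together with the chain-rule formula for its gradient. This lattice-type property of $W^{1,2}_0$ is classical in the Euclidean/smooth setting, but in the metric-measure framework it requires the Lipschitz approximation argument above, together with the chain rule for Sobolev functions under Lipschitz composition and the infinitesimal Hilbertianity needed to handle gradients via weak compactness. Once this is in place, the remainder is a standard energy/gradient-flow calculation in Hilbert space.
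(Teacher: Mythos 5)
Your proof is correct, and it takes a closely related but distinct route from the paper's. Both arguments share the same core Lyapunov idea — build a nonnegative functional of $u_t := h_t^\Omega f$ vanishing at $t=0$, show its derivative is nonpositive by integration by parts, and conclude it vanishes identically — but they differ in the choice of integrand and the consequent technical work. The paper defines $\varphi(s) = \sqrt{\epsilon^2+(s-C)^2}-\epsilon$ for $s>C$ and $\varphi=0$ otherwise (a $C^1$ regularization of $(s-C)^+$), takes $F(t)=\int_\Omega\varphi(u_t)\,\d\mm$, and needs $\varphi'\circ u_t\in W^{1,2}_0(\Omega)$, which follows directly from Proposition~\ref{prop:composition_W120} because $\varphi'$ is $C^1$ with bounded derivative and $\varphi'(0)=0$ (this last point being exactly where $C\ge 0$ enters). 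The lower bound is handled by replacing $\varphi$ with $\tilde\varphi(x)=\varphi(-x)$. You instead take the sharper Stampacchia-type energy $\phi(t)=\tfrac12\int_\Omega((u_t-C)^+)^2\,\d\mm$, which avoids the $\epsilon$-regularization entirely, but in exchange you must verify the lattice property $(u_t-C)^+\in W^{1,2}_0(\Omega)$ by hand (Lipschitz approximation $f_n\to u_t$ in $W^{1,2}(\Omega)$, $(f_n-C)^+\in\Lip_{bs}(\Omega)$ because $C\ge 0$ forces $(f_n-C)^+\equiv 0$ off $\supp f_n$, uniform $W^{1,2}$-bound and weak compactness); this is essentially the argument sketched in Remark~\ref{rmk:improvement_prop_composition}, which the paper's proof bypasses. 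Your lower bound via linearity of $h_t^\Omega$ (a consequence of ${\rm Ch}_\Omega$ being a quadratic form) is also valid and a touch cleaner than re-running the argument with a reflected test function. Two small points worth keeping in mind: the chain-rule identity $\nabla(u_t-C)^+ = \chi_{\{u_t>C\}}\nabla u_t$ should be justified by the $S^2(\X)$-chain rule for Lipschitz postcompositions together with locality on $\{u_t=C\}$, as you note; and the continuity of $\phi$ at $t=0$ (needed to conclude $\phi\equiv 0$ from $\phi'\le 0$) follows from $u_t\to f$ in $L^2$ and the $L^2$-continuity of $v\mapsto\tfrac12\|(v-C)^+\|_{L^2}^2$, which you should state explicitly.
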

\begin{proof}
We fix $\epsilon > 0$ and define $\varphi(t):= \sqrt{\epsilon^2+(t-C)^2}-\epsilon$ for $t >C$ and $\varphi(t):= 0$ for $t \le C$. In particular, we have that $\varphi \in C^2(\mathbb{R})$, $\varphi' \le 1$ and $\varphi'' \le \epsilon^{-\frac{1}{2}}$.
Let $F(t):=\int_{\Omega} \varphi(h_t^{\Omega} f)\,\d \mm$. Then the estimate 
\begin{equation}
    |F(t)-F(s)| \le \Lip\,\varphi\,\mm(\Omega)^{\frac{1}{2}}\| h_t^{\Omega} f - h_s^{\Omega} f\|_{L^2(\Omega,\mm)},\qquad\text{for every }t,s>0
\end{equation}
gives that $F \in C^0([0,\infty)) \cap {AC}_{loc}(0,\infty)$. Therefore, $F$ is differentiable for a.e.\ $t>0$ with derivative given by
\begin{equation}
\label{eq:derivative_aux_funct_wmpr}
    F'(t) = \int_{\Omega} \varphi'(h_t^{\Omega} f)\,\Delta_{\Omega} h_t^{\Omega} f\,\d \mm \qquad\text{for a.e. }t >0.
\end{equation}
Note that, by construction, $\varphi'\in C^1(\mathbb{R})$, it has bounded derivative and satisfies $\varphi'(0) = 0$. Thus, by application of Proposition \ref{prop:composition_W120} we deduce that $\varphi' \circ h_t^\Omega f \in W^{1,2}_0(\Omega)$, therefore we can integrate by parts in \eqref{eq:derivative_aux_funct_wmpr} obtaining
\begin{equation}
    F'(t) = -\int_{\Omega} \varphi''(h_t^{\Omega} f)\,|\nabla  h_t^{\Omega} f|^2\,\d \mm \le 0 \qquad\text{ for a.e. }t >0,
\end{equation}
where the last inequality follows by the convexity of $\varphi$. Hence, since $F\geq 0$ for every $t >0$, and $F$ is continuous up to $t=0$, we have
\begin{equation}
    0 \le F(t) \le F(0) = 0,\qquad\forall\,t>0,
\end{equation} 
yielding that $\varphi(h_t^{\Omega}f) = 0$ $\mm$-a.e. on $\Omega$, which, by the very definition of $\varphi$, gives that $h_t^{\Omega} f \le C$ $\mm$-a.e.. For the lower bound, we can argue similarly, by considering as test function $\tilde{\varphi}(x):=\varphi(-x)$.
\end{proof}
\subsection{Kac's principle on metric measure spaces}
\label{sec:kac's principle}
In this section, we derive a proof of the Kac's principle, by means of functional analytic arguments. The main technical tool is Lemma \ref{prop:decay_pde}, which rely on the following auxiliary operator.

\begin{definition}[Restricted Laplacian]
\label{def:restricted_laplacian}
Let $(\X,\sfd,\mm)$ be an infinitesimally Hilbertian metric measure space. Given $f\in W^{1,2}(\Omega)$, we say that $f\in D\big(\Delta_{\restr{\Omega}}\big)$ if there exists a function $h\in L^2(\Omega,\mm)$ such that
\begin{equation}
    \int_\Omega hg\,\d\mm=-\int_\Omega \la\nabla f,\nabla g \ra \,\d\mm,\qquad \text{for every }g\in \Lip_{bs}(\Omega).
\end{equation}
Note that the function $h\in L^2(\Omega,\mm)$ is unique, as a consequence of the density of $\Lip_{bs}(\Omega)$ in $L^2(\Omega,\mm)$. In this case, we set $h:=\Delta_{\restr{\Omega}}f$.
\end{definition}
It is straightforward to check, by the very definition of $W^{1,2}_0(\Omega)$, that the function $g$ in Definition \ref{def:restricted_laplacian} can be taken in $W^{1,2}_0(\Omega)$.
Moreover, by definition,
\begin{equation}
\label{eq:rel_laplacians}
    D\big(\Delta_{\restr{\Omega}}\big)\cap W_0^{1,2}(\Omega)=D(\Delta_\Omega)\qquad\text{and}\qquad D(\Delta)_{\restr{\Omega}}\subset D\big(\Delta_{\restr{\Omega}}\big),
\end{equation}
and, whenever they exist, the Laplacians coincide. Note that the inclusion above may be strict in general. Let us recall the Leibniz rule for the restricted Laplacian: its proof is a straightforward modification of the argument in \cite[Thm.\ 4.29]{Gigli12}. 
\begin{proposition}
\label{prop:chain_rule_restricted_laplacian}
Let $\Omega \subset \X$. Let $u, v \in L^\infty(\Omega,\mm) \cap D(\Delta \restr{\Omega})$. Moreover, assume that $|\nabla u| \in L^\infty(\Omega,\mm)$. Then $u v \in D(\Delta\restr{\Omega})$ and
\begin{equation}
\label{eq:chain_restricted_laplacian}
    \Delta\restr{\Omega}(u v) = v\Delta\restr{\Omega}u  +u\Delta\restr{\Omega}v +2 \la \nabla u, \nabla v \ra \quad \mm\text{-a.e. in }\Omega.
\end{equation}
\end{proposition}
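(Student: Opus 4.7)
The plan is to verify the defining integral identity for $\Delta\restr{\Omega}(uv)$ directly, following the classical argument for the global Laplacian in \cite[Thm.\ 4.29]{Gigli12} and exploiting that the test functions in Definition \ref{def:restricted_laplacian} may be taken in the larger class $W^{1,2}_0(\Omega)$. Since $u,v\in L^\infty(\Omega,\mm)\cap W^{1,2}(\Omega)$, the Leibniz rule for the gradient (valid in any infinitesimally Hilbertian space) gives $uv\in L^\infty\cap W^{1,2}(\Omega)$ with $\nabla(uv)=u\nabla v+v\nabla u$. The candidate
\begin{equation*}
    h:=v\,\Delta\restr{\Omega}u+u\,\Delta\restr{\Omega}v+2\la\nabla u,\nabla v\ra
\end{equation*}
lies in $L^2(\Omega,\mm)$: the first two terms since $u,v\in L^\infty$ and $\Delta\restr{\Omega}u,\Delta\restr{\Omega}v\in L^2$, and the third via $|\la\nabla u,\nabla v\ra|\le|\nabla u||\nabla v|$ together with $|\nabla u|\in L^\infty$ and $|\nabla v|\in L^2$. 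The assumption $|\nabla u|\in L^\infty(\Omega,\mm)$ is used here, to guarantee that $h$ is a legitimate $L^2$ function.

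For the core computation, fix $g\in\Lip_{bs}(\Omega)$ and expand the pairing
\begin{equation*}
    -\int_\Omega\la\nabla(uv),\nabla g\ra\,\d\mm=-\int_\Omega v\la\nabla u,\nabla g\ra\,\d\mm-\int_\Omega u\la\nabla v,\nabla g\ra\,\d\mm.
\end{equation*}
The pointwise identity $v\la\nabla u,\nabla g\ra=\la\nabla u,\nabla(vg)\ra-g\la\nabla u,\nabla v\ra$ and its symmetric counterpart rewrite the right-hand side as
\begin{equation*}
    -\int_\Omega\la\nabla u,\nabla(vg)\ra\,\d\mm-\int_\Omega\la\nabla v,\nabla(ug)\ra\,\d\mm+2\int_\Omega g\la\nabla u,\nabla v\ra\,\d\mm.
\end{equation*}
Provided $vg,ug\in W^{1,2}_0(\Omega)$, the definition of the restricted Laplacian (with $W^{1,2}_0(\Omega)$-test functions) turns the first two integrals into $\int_\Omega(vg)\,\Delta\restr{\Omega}u\,\d\mm+\int_\Omega(ug)\,\Delta\restr{\Omega}v\,\d\mm$, and regrouping yields exactly $\int_\Omega hg\,\d\mm$, which is the defining identity for $\Delta\restr{\Omega}(uv)=h$.

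The main obstacle is therefore the admissibility claim $vg,ug\in W^{1,2}_0(\Omega)$ for every $g\in\Lip_{bs}(\Omega)$. For $vg$: by density of locally Lipschitz functions in $W^{1,2}(\Omega)$, one finds $v_n\in\Lip_{loc}(\Omega)$ with $v_n\to v$ in $W^{1,2}(\Omega)$; since $g$ has compact support inside $\Omega$ on which each $v_n$ is Lipschitz, $v_ng\in\Lip_{bs}(\Omega)$, and the Leibniz rule combined with the $L^\infty$-bounds on $g$ and $\nabla g$ gives $v_ng\to vg$ in $W^{1,2}(\Omega)$, whence $vg\in W^{1,2}_0(\Omega)$. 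The same argument handles $ug$. Once this technical step is settled, the chain of equalities above closes the proof.
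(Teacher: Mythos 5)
Your proof is correct and takes essentially the same route as the paper's: apply the Leibniz rule for the gradient on $\Omega$ (Proposition~\ref{prop:product_sobolev}), rewrite $v\la\nabla u,\nabla g\ra$ and $u\la\nabla v,\nabla g\ra$ in terms of $\la\nabla u,\nabla(vg)\ra$ and $\la\nabla v,\nabla(ug)\ra$, apply the definition of $\Delta\restr{\Omega}$ with test functions taken in $W^{1,2}_0(\Omega)$, and observe that the hypothesis $|\nabla u|\in L^\infty$ keeps the candidate $h$ in $L^2(\Omega,\mm)$. You go further than the paper in one respect: the paper simply asserts that $ug,vg\in W^{1,2}_0(\Omega)$, whereas you supply an approximation argument. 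One caveat there: the ``density of locally Lipschitz functions in $W^{1,2}(\Omega)$'' is a consequence of the relaxation definition of ${\rm Ch}_\Omega$ only in the \emph{energy} sense; promoting it to strong $W^{1,2}(\Omega)$-convergence uses the Hilbertianity of $W^{1,2}(\Omega)$ (weak convergence plus norm convergence gives strong convergence), so that step is true but not quite immediate. A slightly more direct route is available through the equivalent space $H^{1,2}(\Omega)$ of Appendix~\ref{app:local_sobolev_spaces}: item (i) of Definition~\ref{def:local_sob_AH} already gives $vg\in W^{1,2}(\X)$ directly, and then one approximates $vg$ by global Lipschitz functions $w_n$ and multiplies by a cut-off $\eta\in\Lip_{bs}(\Omega)$ with $\eta\equiv 1$ on $\supp g$ to land in $\Lip_{bs}(\Omega)$. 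A second minor point: when you write $\nabla(v_ng)=g\nabla v_n+v_n\nabla g$ via the Leibniz rule, $v_n-v$ need not be globally bounded on $\Omega$; this is harmless because one may first multiply $v_n$ by a cut-off equal to $1$ on $\supp g$, but it is worth saying.
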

\begin{proof}
Recall that, by Proposition \ref{prop:product_sobolev}, since $u,v\in L^\infty(\Omega,\mm)\cap W^{1,2}(\Omega)$, then $u v\in W^{1,2}(\Omega)$ and
\begin{equation}
\label{eq:chain_localgradient}
    \nabla (u v)=v\nabla u+ u\nabla v,\qquad\text{as elements of } L^2(T\X)\restr{\Omega}.
\end{equation}
Let $g \in \Lip_{bs}(\Omega)$ and compute
\begin{equation}
\begin{aligned}
    \int_\Omega \la \nabla (u v),\nabla g \ra\,\d \mm & \stackrel{\eqref{eq:chain_localgradient}}{=} \int_\Omega \la v\nabla u +u\nabla v,\nabla g \ra\,\d \mm \stackrel{\eqref{eq:chain_localgradient}}{=} \int_\Omega \la \nabla u, \nabla (v g)\ra+\la \nabla v, \nabla (u g)\ra + 2 g\la \nabla u,\nabla v \ra \,\d \mm\\
    & = - \int_\Omega \left( v\Delta\restr{\Omega} u+ u\Delta\restr{\Omega}v + 2 \la \nabla u, \nabla v \ra \right)g\,\d \mm,
\end{aligned}
\end{equation}
where the last equality follows from the fact that $u,v \in D(\Delta\restr{\Omega})$ and $ug,vg \in W^{1,2}_0(\Omega)$. The last term is in $L^2(\Omega,\mm)$ since $|\nabla u| \in L^\infty(\Omega,\mm)$, thus $uv \in D(\Delta \restr{\Omega})$ and \eqref{eq:chain_restricted_laplacian} holds.
\end{proof}

\begin{lemma}
\label{prop:decay_pde}
Let $(\X,\sfd,\mm)$ be an infinitesimally Hilbertian metric measure space and let $\Omega\subset\X$ be open and bounded.
Let $(t \mapsto v_t) \in C([0,1],L^2(\Omega,\mm)) \cap AC_{loc}((0,1),L^2(\Omega,\mm))$ such that $v_t$ is non-negative and $v_t \in D(\Delta\restr{\Omega})$ for a.e.\ $t$,
\begin{equation}
\label{eq:decay_pde}
\begin{cases}
\partial_t v_t = \Delta \restr{\Omega} v_t\\
v_0 = 0.
\end{cases}
\end{equation}
Consider a compact set $K \Subset \Omega$; then, \begin{equation}
    \| v_t \|_{L^1(K,\mm)} =o(t),\qquad\text{as }t\to 0^+.
\end{equation}
\end{lemma}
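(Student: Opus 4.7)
The plan is a Caccioppoli-type localization argument: derive an energy inequality for $v_t$ tested against a cut-off supported away from $\partial\Omega$, then iterate it twice using nested cut-offs to upgrade the trivial rate $o(\sqrt{t})$ to the desired $o(t)$.

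Since $K\Subset\Omega$, I would first pick two Lipschitz cut-offs $\varphi_1,\varphi_2\in \Lip_{bs}(\Omega)$ valued in $[0,1]$ with $\varphi_1\equiv 1$ on $K$, $\varphi_2\equiv 1$ on $\supp\varphi_1$, and $\supp\varphi_2\Subset\Omega$. The key technical preparation is to show that $\varphi_i^2 v_t\in W^{1,2}_0(\Omega)$ for a.e.\ $t>0$: the function lies in $W^{1,2}(\Omega)$ by the Leibniz rule and has compact support inside $\Omega$, and I would approximate it in $W^{1,2}$-norm by $\Lip_{bs}(\Omega)$ functions obtained by multiplying locally Lipschitz approximants of $v_t$ (supplied by the Cheeger-energy definition of $W^{1,2}(\Omega)$, upgraded from weak to strong convergence via Mazur's lemma) by $\varphi_i^2$. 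This legitimates the use of $\varphi_i^2 v_t$ as test function in Definition \ref{def:restricted_laplacian}.

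Setting $F_i(t):=\int_\Omega \varphi_i^2 v_t^2\,\d\mm$, the local absolute continuity of $t\mapsto v_t$ in $L^2(\Omega,\mm)$, the PDE, and integration by parts against $\varphi_i^2 v_t$ yield, after absorbing the cross term via Young's inequality, the Caccioppoli bound
\[ F_i'(t) \le 2\Lip(\varphi_i)^2 \int_{\supp\varphi_i} v_t^2\,\d\mm \quad\text{for a.e.\ }t>0,\qquad F_i(0)=0. \]
Exploiting $\varphi_2\equiv 1$ on $\supp\varphi_1$ and $\supp\varphi_2\subset\Omega$, integrating in time and iterating once produces
\[ F_1(t) \le 2\Lip(\varphi_1)^2\Lip(\varphi_2)^2\,t^2\sup_{r\in[0,t]}\|v_r\|^2_{L^2(\Omega,\mm)}. \]
Because $v\in C([0,1],L^2(\Omega,\mm))$ and $v_0=0$, the supremum is $o(1)$ as $t\to 0^+$, hence $F_1(t)=o(t^2)$; then Cauchy-Schwarz together with $\varphi_1\equiv 1$ on $K$ produces $\|v_t\|_{L^1(K,\mm)}\le \mm(K)^{1/2}F_1(t)^{1/2}=o(t)$.

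The main obstacle I anticipate is the preparatory claim $\varphi_i^2 v_t\in W^{1,2}_0(\Omega)$ in the generality of infinitesimally Hilbertian spaces, which requires the density/Mazur argument sketched above; once that is granted, the iterative Caccioppoli estimate is standard. I also note that the nonnegativity of $v_t$ is never actually used---only the continuity of $t\mapsto v_t$ at zero with $v_0=0$ is needed to make the supremum vanish.
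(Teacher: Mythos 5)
Your proof is correct and takes a genuinely different route from the paper's. The paper pairs $v_t$ against a \emph{single, more regular} cut-off: it picks a nonnegative $\varphi$ with $\varphi\equiv 1$ on $K$, $\supp\varphi\Subset\Omega$, and $\varphi\in D(\Delta)$ (on $\RCD$ spaces such a cut-off is available by Proposition~\ref{good_cut_off}), sets $F(t)=\int_\Omega v_t\varphi\,\d\mm$, and after a Leibniz manipulation with an auxiliary Lipschitz cut-off shows $F'(t)=\int_\Omega v_t\,\Delta\restr{\Omega}\varphi\,\d\mm$. Since the right-hand side is continuous in $t$, $F$ extends to $C^1([0,1])$ with $F(0)=F'(0)=0$, whence $F(t)=o(t)$; the nonnegativity of $v_t$ then gives $\|v_t\|_{L^1(K)}\le F(t)$. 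You replace this linear functional by the quadratic energies $F_i(t)=\int_\Omega\varphi_i^2 v_t^2\,\d\mm$ with two nested \emph{Lipschitz} cut-offs, derive a Caccioppoli bound by testing the PDE against $\varphi_i^2 v_t\in W^{1,2}_0(\Omega)$, iterate once in time, and conclude via Cauchy--Schwarz. Both the Caccioppoli inequality (Young absorption of the cross term is fine) and the iterated integral estimate are correct.

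The comparison: your approach only requires Lipschitz cut-offs, so it applies verbatim in any infinitesimally Hilbertian metric measure space, whereas the paper's choice of $\varphi\in D(\Delta)$ with $\supp\varphi\Subset\Omega$ is only guaranteed a priori under an $\RCD$ (or similar) hypothesis; your proof is therefore more robust at the level of generality at which the lemma is stated. You also correctly observe that the nonnegativity of $v_t$ is never used in your argument (the paper does use it, in the step $\int_K v_t\le\int v_t\varphi$), so your proof actually yields a slightly stronger statement. The price is a longer argument with two cut-offs and a double time-integration, and the preparatory claim $\varphi_i^2 v_t\in W^{1,2}_0(\Omega)$. Your Mazur argument for the latter is fine; one small technical caveat is that $v_t$ is not assumed bounded, so to get the Leibniz identity $\nabla(\varphi_i^2 v_t)=\varphi_i^2\nabla v_t+v_t\nabla(\varphi_i^2)$ you cannot invoke Proposition~\ref{prop:product_sobolev} directly and should first truncate $v_t$ at level $M$, apply it to the truncations, and pass to the limit via locality and dominated convergence.
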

\begin{proof}
We consider a non-negative $\varphi$ such that $\varphi =1$ on $K$, $\supp{ \varphi} \Subset \Omega$ and $\varphi \in D(\Delta) \subset D(\Delta\restr{\Omega})$. It follows
\begin{equation}
    0 \le  \int_K v_t\,\d \mm \le \int_{\Omega} v_t\varphi\,\d \mm  =:F(t).
\end{equation}
Consider $\eta \in \Lip_{bs}(\Omega)\subset L^\infty(\X,\mm) \cap \Lip(\X)$ such that $\eta = 1$ on $\supp{\varphi}$; then, since $ v_t \in W^{1,2}(\Omega)$, by Theorem \ref{thm:equivalence_local_sobolev}, $\eta v_t \in W^{1,2}(\X)$ and
\begin{equation}
\label{eq:chain_rule_uteta}
    \nabla (v_t \eta) = \eta\nabla v_t + v_t\nabla \eta \qquad \text{as elements of }L^2(T\X)\restr{\Omega}.
\end{equation}
Since $L^2(\Omega,\mm) \in f \mapsto \int_\Omega f\varphi\,\d \mm \in \mathbb{R}$ is Lipschitz and $v \in AC_{loc}((0,1),L^2(\Omega,\mm))$, then $F \in AC_{loc}(0,1)$ and 
\begin{equation}
    \begin{aligned}
        F'(t)& = \int_\Omega \varphi\partial_t v_t\,\d\mm = \int_{\Omega}\varphi \Delta \restr{\Omega} v_t\,\d \mm = -\int_{\Omega} \la \nabla v_t,\nabla \varphi \ra\,\d \mm = -\int_{\Omega} \la \nabla v_t,\nabla \varphi \ra\eta\,\d \mm \\
        & =\int_\Omega v_t\la \nabla \eta, \nabla \varphi \ra\,\d \mm -\int_\Omega \la \nabla (v_t \eta),\nabla \varphi \ra\,\d \mm  = \int_{\Omega} v_t\left( \la \nabla \eta,\nabla \varphi \ra + \eta\Delta\restr{\Omega} \varphi \right)\,\d \mm \\
        & = \int_\Omega v_t\Delta\restr{\Omega} \varphi\,\d \mm\\
    \end{aligned}
\end{equation}
for a.e.\ $t$. We notice that the function $\left( t \mapsto G(t):=\int_\Omega v_t\Delta\restr{\Omega} \varphi\,\d \mm \right) \in C([0,1])$, therefore extending $F$ by $0$ at $t=0$, we have $F \in C^1([0,1])$. Thus,
\begin{equation}
    F(t) = F(0)+F'(0)t+o(t) = o(t)
\end{equation}
yielding $\| v_t \|_{L^1(K,\mm)} = o(t)$. This concludes the proof.
\end{proof}
\begin{remark}
Arguing as in the previous proof, in a smooth Riemannian manifold we would get higher-order estimates, using more regular test functions; indeed, for every $n\in\N$, $\| v_t \|_{L^1(K,\mm)} = o(t^n)$, by considering a test function $\varphi \in C^{\infty}_c(\Omega)$ such that $\varphi =1$ on $K$ and reasoning as in the proof of Proposition \ref{prop:decay_pde}.
\end{remark}
In order to deduce our version of the Kac's principle, we combine Lemma \ref{prop:decay_pde} with a monotonicity result for the Dirichlet heat flow with respect to domain inclusion. Before proving the latter, we highlight the relations between the Laplacian defined on the whole space $\Delta$, the Dirichlet Laplacian $\Omega$ and the restricted Laplacian.
We introduce the following notation.  Let $\Omega_1\subset\Omega_2\subset\X$ be open sets, then for $i=1,2$ we denote by $ \mm_i:=\mm \restr{\Omega_i}$ and by $\pi_{\mm_2,\mm_1} \colon L^0(\mm_2) \to L^0(\mm_1)$  the restriction map defined as $\pi_{\mm_2,\mm_1}([f]_{\mm_2}) = [f]_{\mm_1}$, where $[f]_{\mm_i}$ is the equivalence class of $f$ as an element of $L^0(\mm_i)$.
\begin{lemma}
\label{lemma:restriction_restrlap}
Let $(\X,\sfd,\mm)$ be infinitesimally Hilbertian and let $\Omega_1 \subset \Omega_2 \subset \X$ be open sets.
Let $g \in D(\Delta \restr{\Omega_2})$. Then $\pi_{\mm_2,\mm_1} g \in D(\Delta \restr{\Omega_1})$ and $\Delta \restr{\Omega_1} (\pi_{\mm_2,\mm_1} g) = \pi_{\mm_2,\mm_1}\big( \Delta \restr{\Omega_2} g\big)$.
\end{lemma}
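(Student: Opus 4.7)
The statement is essentially a compatibility property for the restricted Laplacian under shrinking the domain, and the proof should be a direct unwinding of Definition \ref{def:restricted_laplacian} combined with the locality of the minimal weak upper gradient.

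The plan is to start by verifying that $\pi_{\mm_2,\mm_1} g$ lies in $W^{1,2}(\Omega_1)$. Since $\Omega_1 \subset \Omega_2$, any sequence $\{g_n\} \subset \Lip_{loc}(\Omega_2)$ approximating $g$ in the definition of ${\rm Ch}_{\Omega_2}(g)$ restricts to a sequence in $\Lip_{loc}(\Omega_1)$ approximating $\pi_{\mm_2,\mm_1} g$ in $L^2(\Omega_1,\mm)$, giving the bound ${\rm Ch}_{\Omega_1}(\pi_{\mm_2,\mm_1} g)\le {\rm Ch}_{\Omega_2}(g) < \infty$. Moreover, by the locality results for local Sobolev spaces in Appendix \ref{app:local_sobolev_spaces}, the gradient $\nabla g \in L^2(T\X)\restr{\Omega_2}$ restricts (as an element of the normed module) to $\nabla (\pi_{\mm_2,\mm_1} g) \in L^2(T\X)\restr{\Omega_1}$, so in particular $\la \nabla g,\nabla \psi\ra = \la \nabla (\pi_{\mm_2,\mm_1} g),\nabla \psi\ra$ $\mm$-a.e.\ on $\Omega_1$ for any $\psi$ Sobolev on both domains.

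Next, I would verify the defining integration-by-parts identity for $\pi_{\mm_2,\mm_1} g$ on $\Omega_1$. Given an arbitrary test function $\varphi \in \Lip_{bs}(\Omega_1)$, its extension by zero $\tilde{\varphi}$ belongs to $\Lip_{bs}(\Omega_2)$, because $\supp \varphi$ is a bounded set compactly contained in the open set $\Omega_1 \subset \Omega_2$. Applying Definition \ref{def:restricted_laplacian} for $g \in D(\Delta\restr{\Omega_2})$ with test function $\tilde{\varphi}$ gives
\begin{equation}
\int_{\Omega_2} (\Delta\restr{\Omega_2} g)\,\tilde{\varphi}\,\d\mm = -\int_{\Omega_2}\la \nabla g,\nabla \tilde{\varphi}\ra\,\d\mm.
\end{equation}
Since $\tilde{\varphi}$ vanishes outside $\supp\varphi \subset \Omega_1$, both integrals reduce to integrals over $\Omega_1$, and using the identification of gradients from the previous paragraph yields
\begin{equation}
\int_{\Omega_1} \pi_{\mm_2,\mm_1}(\Delta\restr{\Omega_2} g)\,\varphi\,\d\mm = -\int_{\Omega_1}\la \nabla (\pi_{\mm_2,\mm_1} g),\nabla \varphi\ra\,\d\mm.
\end{equation}
As $\pi_{\mm_2,\mm_1}(\Delta\restr{\Omega_2} g)\in L^2(\Omega_1,\mm)$ and $\varphi\in \Lip_{bs}(\Omega_1)$ is arbitrary, this is exactly the statement that $\pi_{\mm_2,\mm_1} g\in D(\Delta\restr{\Omega_1})$ with Laplacian given by $\pi_{\mm_2,\mm_1}(\Delta\restr{\Omega_2} g)$.

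The only delicate step is the locality claim in the first paragraph: one must be sure that restricting $\nabla g$ from $\Omega_2$ to $\Omega_1$ inside the cotangent/tangent module framework gives $\nabla(\pi_{\mm_2,\mm_1} g)$. This is exactly the kind of locality statement proved in the appendix on local Sobolev spaces, so provided that machinery is available, the rest of the argument is routine. No compactness, duality or approximation argument beyond the standard density of $\Lip_{bs}(\Omega_1)$ in $L^2(\Omega_1,\mm)$ is needed.
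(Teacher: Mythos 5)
Your proof is correct and follows essentially the same route as the paper's: extend the test function by zero from $\Omega_1$ to $\Omega_2$, apply the integration-by-parts identity defining $\Delta\restr{\Omega_2}$, and use locality of the gradient to reduce the integrals to $\Omega_1$. The paper tests against all of $W^{1,2}_0(\Omega_1)$ and phrases the locality step as the observation that $i(\varphi)$ and $|\nabla i(\varphi)|$ vanish $\mm$-a.e.\ on $\Omega_2\setminus\Omega_1$, while you test against $\Lip_{bs}(\Omega_1)$ (which is what the definition requires) and emphasize that $\nabla g$ restricts to $\nabla(\pi_{\mm_2,\mm_1}g)$ on $\Omega_1$; these are two sides of the same locality argument and both proofs implicitly use both facts.
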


\begin{proof}
We have a natural isometric embedding $i\colon W^{1,2}_0(\Omega_1) \hookrightarrow W^{1,2}_0(\Omega_2)$ by zero extension. Set $\tilde{g}:= \pi_{\mm_2,\mm_1} g\in W^{1,2}(\Omega_1)$ by construction. Then, for every $\varphi \in W^{1,2}_0(\Omega_1)$, we have
\begin{equation*}
    \int_{\Omega_1} \la \nabla \tilde{g}, \nabla \varphi \ra \,\d \mm= \int_{\Omega_2} \la \nabla g, \nabla i(\varphi) \ra\,\d \mm =- \int_{\Omega_2} i(\varphi) \Delta\restr{\Omega_2} g \,\d \mm =- \int_{\Omega_1} \pi_{\mm_2,\mm_1} \big(\Delta\restr{\Omega_2} g\big) \,\varphi\,\d \mm,
\end{equation*}
observing that $i(\varphi)=|\nabla i(\varphi)|=0$, $\mm$-a.e.\ on $\Omega_2\setminus\Omega_1$. Hence, by arbitrariness of $\varphi\in W_0^{1,2}(\Omega_1)$, we conclude that $\tilde{g} \in D(\Delta\restr{\Omega_1})$ and $\Delta\restr{\Omega_1} \tilde{g} = \pi_{\mm_2,\mm_1}\big(\Delta\restr{\Omega_2} g\big)$.
\end{proof}
\begin{proposition}[Domain monotonicity]
\label{prop:domain_monotonicity}
Let $(\X,\sfd,\mm)$ be infinitesimally Hilbertian, let $\Omega_1\subset\Omega_2\subset\X$ be open sets and assume $\Omega_1$ is bounded. Let also $0 \le f \in L^2(\Omega_2)$. Then, for every $t \ge 0$, 
\begin{equation}
    h_t^{\Omega_1} f(x)\le h_t^{\Omega_2} f(x),\qquad\text{for}\ \mm\text{-a.e.}\ x\in\Omega_1.
\end{equation}
\end{proposition}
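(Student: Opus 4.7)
Set $u_t:=h_t^{\Omega_1}f$ and $U_t:=h_t^{\Omega_2}f$; by Proposition \ref{prop:weak_max_principle}, both curves are non-negative $\mm$-a.e. The plan is to study the difference
\begin{equation}
    v_t:=\pi_{\mm_2,\mm_1}(U_t)-u_t\in L^2(\Omega_1,\mm),
\end{equation}
and show $v_t\ge 0$ $\mm$-a.e.\ via a weak-maximum-principle argument modeled on the proof of Proposition \ref{prop:weak_max_principle}. First, by Lemma \ref{lemma:restriction_restrlap} applied to $U_t\in D(\Delta_{\Omega_2})\subset D(\Delta\restr{\Omega_2})$, we get $\pi_{\mm_2,\mm_1}(U_t)\in D(\Delta\restr{\Omega_1})$ with restricted Laplacian $\pi_{\mm_2,\mm_1}(\Delta_{\Omega_2}U_t)$; combined with $u_t\in D(\Delta_{\Omega_1})\subset D(\Delta\restr{\Omega_1})$ from \eqref{eq:rel_laplacians}, this shows that $v_t\in D(\Delta\restr{\Omega_1})$ satisfies
\begin{equation}
    \partial_t v_t=\Delta\restr{\Omega_1}v_t\quad\text{for a.e. }t>0,\qquad v_0=0\text{ in }L^2(\Omega_1,\mm).
\end{equation}

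Now I would repeat the smoothing trick of Proposition \ref{prop:weak_max_principle} applied to the negative part. Fix $\epsilon>0$ and set $\varphi_\epsilon(r):=\sqrt{\epsilon^2+r^2}-\epsilon$ for $r<0$ and $\varphi_\epsilon(r):=0$ for $r\ge 0$: then $\varphi_\epsilon\in C^1(\R)$, convex, with $\varphi_\epsilon(0)=\varphi_\epsilon'(0)=0$, $|\varphi_\epsilon'|\le 1$, $0\le \varphi_\epsilon''\le \epsilon^{-1}$, and $\varphi_\epsilon(r)\uparrow r^-$ as $\epsilon\to 0^+$. Define
\begin{equation}
    F(t):=\int_{\Omega_1}\varphi_\epsilon(v_t)\,\d\mm,
\end{equation}
which is continuous on $[0,\infty)$ and $AC_{\loc}(0,\infty)$ with $F'(t)=\int_{\Omega_1}\varphi_\epsilon'(v_t)\Delta\restr{\Omega_1}v_t\,\d\mm$ for a.e.\ $t$. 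Assuming the test function $\varphi_\epsilon'(v_t)\in W^{1,2}_0(\Omega_1)$, the definition of the restricted Laplacian and the chain rule for $\nabla$ then give
\begin{equation}
    F'(t)=-\int_{\Omega_1}\varphi_\epsilon''(v_t)|\nabla v_t|^2\,\d\mm\le 0,
\end{equation}
so $0\le F(t)\le F(0)=0$, yielding $\varphi_\epsilon(v_t)=0$ $\mm$-a.e., and then letting $\epsilon\to 0$ gives $v_t^-=0$ $\mm$-a.e., as desired.

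The main obstacle is precisely the justification that $\varphi_\epsilon'(v_t)\in W^{1,2}_0(\Omega_1)$: unlike in the proof of Proposition \ref{prop:weak_max_principle}, the function $v_t$ itself does not lie in $W^{1,2}_0(\Omega_1)$ because the trace of $\pi_{\mm_2,\mm_1}(U_t)$ on $\partial\Omega_1$ need not vanish. The way I would circumvent this is to exploit that $\varphi_\epsilon'$ is supported on $(-\infty,0)$ together with the sign information on $u_t,U_t$: since $\pi U_t\ge 0$, one has
\begin{equation}
    \{\varphi_\epsilon'(v_t)\ne 0\}\subset\{v_t<0\}=\{u_t>\pi U_t\}\subset\{u_t>0\},
\end{equation}
so $\varphi_\epsilon'(v_t)$ is supported on the ``positivity set'' of a $W^{1,2}_0(\Omega_1)$ function. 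I would then approximate $u_t$ by $\psi_n\in\Lip_{bs}(\Omega_1)$ in the $W^{1,2}(\Omega_1)$ norm and consider $g_n:=\varphi_\epsilon'(\pi_{\mm_2,\mm_1}(U_t)-\psi_n)\in W^{1,2}(\Omega_1)$. By construction $\{g_n\ne 0\}\subset\{\psi_n>\pi U_t\}\subset\supp\psi_n\Subset\Omega_1$, so each $g_n$ has compact support in $\Omega_1$; the uniform bounds $|g_n|\le 1$, $|\nabla g_n|\le \epsilon^{-1}(|\nabla\pi U_t|+|\nabla\psi_n|)$ allow to conclude, after multiplying by a Lipschitz cut-off equal to $1$ on $\supp\psi_n$ and approximating in energy by Lipschitz functions (cf.\ the density result underlying Proposition \ref{prop:extension_op}), that $g_n\in W^{1,2}_0(\Omega_1)$ and $g_n\to \varphi_\epsilon'(v_t)$ in $W^{1,2}(\Omega_1)$, proving $\varphi_\epsilon'(v_t)\in W^{1,2}_0(\Omega_1)$.
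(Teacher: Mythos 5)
Your proof is correct and essentially reproduces the paper's argument: the same smoothed one-sided function $\varphi_\epsilon$ applied to the difference of the two flows, the same use of Lemma \ref{lemma:restriction_restrlap} to write the evolution under $\Delta\restr{\Omega_1}$, and the same crux — showing that $\varphi_\epsilon'(v_t)\in W^{1,2}_0(\Omega_1)$ so that one can integrate by parts. The only variation is in how that membership is established: the paper approximates both $h_t^{\Omega_1}f$ and $h_t^{\Omega_2}f$ simultaneously by Lipschitz functions with bounded support, choosing the $\Omega_2$-approximants nonnegative (Remark \ref{rmk:improvement_prop_composition}) so that $\psi\circ(u_n-v_n)\in\Lip_{bs}(\Omega_1)$ directly, whereas you approximate only $h_t^{\Omega_1}f$ and keep $\pi_{\mm_2,\mm_1}(h_t^{\Omega_2}f)$ as is, which forces the additional cut-off/density step to place the non-Lipschitz $g_n$ in $W^{1,2}_0(\Omega_1)$ — both routes work.
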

\begin{proof}
We first prove the proposition for $f \in L^2(\Omega_2)$.
We fix $\epsilon > 0$ and define $\varphi(t):= \sqrt{\epsilon^2+t^2}-\epsilon$ for $t >0$ and $\varphi(t):= 0$ for $t \le 0$. In particular, we have that $\varphi' \le 1$ and $\varphi'' \le \epsilon^{-\frac{1}{2}}$. We define $G(t):= \int_{\Omega_1} \varphi(h_t^{\Omega_1} f- h_t^{\Omega_2} f)\,\d \mm$. We estimate, using that $\varphi$ is $1$-Lipschitz
\begin{equation}
\begin{aligned}
    |G(t)-G(s)| &= \left| \int_{\Omega_1} \varphi(h_t^{\Omega_1} f- h_t^{\Omega_2} f)-\varphi(h_s^{\Omega_1} f- h_s^{\Omega_2} f)\,\d \mm  \right|\\
    &\le \mm(\Omega_1)^{\frac{1}{2}} (\| h_t^{\Omega_1} f-h_s^{\Omega_1} f\|_{L^2(\Omega_1)}+ \| h_t^{\Omega_2} f-h_s^{\Omega_2} f\|_{L^2(\Omega_1)}) \\
    &\le \mm(\Omega_1)^{\frac{1}{2}} (\| h_t^{\Omega_1} f-h_s^{\Omega_1} f\|_{L^2(\Omega_1)}+ \| h_t^{\Omega_2} f-h_s^{\Omega_2} f\|_{L^2(\Omega_2)}) \\
\end{aligned}
\end{equation}
which, using the fact that $(t \mapsto h_t^{\Omega_i} f) \in AC_{loc}((0,+\infty),L^2(\Omega_i))$ for $i = 1,2$, grants that $G \in AC_{loc}((0,+\infty))$, hence differentiable a.e.\ with derivative given by
\begin{equation}
\label{eq:derivative_G_domain_monotonicity}
G'(t) = \int_{\Omega_1} \varphi'(h_t^{\Omega_1} f- h_t^{\Omega_2} f)\,\left( \Delta_{\Omega_1} h_t^{\Omega_1} f - \Delta_{\Omega_2} h_t^{\Omega_2} f  \right)\,\d \mm \quad \text{for a.e. }t.    
\end{equation}
By Lemma \ref{lemma:restriction_restrlap} and \eqref{eq:rel_laplacians}, we have, with a slight abuse of notation, that $h_t^{\Omega_1} f -  h_t^{\Omega_2} f \in D(\Delta \restr{\Omega_1})$ and
\begin{equation}
\label{eq:domain_monotonicity_lapldomains}
     \Delta_{\Omega_1} h_t^{\Omega_1} f - \Delta_{\Omega_2} h_t^{\Omega_2} f =    \Delta\restr{\Omega_1} (h_t^{\Omega_1} f -  h_t^{\Omega_2} f).
\end{equation} 
We claim that $\varphi'(h_t^{\Omega_1} f- h_t^{\Omega_2} f) \in W^{1,2}_0(\Omega_1)$, for every $t>0$. For the sake of notation, let $u = h_t^{\Omega_1}f$ and $v = h_t^{\Omega_2}f$. By the weak maximum principle (cf. Proposition \ref{prop:weak_max_principle}), since $f\geq 0$ $\mm$-a.e. on $\Omega_2$, we deduce
\begin{equation}
    u \ge 0,\quad\mm\text{-a.e.}\ \text{on}\ \Omega_1\qquad\text{and}\qquad v \ge 0,\quad\mm\text{-a.e.}\ \text{on}\ \Omega_2.
\end{equation} 
By definition of local Sobolev space, there exist sequences $\{u_n\}\subset \Lip_{bs}(\Omega_1) $, $\{v_n\}\subset \Lip_{bs}(\Omega_2)$ such that $u_n \to u$ in $W^{1,2}(\Omega_1)$ and $v_n \to v$ in $W^{1,2}(\Omega_2)$. Moreover, we can choose $v_n \ge 0$ (cf. Remark \ref{rmk:improvement_prop_composition}).
Set $\psi := \varphi'$ and define $\psi_n:= \psi\circ(u_n-v_n)$, for every $n\in\N$. On the one hand, since $v_n \ge 0$ and $\psi(t) = 0$ for $t \le 0$, we obtain 
\begin{equation}
\label{eq:support_psi_n}
\psi_n(x) = \psi(-v_n(x)) = 0,\qquad \forall\,x\in(\supp{u_n})^c.
\end{equation}
On the other hand, $\psi_n \in \Lip(\Omega_1)$, indeed, for any $x,y\in\Omega_1$, we have 
\begin{equation}
\label{eq:psi_n_lipschitz}
|\psi_n(x)-\psi_n(y)| \le \Lip(\psi)\, (|u_n(x)-u_n(y)|+|v_n(x)-v_n(y)|) \le \Lip(\psi)\,(\Lip(u_n)+\Lip(v_n))\,\sfd(x,y).
\end{equation}
Combining \eqref{eq:support_psi_n} and \eqref{eq:psi_n_lipschitz}, we prove that $\psi_n \in \Lip_{bs}(\Omega_1)$, for every $n\in\N$.
Moreover, arguing as in Proposition \ref{prop:composition_W120}, we can prove that $\psi_n \to \psi\circ(u-v)$ in $W^{1,2}(\Omega_1)$. This finally proves that $\psi\circ(u-v) \in W^{1,2}_0(\Omega_1)$, as claimed. At this stage, using \eqref{eq:domain_monotonicity_lapldomains}, we can integrate by parts in \eqref{eq:derivative_G_domain_monotonicity}, obtaining
\begin{equation}
G'(t) = -\int_{\Omega_1} \varphi''(h_t^{\Omega_1} f- h_t^{\Omega_2} f)\,|\nabla(h_t^{\Omega_1} f- h_t^{\Omega_2} f)|^2\,\d \mm \le 0.
\end{equation}
Therefore, $0 \le G(t) \le G(0) = 0$ for every $t \ge 0$, hence $G(t) \equiv 0$ yielding that $h_t^{\Omega_1} f \le h_t^{\Omega_2} f$ $\mm$-a.e.\ on $\Omega_1$. This concludes the proof.
\end{proof}

\begin{remark}
\label{remark:from_W120_to_W12}
As a consequence of \cite{AmbrosioGigliSavare11-3} and the reflexivity of $W^{1,2}(\X)$, we have that $W^{1,2}_0(\X) = W^{1,2}(\X)$. Hence, the Dirichlet Laplacian on $\X$ and the Laplacian in the sense of Definition \ref{def:laplacian} coincide, i.e.  $D(\Delta_\X) = D(\Delta)$ and $\Delta_\X = \Delta$. Therefore, the domain monotonicity applies with $\Omega_2=\X$. 
\end{remark}
\begin{corollary}[Kac's principle]
\label{coro:kacs_principle}
Let $(\X,\sfd,\mm)$ be an infinitesimally Hilbertian metric measure space and let $\Omega\subset\X$ be open and bounded.
Let $0 \le f \in L^\infty(\mm)$ and $ t \mapsto h_t f,h_t^{\Omega} f$ being respectively the heat flow of $f$ and the Dirichlet heat flow of $f$ associated to $\Omega$. Consider a compact set $K \Subset \Omega$; then
\begin{equation}
\label{eq:first_order_kacs_principle}
\|h_t f-h_t^{\Omega} f \|_{L^1(K,\mm)} = o(t),\qquad\text{as }t\to 0^+.
\end{equation}
\end{corollary}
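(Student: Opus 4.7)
The plan is to apply Lemma \ref{prop:decay_pde} to the difference $v_t := h_t f \restr{\Omega} - h_t^\Omega f$, viewed as an element of $L^2(\Omega,\mm)$. Three things need to be verified: nonnegativity of $v_t$, the fact that $v_t$ solves the heat equation in the sense of the restricted Laplacian with $v_0 = 0$, and the required regularity in time.

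First I would establish $v_t \ge 0$. Since $f \ge 0$ and $\Omega \subset \X$, Remark \ref{remark:from_W120_to_W12} identifies $h_t f$ with the Dirichlet heat flow of $f$ on $\X$. Applying the domain monotonicity Proposition \ref{prop:domain_monotonicity} with $\Omega_1 = \Omega$ and $\Omega_2 = \X$ yields $h_t^\Omega f \le h_t f$ $\mm$-a.e.\ on $\Omega$ for every $t\geq 0$, hence $v_t \ge 0$. The initial condition $v_0 = f\restr{\Omega} - f\restr{\Omega} = 0$ in $L^2(\Omega,\mm)$ is clear.

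Next I would verify that $v$ satisfies the PDE \eqref{eq:decay_pde} in the restricted-Laplacian sense. For a.e.\ $t > 0$, one has $h_t f \in D(\Delta)$ with $\partial_t h_t f = \Delta h_t f$; by Lemma \ref{lemma:restriction_restrlap}, the restriction $h_t f \restr{\Omega}$ belongs to $D(\Delta\restr{\Omega})$ and
\begin{equation}
    \Delta\restr{\Omega}(h_t f\restr{\Omega}) = (\Delta h_t f)\restr{\Omega} = \partial_t(h_t f\restr{\Omega}).
\end{equation}
Similarly, $h_t^\Omega f \in D(\Delta_\Omega) \subset D(\Delta\restr{\Omega})$ with $\Delta\restr{\Omega} h_t^\Omega f = \Delta_\Omega h_t^\Omega f = \partial_t h_t^\Omega f$ for a.e.\ $t$, by \eqref{eq:rel_laplacians} and \eqref{eq:dirichlet_heat_equation}. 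Linearity of $\Delta\restr{\Omega}$ then gives $v_t \in D(\Delta\restr{\Omega})$ with $\partial_t v_t = \Delta\restr{\Omega} v_t$. The continuity of $t\mapsto v_t$ on $[0,1]$ into $L^2(\Omega,\mm)$ and its local absolute continuity on $(0,1)$ follow from the same properties for $t \mapsto h_t f$ and $t \mapsto h_t^\Omega f$, combined with the fact that restriction $L^2(\mm) \to L^2(\Omega,\mm)$ is $1$-Lipschitz.

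With all hypotheses of Lemma \ref{prop:decay_pde} verified, I conclude $\|v_t\|_{L^1(K,\mm)} = o(t)$, which is exactly \eqref{eq:first_order_kacs_principle} since $v_t \ge 0$ and $K\subset\Omega$. The main subtlety is the book-keeping in the PDE step: one must carefully identify the two different objects (the global heat flow restricted to $\Omega$, and the Dirichlet heat flow on $\Omega$) as elements of the common domain $D(\Delta\restr{\Omega})$ so that their difference solves a single equation; this is exactly what Lemma \ref{lemma:restriction_restrlap} is designed to enable. A minor caveat is ensuring $f \in L^2(\mm)$ so that $h_t f$ is defined globally; since only the behavior near the compact set $K$ matters, one may replace $f$ by $\varphi f$ for a cutoff $\varphi$ equal to $1$ on a neighborhood of $\overline{\Omega}$ without affecting either side of \eqref{eq:first_order_kacs_principle} up to an $o(t)$ term, again via Lemma \ref{prop:decay_pde} applied to the difference induced by the cutoff.
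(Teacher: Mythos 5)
Your proof is correct and follows essentially the same route as the paper's: set $v_t := h_t f - h_t^\Omega f$, deduce nonnegativity from domain monotonicity (Proposition \ref{prop:domain_monotonicity} together with Remark \ref{remark:from_W120_to_W12}), identify $v_t$ as a nonnegative solution of the heat equation in $D(\Delta\restr{\Omega})$ via \eqref{eq:rel_laplacians} (and Lemma \ref{lemma:restriction_restrlap}), and conclude by Lemma \ref{prop:decay_pde}. Your extra remark about ensuring $f\in L^2(\mm)$ via a cut-off is a reasonable bit of care that the paper leaves implicit, but it does not change the argument.
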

\begin{proof}
Set $v_t:=h_t f-h_t^\Omega f$. Then, by the regularity of $h_t f,h_t^\Omega f$, we get $v \in  C([0,1],L^2(\Omega,\mm)) \cap AC_{loc}((0,1),L^2(\Omega,\mm))$. Moreover, using \eqref{eq:rel_laplacians}, we see that $v_t \in D(\Delta\restr{\Omega})$ and \eqref{eq:decay_pde} is satisfied. Finally, applying Proposition \ref{prop:domain_monotonicity} (cf.\ Remark \ref{remark:from_W120_to_W12}) we get that, for every $t$, $v_t \ge 0$ $\mm$-a.e.. Thus, we are in position to apply Proposition \ref{prop:decay_pde}, which shows the validity of \eqref{eq:first_order_kacs_principle}.
\end{proof}
\begin{remark}
Note that it would be possible to deduce a stronger Kac's principle for $\RCD$ spaces, with $L^\infty$-norm in place of $L^1$-norm and with exponential error, by means of stochastic calculus and adapting the proofs of \cite{H95}. We refrain to pursue this strategy here in order to avoid using stochastic analysis tools; indeed, our proof is purely Eulerian and do not require any lower curvature bound. 
\end{remark}

\section{One-dimensional localization of the \texorpdfstring{$\CD$}{CD} condition}
\label{sec:1D-localization}

In this section we summarize some results obtained in \cite{Cav14,CavMon15,CavMil16,CM20}, which show that the $\CD$ condition can be properly localized in one-dimensional sets. In particular, we follow the presentation of \cite{CM20}, where the one-dimensional localization is obtained without assuming $\mm(\X)=1$. \\
For the reminder of the section, we assume the metric measure space $(\X,\sfd,\mm)$ to be such that $(\X,\sfd)$ is geodesic and proper and $\mm$ is $\sigma$-finite with $\supp{\mm}=\X$. Given a $1$-Lipschitz function $u:\X \to \mathbb R$, we associate to it the $\d$-cyclically monotone set
\begin{equation}
    \Gamma_u:= \{(x,y)\in \X\times \X : u(x)-u(y)= \sfd(x,y)\}
\end{equation}
and its transpose $\Gamma_u^{-1}:=\{(x,y)\in \X \times \X:(y,x)\in \Gamma_u\}$. Consequently, we introduce the transport relation $R_u$ and the transport set $T_u$ as 
\begin{equation}
    R_u:= \Gamma_u \cup \Gamma_u^{-1} \quad \text{and} \quad T_u := \pi_1 \big(R_u \setminus \{(x,y)\in \X\times \X:x=y\}\big),
\end{equation}
where $\pi_1$ denotes the projection on the first factor. We denote by $\Gamma_u(x)$ the section of $\Gamma_u$ through $x$ in the first coordinate. Then, we define the set of forward and backward branching point as 
\begin{equation}
    A^+ :=\{x \in T_u : \exists y,z \in \Gamma_u(x), (y,z)\notin R_u\},\quad A^- :=\{x \in T_u : \exists y,z \in \Gamma_u^{-1}(x), (y,z)\notin R_u\}.
\end{equation}
Consider respectively the non-branched transport set and the non-branched transport relation 
\begin{equation}
    T_u^{nb}:= T_u \setminus (A^+ \cup A^-), \qquad R_u^{nb}:= R_u \cap ( T_u^{nb} \times  T_u^{nb}).
\end{equation}
As shown in \cite{Cav14}, $R_u^{nb}$ is an equivalence relation on $T_u^{nb}$ and for every $x\in T_u^{nb}$, $R_u(x):= \{y\in \X : (x,y)\in R_u\}$ is isometric to a closed interval of $\R$. In particular, from the non-branched transport relation we obtain a partition of the non-branched transport set $T_u^{nb}$ into a disjoint family $\{X_\alpha\}_{\alpha \in Q}$ of sets, where $Q$ is a set of indices. Moreover, $\bar X_\alpha$ is isometric to a closed interval of $\R$, for any $\alpha\in Q$.

\begin{remark}
Let us introduce two families of distinguished points of the
transport set: the set of initial and final points. These are defined respectively as follows:
\begin{align}
a = \{x \in T_u\mid \nexists y\in T_u, y \neq x, \, (y,x)\in \Gamma_u\}, \quad b = \{x \in T_u\mid \nexists y\in T_u, y \neq x, \, (x,y)\in \Gamma_u\}.
\end{align}
Note that the sets $a$ and $b$ may or may not be a subset of $T_u^{nb}$, $A^+$ or $A^-$. We will denote by $a(X_\alpha)$, $b(X_\alpha)$ the initial and final point, respectively, of the transport set $X_\alpha$, whenever they exist.
\end{remark}

Consider now the quotient map $\mathfrak{Q}\colon T_u^{nb}\to Q$ associated to the partition, that is 
\begin{equation}
    \mathfrak{Q}(x)=\alpha \iff x\in X_\alpha.
\end{equation}
With this map we can endow $Q$ with the quotient $\sigma$-algebra, that is the finest $\sigma$-algebra on $Q$ for which $\mathfrak{Q}$ is measurable. At this point we can identify a disintegration of the measure $\mm \restr{T_u^{nb}}$ associated to the partition $\{X_\alpha\}_{\alpha \in Q}$. Denote by $\mathcal M^+(\X)$ the set of non-negative Radon measure on $\X$.

\begin{theorem}[{\cite[Thm.\ 3.4]{CM20}}]\label{thm:disintegration}
Let $(\X,\sfd,\mm)$ be a metric measure space and assume $\supp{\mm}=\X$. Let $u\colon\X\rightarrow\R$ be 1-Lipschitz. Let $\sfq \in \mathscr{P}(Q)$ such that $\mathfrak Q_\#( \mm \restr{T_u^{nb}}) \ll \sfq$. Then the measure $\mm$ restricted to the non-branched transport set $T_u^{nb}$ admits the following disintegration 
\begin{equation}\label{eq:disintegration}
    \mm\restr{T_u^{nb}} = \int_Q  \mm_\alpha\,\d\sfq( \alpha),
\end{equation}
and the map $Q \ni \alpha \mapsto \mm_\alpha \in \mathcal M^+(\X)$ satisfies the following
\begin{enumerate}
    \item for every $\mm$ measurable set $B$, the map $\alpha \mapsto \mm_\alpha(B)$ is $\sfq$-measurable,
    \item for $\sfq$-almost every $\alpha \in Q$, the measure $\mm_\alpha$ is concentrated in $\mathfrak{Q}^{-1}(\alpha)$,
    \item for every $\mm$ measurable set $B$ and every $\sfq$-measurable set $C$, it holds that
    \begin{equation}
        \mm (B \cap \mathfrak{Q}^{-1}(C)) = \int_C \mm_\alpha(B) \,\d\sfq( \alpha),
    \end{equation}
    \item For every compact set $K \subset \X$ there exists a constant $C_K >0$ such that $\mm_\alpha(K)\leq C_K$ for $\sfq$-almost every $\alpha\in Q$.
\end{enumerate}
Moreover, for every fixed probability measure $\sfq$ such that $\mathfrak Q_\#( \mm \restr{T_u^{nb}}) \ll \sfq$, the disintegration \eqref{eq:disintegration} is $\sfq$-essentially unique, meaning that, if any other map $Q \ni \alpha \mapsto \tilde\mm_\alpha \in \mathcal M^+(\X)$ satisfies (1)-(4), then $\mm_\alpha=\tilde \mm_\alpha$ for $\sfq$-a.e.\ $\alpha\in Q$.
\end{theorem}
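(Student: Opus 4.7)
The plan is to derive this disintegration as an application of the classical Rokhlin–Fubini disintegration theorem, applied to the quotient map $\mathfrak{Q}$, together with a $\sigma$-finite localization argument and a geometric comparison to get the local uniform bound in item (4).

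\textbf{Step 1 (Measurable structure of the quotient).} The first task is to check that $T_u^{nb}$ carries a well-defined measurable structure for which $\mathfrak{Q}$ is measurable. I would invoke the results already established in \cite{Cav14, CavMon15}: the sets $\Gamma_u, \Gamma_u^{-1}, R_u$ are closed by continuity of $u$ and $\sfd$, while $A^\pm$ and hence $T_u^{nb}$ are analytic (in particular universally measurable). Since $R_u^{nb}$ is an equivalence relation on $T_u^{nb}$ whose classes are order-isomorphic to closed intervals, one can select a Borel cross-section using a measurable selection theorem (e.g. Kuratowski–Ryll-Nardzewski), which realizes $Q$ as a measurable subset of $\X$ and endows the quotient $\sigma$-algebra on $Q$ with the trace Borel structure. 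In particular the pushforward $\mathfrak{Q}_\#(\mm\restr{T_u^{nb}})$ is a well-defined $\sigma$-finite Borel measure on $Q$.

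\textbf{Step 2 (Reduction to the finite case).} Since $\mm$ is $\sigma$-finite and $\X$ is proper, I would write $T_u^{nb}=\bigcup_n K_n$ with $K_n$ an increasing sequence of compact sets having $\mm(K_n)<\infty$. On each $K_n$ the measure $\mm\restr{K_n}$ is finite, the map $\mathfrak{Q}\colon K_n\to Q$ is measurable between standard Borel spaces, and the classical disintegration theorem (see e.g. Dellacherie–Meyer) produces a $\mathfrak{Q}_\#(\mm\restr{K_n})$-essentially unique family $\{\mm_\alpha^n\}_{\alpha\in Q}$ of probability measures on the fibers satisfying properties analogous to (1)–(3). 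Given the fixed $\sfq$ with $\mathfrak Q_\#(\mm\restr{T_u^{nb}})\ll \sfq$, one multiplies each $\mm_\alpha^n$ by the Radon–Nikodym density $\frac{\d \mathfrak Q_\#(\mm\restr{K_n})}{\d \sfq}(\alpha)$ to obtain measures absolutely continuous with respect to $\sfq$, and then passes to the monotone limit
\begin{equation}
\mm_\alpha := \lim_{n\to\infty} \mm_\alpha^n,
\end{equation}
which is a well-defined element of $\mathcal M^+(\X)$ concentrated on $\mathfrak Q^{-1}(\alpha)$ for $\sfq$-a.e.\ $\alpha$. Properties (1)–(3) are preserved in the limit by monotone convergence.

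\textbf{Step 3 (Uniform local bound, the main obstacle).} The most delicate item is (4): a uniform bound $\mm_\alpha(K)\le C_K$ on compact $K$, valid $\sfq$-almost surely in $\alpha$, not just $\mm_\alpha(K)<\infty$. Here one cannot argue purely abstractly; it is exactly at this point that the geometric fact that each $\bar X_\alpha$ is isometric to a closed real interval (cf.\ \cite{Cav14}) enters. I would cover $K$ by finitely many balls $B_r(x_i)$ of small radius, and observe that the intersection $X_\alpha \cap B_r(x_i)$ is isometric to an interval of length at most $2r$. Combined with the local uniform doubling of $\mm$ and the one-dimensional CD/MCP-type estimates available for the conditional measures (a consequence of the needle decomposition machinery underlying \cite{CM20}), one obtains a bound on $\mm_\alpha(B_r(x_i))$ depending only on $r$ and $x_i$, and summing gives $\mm_\alpha(K)\le C_K$ uniformly in $\alpha$. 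If one does not want to invoke any curvature at this stage, the bound can be derived purely from the fact that the conditional density along the isometric interval is locally bounded by the total mass of a neighborhood divided by a controlled length, using Fubini together with the $\sigma$-finiteness of $\mm$ on bounded sets.

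\textbf{Step 4 (Essential uniqueness).} Finally, uniqueness $\sfq$-a.e.\ follows from the standard uniqueness part of the disintegration theorem: if $\{\tilde\mm_\alpha\}$ also satisfies (1)–(3), then for every pair $B$ measurable in $\X$ and $C$ measurable in $Q$ the two integrals $\int_C \mm_\alpha(B)\,\d\sfq$ and $\int_C \tilde\mm_\alpha(B)\,\d\sfq$ agree, and a monotone class argument applied to a countable generating family for the Borel $\sigma$-algebra of $\X$ (which exists because $(\X,\sfd)$ is separable) yields $\mm_\alpha=\tilde\mm_\alpha$ for $\sfq$-a.e.\ $\alpha$. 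The expected obstacle is therefore not the existence or uniqueness itself but the quantitative bound (4), which requires merging the abstract disintegration with the one-dimensional metric structure of the transport rays.
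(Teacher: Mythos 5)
The paper does not prove this statement; it is cited verbatim from \cite{CM20} (Theorem 3.4 there), so there is no ``paper's own proof'' to compare against. Judged on its own merits, your outline is reasonable in spirit (the proof in the literature does indeed combine a measurable-selection argument, an abstract disintegration theorem for $\sigma$-finite measures, and the one-dimensional ray structure), and Steps 1, 2, and 4 capture the standard machinery correctly. However, there is a genuine gap in Step 3.

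The theorem is stated for a general metric measure space with $\supp\mm=\X$ and makes no curvature assumption, so your appeal to ``one-dimensional CD/MCP-type estimates'' and local uniform doubling is not available at this level of generality: these are hypotheses that enter later in the paper (e.g.\ Theorem \ref{thm:1d_localization}), not here. Your stated fallback --- that the bound in item (4) follows ``purely from Fubini together with the $\sigma$-finiteness of $\mm$ on bounded sets'' --- is not an argument, and it is not obvious that it can be made into one within the Rokhlin framework you set up. Concretely, after writing $\mm_\alpha=\rho(\alpha)\hat\mm_\alpha$ with $\hat\mm_\alpha$ the normalized conditional probabilities and $\rho=\d\,\mathfrak Q_\#(\mm\restr{T_u^{nb}})/\d\sfq$, you need an $L^\infty_{\mathrm{loc}}$ control on $\rho$ over the (generally noncompact) set $\{\alpha: X_\alpha\cap K\neq\emptyset\}$; nothing in your construction supplies it. The actual proof in \cite{Cav14,CM20} does not obtain item (4) as an afterthought of abstract disintegration --- it constructs the conditional measures explicitly through the ray map $g\colon\mathrm{Dom}(g)\subset Q\times\R\to T_u^{nb}$, which parametrizes the rays by arclength, pulls $\mm$ back to a product-like structure, and extracts the local uniform bound from that concrete parametrization and local finiteness of $\mm$. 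In short: identify the ray map and use it to build $\mm_\alpha$ directly, rather than post-processing the output of Rokhlin's theorem; the uniform bound in (4) is a byproduct of that construction, not of the abstract disintegration theorem alone.
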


The last statement is quite general and does not require any particular rigidity assumption on the metric measure space $(\X,\sfd,\mm)$. However, it becomes particularly interesting when applied to essentially non-branching $\CD(K,N)$ spaces. We refer the reader to \cite{CM20} for the precise definitions. Note that, In our setting, as ${\sf RCD}(K,N)$ spaces are non-branching (see Theorem.\ \ref{thm:nonbranch_rcd}), and so essentially non-branching, the following results apply.

\begin{proposition}[{\cite[Lem.\ 3.5]{CM20}}]
\label{prop:measure_non_branched_set}
Let $(\X,\sfd,\mm)$ be an essentially non-branching space satisfying the $\CD(K,N)$ condition for some $K\in \R$ and $N \in (1,\infty)$. Then, for every 1-Lipschitz function $u: \X\to \R$, it holds that $\mm(T_u \setminus T_u^{nb})=0$.
\end{proposition}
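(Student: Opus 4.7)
The plan is to argue by contradiction. Since replacing $u$ with $-u$ interchanges $\Gamma_u$ with $\Gamma_u^{-1}$ (hence $A^+$ with $A^-$) while preserving $R_u$ and $T_u^{nb}$, it suffices to show $\mm(A^+)=0$. I would suppose $\mm(A^+)>0$ and, from this, manufacture an optimal dynamical transport plan out of an absolutely continuous measure that exhibits branching on a set of positive mass, in order to contradict the essentially non-branching $\CD(K,N)$ hypothesis.

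Concretely, I would first apply a Borel/analytic selection theorem (Kuratowski--Ryll-Nardzewski or von Neumann) to the Borel set
\begin{equation*}
\mathcal{B}:=\big\{(x,y,z)\in A^+\times\X\times\X\,:\,(x,y),(x,z)\in\Gamma_u,\ (y,z)\notin R_u\big\},
\end{equation*}
extracting Borel maps $y,z\colon A^+\to\X$ with $(x,y(x),z(x))\in\mathcal{B}$ for every $x\in A^+$. After shrinking $A^+$ to a Borel subset of positive $\mm$-measure on which $\sfd(x,y(x))$ and $\sfd(x,z(x))$ are uniformly bounded away from $0$ and from $+\infty$, set $\mu_0:=\mm(A^+)^{-1}\mm\restr{A^+}\in\mathscr{P}(\X)$ and $\mu_1^y:=y_\#\mu_0$, $\mu_1^z:=z_\#\mu_0$. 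Since $u$ is $1$-Lipschitz, any pair in $\Gamma_u$ is connected by a geodesic whose whole graph lies in $\Gamma_u$, and any such geodesic is optimal for the distance cost; via measurable selection in $C([0,1],\X)$ I would produce dynamical plans $\nu_y,\nu_z\in\mathscr{P}(C([0,1],\X))$ concentrated on geodesics from $x$ to $y(x)$ (resp.\ $z(x)$), both with marginal $\mu_0$ at time $0$. The mixed plan $\nu:=\tfrac12(\nu_y+\nu_z)$ has marginals $\mu_0$ and $\tfrac12(\mu_1^y+\mu_1^z)$ and, by $\sfd$-cyclical monotonicity of $\Gamma_u$, is again an optimal dynamical plan.

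At this point I would invoke the essentially non-branching $\CD(K,N)$ hypothesis. Since $\mu_0\ll\mm$, the structure theory of such spaces (Cavalletti--Mondino, Rajala--Sturm) forces $\nu$ to be concentrated on a set of pairwise non-branching geodesics. On the other hand, by construction $\nu$ carries, for $\mu_0$-a.e.\ $x\in A^+$, two geodesics starting at $x$ whose endpoints satisfy $(y(x),z(x))\notin R_u$; any shared initial segment of these two geodesics would, via the $1$-Lipschitz property of $u$, force $(y(x),z(x))\in R_u$, so the two geodesics must branch at $x$ itself, contradicting the essential non-branching of $\nu$. Hence $\mm(A^+)=0$. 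The main obstacle is precisely this last step: one needs that essential non-branching really applies to the particular plan $\nu$ even though the target $\tfrac12(\mu_1^y+\mu_1^z)$ need not be absolutely continuous (this is where the $\CD(K,N)$ assumption, rather than just essential non-branching, is used), and the measurable selection in Steps 1--2 must be arranged so that the pointwise branching at individual $x$ survives as branching of $\nu$ on a positive-mass set of geodesics, rather than being absorbed into a null exceptional set.
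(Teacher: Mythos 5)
The paper states this as an external result, citing \cite[Lem.\ 3.5]{CM20} without reproducing a proof, so there is no internal argument to compare against; what follows assesses your proposal on its own terms. Your overall strategy — argue by contradiction, measurably select two outgoing branches from points of $A^+$, assemble a dynamical plan that ``forks,'' and invoke the essentially non-branching $\CD(K,N)$ hypothesis — is indeed the strategy of Cavalletti--Mondino and of the closely related Cavalletti--Huesmann arguments. You also correctly identify the principal technical obstruction: the target measure $\tfrac12(\mu_1^y+\mu_1^z)$ need not be absolutely continuous, so the essentially non-branching hypothesis does not apply directly. What is missing is the standard fix: one does not transport to the endpoints $y(x),z(x)$ but only to an intermediate time $s\in(0,1)$, and one uses the $\CD(K,N)$ (indeed already $\MCP(K,N)$) interpolation density estimate to guarantee that $(\e_s)_\#\nu$ is again absolutely continuous; essential non-branching is then applied to the pair $(\mu_0,(\e_s)_\#\nu)$. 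A further point you gloss over is that $\Gamma_u$-cyclical monotonicity is $\sfd$-cyclical monotonicity, not $\sfd^2$-cyclical monotonicity, so identifying your mixed plan as a $W_2$-optimal dynamical plan requires an additional normalisation (e.g.\ restricting to transport between fixed $u$-levels, so that the geodesics in play all have the same length).

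There is also a concrete error in the final step. You assert that any shared initial segment of the geodesics $\gamma^1\colon x\to y(x)$ and $\gamma^2\colon x\to z(x)$ would force $(y(x),z(x))\in R_u$, so the two geodesics must diverge at $x$ itself. This implication is false: on a ``Y''-shaped tripod, with $u(\cdot)=-\sfd(\cdot,P)$ for $P$ the foot of the stem and $x$ on the stem, both upper arms lie in $\Gamma_u(x)$, the two geodesics from $x$ share the stem up to the branching vertex, yet for $y,z$ on the two arms with the same $u$-value one has $u(y)-u(z)=0\ne\sfd(y,z)$, so $(y,z)\notin R_u$. In fact the logic runs the other way: two geodesics from $x$ that share a \emph{nontrivial} initial segment and then separate are precisely geodesics that branch in the sense of the definition preceding Theorem~\ref{thm:nonbranch_rcd}, so that case contradicts essential non-branching directly. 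The delicate case is the opposite one, when $\gamma^1$ and $\gamma^2$ agree only at $t=0$: two distinct geodesics coinciding only at the initial point do \emph{not} branch in the technical sense, so essential non-branching is not violated by them. To dispose of that case one needs the stronger structural consequence of essential non-branching together with $\CD(K,N)$/$\MCP(K,N)$, namely that the optimal dynamical plan from an absolutely continuous $\mu_0$ is \emph{unique and induced by a map} (Cavalletti--Huesmann, Gigli--Rajala--Sturm); a plan that assigns two distinct geodesics from $x$ for a positive-$\mm$-measure set of $x$ cannot be induced by a map. Without invoking this map-uniqueness input, the argument has a genuine gap precisely at the point where the contradiction is supposed to appear.
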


The disintegration presented in Theorem \ref{thm:disintegration} then allows to localize the $\CD(K,N)$ condition to the one-dimensional elements of the partition $\{X_\alpha\}_{\alpha \in Q}$. 

\begin{theorem}[{\cite[Thm.\ 3.6]{CM20}}]
\label{thm:1d_localization}
Let $(\X,\sfd,\mm)$ be an essentially non-branching space satisfying the $\CD(K,N)$ condition for some $K\in \R$ and $N \in (1,\infty)$. For any 1-Lipschitz function $u:\X\to \R$ and fixed $\sfq \in \mathscr{P}(Q)$ such that $\sfq \ll \mathfrak Q_\#( \mm \restr{T_u^{nb}}) \ll \sfq$, there exists a $\sfq$-essentially unique disintegration 
\begin{equation}
\label{eq:disintegration_mm}
    \mm \restr{T_u} = \int_Q  \mm_\alpha\,\d\sfq( \alpha),
\end{equation}
provided by Theorem \ref{thm:disintegration}. Moreover, for $\sfq$-almost every $\alpha\in Q$, $\mm_\alpha$ is a Radon measure with $\mm_\alpha= h_\alpha \cdot \mathscr H^1 \restr{X_\alpha} \ll \mathscr H^1 \restr{X_\alpha}$ and $(\bar X_\alpha,|\cdot|,\mm_\alpha)$ verifies the $\CD(K,N)$ condition.
\end{theorem}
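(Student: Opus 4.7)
The plan is to first establish the disintegration in \eqref{eq:disintegration_mm} by combining Proposition \ref{prop:measure_non_branched_set} with Theorem \ref{thm:disintegration}. Since an essentially non-branching $\CD(K,N)$ space satisfies $\mm(T_u\setminus T_u^{nb})=0$, the measures $\mm\restr{T_u}$ and $\mm\restr{T_u^{nb}}$ coincide, so the quotient map $\mathfrak{Q}\colon T_u^{nb}\to Q$ is defined $\mm$-almost everywhere on $T_u$. The two-sided assumption $\sfq\ll \mathfrak{Q}_\#(\mm\restr{T_u^{nb}})\ll \sfq$ allows us to invoke Theorem \ref{thm:disintegration} to produce the conditional measures $\alpha\mapsto\mm_\alpha$ and to transfer the essential uniqueness statement to $\sfq$-measurable maps.

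The core content is the one-dimensional $\CD(K,N)$ property of each fiber $(\bar X_\alpha,|\cdot|,\mm_\alpha)$, which I would obtain by localizing the global $\CD(K,N)$ inequality along rays. Fix $\sfq$-a.e.\ $\alpha\in Q$: the closure $\bar X_\alpha$ is isometric, via a restriction of $u$, to a closed interval $I_\alpha\subset\R$, hence any two compactly supported probability measures $\mu_0^\alpha,\mu_1^\alpha\ll\mm_\alpha$ admit a unique $W_2$-geodesic $s\mapsto\mu_s^\alpha$ in $\bar X_\alpha$. Given a small $\sfq$-measurable neighborhood $A\subset Q$ of $\alpha$, I would lift to global measures by averaging
\begin{equation*}
    \mu_i^A:= \frac{1}{\sfq(A)}\int_A \mu_i^{\alpha'}\,\d\sfq(\alpha'),\qquad i=0,1,
\end{equation*}
where $\alpha'\mapsto\mu_i^{\alpha'}$ is a measurable lifting obtained by pushing forward $\mu_i^\alpha$ through the natural identifications $I_\alpha\simeq I_{\alpha'}$. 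Since $\Gamma_u$ is $\sfd$-cyclically monotone and $u$ is 1-Lipschitz, any $W_2$-optimal plan between $\mu_0^A$ and $\mu_1^A$ is concentrated on $R_u$ and therefore decouples fiberwise across rays; the global $W_2$-geodesic $s\mapsto\mu_s^A$ then has ray-wise disintegration exactly $\mu_s^{\alpha'}$. Plugging $(\mu_0^A,\mu_1^A)$ into the global $\CD(K,N)$ inequality, rewriting the $N$-R\'enyi entropies through the disintegration, and shrinking $A$ to $\{\alpha\}$ via a Lebesgue differentiation argument on $(Q,\sfq)$ yields the one-dimensional $\CD(K,N)$ inequality for $\mm_\alpha$.

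Once the one-dimensional $\CD(K,N)$ property on each fiber is available, the absolute continuity $\mm_\alpha\ll\mathscr{H}^1\restr{X_\alpha}$ becomes a by-product: atoms or $\mathscr{H}^1$-singular components in $\mm_\alpha$ would force the $N$-R\'enyi entropy along suitable geodesic interpolations on $\bar X_\alpha$ to diverge, contradicting the displacement convexity encoded in $\CD(K,N)$. The density $h_\alpha$ then satisfies the standard one-dimensional concavity estimates associated with $\CD(K,N)$.

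The main obstacle I foresee is the rigorous construction of the $\sfq$-measurable lifting $\alpha'\mapsto\mu_i^{\alpha'}$ together with the decoupling of the global $W_2$-geodesic along rays. This is where essential non-branching (ensuring uniqueness of the one-dimensional geodesic inside each $\bar X_\alpha$), the $\sfd$-cyclical monotonicity of $\Gamma_u$ (confining optimal transport to $R_u$), and the essential uniqueness in Theorem \ref{thm:disintegration} (identifying the disintegration of $\mu_s^A$ with $\mu_s^{\alpha'}$) must be combined delicately. Once the decoupling is secured, the passage from global to local $\CD$ is a robust, if technically involved, Lebesgue-point argument.
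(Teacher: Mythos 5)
This theorem is not proved in the paper: it is stated as a direct citation of \cite[Thm.\ 3.6]{CM20}, and Section \ref{sec:1D-localization} explicitly presents it, together with Theorem \ref{thm:disintegration} and Proposition \ref{prop:measure_non_branched_set}, as imported background material. There is therefore no proof here to compare yours against; the relevant argument is the one in Cavalletti--Mondino and its precursors. Your high-level outline --- disintegrate via Theorem \ref{thm:disintegration} after discarding $T_u\setminus T_u^{nb}$ by Proposition \ref{prop:measure_non_branched_set}, then localize the $\CD(K,N)$ inequality by plugging ray-adapted test measures into the global inequality and passing to a Lebesgue point in $Q$ --- is in the right spirit, but several load-bearing steps do not work as you have written them.

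Concretely: (i) the ``natural identifications $I_\alpha\simeq I_{\alpha'}$'' do not exist, since the rays have $\alpha$-dependent lengths and endpoints; Cavalletti--Mondino never identify distinct rays, but instead take $\mu_0$ a normalized restriction of $\mm$ to a thin tube in $T_u$ and $\mu_1$ its pushforward under translation by a fixed amount along the rays, so the transport respects the ray decomposition by construction rather than by an abstract optimality argument. (ii) Your claim that any quadratic-cost optimal plan between $\mu_0^A,\mu_1^A$ is concentrated on $R_u$ because $\Gamma_u$ is $\sfd$-cyclically monotone conflates the linear and quadratic problems: $\sfd$-cyclical monotonicity certifies optimality for the cost $\sfd$, whereas you need $\sfd^2$-cyclical monotonicity, which holds for the monotone translations used in \cite{CM20} but fails for arbitrary fiberwise measures (two parallel rays with oppositely-ordered endpoint masses give a cheaper cross-ray plan). (iii) You propose to derive $\mm_\alpha\ll\mathscr{H}^1\restr{X_\alpha}$ as a corollary of the fiberwise $\CD(K,N)$ via an entropy-divergence heuristic; but the $N$-R\'enyi entropy only sees the part of the interpolating measure absolutely continuous with respect to $\mm_\alpha$, so a singular part of $\mm_\alpha$ does not obviously force a blow-up. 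In the actual argument the absolute continuity of the conditional measures is established first, directly from the essentially non-branching ${\sf MCP}(K,N)$ property (a consequence of $\CD(K,N)$) via contraction estimates for the ray translations, and only afterwards is the one-dimensional $\CD(K,N)$ condition on $(\bar X_\alpha,|\cdot|,\mm_\alpha)$ obtained. Your sketch reverses this logical dependence and leaves the absolute-continuity step without a real proof.
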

\begin{remark}
\label{rem:mutual_abs_cont}
We point out that, in the original statement of \cite[Thm.\ 3.6]{CM20}, the measure $\sfq\in\mathscr{P}(Q)$ is assumed to satisfy the property $\mathfrak Q_\#( \mm \restr{T_u^{nb}}) \ll \sfq$. However, in the proof of \cite[Lem.\ 3.3]{CM20}, the authors actually build a measure $\tilde\mm\in\mathscr{P}(\X)$ such that $\tilde \mm \ll \mm \restr{T_u^{nb}} \ll \tilde \mm$. Thus, we deduce that $\sfq:= \mathfrak Q_\# \tilde \mm \in \mathscr{P}(Q)$ satisfies $\sfq \ll \mathfrak Q_\#( \mm \restr{T_u^{nb}}) \ll \sfq$.
\end{remark}
\subsection{The Laplacian of the distance function}
We report here one of the main results contained in \cite{CM20}, namely a representation formula for the Laplacian of the signed distance function from the boundary of an open set. Firstly, we recall the notion of Radon functional.

\begin{definition}[Radon functional]
Let $(\X,\sfd)$ be a locally compact metric space and let $\Omega\subset\X$ be an open set. Then, a \emph{Radon functional} over $\Omega$ is a linear functional $T\colon\Lip_{bs}(\Omega)\rightarrow\R$ such that, for every compact subset $K\Subset\Omega$, there exists a constant $C_K\geq 0$ so that 
\begin{equation}
    |T(f)|\leq C_K \max_{K}|f|,\qquad\forall\,f\in\Lip_{bs}(\Omega)\quad\text{with}\quad\supp{f}\subset K.
\end{equation}
\end{definition}
One can consider the following two operations involving Radon functionals:
\begin{itemize}
    \item[i)] \emph{Multiplication}. Given a Radon functional $T$ over an open set $\Omega\subset\X$, we define, for $v \in {\rm Lip}(\Omega)$, a Radon functional $ v T \colon {\rm Lip}_{bs}(\Omega) \to \mathbb{R}$ as 
    \begin{equation}
    \label{eq:multiplication_radonfunctional}
        [v T](f) := T(vf)\qquad\text{for every }f \in {\rm Lip}_{bs}(\Omega);
    \end{equation}
    \item[ii)] \emph{Pushforward}. Consider a locally compact metric space $(\Y,\sfd_\Y)$. Let $T$ be a Radon functional over an open set $\Omega\subset\X$ and let $\Phi \colon \Omega \to \Y$ be Lipschitz such that $\Phi(\Omega)\subset\Y$ is open. Then, we define the following Radon functional over $\Phi(\Omega)\subset \Y$
    \begin{equation}
    \label{eq:pushforward_radonfunctional}
        [\Phi_{\#} T](f):= T(f \circ \Phi) \qquad \text{for every }f \in {\rm Lip}_{bs}(\Phi(\Omega)).
    \end{equation}
\end{itemize}
Secondly, we introduce the definition of distributional Laplacian, following \cite{CM20} (see also \cite[Def.\ 4.4]{Gigli12}).
\begin{definition}[Distributional Laplacian]
Let $(\X,\sfd,\mm)$ be infinitesimally Hilbertian. Let $\Omega\subset\X$ be open and let $u\in \Lip(\X)$. We say that $u\in D(\pmb\Delta,\Omega)$ if there exists a (unique) Radon functional $T$ over $\Omega$ such that
\begin{equation}
\label{eq:distributional_laplacian}
    T(f)=-\int_\X \la\nabla f,\nabla u\ra\,\d\mm\qquad\forall\,f\in \Lip_{bs}(\Omega).
\end{equation}
In this case, we set $\pmb\Delta u\restr{\Omega}:= T$.
\end{definition}

\begin{remark}
Let $u\in D(\pmb{\Delta},\Omega)\cap W^{1,2}(\Omega)$ and consider its distributional Laplacian $\pmb\Delta u\restr{\Omega}$ as defined above. If there exists $h\in L^2(\Omega,\mm)$ such that
\begin{equation}
    \pmb\Delta u\restr{\Omega}(f)=\int_{\Omega} h f\, \d \mm, \qquad\forall\,f\in \Lip_{bs}(\Omega),
\end{equation} 
we have that $u \in D(\Delta\restr{\Omega})$ and $\Delta\restr{\Omega} u =h$, where $\Delta\restr{\Omega}$ is the restricted Laplacian as in Definition \ref{def:restricted_laplacian}.
\end{remark}

Let $\Omega\subset \X$ be open and consider the signed distance function from $\partial\Omega$, defined as in \eqref{eq:signed_distance_fun}, namely
\begin{equation}
    \delta\colon \X\rightarrow\R;\qquad \delta(x)=\sfd(x,\partial \Omega) \chi_{\Omega}(x)-\sfd(x,\partial\Omega) \chi_{\X \setminus {\Omega}}(x).
\end{equation}
Since $\delta$ is $1$-Lipschitz, we can apply Theorem \ref{thm:1d_localization} and obtain a disintegration of $\mm$, associated to $\delta$. Note that, by construction, we have
\begin{equation}
\label{eq:init_final_pts_delta}
    \delta(a(X_\alpha))\ge 0\qquad\text{and}\qquad\delta(b(X_\alpha))\le 0,
\end{equation}
for any $\alpha\in Q$, cf. \ \cite[Rmk.\ 4.12]{CM20}. Moreover, we have the following representation formula for the Laplacian of $\delta$, in the setting of $\RCD$ spaces.
\begin{proposition}[{\cite[Cor.\ 4.16]{CM20}}]
\label{prop:repr_laplacian_delta}
Let $(\X,\sfd,\mm)$ be an $\RCD(K,N)$ space for $K \in \mathbb{R}$ and $N \in (1,\infty)$. Consider $\delta$ as above and the associated disintegration
\begin{equation}
\label{eq:restriction_mm}
    \mm \restr{\X \setminus \partial\Omega} = \int_Q h_\alpha\,\d \mathscr{H}^1\restr{X_\alpha}\,\d \sfq (\alpha).
\end{equation}
Then $\delta\in D(\pmb\Delta, \X \setminus \partial \Omega)$ and\footnote{With a slight abuse of notation, in what follows, we always omit the restriction to $\X\setminus\partial\Omega$ and write simply $\pmb\Delta\delta$.}
\begin{equation}
\label{eq:repr_laplacian_delta}
\pmb\Delta \delta = -(\log h_\alpha)' \mm\restr{\X \setminus \partial \Omega} - \int_Q h_\alpha [\delta_{a(X_\alpha) \cap \Omega}- \delta_{b(X_\alpha) \cap (\X \setminus \Omega)}]\,\d \sfq (\alpha).
\end{equation}
We denote $[\pmb{\Delta} \delta]^{reg}:=-(\log h_\alpha)'$ $\mm$-a.e..
Moreover, the following comparison holds
\begin{equation}
\label{eq:bound_low_upp_laplacian_distance}
    \begin{aligned}
    & \pmb\Delta \delta \le (N-1) \frac{s'_{\frac{K}{N-1}}(\sfd_{b(X_\alpha)})}{s_{\frac{K}{N-1}}(\sfd_{b(X_\alpha)})} \mm \restr{\X \setminus \partial \Omega}+ \int_Q h_\alpha \delta_{b(X_\alpha) \cap (\X \setminus \Omega)}\,\d \sfq(\alpha),\\
    & \pmb\Delta \delta \ge -(N-1) \frac{s'_{\frac{K}{N-1}}(\sfd_{a(X_\alpha)})}{s_{\frac{K}{N-1}}(\sfd_{a(X_\alpha)})} \mm \restr{\X \setminus \partial \Omega}- \int_Q h_\alpha \delta_{a(X_\alpha) \cap \Omega}\,\d \sfq(\alpha),
    \end{aligned}
\end{equation}
where $\sfd_p(\cdot):=\sfd(p,\cdot)$ and, for $\kappa \in \R$, $s_\kappa\colon [0,J_k]\rightarrow\R$ is defined as
\begin{equation}
\label{eq:model_fun}
    s_\kappa(\theta)=
    \begin{cases}
        \frac{1}{\sqrt\kappa}\sin(\theta\sqrt\kappa), &\text{if }\kappa>0,\\
        \theta, &\text{if }\kappa=0,\\
        \frac{1}{\sqrt{-\kappa}}\sinh(\theta\sqrt{-\kappa}), &\text{if }\kappa<0.
    \end{cases}
\end{equation}
Here $J_{\kappa} := \pi/\sqrt{\kappa}$ for $\kappa >0$ and $J_{\kappa} = +\infty$ otherwise.
\end{proposition}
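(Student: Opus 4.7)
\medskip

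The starting point is to apply the 1D localization Theorem \ref{thm:1d_localization} to the $1$-Lipschitz function $\delta$. This produces the disintegration $\mm\restr{T_\delta}=\int_Q h_\alpha\mathcal{H}^1\restr{X_\alpha}\,\d\sfq(\alpha)$, with each ray $(\bar X_\alpha,|\cdot|,\mm_\alpha)$ being a $\CD(K,N)$ space. Two observations are needed before computing. First, every $x\in\X\setminus\partial\Omega$ lies in $T_\delta$: indeed, since $\X$ is proper, we can take $y\in\partial\Omega$ minimizing $\sfd(x,\partial\Omega)$ and verify that $(x,y)\in\Gamma_\delta$ when $x\in\Omega$, or $(y,x)\in\Gamma_\delta$ when $x\in\X\setminus\bar\Omega$. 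Second, by Proposition \ref{prop:measure_non_branched_set} applied to the non-branching $\RCD(K,N)$ space $(\X,\sfd,\mm)$ (cf. Theorem \ref{thm:nonbranch_rcd}), we have $\mm(T_\delta\setminus T_\delta^{nb})=0$, so the disintegration describes $\mm$ on $\X\setminus\partial\Omega$ up to $\mm$-null sets.

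\medskip

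Next, I would compute $\pmb\Delta\delta(f)=-\int_\X\la\nabla f,\nabla\delta\ra\,\d\mm$ for an arbitrary $f\in\Lip_{bs}(\X\setminus\partial\Omega)$ by reducing to a 1D integration along each ray. Parametrizing $X_\alpha$ by arclength $\gamma_\alpha\colon [0,L_\alpha]\to\X$ from $a(X_\alpha)$ to $b(X_\alpha)$, the function $\delta\circ\gamma_\alpha$ is affine with slope $-1$, so $\nabla\delta$ aligns with $-\dot\gamma_\alpha$ on the ray and $\la\nabla f,\nabla\delta\ra(\gamma_\alpha(r))=-(f\circ\gamma_\alpha)'(r)$. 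Inserting the disintegration,
\begin{equation}
\int_\X\la\nabla f,\nabla\delta\ra\,\d\mm=-\int_Q\int_0^{L_\alpha}(f\circ\gamma_\alpha)'(r)\,h_\alpha(\gamma_\alpha(r))\,\d r\,\d\sfq(\alpha).
\end{equation}
The 1D $\CD(K,N)$ property of $(\bar X_\alpha,|\cdot|,\mm_\alpha)$ ensures that $h_\alpha$ is locally Lipschitz on the open ray, which legitimates an integration by parts on each $X_\alpha$. The resulting bulk term is $-\int_\X f\,(\log h_\alpha)'\,\d\mm$, where $(\log h_\alpha)'$ denotes the $\mm$-a.e.\ defined derivative along the ray in the direction of decrease of $\delta$, while the endpoint contributions are $h_\alpha(a(X_\alpha))f(a(X_\alpha))$ and $h_\alpha(b(X_\alpha))f(b(X_\alpha))$. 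By \eqref{eq:init_final_pts_delta}, only those endpoints in $\Omega$ (for $a$) or in $\X\setminus\Omega$ (for $b$) give a nontrivial pairing with $f$, since $f$ vanishes on $\partial\Omega$. Rearranging yields the representation \eqref{eq:repr_laplacian_delta}.

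\medskip

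For the comparison estimates \eqref{eq:bound_low_upp_laplacian_distance}, I would invoke the classical characterization of the 1D $\CD(K,N)$ condition: $(\bar X_\alpha,|\cdot|,h_\alpha\mathcal{H}^1)$ is $\CD(K,N)$ if and only if $h_\alpha^{1/(N-1)}$ is $(K/(N-1))$-concave on $\bar X_\alpha$, which through a standard ODE comparison translates into the pointwise bounds
\begin{equation}
-(N-1)\frac{s'_{K/(N-1)}(\sfd_{a(X_\alpha)})}{s_{K/(N-1)}(\sfd_{a(X_\alpha)})}\le -(\log h_\alpha)'\le (N-1)\frac{s'_{K/(N-1)}(\sfd_{b(X_\alpha)})}{s_{K/(N-1)}(\sfd_{b(X_\alpha)})}
\end{equation}
$\mm$-a.e.\ on each ray. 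Plugging these into \eqref{eq:repr_laplacian_delta} and observing that the Dirac contributions at $a(X_\alpha)\cap\Omega$ (resp.\ $b(X_\alpha)\cap(\X\setminus\Omega)$) enter with a nonpositive (resp.\ nonnegative) sign gives \eqref{eq:bound_low_upp_laplacian_distance} after dropping the appropriate boundary term.

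\medskip

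The main obstacle, and the crucial technical step, is the rigorous identification in the second paragraph: passing from the abstract pairing $\la\nabla f,\nabla\delta\ra$ in the Hilbert module $L^2(T\X)$ to the concrete 1D directional derivative $-(f\circ\gamma_\alpha)'$ along $\sfq$-a.e.\ ray. This hinges on the fact that, for an $\RCD(K,N)$ space and the specific choice $u=\delta$, the disintegration is compatible with the differential calculus of $\X$: the minimal weak upper gradient of $\delta$ (which equals $1$ by Proposition \ref{prop:eikonal_weak}) is realized, at $\mm$-a.e.\ point, as the metric speed of the transport ray through that point, and $\nabla \delta$ acts on $\nabla f$ by differentiation along that ray. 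This compatibility, together with the absolute continuity of $h_\alpha$ along each ray needed to justify the integration by parts, is exactly what \cite{CM20} establishes using the non-branching of transport rays inherited from Theorem \ref{thm:nonbranch_rcd}.
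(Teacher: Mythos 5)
The paper does not prove this proposition — it is quoted directly as \cite[Cor.\ 4.16]{CM20} and used as a black box, so there is no in-paper argument to compare against. Your reconstruction of the CM20 argument is structurally faithful: the reduction to the ray disintegration (together with $\X\setminus\partial\Omega\subset T_\delta$, which the paper records in Remark \ref{rmk:symmetric_difference}, and $\mm(T_\delta\setminus T_\delta^{nb})=0$ from Proposition \ref{prop:measure_non_branched_set}), the ray-by-ray integration by parts against the locally Lipschitz densities $h_\alpha$ furnished by the 1D $\CD(K,N)$ localization, and the $K/(N-1)$-concavity of $h_\alpha^{1/(N-1)}$ yielding the comparison bounds. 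Your sign bookkeeping is consistent as long as $(\log h_\alpha)'$ is read as the derivative in the direction of decreasing $\delta$, i.e.\ from $a(X_\alpha)$ towards $b(X_\alpha)$; then the bulk term is $-(\log h_\alpha)'$ and the Dirac masses at $a(X_\alpha)\cap\Omega$ and $b(X_\alpha)\cap(\X\setminus\Omega)$ carry the stated signs, with endpoints on $\partial\Omega$ dropping out because $f$ vanishes there. (Note this is the opposite orientation from the one used in Proposition \ref{prop:parameterization_tubular_neighborhood}, where $\delta(\gamma_\alpha(s))=s$; you should make the convention explicit if this sketch were expanded.)

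The genuine gap — which you correctly flag as the ``main obstacle'' — is the pointwise identification $\la\nabla f,\nabla\delta\ra(\gamma_\alpha(r)) = -(f\circ\gamma_\alpha)'(r)$ for $\mm$-a.e.\ points. This is \emph{not} a formal consequence of $|\nabla\delta|=1$ and the disintegration: $\nabla\delta$ is an element of the abstract normed module $L^2(T\X)$, and the statement that its pairing with $\nabla f$ is realized as a directional derivative along the transport rays requires the technical heart of \cite{CM20} (compatibility of the module calculus with the essentially non-branching geodesic structure of the rays, phrased there via derivations/test plans on $T_\delta^{nb}$). As a heuristic reconstruction of the cited argument your proposal is correct and complete in its remaining steps; as a self-contained proof it rests on this one deferred lemma, which you would need to import explicitly from \cite{CM20} rather than assert.
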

\begin{remark}
\label{rmk:symmetric_difference}
Note that $\X \setminus \partial \Omega \subset T_\delta$, since for any $x \in \X \setminus \partial \Omega$, there exists $y \in \partial \Omega$ such that $|\delta(x)| = \sfd(x,y)$, meaning that $(x,y) \in R_{\delta}\setminus \{(x,x)\mid x\in\X\}$. As a consequence, it makes sense to consider the disintegration of $\mm\restr{\X\setminus\partial\Omega}$ as in \eqref{eq:restriction_mm}. Moreover, we show in Proposition \ref{prop:bd_measure_zero} that, under a further assumption on $\partial\Omega$, we do not lose information on $\mm$ when we restrict to $\X\setminus\partial\Omega$, since $\mm(T_{\delta}\, \triangle \,(\X \setminus \partial \Omega)) = 0$, where $A \,\triangle\, B= (A \setminus B) \cup (B \setminus A)$.
\end{remark}
\begin{remark}
The result in \cite{CM20} holds for essentially non-branching ${\sf MCP}(K,N)$ (for possibly not single-valued distributional Laplacian). For our purposes, it is enough to consider the subclass of $\RCD(K,N)$ spaces.
\end{remark}
\subsection{1D localization and regularity of \texorpdfstring{$\partial \Omega$}{delOmega}}
We define here some regularity assumptions on the open set $\Omega$ and some consequences.
\begin{definition}[Interior/exterior ball condition]
\label{def:int_ball_condition}
Let $(\X,\sfd)$ be a metric space and let $\Omega\subset\X$ be open. We say that $x \in \partial \Omega$ verifies the interior ball condition with respect to $\Omega$ if $\exists\,p_x \in \Omega$, $r_x \in \mathbb{R}^+$ such that 
\begin{equation}
    B_{r_x}(p_x) \subset \Omega \qquad\text{and}\qquad x \in \partial B_{r_x}(p_x). 
\end{equation}
Similarly, we say that $x \in \partial \Omega$ verifies the exterior ball condition if it verifies the interior ball condition with respect to $\X \setminus \bar{\Omega}$.\\
Let $\epsilon > 0$. We say that $\Omega$ verifies a $\epsilon$-uniform interior ball condition if every $x \in \partial \Omega$ verifies the interior ball condition with respect to $\Omega$ and $r_x \ge \epsilon$, for every $x\in\Omega$.
\end{definition}
\begin{remark}
It is well-known that, in $\mathbb{R}^n$, the uniform interior and exterior ball condition for an open set $\Omega$ is equivalent to the $C^{1,1}$ regularity of $\partial \Omega$, meaning that, up to rotation of coordinates, for every $x =(x',x_n) \in \partial \Omega$ there exists a cylinder $C_{\delta}(x):=B_\delta(x') \times (-\delta,\delta)$ such that $C_{\delta}(x) \cap \Omega$ can be written as a subgraph of a $C^{1,1}$-function $f$ with $|f-x_n| \le \delta$.
In particular, the $L^\infty$ bounds on $f''$ are comparable to $\frac{1}{\epsilon}$, where $\epsilon$ is the uniformity of the ball condition (see \cite[Prop.\ 2.7, Rem.\ 2.8]{JN22} for more details). 
In the non-smooth setting, given any open set $\Omega\subset\X$, the boundary of its $2\epsilon$-enlargement satisfies the $\epsilon$-uniform interior ball condition. 
\end{remark}
\begin{proposition}
\label{prop:bd_measure_zero}
Let $(\X,\sfd,\mm)$ be a geodesic $\PI$ space. Let $\Omega\subset \X$ be open with $\partial \Omega$ bounded. Assume that $\Omega$ verifies the interior (or exterior) ball condition. Then $\mm(\partial \Omega) = 0$.
\end{proposition}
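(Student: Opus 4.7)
The strategy is to show that, under the interior ball condition, at every point $x\in\partial\Omega$ a definite fraction of every small ball $B_r(x)$ lies inside $\Omega$. Consequently, $\partial\Omega$ has density strictly less than $1$ at each of its points, which is incompatible with Lebesgue differentiation unless $\mm(\partial\Omega)=0$.

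\textbf{Step 1: geodesic to an interior tangent ball.} Fix $x\in\partial\Omega$ and use the interior ball condition to obtain $p_x\in\Omega$ and $r_x>0$ with $B_{r_x}(p_x)\subset\Omega$ and $x\in\partial B_{r_x}(p_x)$, so that $\sfd(p_x,x)=r_x$. Since $(\X,\sfd)$ is geodesic (and $\PI$ spaces are proper), pick a minimizing geodesic $\gamma\colon[0,r_x]\to\X$ from $p_x$ to $x$. For any $r\in(0,r_x)$, set $q_r:=\gamma(r_x-r/2)$, so that $\sfd(q_r,x)=r/2$ and $\sfd(q_r,p_x)=r_x-r/2$.

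\textbf{Step 2: a definite portion of $B_r(x)$ lies in $\Omega$.} The triangle inequality yields $B_{r/2}(q_r)\subset B_{r_x}(p_x)\cap B_r(x)\subset\Omega\cap B_r(x)$. On the other hand, $B_r(x)\subset B_{3r/2}(q_r)\subset B_{2r}(q_r)$. Applying the local doubling inequality \eqref{eq:doubling_def} twice at scale $\le 2r$ (which is bounded since $r<r_x$ and $\partial\Omega$ is bounded, hence contained in a compact set on which $r_x$ is controlled if needed, but already at the single point $x$ doubling at this scale holds with some constant $C_D=C_D(2r_x)$), we get
\begin{equation}
\mm(B_r(x))\le\mm(B_{2r}(q_r))\le C_D^2\,\mm(B_{r/2}(q_r))\le C_D^2\,\mm(B_r(x)\cap\Omega).
\end{equation}
Therefore, setting $c_x:=C_D^{-2}>0$, we have $\mm(B_r(x)\cap\Omega)\ge c_x\mm(B_r(x))$ for every $r\in(0,r_x)$, and in particular
\begin{equation}
\frac{\mm(B_r(x)\cap\partial\Omega)}{\mm(B_r(x))}\le\frac{\mm(B_r(x)\setminus\Omega)}{\mm(B_r(x))}\le 1-c_x<1.
\end{equation}

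\textbf{Step 3: Lebesgue differentiation.} The set $\partial\Omega$ is closed, hence Borel, and is contained in a bounded (hence, by properness, relatively compact) set; thus $\mm(\partial\Omega)<\infty$. Since $(\X,\sfd,\mm)$ is locally uniformly doubling, the Lebesgue differentiation theorem holds and asserts that for $\mm$-a.e.\ $x\in\partial\Omega$,
\begin{equation}
\lim_{r\to 0^+}\frac{\mm(B_r(x)\cap\partial\Omega)}{\mm(B_r(x))}=1.
\end{equation}
Combined with Step 2, which shows the limsup is $\le 1-c_x<1$ at \emph{every} $x\in\partial\Omega$, this forces the set of density-$1$ points of $\partial\Omega$ to be empty, and hence $\mm(\partial\Omega)=0$.

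\textbf{Exterior ball case and remarks.} If instead $\Omega$ verifies the exterior ball condition, we repeat the identical argument after replacing $\Omega$ with $\X\setminus\overline{\Omega}$ (the boundary is the same). The only mildly technical point is invoking doubling with a constant $C_D$ valid at the scales we consider; this is immediate from the local uniform doubling assumption. No step is truly hard: the essential geometric input is that an interior ball tangent at $x$ lets one slide a ball of radius $r/2$ at distance $r/2$ from $x$, which by doubling occupies a uniform fraction of $B_r(x)$.
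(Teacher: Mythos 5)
Your proof is correct, and it takes a genuinely different route in its final step. Both arguments hinge on the same geometric core: the interior tangent ball plus doubling forces $\Omega$ to occupy a definite fraction of small balls centered at any boundary point. In the paper's version this density bound is established at one specific scale $2r_x$ per boundary point (after shrinking $r_x < r$), and the conclusion is reached by a hands-on Vitali covering argument: one extracts a disjoint subfamily covering $\partial\Omega$, sums the measures, and bounds $\mm(\partial\Omega) \le C\,\mm(B_{2r}(\partial\Omega)\cap\Omega)$, which vanishes as $r\to 0$. Your version instead slides a ball of radius $r/2$ along the geodesic from $p_x$ to $x$, which yields the density bound $\mm(B_r(x)\cap\Omega)\ge c\,\mm(B_r(x))$ at \emph{all} small scales $r<r_x$, and then invokes the Lebesgue differentiation theorem for locally doubling metric measure spaces to conclude that the set of density-one points of $\partial\Omega$ is both full measure in $\partial\Omega$ and empty. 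Your approach is cleaner and more conceptual, replacing the covering bookkeeping with a single appeal to a standard theorem; the paper's is more elementary and self-contained, using only the Vitali covering lemma directly (which in fact underlies the differentiation theorem). Both are valid, and the constant $c_x = C_D^{-2}$ is indeed uniform as required since the doubling constant is uniform over all base points at any fixed scale, and only a pointwise positive lower bound is needed for the contradiction.
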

\begin{proof}
Fix $0<r<R$. Recall that a $\PI$ space is locally uniformly doubling (see \eqref{eq:doubling_def}) and consider the constant $C_D=C_D(4R)$ as defined therein. Let $x \in \partial \Omega$, then, by the interior ball condition, there exists a point $p_x\in\Omega$ and a radius $r_x<r$ such that $B_{r_x}(p_x) \subset \Omega$ with $x \in \partial B_{r_x}(p_x)$ (note that we can choose $r_x$ such that $r_x < r$ since $(\X,\sfd)$ is geodesic). This, together with the triangle inequality, implies 
\begin{equation}
\label{eq:inclusions}
    B_{r_x}(p_x)\subset B_{2r_x}(x)\cap\Omega\subset B_{4r_x}(p_x).
\end{equation}
Therefore, combining the doubling condition and \eqref{eq:inclusions}, we obtain the following:
\begin{equation}
\label{eq:estimates_measure}
     \mm(B_{2r_x}(x)\cap\Omega)\ge\mm( B_{r_x}(p_x))\ge C_D^{-2} \mm(B_{4r_x}(p_x))\ge C_D^{-2} \mm(B_{2r_x}(x)).
\end{equation}
Repeating this procedure for any $x\in\partial\Omega$, we obtain a covering of $\partial\Omega$ consisting of balls of radius $2r_x<2r$. Thus, since $\X$ is also a separable metric space, we can apply Vitali covering Lemma, so there exists $\{x_i\}_{i\in\N}\subset\partial\Omega$ such that
\begin{equation}
\label{eq:vitali_covering}
     B_{2r_i}(x_i)\cap B_{2r_j}(x_j)=\emptyset\quad \forall\,i\neq j\qquad\text{and}\qquad \partial\Omega\subset\bigcup_{x\in\partial\Omega}B_{2r_x}(x)\subset\bigcup_{i\in\N}B_{10r_i}(x_i)
\end{equation}
having denoted $r_i:=r_{x_i}$, for every $i\in\N$. We are in position to prove the statement, indeed using the doubling property, together with \eqref{eq:estimates_measure} and \eqref{eq:vitali_covering}, we have:
\begin{equation}
\begin{split}
    \mm(\partial \Omega) &\le \mm\bigg(\bigcup_{i\in\N}B_{16r_i}(x_i)\bigg)\le C_D^{-4}\sum_{i=1}^\infty \mm(B_{2r_i}(x_i)) \le C_D^{-2} \sum_{i=1}^\infty \mm(B_{2r_i}(x_i)\cap\Omega) \\
                         &\le C_D^{-2} \mm(B_{2r}(\partial \Omega) \cap \Omega),
\end{split}
\end{equation}
where the last term is finite thanks to boundedness of $\partial\Omega$.
Taking the limit as $r \to 0$, we conclude the proof. The argument for the case where $\Omega$ satisfies the exterior ball condition follows verbatim.
\end{proof}
\begin{remark}
\label{rmk:mesasure_level_sets}
We point out that, under the assumptions of Proposition \ref{prop:bd_measure_zero}, also $\mm(\{\delta=r\})=0$ for every $r>0$ outside of a countable set. This follows by monotonicity of the function $r\mapsto \mm(\{\delta>r\})$, which has an at most countable set of discontinuities.  
\end{remark}

We now introduce a condition involving both the metric and the reference measure, which we call measured interior geodesic condition, in short $\mIGC{\epsilon}$. To this aim, let us define the set
\begin{equation}
    O_\epsilon := \{ x \in \Omega\mid \exists\, \gamma\colon[0,1]\rightarrow\X \text{ such that }x \in \gamma, \gamma_0 \in \partial \Omega,\, \delta(\gamma_1) = \ell(\gamma) \ge \epsilon \}.
\end{equation}
\begin{definition}[$\mIGC{\epsilon}$ condition]
\label{def:migc}
Let $(\X,\sfd,\mm)$ be a metric measure space, let $\Omega\subset\X$ be open and let $\epsilon>0$. We say that $\partial \Omega$ satisfies the measure theoretic $\epsilon$-\emph{interior geodesic condition}, or $\mIGC{\epsilon}$ condition for brevity, if 
\begin{equation}
 \mm(\{ 0 < \delta < \epsilon \} \setminus O_\epsilon) = 0 
\end{equation}
Similarly, we say that $\partial\Omega$ satisfies the measure theoretic $\epsilon$-\emph{exterior geodesic condition}, or simply $\mEGC{\epsilon}$ condition, if the above definition holds with $\X\setminus \bar{\Omega}$ in place of $\Omega$.
\end{definition}
\begin{remark}
Note that for example a ball of radius $\epsilon$ satisfies the $\mIGC{\epsilon}$ condition, if the radial geodesics cover the ball itself, up to negligible sets. 
\end{remark}
We now investigate the equivalence of the $\mIGC{\epsilon}$ condition to the fact that almost all transport rays have length greater or equal than $\epsilon$ inside $\Omega$. Let us define the set 
\begin{equation}
Q_\epsilon:= \{ \alpha \in Q \,|\, \mathscr{H}^1(X_\alpha \cap \Omega) \ge \epsilon \}.    
\end{equation}
Let us relate the sets $O_\epsilon$ and $Q_\epsilon$.
\begin{proposition}
\label{lemma:relation_ray_geo}
Let $(\X,\sfd)$ be a geodesic non-branching metric space and let $u\colon\X\rightarrow\R$ be a $1$-Lipschitz function. Let $\Omega\subset\X$ be open and $\epsilon>0$. Then, with the notation above, 
\begin{equation}
\label{eq:relation_O_eps_Q_eps}
    \Omega \cap \mathfrak{Q}^{-1}(Q_\epsilon) = O_\epsilon \cap T_\delta^{nb}.
\end{equation}
\end{proposition}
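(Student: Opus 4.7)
My plan is to establish the two inclusions separately. The common ingredient is the isometric parametrization of $\bar X_\alpha$ as a closed interval together with the following consequence of non-branching: if $y \in T_\delta^{nb}$, any two geodesics issuing from $y$ along which $\delta$ varies at unit rate with the same sign must share a nontrivial initial segment, hence coincide locally — otherwise one could pick points on the two geodesics at equal small distance from $y$ and at the same $\delta$-level but distinct, placing $y$ in $A^+$ or $A^-$ and contradicting $y \in T_\delta^{nb}$.

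For the inclusion $\subseteq$, I would take $x \in \Omega \cap \mathfrak{Q}^{-1}(Q_\epsilon)$, set $\alpha = \mathfrak{Q}(x)$, and parametrize $\bar X_\alpha$ by an isometry $\phi \colon [a,b] \to \X$ oriented so that $\delta \circ \phi$ has slope $+1$. A transport ray through a point of $\Omega$ must extend down to its $\partial\Omega$-projection, so some $\phi(s^*)$ lies on $\partial\Omega$, and the part $\bar X_\alpha \cap \Omega = \phi((s^*,b])$ has $\mathscr{H}^1$-length $b - s^* \geq \epsilon$ by hypothesis. Restricting $\phi$ to $[s^*,\, s^* + \max\{\epsilon, \delta(x)\}]$ and reparametrizing yields a geodesic $\gamma$ witnessing $x \in O_\epsilon$, while $x \in T_\delta^{nb}$ comes for free.

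For the inclusion $\supseteq$, I would take $x \in O_\epsilon \cap T_\delta^{nb}$ with witness $\gamma$, reparametrize at unit speed as $\tilde\gamma \colon [0,L]\to\X$ with $L = \ell(\gamma) \geq \epsilon$, and use that $\delta$ is $1$-Lipschitz to get $\delta(\tilde\gamma(r)) = r$ and, consequently, that every two points of $\tilde\gamma$ are $R_\delta$-related. Setting $\alpha = \mathfrak{Q}(x)$ and parametrizing $\bar X_\alpha$ isometrically with $\phi(0) = x$ and $\delta \circ \phi(s) = \delta(x) + s$, the local-uniqueness fact yields $\tilde\gamma(\delta(x) + s) = \phi(s)$ near $s=0$. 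The main step is to propagate this equality to the whole relevant range of $s$: for any $r \in [0,L]$ with $\tilde\gamma(r) \in T_\delta^{nb}$, transitivity of $R_\delta^{nb}$ on $T_\delta^{nb}$ forces $\tilde\gamma(r) \in X_\alpha$, and uniqueness of the $\delta$-level inside $\bar X_\alpha$ pins $\tilde\gamma(r) = \phi(r - \delta(x))$. This places $\tilde\gamma([0,L])$ inside $\bar X_\alpha$, so $\tilde\gamma((0,L]) \subset \bar X_\alpha \cap \Omega$ has $\mathscr{H}^1$-measure $L \geq \epsilon$; since $\bar X_\alpha \setminus X_\alpha$ is contained in the at most two initial and final points of the ray and is hence $\mathscr{H}^1$-negligible, $\mathscr{H}^1(X_\alpha \cap \Omega) \geq \epsilon$, i.e., $\alpha \in Q_\epsilon$.

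The main obstacle is the global propagation step above, where \emph{a priori} $\tilde\gamma$ could meet a branching point of $\bar X_\alpha$ at which $\phi$ and $\tilde\gamma$ pick different ``up'' directions. The key remark is that any $T_\delta^{nb}$-point of $\tilde\gamma$ past the putative divergence would be $R_\delta$-related to $x$, hence forced into $X_\alpha$ and onto the unique $\phi$-point at its $\delta$-level, contradicting divergence; handling this carefully, together with the negligibility of $\bar X_\alpha \setminus X_\alpha$, is the technical heart of the $\supseteq$ inclusion.
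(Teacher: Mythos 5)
Your $\subseteq$ direction is correct and in substance the same as the paper's. The $\supseteq$ direction, however, has a genuine gap at exactly the step you flag as ``the technical heart.'' You obtain $\tilde\gamma(r)=\phi(r-\delta(x))$ only for those $r$ with $\tilde\gamma(r)\in T_\delta^{nb}$, and you do not establish that the set of such $r$ exhausts $(0,L)$ (or is even dense in it), so you cannot yet conclude $\tilde\gamma([0,L])\subset\bar X_\alpha$, which is what both the inclusion and the $\mathscr H^1$-estimate require. The contradiction you sketch --- that a $T_\delta^{nb}$-point of $\tilde\gamma$ past a putative divergence would be pulled back into $X_\alpha$ --- does not rule out the scenario in which \emph{no} point of $\tilde\gamma$ past the divergence lies in $T_\delta^{nb}$, which is precisely the case that must be excluded, and the negligibility of $\bar X_\alpha\setminus X_\alpha$ is of no help until the trace of $\tilde\gamma$ has first been placed inside $\bar X_\alpha$.

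What closes the gap is the paper's Lemma~\ref{lemma:inclusions}: in a geodesic non-branching space, $A^+\subset a$ and $A^-\subset b$, i.e.\ branching points of the transport set are forced to be endpoints of the transport order. For $r\in(0,L)$ the point $\tilde\gamma(r)$ is an interior point of a nontrivial $\delta$-monotone geodesic, hence both preceded and followed in $\Gamma_\delta$, hence lies in $(a\cup b)^c$, and therefore by that lemma in $(A^+\cup A^-)^c$; so ${\rm Int}(\tilde\gamma)\subset T_\delta^{nb}$ outright. With this fact all interior points of $\tilde\gamma$ lie in a single orbit $X_\alpha$ and $\mathscr H^1(X_\alpha\cap\Omega)\ge L\ge\epsilon$; the endpoint case $x=\tilde\gamma(L)$ is handled exactly as you propose via $x\in T_\delta^{nb}$ and transitivity of $R_\delta^{nb}$. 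Note also that the ``local uniqueness'' fact in your opening paragraph is essentially just the definition of $T_\delta^{nb}$ (absence of branching at such a point); what your propagation actually needs is the converse direction --- that interior geodesic points belong to $T_\delta^{nb}$ --- and that is exactly what the lemma supplies.
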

For the proof of the proposition, we need the following preliminary lemma. 

\begin{lemma}
\label{lemma:inclusions}
Let $(\X,\sfd)$ be a geodesic non-branching metric space and let $u\colon\X\rightarrow\R$ be a $1$-Lipschitz function. Then, 
\begin{equation}
    A^+\subset a,\qquad\text{and}\qquad A^-\subset b.
\end{equation}
\end{lemma}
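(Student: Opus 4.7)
The plan is to prove the inclusion $A^+ \subset a$ by contradiction; the inclusion $A^- \subset b$ will then follow by the symmetry $u \rightsquigarrow -u$, which swaps $\Gamma_u$ with $\Gamma_u^{-1}$, $A^+$ with $A^-$, and $a$ with $b$.

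I would fix $x \in A^+$ and suppose towards a contradiction that $x \notin a$, so that there exists $w \in T_u$ with $w \neq x$ and $(w,x) \in \Gamma_u$. Picking $y,z \in \Gamma_u(x)$ with $(y,z) \notin R_u$, a short direct check rules out the degenerate configurations: $y = x$ or $z = x$ would force $(y,z) \in \Gamma_u \subset R_u$; and $y = w$ (resp.\ $z = w$) would, after summing the $u$-differences and using that $u$ is $1$-Lipschitz, yield $(w,z) \in \Gamma_u$ (resp.\ $(w,y) \in \Gamma_u$), in either case contradicting $(y,z) \notin R_u$. Hence the four points $w,x,y,z$ can be assumed pairwise distinct.

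The geometric engine of the argument is that the identities $u(w)-u(x)=\sfd(w,x)$, $u(x)-u(y)=\sfd(x,y)$ and $u(x)-u(z)=\sfd(x,z)$ combined with the triangle inequality and the $1$-Lipschitz bound on $u$ give
\begin{equation}
\sfd(w,y) = \sfd(w,x) + \sfd(x,y), \qquad \sfd(w,z) = \sfd(w,x) + \sfd(x,z).
\end{equation}
Thus, fixing one geodesic from $w$ to $x$ and concatenating it with geodesics from $x$ to $y$ and from $x$ to $z$, I obtain two arc-length geodesics $\alpha, \beta$ issuing from $w$ that agree on the common initial segment from $w$ to $x$ and then split, reaching $y$ and $z$ respectively.

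Setting $L_1 := \sfd(w,y)$, $L_2 := \sfd(w,z)$ and assuming without loss of generality $L_1 \le L_2$, I would restrict $\beta$ to its first $L_1$ units of arc length and rescale both $\alpha$ and $\beta\restr{[0,L_1]}$ to $[0,1]$. This yields two geodesics $[0,1]\to\X$ of common length $L_1$ that coincide on the initial interval $[0,\sfd(w,x)/L_1]$, whose right endpoint is in $(0,1)$ because $w\neq x$ and $y\neq x$. The non-branching hypothesis forces the two geodesics to coincide throughout $[0,1]$, so $\beta$ passes through $y$ at arc-length time $L_1$ on its way to $z$. The tail $\beta\restr{[L_1,L_2]}$ is then a geodesic from $y$ to $z$ of length $L_2 - L_1 = u(y)-u(z)$, which shows $(y,z) \in \Gamma_u \subset R_u$, contradicting the choice of $y,z$. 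The only mildly delicate point — the main obstacle — is this reparametrization: the stated non-branching property applies to pairs of curves $[0,1]\to\X$ of matching length, so when $L_1\neq L_2$ one must truncate $\beta$ and rescale before being allowed to invoke it; once that reduction is set up, the rest is bookkeeping with the $1$-Lipschitz bound.
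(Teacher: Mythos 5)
Your proposal is correct and takes essentially the same approach as the paper: contradiction, the $1$-Lipschitz/triangle-inequality identity to produce two concatenated geodesics issuing from $w$ that share the initial arc through $x$, and then the non-branching hypothesis. The truncation-and-rescale step you single out is indeed necessary for the $[0,1]$-parameterized non-branching definition to apply when the two concatenated geodesics have different lengths, a point the paper's proof passes over silently.
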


\begin{proof}
We prove the first inclusion, the second one being analogous. Let $x\in a^c$ and assume by contradiction that $x\in A^+$. In particular, this means that $x\in T_u$ and, by definition of $a$, there exists a point $y\in T_u$ such that $x\neq y$ and $(y,x)\in \Gamma_u$, while by definition of $A^+$, there exist $z,w\in \Gamma_u(x)$ with $(z,w)\notin R_u$. We claim that $x$ is an intermediate point along geodesics joining $y,z$ and $y,w$. Indeed, by construction $x\neq y$, $x\neq z$, and also, since $(y,x),(x,z)\in \Gamma_u$, we have
\begin{equation}
\label{eq:transport_rel_xyz}
    u(y)-u(z)=u(y)-u(x)+u(x)-u(z)=\sfd(y,x)+\sfd(x,z)\ge\sfd(y,z),
\end{equation}
by triangle inequality. Now, recall that $u$ is $1$-Lipschitz, thus in \eqref{eq:transport_rel_xyz} the equality holds, proving the claim for the triple $y,x,z$. Analogously, the same holds for the triple $y,x,w$. 
Now let $\gamma$ be a geodesic connecting $y$ and $x$, $\eta_1$ be a geodesic connecting $x$ and $z$ and $\eta_2$ be a geodesic connecting $x$ and $w$. By the claim above the curves $\gamma \cup \eta_i$ for $i=1,2$ are geodesics, hence since $z\neq w$, we obtain a contradiction with the non-branching assumption. 
\end{proof}

\begin{proof}[Proof of Proposition \ref{lemma:relation_ray_geo}]
Firstly, we show that $\Omega \cap \mathfrak{Q}^{-1}(Q_\epsilon) \subset O_\epsilon \cap T_\delta^{nb}$. Let $z \in \Omega \cap \mathfrak{Q}^{-1}(Q_\epsilon)$. By definition of $\mathfrak{Q}$, $\mathfrak{Q}^{-1}(Q_\epsilon) \subset T_\delta^{nb}$, hence $z \in X_\alpha$ for some $\alpha\in Q_\epsilon$. Now $\bar X_\alpha$ is a minimizing geodesic for $\delta$ connecting $a(X_\alpha)$ to $\partial \Omega$ (recall that by \eqref{eq:init_final_pts_delta}, $\bar X_\alpha\cap\partial\Omega\neq\emptyset$), in particular there exists $y_\alpha\in\partial\Omega$ such that 
\begin{equation}
\delta(a(X_\alpha)) = \sfd(a(X_\alpha),y_\alpha) \ge \mathscr{H}^1(X_\alpha\cap\Omega)\ge \epsilon,    
\end{equation}
giving that $z \in O_\epsilon$. Secondly, we prove the converse inclusion $O_\epsilon \cap T_\delta^{nb} \subset \Omega \cap \mathfrak{Q}^{-1}(Q_\epsilon)$. Let $x \in O_\epsilon \cap T_\delta^{nb}$; then, by definition of $O_\epsilon$, $x\in\Omega$ and there exists a geodesic $\gamma\colon[0,1]\rightarrow\X$ such that
\begin{equation}
x \in \gamma,\quad\gamma_0 \in \partial \Omega,\quad\text{and}\quad \delta(\gamma_1) = \ell(\gamma) \ge \epsilon.     
\end{equation}
Note that for any $t,s\in(0,1)$, $(\gamma_t,\gamma_s)\in R_\delta$, hence $\gamma_t\in (a\cup b)^c$ for any $t\in (0,1)$, being an interior point of a geodesic. Therefore, Lemma \ref{lemma:inclusions} ensures that $\gamma_t\in (A^+\cup A^-)^c$ for every $t\in (0,1)$, meaning that ${\rm Int}(\gamma)\subset T_\delta^{nb}$. As a consequence there exists $\alpha\in Q$ such that ${\rm Int}(\gamma)\subset X_\alpha$ and, furthermore we have
\begin{equation}
    \mathscr{H}^1(X_\alpha\cap\Omega)\ge \ell(\gamma)\ge \epsilon,
\end{equation}
hence $\alpha\in Q_\epsilon$. We are left to show that $x\in X_\alpha$. If $x\in {\rm Int}(\gamma)$, there is nothing to prove. If $x=\gamma_1$, recall that $x\in T_\delta^{nb}$ and, by construction, $(\gamma_t,x)\in R_\delta$ for every $t\in (0,1)$. This implies that $x$ belongs to the same orbit of $\gamma_t$, namely $x\in X_\alpha$, or equivalently $x\in \mathfrak{Q}^{-1}(Q_\epsilon)$. This concludes the proof.
\end{proof}
\begin{proposition}
\label{lem:equivalence_migceps_lengthtransp}
Let $(\X,\sfd,\mm)$ be an $\RCD(K,N)$ space with $K\in\R$ and $N\in(1,\infty)$. Let $\Omega\subset\X$ be open and let $\epsilon>0$. Set also $Q':=\mathfrak{Q}(\Omega\cap T_\delta^{nb})$. Then, $\partial \Omega$ satisfies the $\mIGC{\epsilon}$ condition if and only if $\sfq(Q' \setminus Q_\epsilon) = 0$.
\end{proposition}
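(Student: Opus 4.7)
The plan is to convert both sides of the equivalence to the (non)vanishing of the same integral via the disintegration formula. Set $E_\epsilon:=\{0<\delta<\epsilon\}$. Since $E_\epsilon\subset\Omega\subset\X\setminus\partial\Omega\subset T_\delta$ by Remark~\ref{rmk:symmetric_difference}, and Proposition~\ref{prop:measure_non_branched_set} applied to the $1$-Lipschitz function $\delta$ in our (essentially non-branching) $\RCD(K,N)$ space gives $\mm(T_\delta\setminus T_\delta^{nb})=0$, I can work modulo $\mm$-null sets inside $T_\delta^{nb}$. Combining this with the identity $O_\epsilon\cap T_\delta^{nb}=\Omega\cap\mathfrak{Q}^{-1}(Q_\epsilon)$ from Proposition~\ref{lemma:relation_ray_geo} and the inclusion $E_\epsilon\subset\Omega$, a short set-theoretic manipulation yields
\begin{equation*}
E_\epsilon\setminus O_\epsilon=E_\epsilon\cap\mathfrak{Q}^{-1}(Q\setminus Q_\epsilon)\qquad\mm\text{-a.e.}
\end{equation*}

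Plugging the above into the disintegration~\eqref{eq:disintegration_mm} of Theorem~\ref{thm:1d_localization} (via property~(3) of Theorem~\ref{thm:disintegration}) gives
\begin{equation*}
\mm(E_\epsilon\setminus O_\epsilon)=\int_{Q\setminus Q_\epsilon}\mm_\alpha(E_\epsilon)\,\d\sfq(\alpha)=\int_{Q'\setminus Q_\epsilon}\mm_\alpha(E_\epsilon)\,\d\sfq(\alpha),
\end{equation*}
the second equality being a consequence of the fact that for $\alpha\in Q\setminus Q'$ we have $X_\alpha\cap\Omega=\emptyset$ and $\mm_\alpha$ is concentrated on $X_\alpha$, so $\mm_\alpha(E_\epsilon)=0$ since $E_\epsilon\subset\Omega$. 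The implication ``$\sfq(Q'\setminus Q_\epsilon)=0\Rightarrow\mIGC{\epsilon}$'' is then immediate.

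For the converse, it suffices to show that $\mm_\alpha(E_\epsilon)>0$ for $\sfq$-a.e.\ $\alpha\in Q'\setminus Q_\epsilon$. Fix such $\alpha$: since any two points of $\bar X_\alpha$ are related by $R_\delta$, the map $\delta|_{\bar X_\alpha}$ is an isometry onto a closed interval of $\R$, monotonically passing from $\delta(a(X_\alpha))\ge 0$ to $\delta(b(X_\alpha))\le 0$ by~\eqref{eq:init_final_pts_delta}. Therefore $\mathscr{H}^1(X_\alpha\cap\Omega)=\delta(a(X_\alpha))\in(0,\epsilon)$ and $X_\alpha\cap E_\epsilon$ coincides with $X_\alpha\cap\Omega$ up to (at most) one endpoint. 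By Theorem~\ref{thm:1d_localization}, $\mm_\alpha=h_\alpha\mathscr{H}^1\restr{X_\alpha}$ with $(\bar X_\alpha,|\cdot|,\mm_\alpha)$ a $\CD(K,N)$ space; a classical feature of one-dimensional $\CD(K,N)$ densities is that $h_\alpha^{1/(N-1)}$ is a weighted concave function, and since $\mm_\alpha\not\equiv 0$ for $\sfq$-a.e.\ $\alpha$, this forces $h_\alpha>0$ $\mathscr{H}^1$-a.e.\ on $X_\alpha$. Hence $\mm_\alpha(E_\epsilon)>0$ for $\sfq$-a.e.\ $\alpha\in Q'\setminus Q_\epsilon$, and the vanishing of the integral above forces $\sfq(Q'\setminus Q_\epsilon)=0$. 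The only non-routine ingredient is this positivity of $h_\alpha$ along each transport ray, which is standard for one-dimensional $\CD(K,N)$ densities; everything else is set-theoretic bookkeeping with the disintegration.
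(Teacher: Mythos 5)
Your proof is correct and follows the paper's overall strategy: rewrite the set $\{0<\delta<\epsilon\}\setminus O_\epsilon$ via Proposition~\ref{lemma:relation_ray_geo} and Proposition~\ref{prop:measure_non_branched_set}, then apply the disintegration. The difference is in how the converse implication (``$\mIGC{\epsilon}\Rightarrow\sfq(Q'\setminus Q_\epsilon)=0$'') is closed. The paper opens with the mutual absolute continuity $\sfq\ll\mathfrak Q_\#\mm\restr{T_\delta^{nb}}\ll\sfq$ and then asserts the equality $\mm(T_\delta^{nb}\cap\mathfrak Q^{-1}(Q'\setminus Q_\epsilon))=\mm(T_\delta^{nb}\cap\{0<\delta<\epsilon\}\setminus\mathfrak Q^{-1}(Q_\epsilon))$; read literally, that equality is not quite right (the left side measures \emph{entire} rays with short interior length, including their portions where $\delta\le 0$, while the right side only measures the tubular part). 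The correct step underneath is that the two quantities vanish simultaneously, and the substance of that step is precisely what you make explicit: for $\sfq$-a.e.\ $\alpha\in Q'\setminus Q_\epsilon$ one has $\mathscr H^1(X_\alpha\cap E_\epsilon)=\mathscr H^1(X_\alpha\cap\Omega)>0$ and $h_\alpha>0$ on the interior of $\bar X_\alpha$ by the one-dimensional $\CD(K,N)$ property (together with $\mm_\alpha\not\equiv 0$, which you correctly deduce from $\sfq\ll\mathfrak Q_\#\mm\restr{T_\delta^{nb}}$), so $\mm_\alpha(E_\epsilon)>0$. Thus your version is somewhat more explicit about the hard direction, at the cost of importing the (standard but unreferenced) positivity of the 1D $\CD(K,N)$ densities; the paper reaches the same end faster by leaning on the mutual absolute continuity but glosses over the measure-equality subtlety. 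Both buy the same result; yours is closer to being self-contained on the delicate step.
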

\begin{proof}
Since $\sfq \ll \mathfrak{Q}_\#\mm \restr{T_\delta^{nb}} \ll \sfq$, cf. Remark \ref{rem:mutual_abs_cont}, it is enough to prove that $\partial\Omega$ verifies the $\mIGC{\epsilon}$ condition if and only if $(\mathfrak{Q}_\#\mm \restr{T_\delta^{nb}})(Q' \setminus Q_\epsilon) =0$, i.e.\ $\mm(T_\delta^{nb} \cap \mathfrak{Q}^{-1}(Q' \setminus Q_\epsilon)) =0$. First of all, observe that $Q'=\mathfrak{Q}(\{0<\delta<\epsilon\}\cap T_\delta^{nb})$. Indeed, if $x\in\Omega\setminus\{0<\delta<\epsilon\}\cap T_\delta^{nb}$, then $x\in X_\alpha$ for some $\alpha\in Q'$. However, $\bar\X_\alpha$ is a minimizing geodesic for $\delta$ joining $x$ and $\partial\Omega$. Thus, $X_\alpha\cap\{0<\delta<\epsilon\}\neq \emptyset$, and therefore $\alpha\in \mathfrak{Q}(\{0<\delta<\epsilon\}\cap T_\delta^{nb})$, proving the claimed equality. Second of all, using Proposition \ref{lemma:relation_ray_geo} (which applies since $\RCD(K,N)$ spaces are non-branching by Theorem \ref{thm:nonbranch_rcd}), we deduce that
\begin{equation}
    \mm(T_\delta^{nb} \cap \mathfrak{Q}^{-1}(Q' \setminus Q_\epsilon)) =\mm(T_\delta^{nb} \cap \{0<\delta<\epsilon\}\setminus\mathfrak{Q}^{-1}( Q_\epsilon))\stackrel{ \eqref{eq:relation_O_eps_Q_eps}}{=}\mm(T_\delta^{nb} \cap \{0<\delta<\epsilon\}\setminus O_\epsilon ).
\end{equation}
Finally, applying Proposition \ref{prop:measure_non_branched_set}, together with Remark \ref{rmk:symmetric_difference}, we have that $\mm((T_\delta^{nb}\cap\Omega)\,\triangle\, \Omega)=0$, hence,
\begin{equation}
    \mm(T_\delta^{nb} \cap \mathfrak{Q}^{-1}(Q' \setminus Q_\epsilon)) =\mm(\{0<\delta<\epsilon\}\setminus O_\epsilon ).
\end{equation}
Now the proof of the statement easily follows from the definition of $\mIGC{\epsilon}$ condition.
\end{proof}
Thanks to Proposition \ref{lem:equivalence_migceps_lengthtransp}, we can immediately deduce regularity properties for $\pmb\Delta\delta$ in the interior of $\Omega$. Indeed, on the one hand, we have mild integrability of its regular part, on the other hand, its singular part is separated from $\partial\Omega$.
\begin{corollary}
\label{cor:mild_integrability_laplacian}
Let $(\X,\sfd,\mm)$ be an $\RCD(K,N)$ space with $K \in \mathbb{R}$ and $N \in (1,\infty)$.
Let $\Omega\subset\X$ be open and assume that $\partial \Omega$ satisfies the $\mIGC{\epsilon}$ condition. Then $[\pmb\Delta \delta]^{reg} \in L^1(\{ 0 < \delta <\epsilon \})$.
\end{corollary}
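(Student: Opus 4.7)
The plan is as follows. By Proposition \ref{lem:equivalence_migceps_lengthtransp}, the $\mIGC{\epsilon}$ condition is equivalent to $\sfq(Q'\setminus Q_\epsilon)=0$, so for $\sfq$-a.e.\ ray $X_\alpha$ with $X_\alpha\cap\{0<\delta<\epsilon\}\neq\emptyset$ we have $\delta(a(X_\alpha))=\mathscr{H}^1(X_\alpha\cap\Omega)\ge\epsilon$; combined with $\delta(b(X_\alpha))\le 0$ from \eqref{eq:init_final_pts_delta}, both endpoints of such a ray lie outside $\{0<\delta<\epsilon\}$. Consequently, the purely singular part of $\pmb\Delta\delta$ in the representation \eqref{eq:repr_laplacian_delta} does not contribute on this open set, and $\pmb\Delta\delta\restr{\{0<\delta<\epsilon\}}=[\pmb\Delta\delta]^{reg}\,\mm\restr{\{0<\delta<\epsilon\}}$, so the task reduces to showing $L^1$-integrability of the regular part.

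Parameterizing each ray $X_\alpha$ (for $\alpha\in Q_\epsilon$) by arclength from $a(X_\alpha)$, the portion $X_\alpha\cap\{0<\delta<\epsilon\}$ corresponds to the interval $I_\alpha:=(\delta(a(X_\alpha))-\epsilon,\delta(a(X_\alpha)))\subset(0,L_\alpha)$, where $L_\alpha=\mathscr{H}^1(X_\alpha)\ge\delta(a(X_\alpha))$. Using the disintegration \eqref{eq:restriction_mm} together with $[\pmb\Delta\delta]^{reg}=-(\log h_\alpha)'$ and the pointwise identity $|(\log h_\alpha)'|\,h_\alpha=|h_\alpha'|$, the $L^1$ norm transforms into a total-variation integral along the rays:
\[
\int_{\{0<\delta<\epsilon\}}|[\pmb\Delta\delta]^{reg}|\,\d\mm=\int_{Q_\epsilon}\int_{I_\alpha}|h_\alpha'(s)|\,\d s\,\d\sfq(\alpha),
\]
so it suffices to estimate $\int_{I_\alpha}|h_\alpha'|\,\d s$ uniformly in $\alpha$.

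I would then exploit the one-dimensional $\CD(K,N)$ structure provided by Theorem \ref{thm:1d_localization}: the density $f_\alpha:=h_\alpha^{1/(N-1)}$ is $(K/(N-1))$-concave on $(0,L_\alpha)$, so on the interval $I_\alpha$ of length $\epsilon$ its supremum is controlled by its average, which in turn is controlled via Hölder (or equivalently via the Bishop--Gromov monotonicity $s\mapsto h_\alpha(s)/s_\kappa(s)^{N-1}$ implicit in the bounds \eqref{eq:bound_low_upp_laplacian_distance}) by $\mm_\alpha(I_\alpha)$. Theorem \ref{thm:disintegration}(4) provides the $\sfq$-uniform bound $\mm_\alpha(I_\alpha)\le\mm_\alpha(\overline{\{0<\delta<\epsilon\}})\le C$ on the compact tubular neighborhood. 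Since a (Jacobi-)concave non-negative function has total variation bounded by twice its supremum and $|h_\alpha'|=(N-1)f_\alpha^{N-2}|f_\alpha'|$, combining these estimates yields $\int_{I_\alpha}|h_\alpha'|\,\d s\le C'=C'(K,N,\epsilon)$ uniformly in $\alpha$. Integrating against the probability measure $\sfq$ then gives the desired bound $\|[\pmb\Delta\delta]^{reg}\|_{L^1(\{0<\delta<\epsilon\})}\le C'$.

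The main obstacle is to make the chain of $\CD(K,N)$ concavity estimates in the third paragraph genuinely uniform in $\alpha$ across all regimes: the $K<0$ case requires absorbing the Jacobi correction in the $(K/(N-1))$-concavity of $f_\alpha$ on the bounded interval of length $\epsilon$, and for $N\in(1,2)$ the standard Hölder step comparing averages of $f_\alpha$ and $h_\alpha$ points in the wrong direction, forcing one to work directly with the Bishop--Gromov monotonicity derived from \eqref{eq:bound_low_upp_laplacian_distance}. One also needs to handle carefully the degenerate situation when $I_\alpha$ reaches an endpoint of the ray's parameterization, i.e.\ when $\delta(a(X_\alpha))=\epsilon$ exactly or when $\delta(b(X_\alpha))=0$ (the latter corresponding to $b(X_\alpha)\in\partial\Omega$), where the naive pointwise bound $|h_\alpha'|\le (N-1) h_\alpha[s'_\kappa/s_\kappa(s)+s'_\kappa/s_\kappa(L_\alpha-s)]$ is not integrable and must be replaced by the total-variation route above.
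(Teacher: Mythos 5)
Your reduction — discarding the singular part of $\pmb\Delta\delta$ on $\{0<\delta<\epsilon\}$ via Proposition \ref{lem:equivalence_migceps_lengthtransp}, and transforming the $L^1$ norm of $[\pmb\Delta\delta]^{reg}=-(\log h_\alpha)'$ through the disintegration into $\int_{Q_\epsilon}\int_{I_\alpha}|h_\alpha'|\,\d s\,\d\sfq(\alpha)$ — is exactly the route the paper takes, and it is correct. The whole weight of the argument then rests on a uniform bound $\int_{I_\alpha}|h_\alpha'|\,\d s\le C(K,N,\epsilon)$ for $\sfq$-a.e.\ $\alpha$. The paper obtains this by simply invoking \cite[Lem.\ 2.16]{CM20}, which is precisely the statement that a one-dimensional $\CD(K,N)$ density over a ray of length at least $\epsilon$ has total variation bounded, on a sub-interval of length $\epsilon$, by a constant depending only on $K,N,\epsilon$. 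That single citation replaces your entire third paragraph.

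Your attempt to re-derive the uniform bound from the $(K/(N-1))$-concavity of $f_\alpha=h_\alpha^{1/(N-1)}$ has real gaps, not just caveats. You already flag the Jensen step failing for $N\in(1,2)$, and the endpoint degeneracies; those are genuine. But there is a further, unacknowledged problem: the bound $\mm_\alpha(I_\alpha)\le \mm_\alpha(\overline{\{0<\delta<\epsilon\}})\le C$ uses Theorem \ref{thm:disintegration}(4), which requires the evaluation set to be \emph{compact}. In the corollary $\Omega$ is only assumed open, not bounded, so $\overline{\{0<\delta<\epsilon\}}$ need not be compact, and you have no uniform-in-$\alpha$ control of $\mm_\alpha(I_\alpha)$. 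The paper's route avoids this entirely because \cite[Lem.\ 2.16]{CM20} is a purely one-dimensional statement about $\CD(K,N)$ densities whose constant does not depend on the ambient geometry. Similarly, the chain rule $|h_\alpha'|=(N-1)f_\alpha^{N-2}|f_\alpha'|$ is dangerous when $N<2$, since $f_\alpha^{N-2}$ blows up near the zeros of $f_\alpha$, i.e.\ at the endpoints $a(X_\alpha),b(X_\alpha)$; a uniform total-variation estimate via this identity would require a uniform lower bound on $f_\alpha$ on $I_\alpha$, which you do not have. In short, the reduction is right and matches the paper, but the key quantitative input needs to be cited rather than reconstructed as sketched.
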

\begin{proof}
By Proposition \ref{lem:equivalence_migceps_lengthtransp}, the $\mIGC{\epsilon}$ condition implies that for $\sfq$-a.e.\ $\alpha \in Q'$ we have
\begin{equation}
\mathscr{H}^1(X_\alpha \cap \Omega) \ge \epsilon,
\end{equation}
where $Q'=\mathfrak{Q}(\Omega\cap T_\delta^{nb})$. In particular, it holds that $\sfd(a(X_\alpha), b(X_\alpha)) \ge \epsilon$, for $\sfq$-a.e.\ $\alpha\in Q'$. Then, applying \cite[Lem.\ 2.16]{CM20} for $\alpha\in Q'$, we get that there exists a constant $C= C(\epsilon,K,N)$ such that
\begin{equation}
\label{eq:uniform_estimate_seminorm_halpha}
    \| h_\alpha' \|_{L^1(0,\epsilon)} \le \frac{C}{\epsilon},\qquad\text{for $\sfq$-a.e. }\alpha\in Q'.
\end{equation}
Note that $X_\alpha\cap\{0<\delta<\epsilon\}\neq\emptyset$ if and only if $\alpha\in Q'$, thus we may compute
\begin{equation}
    \int_{\{ 0 < \delta <\epsilon \}} |[\pmb\Delta \delta]^{reg}|\,\d \mm =\int_{Q'} \int_0^\epsilon |(\log h_\alpha)'| |h_\alpha|\,\d t\,\d \sfq(\alpha) =  \int_{Q'} \int_0^\epsilon |h_\alpha'|\,\d t\,\d \sfq(\alpha) \stackrel{\eqref{eq:uniform_estimate_seminorm_halpha}}{\le}\sfq(Q')\,\frac{C}{\epsilon} \le \frac{C}{\epsilon}.
\end{equation}
This concludes the proof.
\end{proof}
\begin{corollary}
\label{coro:uniform_ball_cond}
Let $(\X,\sfd,\mm)$ be a $\RCD(K,N)$ for $K \in \mathbb{R}$ and $N \in (1,\infty)$ and let $\Omega\subset\X$ be open. Assume that $\partial\Omega$ verifies the $\mIGC{\epsilon}$. Then,
\begin{equation}
\sfq(\{ \alpha \in Q \mid a(X_\alpha) \in B_{\epsilon}(\partial \Omega) \cap \Omega \}) = 0.    
\end{equation}
Similarly, if $\partial\Omega$ verifies the $\mEGC{\epsilon}$ condition, then $\sfq(\{ \alpha \in Q \mid b(X_\alpha) \in B_{\epsilon}(\partial \Omega) \cap (\X \setminus \bar{\Omega}) \}) = 0$.
\end{corollary}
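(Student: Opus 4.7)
\textbf{Proof plan for Corollary \ref{coro:uniform_ball_cond}.}
The key observation is that under the $\mIGC{\epsilon}$ condition, Proposition \ref{lem:equivalence_migceps_lengthtransp} already gives $\sfq(Q'\setminus Q_\epsilon)=0$, where $Q'=\mathfrak{Q}(\Omega\cap T_\delta^{nb})$ and $Q_\epsilon=\{\alpha\in Q\mid\mathscr{H}^1(X_\alpha\cap\Omega)\ge\epsilon\}$. The plan is therefore to show that the set
\begin{equation}
A:=\{\alpha\in Q\mid a(X_\alpha)\in B_\epsilon(\partial\Omega)\cap\Omega\}
\end{equation}
is contained in $Q'\setminus Q_\epsilon$ (modulo $\sfq$-null sets).

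First I would exploit the structure of the transport rays coming from Theorem \ref{thm:1d_localization}: for every $\alpha\in Q$, the closed ray $\bar X_\alpha$ is isometric to an interval $[0,L_\alpha]$ via a parametrization $t\mapsto \gamma_\alpha(t)$ with $\gamma_\alpha(0)=a(X_\alpha)$, $\gamma_\alpha(L_\alpha)=b(X_\alpha)$. Since $\delta$ is $1$-Lipschitz and saturates the Lipschitz constant along $X_\alpha$, the restriction of $\delta$ to this interval is the affine function $t\mapsto \delta(a(X_\alpha))-t$ (monotonicity follows from \eqref{eq:init_final_pts_delta} and the $\d$-cyclical monotonicity of $\Gamma_\delta$). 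Consequently
\begin{equation}
\mathscr{H}^1(X_\alpha\cap\Omega)=\mathscr{H}^1\{t\in[0,L_\alpha]\mid \delta(a(X_\alpha))-t>0\}=\min(L_\alpha,\delta(a(X_\alpha))).
\end{equation}

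Now suppose $\alpha\in A$. Then $a(X_\alpha)\in\Omega$ gives $\delta(a(X_\alpha))>0$ and $a(X_\alpha)\in B_\epsilon(\partial\Omega)$ gives $\delta(a(X_\alpha))<\epsilon$. Combining with the displayed identity yields $\mathscr{H}^1(X_\alpha\cap\Omega)\le\delta(a(X_\alpha))<\epsilon$, so $\alpha\notin Q_\epsilon$. On the other hand, since $\delta$ is continuous on $\bar X_\alpha$ and strictly positive at $t=0$, it is strictly positive on a whole (non-degenerate) sub-interval of $(0,L_\alpha]$; picking any interior point of this interval, which by construction lies in $X_\alpha\subset T_\delta^{nb}$, produces a point of $X_\alpha\cap\Omega$, hence $\alpha\in Q'$. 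This proves $A\subset Q'\setminus Q_\epsilon$, and therefore $\sfq(A)\le\sfq(Q'\setminus Q_\epsilon)=0$ by Proposition \ref{lem:equivalence_migceps_lengthtransp}.

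The proof under the $\mEGC{\epsilon}$ assumption is entirely parallel: one repeats the argument with $b(X_\alpha)$ in place of $a(X_\alpha)$, using that $\delta$ reaches its minimum on $\bar X_\alpha$ at $b(X_\alpha)$ and is negative precisely on the portion of the ray inside $\X\setminus\bar\Omega$. The only point that requires a little care is the parametrization step above (where I use that the oriented length of $\bar X_\alpha\cap\Omega$ is exactly $\min(L_\alpha,\delta(a(X_\alpha)))$); this follows from the fact that $\bar X_\alpha$ is isometric to a closed interval and $\delta$ decreases at unit speed along the ray, both of which are standard consequences of the non-branched localization machinery of \cite{CM20}. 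I do not expect any serious obstacle in the argument.
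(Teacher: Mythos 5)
Your proof is correct, and it is essentially the argument the paper leaves implicit (the corollary is stated without proof, immediately after Proposition~\ref{lem:equivalence_migceps_lengthtransp} and Corollary~\ref{cor:mild_integrability_laplacian}, which together make the reduction to $\sfq(Q'\setminus Q_\epsilon)=0$ clear). The key computation — that $\delta$ decreases at unit speed along each transport ray, so by \eqref{eq:init_final_pts_delta} one has $\mathscr{H}^1(X_\alpha\cap\Omega)=\delta(a(X_\alpha))$, whence $a(X_\alpha)\in B_\epsilon(\partial\Omega)\cap\Omega$ forces $\alpha\in Q'\setminus Q_\epsilon$ — is exactly what is needed. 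One small remark: the $\min$ in your identity $\mathscr{H}^1(X_\alpha\cap\Omega)=\min(L_\alpha,\delta(a(X_\alpha)))$ is superfluous, since $\delta(b(X_\alpha))=\delta(a(X_\alpha))-L_\alpha\le 0$ already forces $L_\alpha\ge\delta(a(X_\alpha))$; this does not affect the argument. The exterior case is indeed symmetric once one observes that the signed distance for $\X\setminus\bar\Omega$ agrees with $-\delta$ away from $\partial\Omega$, so the transport rays are the same with the roles of $a(X_\alpha)$ and $b(X_\alpha)$ interchanged.
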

\begin{remark}
\label{rmk:IGCeps_to_reg_Deltadelta}
Under the $\mIGC{\epsilon}$ condition for $\partial\Omega$, $\pmb{\Delta} \delta$ as a Radon functional on $\{ 0 < \delta <\epsilon \}$ is equal to $\pmb{\Delta} \delta = [\pmb{\Delta} \delta]^{reg} \mm$.
Indeed, by Corollary \ref{coro:uniform_ball_cond}, we have that, for every $\varphi \in {\rm Lip}_{bs}(\{ 0 < \delta < \epsilon\})$, $\varphi(a(X_\alpha))=0$ for $\sfq$-a.e.\ $\alpha$, hence
\begin{equation}
    \int\varphi\,\d[\pmb{\Delta} \delta]^{sing}=-\int h_\alpha(a(X_\alpha))\,\varphi(a(X_\alpha))\,\d\sfq(\alpha) =0.
\end{equation}
Thus, the representation formula \eqref{eq:repr_laplacian_delta} has the form
\begin{equation}
\label{eq:IGCeps_to_reg_Deltadelta}
    \la \pmb\Delta\delta,\varphi\ra=\int_\Omega \varphi[\pmb\Delta\delta]^{reg}\,\d\mm.
\end{equation} 
\end{remark}

We consider the map $\proj \colon T_\delta^{nb} \to \partial \Omega$ that projects a point $x\in T_\delta^{nb}$ on the foot of the unique geodesic realizing $\delta(x)$, namely $\proj(x)$ is the only $y \in \partial \Omega$ such that $\sfd(x,y)=\delta(x)$.
We stress that the definition is well-posed, indeed, if there exist $y_1,y_2 \in \partial \Omega$, $y_1 \neq y_2$ such that $\sfd(x,y_1)=\sfd(x,y_2)=\delta(x)$, then
\begin{equation}
(x,y_i) \in \Gamma_\delta, \quad i=1,2\qquad\text{and}\qquad(y_1,y_2) \notin R_\delta.    
\end{equation}
Hence $x \in A^+\cup A^-$, giving a contradiction since $x \in T_\delta^{nb}$.
\begin{proposition}
\label{prop:surjectivity_projection}
Let $(\X,\sfd,\mm)$ be an $\RCD(K,N)$ space with $K\in\R$ and $N\in(1,\infty)$. Let $\Omega\subset\X$ be open and assume $\partial \Omega$ satisfies the $\mIGC{\epsilon}$ condition, for some $\epsilon>0$. Then $\proj\restr {O_\epsilon \cap \{ 0 <\delta <\epsilon\}}$ is surjective.
\end{proposition}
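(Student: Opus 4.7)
Fix $y\in\partial\Omega$. The plan is to construct a unit-speed geodesic $\sigma\colon[0,\epsilon]\to\X$ emanating from $y$ along which $\delta$ grows linearly, i.e.\ $\delta(\sigma_t)=t$ for every $t\in[0,\epsilon]$. Then, for any $s\in(0,\epsilon)$, the point $x:=\sigma_s$ will lie in $O_\epsilon\cap\{0<\delta<\epsilon\}\cap T_\delta^{nb}$ and satisfy $\proj(x)=y$. The geodesic $\sigma$ will be obtained as an Arzel\`a--Ascoli limit of geodesics witnessing the $O_\epsilon$-membership of a sequence of points converging to $y$.

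\textbf{Producing the geodesic.} Since $y\in\bar\Omega$, for every $r\in(0,\epsilon)$ the set $B_r(y)\cap\Omega$ is open, nonempty, and contained in $\{0<\delta<\epsilon\}$; hence it has positive $\mm$-measure, because $\supp\mm=\X$. Combined with the $\mIGC{\epsilon}$ condition, this produces a sequence $x_n\in O_\epsilon\cap\{0<\delta<\epsilon\}$ with $x_n\to y$. For each $n$ the definition of $O_\epsilon$ yields a geodesic which, after unit-speed reparametrization and truncation to length $\epsilon$, gives $\sigma^n\colon[0,\epsilon]\to\X$ with $\sigma^n_0\in\partial\Omega$ and $\delta(\sigma^n_t)=t$ (the last identity follows from $1$-Lipschitzness of $\delta$ combined with the fact that $\sigma^n$ realises the distance from its foot). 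A triangle inequality then gives
\begin{equation*}
\sfd(\sigma^n_0,y)\le \sfd(\sigma^n_0,x_n)+\sfd(x_n,y)=\delta(x_n)+\sfd(x_n,y)\le 2\sfd(x_n,y)\to 0.
\end{equation*}
As $\RCD(K,N)$ spaces are proper, the images of the $\sigma^n$ eventually lie in a common compact set, so Arzel\`a--Ascoli produces a subsequential uniform limit $\sigma\colon[0,\epsilon]\to\X$ with $\sigma_0=y$. Passing $\sfd(\sigma^n_0,\sigma^n_\epsilon)=\epsilon$ and $\delta(\sigma^n_t)=t$ to the limit shows that $\sigma$ is a unit-speed geodesic of length $\epsilon$ on which $\delta(\sigma_t)=t$ throughout.

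\textbf{Conclusion and main difficulty.} Fix any $s\in(0,\epsilon)$ and set $x:=\sigma_s$. Then $\delta(x)=s\in(0,\epsilon)$, and $\sigma$ (reparametrised on $[0,1]$) witnesses $x\in O_\epsilon$; moreover $\sfd(x,y)=s=\delta(x)$, so $y$ is a foot of $x$. The only delicate step is verifying $x\in T_\delta^{nb}$, for otherwise $\proj(x)$ is a priori undefined. Here we invoke Lemma \ref{lemma:inclusions}: as $\X$ is non-branching (Theorem \ref{thm:nonbranch_rcd}) one has $A^+\subset a$ and $A^-\subset b$, so it suffices to show that $x$ is neither an initial nor a final point of a transport ray. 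This is immediate from $\sigma$: $(\sigma_\epsilon,x)\in\Gamma_\delta$ with $\sigma_\epsilon\in T_\delta\setminus\{x\}$ excludes $x\in a$, while $(x,y)\in\Gamma_\delta$ with $y\in T_\delta\setminus\{x\}$ excludes $x\in b$. Consequently $x\in T_\delta^{nb}$, the foot of $x$ is unique, and $\proj(x)=y$. This shows $y\in\proj(O_\epsilon\cap\{0<\delta<\epsilon\})$, yielding the desired surjectivity.
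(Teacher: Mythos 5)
Your argument is correct. It also reorganizes the paper's proof in a way worth noting. The paper first proves that $\proj(O_\epsilon\cap\{0<\delta<\epsilon\})$ is \emph{closed} in $\partial\Omega$ (by taking a sequence $z_n$ in the image with witnessing geodesics, applying Arzel\`a--Ascoli, and invoking Lemma \ref{lemma:inclusions} to check that interior points of the limit geodesic lie in $T_\delta^{nb}$); then it argues by contradiction, using the $\mIGC{\epsilon}$ condition together with $\supp\mm=\X$ to locate a point $y\in O_\epsilon\cap\{0<\delta<\epsilon\}\cap T_\delta^{nb}$ near any would-be missed boundary point $z$, forcing $\proj(y)$ into a neighborhood that was assumed to avoid the image. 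You fuse these two halves: starting from an arbitrary $y\in\partial\Omega$ you use the same $\mIGC{\epsilon}$-plus-positive-measure argument to produce $x_n\in O_\epsilon\cap\{0<\delta<\epsilon\}$ with $x_n\to y$, pass the witnessing geodesics to an Arzel\`a--Ascoli limit $\sigma$ rooted at $y$, and read off directly that any interior point of $\sigma$ lies in $O_\epsilon\cap\{0<\delta<\epsilon\}\cap T_\delta^{nb}$ and projects to $y$ (again via Lemma \ref{lemma:inclusions}). The ingredients are identical — Arzel\`a--Ascoli for geodesics, the eikonal-type identity $\delta(\sigma_t)=t$ along feet-realizing geodesics, Lemma \ref{lemma:inclusions} plus non-branching to exclude the branching sets, and $\mIGC{\epsilon}$ for density — but your ordering eliminates the intermediate closedness lemma and the final proof by contradiction, giving a direct constructive proof. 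The one small thing worth making explicit, which you handle implicitly in the step $\sfd(\sigma^n_0,x_n)=\delta(x_n)$, is that the entire geodesic in the definition of $O_\epsilon$ is foot-realizing, i.e.\ $\delta(\gamma_t)=\sfd(\gamma_0,\gamma_t)$ for every $t$; this follows from the two inequalities $\delta(\gamma_t)\le\sfd(\gamma_0,\gamma_t)$ and $\delta(\gamma_t)\ge\delta(\gamma_1)-\sfd(\gamma_1,\gamma_t)=\ell(\gamma)-\sfd(\gamma_1,\gamma_t)=\sfd(\gamma_0,\gamma_t)$, exactly as you indicate parenthetically.
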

\begin{proof}
We firstly show that $\proj( O_\epsilon \cap \{ 0 <\delta <\epsilon\})$ is closed in $\partial\Omega$. Let $\{z_n\}_{n\in\N} \subset \proj( O_\epsilon \cap \{ 0 <\delta <\epsilon\})$ be such that $z_n \to z$, for some $z\in\partial\Omega$. For any $n\in\N$, there exists $x_n\in O_\epsilon \cap \{ 0 <\delta <\epsilon\}\cap T_\delta^{nb}$ such that $\proj(x_n)=z_n$ and, by definition of $O_\epsilon$, there exists $\gamma^n\colon [0,1]\rightarrow\X$ such that (up to reparameterization and suitable restriction of $\gamma^n$)
\begin{equation}
    x_n\in\gamma^n,\qquad \gamma_0^n=z_n\qquad\text{and}\qquad \delta(\gamma_1^n)=\ell(\gamma^n)=\epsilon.
\end{equation}
Thus, applying Ascoli--Arzel\'a theorem, up to a (not relabeled) subsequence, the sequence of curves $\{\gamma^n\}_{n\in\N}$ uniformly converges to a geodesic $\gamma\colon[0,1]\rightarrow\X$ such that
\begin{equation}
    \gamma_0=z\qquad\text{and}\qquad\delta(\gamma_1)=\ell(\gamma)=\epsilon.
\end{equation}
We claim that $\gamma_t\in T_\delta^{nb}$ for any $t\in (0,1)$. Indeed, passing to the limit the following relation
\begin{equation}
    \delta(\gamma^n_t)=\delta(\gamma^n_t)-\delta(z_n)=\sfd(\gamma^n_t,z_n),
\end{equation}
we deduce that $(\gamma_t,z)\in\Gamma_\delta$, and so $\gamma_t\in T_\delta$. Since, for $t\in (0,1)$, $\gamma_t\in (a\cup b)^c$, Lemma \ref{lemma:inclusions} implies the claim. Finally, this shows that $\gamma_t\in O_\epsilon \cap \{ 0 <\delta <\epsilon\}\cap T_\delta^{nb}$ and $\proj(\gamma_t)=z$.

To conclude, we argue by contradiction, assuming there exists $z \in \partial \Omega \setminus \proj(O_\epsilon \cap \{ 0 <\delta <\epsilon\})$. Then, since $\proj(O_\epsilon \cap \{ 0 <\delta <\epsilon\})$ is closed, there exists $r>0$ such that $B_r(z)\cap \partial \Omega \subset \proj(O_\epsilon \cap \{ 0 <\delta <\epsilon\})^c$. However, by assumption $\mm(\{ 0<\delta<\epsilon \} \setminus O_\epsilon)=0$, thus there exists $y \in B_{r/2}(z) \cap O_\epsilon \cap \{0<\delta < \epsilon\} \cap T_\delta^{nb}$, and this means that
\begin{equation}
    \delta(y)\leq \sfd(y,z)< \frac{r}{2}\qquad\text{and}\qquad\proj(y)\in B_r(z)\cap\partial\Omega.
\end{equation}
This gives a contradiction since $B_r(z)$ had empty intersection with the image of $\proj$.
\end{proof}

We relate the $\mIGC{\epsilon}$ with the uniform interior ball condition.
\begin{proposition}
\label{prop:ball_vs_IGC_epsilon}
Let $(\X,\sfd,\mm)$ be an $\RCD(K,N)$ space with $K\in\R$ and $N\in(1,\infty)$. Let $\Omega\subset\X$ be open. The following hold:
\begin{enumerate}
    \item If $\partial \Omega$ satisfies the $\epsilon$-uniform interior ball condition and every point of $\partial \Omega$ satisfies the exterior ball condition. Then it satisfies the $\mIGC{\epsilon}$ condition;
    \item if $\partial \Omega$ satisfies $\mIGC{\epsilon}$-condition, then it satisfies the $\epsilon$-uniform interior ball condition.
\end{enumerate}
\end{proposition}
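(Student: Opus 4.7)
The plan is to prove each implication by working at the level of the one-dimensional disintegration produced by $\delta$, exploiting the characterization of $\mIGC{\epsilon}$ in terms of transport ray lengths from Proposition \ref{lem:equivalence_migceps_lengthtransp}, together with the surjectivity statement of Proposition \ref{prop:surjectivity_projection} and non-branching from Theorem \ref{thm:nonbranch_rcd}.

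\textbf{Strategy for (1).} I would fix $\alpha \in Q' := \mathfrak{Q}(\Omega \cap T_\delta^{nb})$ and let $z := \bar X_\alpha \cap \partial \Omega$, which is a single point since $\delta$ is affine of slope $-1$ along the geodesic $\bar X_\alpha$. Invoking the hypothesis at $z$, I pick $p_z \in \Omega$ with $\sfd(p_z,z) = \epsilon$ and $B_\epsilon(p_z) \subset \Omega$, and $q_z \in \X \setminus \bar \Omega$ with $\sfd(q_z,z) = r_z$ and $B_{r_z}(q_z) \subset \X \setminus \bar\Omega$, so that $\delta(p_z) = \epsilon$ and $\delta(q_z) = -r_z$. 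Since $\delta$ is $1$-Lipschitz, for any two points $u,v$ one has $\sfd(u,v)\geq |\delta(u)-\delta(v)|$, which combined with the triangle inequality gives $\sfd(p_z,q_z)=\epsilon+r_z$ and $\sfd(a(X_\alpha),q_z)=s^{*}+r_z$, where $s^{*}:=\sfd(z,a(X_\alpha))=\mathscr{H}^1(X_\alpha\cap\Omega)$. Consequently, any concatenation of a geodesic $p_z\to z$ with one $z\to q_z$ is a single geodesic, and similarly for $a(X_\alpha)\to z\to q_z$.

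\textbf{Conclusion of (1).} Assume by contradiction $s^{*}<\epsilon$ on a $\sfq$-positive set and fix such an $\alpha$. Choose the SAME geodesic $z \to q_z$ in both concatenations; this produces two constant-speed geodesics starting from $q_z$ of common length $r_z+s^{*}$, agreeing on the initial segment up to $z$ (both equal to the chosen $q_z\to z$ geodesic, reversed). By non-branching (Theorem \ref{thm:nonbranch_rcd}), the two geodesics coincide, placing $a(X_\alpha)$ on the segment from $z$ to $p_z$ at distance $s^{*}$ from $z$. Choosing any point $a''$ on this segment at distance $s^{*}+\delta_0$ from $z$ with $\delta_0>0$ small, a direct $\delta$-computation gives $(a'',a(X_\alpha))\in \Gamma_\delta$ and $(a'',x)\in\Gamma_\delta$ for every interior point $x$ of $X_\alpha$ close enough to $a(X_\alpha)$, so $a''$ belongs to $T_\delta$. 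Since $\mm(T_\delta \setminus T_\delta^{nb})=0$ by Proposition \ref{prop:measure_non_branched_set} and $\mathscr{H}^1$ on the segment is (up to the density $h_\alpha$) part of the disintegration of $\mm$ along the transport ray $X_{\alpha'}$ containing $a''$, we can arrange $a''\in T_\delta^{nb}$, forcing $a''\in X_\alpha$ and contradicting the maximality of $X_\alpha$ as an equivalence class of $R_\delta^{nb}$. Hence $s^{*}\geq \epsilon$ for $\sfq$-a.e.\ $\alpha\in Q'$, and Proposition \ref{lem:equivalence_migceps_lengthtransp} yields $\mIGC{\epsilon}$.

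\textbf{Strategy for (2).} Fix $x \in \partial\Omega$ arbitrary. By Proposition \ref{prop:surjectivity_projection}, there exists $y\in O_\epsilon \cap \{0<\delta<\epsilon\}\cap T_\delta^{nb}$ with $\proj(y)=x$. By definition of $O_\epsilon$, we find a geodesic $\gamma\colon[0,1]\to \X$ with $y\in\gamma$, $\gamma_0\in\partial\Omega$, and $\delta(\gamma_1)=\ell(\gamma)\geq \epsilon$. The $1$-Lipschitz property of $\delta$ combined with the equality $\delta(\gamma_1)=\ell(\gamma)$ forces $\delta(\gamma_t)=t\,\ell(\gamma)$ for every $t$ (hence $\sfd(\gamma_0,\gamma_t)=\delta(\gamma_t)$), so $\gamma_0$ is a foot of $y$; by uniqueness of the foot on $T_\delta^{nb}$, $\gamma_0=x$. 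Taking $p_x:=\gamma_{t_0}$ with $t_0 = \epsilon/\ell(\gamma)\in(0,1]$ gives $\sfd(p_x,x)=\epsilon$ and $\delta(p_x)=\epsilon$, so $B_\epsilon(p_x)\cap\partial\Omega=\emptyset$. Since $(\X,\sfd)$ is geodesic, $B_\epsilon(p_x)$ is path-connected; as $p_x\in\Omega$ this yields $B_\epsilon(p_x)\subset\Omega$ with $x\in\partial B_\epsilon(p_x)$.

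The main technical obstacle is the final step in (1): guaranteeing the auxiliary point $a''$ lies in $T_\delta^{nb}$ so that maximality of the transport ray can be invoked. The approach is to exploit the absolute continuity structure given by Theorem \ref{thm:1d_localization} (the disintegration $\mm_\alpha = h_\alpha\mathscr H^1\restr{X_\alpha}$) together with $\mm(T_\delta\setminus T_\delta^{nb})=0$, choosing $\delta_0$ in a set of positive $\mathscr H^1$-measure where this inclusion holds. Everything else in (1) reduces to the 1-Lipschitz/triangle inequality argument and a single application of non-branching.
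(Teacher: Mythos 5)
Your part (2) is essentially the paper's proof: find a projecting point via Proposition \ref{prop:surjectivity_projection}, extract the calibrating geodesic from the definition of $O_\epsilon$, identify its foot with the given boundary point via uniqueness of $\proj$ on $T_\delta^{nb}$, and truncate to length $\epsilon$.

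For part (1), your route is genuinely different. The paper proves the inclusion $\{0<\delta<\epsilon\}\subset O_\epsilon$ \emph{pointwise}: given an arbitrary $x$ with foot $z$, the interior and exterior balls at $z$ produce geodesics that, concatenated and compared via non-branching, extend the calibrating geodesic through $x$ to one of length $\epsilon$, so no disintegration is needed at all. You instead argue ray by ray, aiming at $\mathscr{H}^1(X_\alpha\cap\Omega)\geq\epsilon$ and invoking Proposition \ref{lem:equivalence_migceps_lengthtransp}. Your $1$-Lipschitz/triangle computations and the non-branching step placing $a(X_\alpha)$ on $[z,p_z]$ are sound, and this is the same geometric mechanism the paper uses; your route just forces you to resolve a membership question that the paper's direct inclusion avoids.

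That membership question is exactly where your proposal has a gap. You cannot deduce $a''\in T_\delta^{nb}$ from $\mm(T_\delta\setminus T_\delta^{nb})=0$ together with the disintegration: the segment $[z,p_z]$ is a single curve and in general has $\mm$-measure zero, so the $\mm$-null statement tells you nothing about a positive-$\mathscr{H}^1$-measure set of admissible $\delta_0$, and the phrase \emph{the transport ray $X_{\alpha'}$ containing $a''$} presupposes precisely what you are trying to prove. The fact you need is nevertheless true, for the simpler reason already recorded in Lemma \ref{lemma:inclusions}: in a geodesic non-branching space one has $A^+\subset a$ and $A^-\subset b$, so any \emph{interior} point of a $\delta$-calibrating geodesic lies outside $a\cup b$ and hence outside $A^+\cup A^-$. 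Since $p_z$ is strictly upstream and $z$ strictly downstream of $a''$ in $\Gamma_\delta$, this gives $a''\in T_\delta^{nb}$, and your contradiction with the maximality of $X_\alpha$ then closes. Note that once fixed this rules out $\mathscr{H}^1(X_\alpha\cap\Omega)<\epsilon$ for \emph{every} $\alpha\in Q'$, so the $\sfq$-positive-measure framing was unnecessary, and the whole detour through Proposition \ref{lem:equivalence_migceps_lengthtransp} could have been dropped in favor of the paper's pointwise inclusion.
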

\begin{proof}
We prove $(1)$ and, to do so, we show that 
\begin{equation}
    O_\epsilon\cap\{0<\delta<\epsilon\} = \{0<\delta<\epsilon\}.
\end{equation}
Let $x\in\{0<\delta<\epsilon\}$, then there exists a geodesic $\gamma\colon[0,1]\rightarrow\X$ such that $\gamma_0\in\partial\Omega$, $\gamma_1=x$ and $\delta(\gamma_1)=\ell(\gamma)$. By assumptions, there exists $p\in\Omega$ such that $B_\epsilon(p)$ verifies the interior ball condition at the point $\gamma_0$, and there exist $q\in\X\setminus\bar\Omega$, $r>0$ such that $B_{r}(q)$ verifies the exterior ball condition at the point $\gamma_0$. Let $\eta\colon[0,1]\rightarrow\X$ and $\tilde\eta\colon[0,1]\rightarrow\X$ denote the geodesics such that $\eta_0=\gamma_0$, $\eta_1=p$ and $\tilde\eta_0=\gamma_0$, $\tilde\eta_1=q$, respectively. By construction, we have
\begin{equation}
\label{eq:length_eta}
    \delta(p)=\ell(\eta)=\epsilon\qquad\text{and}\qquad-\delta(q)=\ell(\tilde\eta)=r,
\end{equation}
thus, $\sfd(p,q)\le \sfd(p,\eta_0)+\sfd(q,\tilde\eta_0)=r+\epsilon$. We claim that equality holds. Assume by contradiction that this is not the case and consider $\tilde\gamma\colon[0,1]\rightarrow\X$ geodesic joining $p$ and $q$. Then, there exists $t\in (0,1)$ such that $\tilde\gamma_t\in\partial\Omega$ and, in addition
\begin{equation}
    \delta(p)\le\sfd(p,\tilde\gamma_t)<\epsilon \qquad\text{or}\qquad-\delta(q)\le\sfd(q,\tilde\gamma_ t)<r.
\end{equation}
This implies that the equalities in \eqref{eq:length_eta} can not hold at the same time, giving a contradiction. Hence, if we consider the arc-length reparameterizations of $\eta$ and $\tilde\eta$ (without relabeling), the curve $\eta \cup \tilde{\eta}\colon [0,r+\epsilon] \to \X$ defined as
\begin{equation}
(\eta \cup \tilde{\eta})_t:= \begin{cases}
\eta_{r-t} & \text{on }[0,r],\\
\tilde{\eta}_{t-r} &\text{on }[r,r+\epsilon].
\end{cases}    
\end{equation}
is a geodesic between $p$ and $q$. In an analogous way, the concatenation $\gamma\cup\tilde\eta$ of $\gamma$ and $\tilde\eta$ is a geodesic between $x$ and $q$. Therefore, since $\RCD(K,N)$ spaces are non-branching (see Theorem \ref{thm:nonbranch_rcd}), we must have $\gamma \subset \eta$. Since \eqref{eq:length_eta} holds, $x\in O_\epsilon$. This concludes the proof of $(1)$.\\
We prove $(2)$. Let us consider $z\in \partial \Omega$. Hence, by an application of Proposition \ref{prop:surjectivity_projection}, we know that there exists $x \in O_\epsilon\cap \{ 0<\delta <\epsilon\}\cap T_\delta^{nb}$ being such that $\proj(x)=z$. Let us consider the geodesic $\gamma$ such that $\gamma_0 \in \partial \Omega$, $x\in \gamma$ and $\ell(\gamma) = \epsilon$. It can be readily checked that $B_\epsilon(\gamma_1) \subset \Omega$, thus proving the ball condition of radius $\epsilon$ at $z$. By arbitrariness of the point $z$, we conclude.    
\end{proof}

\begin{example}
The following two examples show that Proposition \ref{prop:ball_vs_IGC_epsilon} is sharp in the sense that 
the converse to $(1)$ and $(2)$ does not hold.\\
For item $(1)$, consider $(\mathbb{R}^2,|\cdot|,\mathcal{L}^2)$ and the open set $\Omega := (\mathbb{R}^2 \setminus [0,1]^2) \cap B$, where $B$ is a large ball enclosing $[0,1]^2$. 
We claim that, for $\epsilon \ll 1$, we have $O_\epsilon \cap \{ 0 < \delta < \epsilon \} = \{ 0 < \delta < \epsilon \}$. Indeed, for every $x \in \{ 0 < \delta < \epsilon \}$, we consider the segment connecting $x$ to the closest point in $\partial \Omega$. This segment can be extended to a segment of length $\epsilon$ which can be chosen to be the geodesic in the definition of $O_\epsilon$ at the point $x$. Hence, in particular, $\partial \Omega$ satisfies the $\mIGC{\epsilon}$ condition.
However, this set does not satisfy the exterior ball condition at the point $0$ (see Figure \ref{fig:A}).\\
For item $(2)$, consider $(\mathbb{R}^3,|\cdot|,\mathcal{L}^3)$ and the open set $\Omega:= B_1(0) \setminus S$, where the closed set $S\subset\R^3$ is defined as follows: let $\epsilon>0$ sufficiently small, then 
\begin{equation}
    S: = \bar B_{1-4\epsilon}(0) \cap (\{ x=z=0 \} \cup \{ y=z=0 \}).
\end{equation}
Observe that $\Omega$ satisfies the $\epsilon$-uniform interior ball condition, yet it does not satisfy the $\mIGC{\epsilon}$ condition. 
Indeed, to check this, consider the ball $B_\rho(0)$ with $\rho \ll \epsilon$ and notice that $\mathcal{L}^3(B_\rho(0) \cap (O_\epsilon \setminus \{0 <\delta <\epsilon\}))>0$.
The crucial remark is that, for a point $x \in S$ close to the origin, a tangent ball of radius $\epsilon$ can be chosen only tangent to the plan $\{ z = 0\}$ (see Figure \ref{fig:B}). Thus, the ball condition on the set $S$ does not give any control on the geodesics on all $B_\rho(0)$.
\begin{figure}[ht!]
     \centering
     \begin{subfigure}[b]{0.45\textwidth}
         \centering
         \includegraphics[scale=0.67]{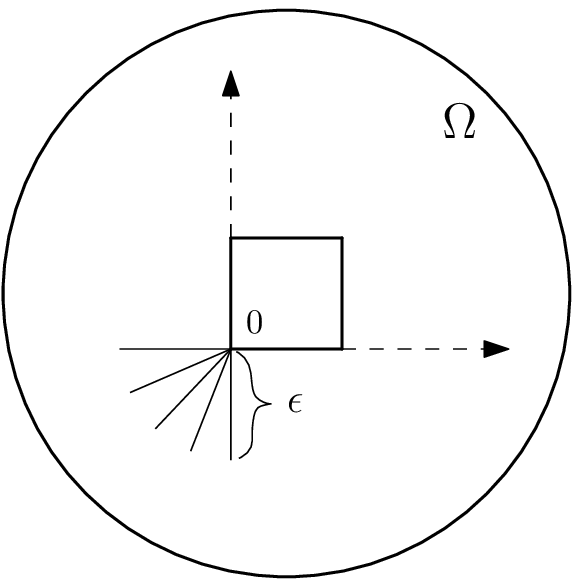}
          \caption{$\Omega = (\mathbb{R}^2 \setminus [0,1]^2) \cap B\subset\R^2$}
         \label{fig:A}
     \end{subfigure}
      \hspace{-0.5cm}
     \begin{subfigure}[b]{0.45\textwidth}
         \centering
         \includegraphics[scale=0.83]{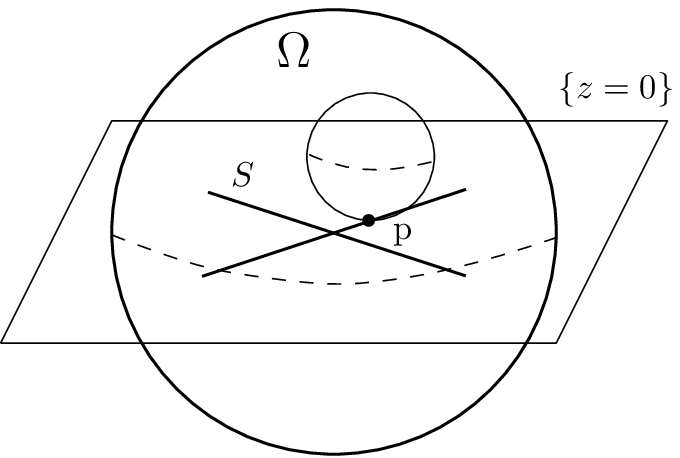}
          \caption{$\Omega= B_1(0) \setminus S\subset\R^3$}
         \label{fig:B}
     \end{subfigure}
    \caption{Counterexamples regarding strict implications in Proposition \ref{prop:ball_vs_IGC_epsilon}.}
\end{figure}
\end{example}
\section{A mean value lemma in \texorpdfstring{$\RCD(K,N)$}{RCD(K,N)} spaces}
\label{sec:mean_value_lemma}
Let $(\X,\sfd,\mm)$ be a $\RCD(K,N)$ for $K \in \mathbb{R}$ and $N \in (1,\infty)$ and let $\Omega\subset \X$ be open and bounded. Let $\delta\colon \X\rightarrow\R$ be the signed distance function from $\partial\Omega$ as in \eqref{eq:signed_distance_fun}, and for a fixed $v\in D\big(\Delta_{\restr{\Omega}}\big)$, define: 
\begin{equation}
\label{eq:def_F}
    F(r):=\int_{\{\delta>r\}}v\,\d\mm,\qquad r\geq 0.
\end{equation}
The goal of this section is to compute the second distributional derivative of the function $F$, to deduce a mean value lemma in $\RCD$ setting. First of all, we recall the following version of the coarea formula: this is a consequence of \cite[Prop.\ 4.2]{Mir03} and of the following identity for the total variation of $f$:
\begin{equation}
    |Df|= |\nabla f|\mm,\qquad\forall\, f\in \Lip_{loc}(\X) \cap BV(\X),
\end{equation}
which holds for proper $\RCD(K,\infty)$ spaces, see \cite{GigliHan14}.

\begin{proposition}[Coarea formula]
\label{prop:coarea}
Let $(\X,\sfd,\mm)$ be an $\RCD(K,N)$ space.
Let $u \in \Lip_{loc}(\X) \cap BV(\X)$. Then $\{u > r \}$ has finite perimeter for a.e. $r \in (0,\infty)$ and for every $f \in L^1(|\nabla u| \mm)$ it holds that
\begin{equation}
\label{eq:coarea_original}
\int \varphi(u) f |\nabla u|\,\d \mm = \int_0^{\infty} \varphi(r) \int f \,\d {\rm Per}(\{ u > r \},\cdot)\,\d r
\end{equation}
for every $\varphi\colon (0,\infty) \to \R$ Borel and bounded.
\end{proposition}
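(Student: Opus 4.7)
The plan is to obtain Proposition \ref{prop:coarea} by assembling two existing results from the literature, essentially as the footnote-style remark preceding the statement already suggests. First, I would invoke the general metric-measure coarea formula of \cite[Prop.~4.2]{Mir03}, which applies to any $\PI$ space (and so in particular to our $\RCD(K,N)$ space, since such spaces are locally uniformly doubling by Bishop--Gromov and satisfy a local weak Poincar\'e inequality by \cite{Rajala12}). In the formulation of Miranda, for $u\in BV(\X)$ the superlevel set $\{u>r\}$ has finite perimeter for a.e.\ $r\in(0,\infty)$, and one has
\begin{equation}
    \int \varphi(u)\,f\,\d|Du| \;=\; \int_0^{\infty} \varphi(r) \int f\,\d{\rm Per}(\{u>r\},\cdot)\,\d r
\end{equation}
for every Borel bounded $\varphi$ and every $f\in L^1(|Du|)$, where $|Du|$ denotes the total variation measure of $u$.

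Second, I would apply the identification of the total variation measure in the $\RCD$ setting due to \cite{GigliHan14}: for every $f\in \Lip_{\loc}(\X)\cap BV(\X)$ in a proper infinitesimally Hilbertian metric measure space with a lower Ricci bound (hence in particular in $\RCD(K,N)$, $N\in(1,\infty)$, since such spaces are proper by the doubling property), one has
\begin{equation}
    |Df| \;=\; |\nabla f|\,\mm
\end{equation}
as measures. Substituting $u$ for $f$ in this identity transforms the left-hand side of Miranda's formula into $\int \varphi(u)\,f\,|\nabla u|\,\d\mm$, and the integrability condition $f\in L^1(|Du|)$ becomes $f\in L^1(|\nabla u|\mm)$, matching the assumption of the statement. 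The finite-perimeter conclusion for a.e.\ superlevel set passes through unchanged.

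There is no serious technical obstacle here: the only points to verify are that the $\PI$ hypotheses of \cite{Mir03} and the infinitesimal Hilbertianity / properness hypotheses of \cite{GigliHan14} are genuinely available in our $\RCD(K,N)$ setting (both already recorded in Section~\ref{sec:preliminaries}), and that the measures on both sides of Miranda's formula are assembled from objects for which the $\mm$-a.e.\ pointwise gradient $|\nabla u|$ is well defined, which is precisely the regularity $u\in \Lip_{\loc}(\X)\cap BV(\X)$ that we assume. Once these ingredients are in place the proof reduces to one line of substitution.
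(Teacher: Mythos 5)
Your proposal matches the paper's own justification exactly: the paper does not give a formal proof but states, in the sentence preceding Proposition \ref{prop:coarea}, that the result is a consequence of \cite[Prop.~4.2]{Mir03} combined with the identity $|Du| = |\nabla u|\,\mm$ for $u\in\Lip_{\loc}(\X)\cap BV(\X)$ from \cite{GigliHan14}, which is precisely the two-step substitution you carry out. Your verification that $\RCD(K,N)$ spaces satisfy the $\PI$, properness, and infinitesimal Hilbertianity hypotheses needed to invoke those two references is correct and in line with the preliminaries of Section~\ref{sec:preliminaries}.
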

\begin{remark}[Coarea formula for the signed distance]
\label{rmk:coarea_distance}
Let $\Omega\subset\X$ be open and bounded and define the signed distance $\delta$ as in \eqref{eq:signed_distance_fun}. We wish to apply
Proposition \ref{prop:coarea} to $\delta$. In this case, take $\eta \in \Lip_{bs}(\X)$ such that $\eta \equiv 1$ on $\Omega$, then $\eta \delta \in \Lip_{bs}(\X) \cap BV(\X)$, therefore the set
\begin{equation}
    \{\delta>r\}=\{\delta>r\}\cap\Omega=\{\eta\delta>r\},
\end{equation}
has finite perimeter for a.e. $r\in (0,\infty)$. Moreover, for $f \in L^1(\Omega,\mm)$, denote by $\tilde f\in L^1(\mm)$ its extension by $0$ to $\X$, thus, we apply \eqref{eq:coarea_original} as follows: 
\begin{equation}
\label{eq:coarea}
\begin{aligned}
\int_{\Omega} \varphi(\delta) f |\nabla \delta|\,\d \mm &=
\int \varphi(\eta\delta) \tilde f |\nabla (\eta \delta)|\,\d \mm = \int_0^{\infty} \varphi(r) \int \tilde f \,\d {\rm Per}(\{ \eta \delta > r \},\cdot)\,\d r \\
&= \int_0^\infty \varphi(r)\int_{\Omega} f\,\d {\rm Per}(\{ \delta > r\},\cdot)\,\d r.
\end{aligned}
\end{equation}
\end{remark}
The one-dimensional localization and the coarea formula are two possible ways of parameterizing the tubular neighbourhood of $\partial \Omega$ and their relation is shown in the following proposition.
\begin{proposition}
\label{prop:parameterization_tubular_neighborhood}
Let $(\X,\sfd,\mm)$ be an $\RCD(K,N)$ space for $K \in \mathbb{R}$ and $N \in (1,\infty)$. Let $\Omega\subset\X$ be open. Consider the disintegration of $\mm$ as in \eqref{eq:restriction_mm}. Then, for every Borel function $f \colon \X\rightarrow\R$,
\begin{equation}
    \int f \,\d {\rm Per}(\{\delta > s \},\cdot) = \int_Q f(\gamma_\alpha(s))\,h_{\alpha}(s)\,\chi_{\{ \tilde{\alpha}\mid s \in I_{\tilde{\alpha}} \}}(\alpha)\,\d \sfq(\alpha),\qquad\text{for a.e. }s\in\R,
\end{equation}
where $\gamma_\alpha\colon \bar I_\alpha\rightarrow \bar X_\alpha$ is an arc-length parameterization of $X_\alpha$ such that $0\in \bar I_\alpha$ and $\gamma_\alpha(0)\in\partial\Omega$, $\forall\,\alpha \in Q$. 
\end{proposition}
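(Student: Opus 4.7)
The plan is to verify the identity weakly, by testing both sides against an arbitrary Borel bounded $\varphi\colon\R\to\R$ with $\varphi(0)=0$, and appealing to the arbitrariness of $\varphi$.

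First, I would apply the coarea formula. Fix $f\in L^1(\mm)$ (the general case will follow by approximation under suitable integrability assumptions). Choose $\eta\in\Lip_{bs}(\X)$ with $\eta\equiv 1$ on $\Omega$, so that $\eta\delta\in\Lip_{bs}(\X)\cap BV(\X)$. Applying Proposition \ref{prop:coarea} to $u=\eta\delta$, together with the eikonal identity $|\nabla\delta|=1$ $\mm$-a.e.\ (Proposition \ref{prop:eikonal_weak}) and arguing as in Remark \ref{rmk:coarea_distance}, yields
\begin{equation*}
\int\varphi(\delta)\,f\,\d\mm \;=\; \int_\R\varphi(s)\,\int f\,\d{\rm Per}(\{\delta>s\},\cdot)\,\d s.
\end{equation*}

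Second, I would rewrite the left-hand side above via the disintegration \eqref{eq:restriction_mm}. Since $\varphi(0)=0$, the integrand is supported in $\X\setminus\partial\Omega$, and, by Remark \ref{rmk:symmetric_difference} together with Proposition \ref{prop:measure_non_branched_set}, we have $\mm((\X\setminus\partial\Omega)\setminus T_\delta^{nb})=0$, so the disintegration can be applied directly. The key observation is that, for $\sfq$-a.e.\ $\alpha\in Q$ and every $s\in\bar I_\alpha$,
\begin{equation*}
\delta(\gamma_\alpha(s))=s.
\end{equation*}
Indeed, $X_\alpha$ is a transport ray for $\delta$, so any two points $x,y\in X_\alpha$ satisfy $|\delta(x)-\delta(y)|=\sfd(x,y)$; since $\gamma_\alpha$ is an arc-length parameterization with $\gamma_\alpha(0)\in\partial\Omega$ and $\delta(\gamma_\alpha(0))=0$, the claim follows. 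Consequently,
\begin{equation*}
\int\varphi(\delta)\,f\,\d\mm \;=\; \int_Q\int_{I_\alpha}\varphi(s)\,f(\gamma_\alpha(s))\,h_\alpha(s)\,\d s\,\d\sfq(\alpha).
\end{equation*}

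Finally, an application of Fubini's theorem (using the joint measurability of the integrand, a standard consequence of the measurable selection embedded in Theorem \ref{thm:1d_localization}) rewrites the above as
\begin{equation*}
\int_\R\varphi(s)\left(\int_Q f(\gamma_\alpha(s))\,h_\alpha(s)\,\chi_{\{s\in I_\alpha\}}(\alpha)\,\d\sfq(\alpha)\right)\d s.
\end{equation*}
Comparing with the coarea identity and exploiting the arbitrariness of $\varphi$ yields the claimed identity for a.e.\ $s\in\R$. The main technical subtlety lies in the identification $\delta\circ\gamma_\alpha=\mathrm{id}_{\bar I_\alpha}$ and in the joint measurability needed for Fubini; both are standard consequences of the construction of the disintegration associated to $\delta$.
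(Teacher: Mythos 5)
Your proposal is correct and follows the paper's proof in all essentials: establish $\delta(\gamma_\alpha(s))=s$ from the transport-ray structure, equate integrated versions of the two sides using the coarea formula (Remark \ref{rmk:coarea_distance}) on the perimeter side and the disintegration \eqref{eq:restriction_mm} together with $(\gamma_\alpha)_\#(h_\alpha\,\d s)=\mm_\alpha$ on the ray side, then conclude by arbitrariness of the test. The only cosmetic difference is the choice of test objects: you use arbitrary bounded Borel $\varphi$ with $\varphi(0)=0$, while the paper uses characteristic functions $\chi_{(s_1,s_2)}$ of intervals and then upgrades to general Borel sets via the monotone class theorem.
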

\begin{proof}
For any $\alpha\in Q$, since $\gamma_\alpha$ has metric speed equal to $1$ and $\gamma_\alpha(0) \in \partial \Omega$, we have that, for every $0 \le s \in \bar I_\alpha$,
\begin{equation}
    \delta(\gamma_\alpha(s)) = \delta(\gamma_\alpha(s))- \delta(\gamma_\alpha(0)) = \sfd(\gamma_\alpha(s), \gamma_\alpha(0)) = s
\end{equation}
and similarly for $0 \ge s \in \bar I_\alpha$, we get $\delta(\gamma_{\alpha}(s)) = s$ for every $s \in I_\alpha$. For any $s\in\R$, set 
\begin{equation}
    F(s):= \int_Q f(\gamma_\alpha(s))\,h_\alpha(s)\chi_{\{\tilde{\alpha}\mid s \in I_{\tilde{\alpha}} \}}(\alpha)\,\d \sfq(\alpha)\qquad\text{and}\qquad G(s):=\int_Q f\,\d {\rm Per}(\{ \delta > s\},\cdot).
\end{equation} 
Then, letting $s_1<s_2 \in \R$, we compute: 
\begin{equation}
\begin{aligned}
\label{eq:integrated_rel_delta_gammaalpha}
   \int_{s_1}^{s_2} F(s)\,\d s &=
   \int_Q \int_{I_\alpha} \chi_{(s_1,s_2)}(\delta(\gamma_\alpha(s)))\, f(\gamma_\alpha(s))\,h_{\alpha}(s)\,\d s \,\d\sfq(\alpha)\\
   & = \int \int_{I_\alpha} \left( \chi_{\{ s_1 < \delta < s_2\}} f \right)(\gamma_\alpha(s))h_\alpha(s)\,\d s\,\d \sfq(\alpha) = \int \int \chi_{\{ s_1 < \delta < s_2 \}} f \,\d\mm_\alpha \,\d \sfq(\alpha) \\
   &= \int_{\{ s_1 < \delta < s_2 \}} f \,\d \mm,
\end{aligned}
\end{equation}
since by definition, $\left(\gamma_\alpha\right)_\#( h_\alpha \,\d s)=\mm_\alpha$.
Finally, using coarea formula, we obtain
\begin{equation}
     \int_{s_1}^{s_2} F(s)\,\d s = \int_{\{ s_1 < \delta < s_2 \}} f\,\d \mm  \stackrel{\eqref{eq:coarea}}{=} \int_{s_1}^{s_2} G(s)\,\d s.
\end{equation}
We showed that, for every open interval $I$, $\int_I F\,\d s = \int_I G\,\d s$. Therefore, since the last property is stable under limits of increasing sequence of sets, we are in position to apply monotone class theorem and get that, for every Borel set $B \subset \R$, $\int_B F \,\d s= \int_B G\,\d s$. Thus, $F = G$ for a.e.\ $s$, as claimed.
\end{proof}
We apply coarea formula to compute the first derivative of the function $F$ defined in \eqref{eq:def_F}.
\begin{proposition}[First derivative of $F$]
\label{cor:Fprime}
Let $(\X,\sfd,\mm)$ be a $\RCD(K,N)$ for $K \in \mathbb{R}$ and $N \in (1,\infty)$, let $\Omega\subset\X$ be open and bounded and consider $v \in L^1(\Omega,\mm)$. Then, $\{ \delta > r \}$ is a set of finite perimeter for a.e.\ $r$, $F\in{\rm AC}((0,+\infty),\R)$ and 
\begin{equation}
\label{eq:Fprime}
    F'(r)= -\int_\Omega v\,\d {\rm Per}(\{\delta>r\},\cdot),\qquad\text{for a.e. }r\in (0,+\infty)
\end{equation}
\end{proposition}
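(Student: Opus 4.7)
The plan is to deduce everything from the coarea formula of Proposition \ref{prop:coarea} (in the form of Remark \ref{rmk:coarea_distance}) applied to the signed distance $\delta$. Since $|\nabla\delta|=1$ $\mm$-a.e.\ by Proposition \ref{prop:eikonal_weak}, choosing a Lipschitz cut-off $\eta\in\Lip_{bs}(\X)$ with $\eta\equiv 1$ on $\Omega$ gives $\eta\delta\in\Lip_{bs}(\X)\cap BV(\X)$; since $\{\eta\delta>r\}=\{\delta>r\}$ for every $r>0$, the first conclusion of Proposition \ref{prop:coarea} directly yields that $\{\delta>r\}$ has finite perimeter for a.e.\ $r\in(0,+\infty)$.

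Next, fix a Borel representative of $v$ (still denoted $v$) and set
\begin{equation}
    g(r):=\int_\Omega v\,\d{\rm Per}(\{\delta>r\},\cdot).
\end{equation}
Since $|\nabla\delta|=1$ $\mm$-a.e., we have $v\in L^1(|\nabla\delta|\mm)$, so for any $0<r_1<r_2$, applying \eqref{eq:coarea} with $\varphi=\chi_{(r_1,r_2)}$ yields
\begin{equation}
    F(r_1)-F(r_2)=\int_{\{r_1<\delta<r_2\}} v\,\d\mm=\int_{r_1}^{r_2} g(r)\,\d r.
\end{equation}
Applied instead with $\varphi\equiv 1$ on $(0,+\infty)$ and $|v|$ in place of $v$, the coarea formula gives
\begin{equation}
    \int_0^{+\infty}|g(r)|\,\d r\le\int_0^{+\infty}\int_\Omega|v|\,\d{\rm Per}(\{\delta>r\},\cdot)\,\d r=\int_\Omega|v|\,\d\mm<\infty,
\end{equation}
so $g\in L^1(0,+\infty)$. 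Therefore $F$ is absolutely continuous on $(0,+\infty)$ and, by the fundamental theorem of calculus, $F'(r)=-g(r)$ for a.e.\ $r$, which is precisely \eqref{eq:Fprime}.

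The only subtlety is that integrating the $\mm$-equivalence class $v$ against the Radon measure ${\rm Per}(\{\delta>r\},\cdot)$ requires selecting a Borel representative; however, the right-hand side of the displayed identity for $F(r_1)-F(r_2)$ depends only on $v$ modulo $\mm$-a.e.\ equality, so the a.e.\ identity $F'(r)=-g(r)$ is independent of the chosen representative. I do not anticipate any substantial obstacle beyond this bookkeeping; the argument is essentially an application of Fubini through coarea.
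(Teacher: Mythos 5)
Your proof is correct and follows essentially the same route as the paper's: both apply the coarea formula for $\delta$ (via Remark \ref{rmk:coarea_distance}) with a characteristic function of an interval as test function to express $F(r_1)-F(r_2)$ as $\int_{r_1}^{r_2}g(r)\,\d r$, then invoke the integrability of $r\mapsto\int v\,\d{\rm Per}(\{\delta>r\},\cdot)$ to conclude absolute continuity. You are slightly more explicit than the paper in bounding $\|g\|_{L^1(0,\infty)}$ by a second application of coarea with $|v|$, and in noting that the identity is independent of the chosen Borel representative of $v$; the paper leaves both points implicit but the argument is the same.
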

\begin{proof}
In view of Remark \ref{rmk:coarea_distance}, we have that for a.e.\ $r>0$ the set $\{ \delta > r \}$ is of finite perimeter. 
We prove the second part of the statement. We notice that $|\nabla \delta|=1$, $\mm$-a.e. from Proposition \ref{prop:eikonal_weak}. Moreover, we compute for $r,h>0$, 
\begin{align}
    |F(r+h)-F(r)|&= F(r)-F(r+h)=\int_{\{\delta>r\}}v\,\d\mm-\int_{\{\delta>r+h\}}v\,\d\mm=
    \int_{\{r<\delta<r+h\}}v\,\d\mm\\
    &=\int_\Omega\chi_{(r,r+h)}(\delta) v|\nabla\delta|\,\d\mm\stackrel{\eqref{eq:coarea}}{=}\int_0^\infty\chi_{(r,r+h)}(s)\int_\Omega v\,\d {\rm Per}(\{\delta>s\},\cdot)\,\d s\\
    &=\int_r^{r+h}\int_\Omega v\,\d {\rm Per}(\{\delta>s\},\cdot)\,\d s.
\end{align}
Since $v\in L^1(\Omega,\mm)$, the function $s\mapsto \int v\,\d {\rm Per}(\{\delta>s\},\cdot)$ is integrable on $(0,\infty)$, we deduce that $F\in {\rm AC}((0,+\infty),\R)$ and \eqref{eq:Fprime} holds.
\end{proof}

\begin{proposition}[Second derivative of $F$]
\label{prop:Fsecond}
Let $(\X,\sfd,\mm)$ be a $\RCD(K,N)$ for $K \in \mathbb{R}$ and $N \in (1,\infty)$, let $\Omega\subset\X$ be open and bounded and let $v \in D(\Delta \restr{\Omega})\cap \Lip(\Omega)$. Then
\begin{equation}
\label{eq:Fsecond_original}
    F''= \int_{\{\delta > \cdot\}} \Delta \restr{\Omega} v \,\d \mm -\delta_\#(v \pmb{\Delta} \delta)
\end{equation}
holds in the sense of distributions on $(0,\infty)$, where, according to \eqref{eq:multiplication_radonfunctional} and \eqref{eq:pushforward_radonfunctional}, $\delta_\#(v \pmb{\Delta} \delta)$ is the Radon functional over $(0,\infty)$ defined by
\begin{equation}
    \delta_\#(v \pmb{\Delta} \delta)(\varphi)=-\int\la \nabla [v (\varphi\circ\delta)],\nabla \delta\ra\,\d\mm,\qquad\forall\,\varphi\in C_c^\infty(0,\infty). 
\end{equation}
\end{proposition}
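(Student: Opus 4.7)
The plan is to test the proposed distributional identity against an arbitrary $\varphi\in C_c^\infty(0,\infty)$, compute $F''(\varphi)$ as an explicit integral on $\Omega$, and split it via the Leibniz rule to recover the two terms on the right-hand side. Using $F\in{\rm AC}((0,+\infty),\R)$ from Proposition~\ref{cor:Fprime}, the coarea formula of Remark~\ref{rmk:coarea_distance}, and $|\nabla\delta|=1$ $\mm$-a.e.\ from Proposition~\ref{prop:eikonal_weak}, I would first rewrite
\begin{equation*}
F''(\varphi) = -\int_0^\infty F'(r)\varphi'(r)\,\d r = \int_0^\infty \varphi'(r)\int_\Omega v\,\d{\rm Per}(\{\delta>r\},\cdot)\,\d r = \int_\Omega v\,\varphi'(\delta)\,\d\mm.
\end{equation*}
The chain rule then supplies $\varphi'(\delta)=\la\nabla(\varphi\circ\delta),\nabla\delta\ra$ $\mm$-a.e., and the Leibniz rule for the scalar product of gradients yields, $\mm$-a.e.\ on $\Omega$,
\begin{equation*}
v\la\nabla(\varphi\circ\delta),\nabla\delta\ra = \la\nabla\bigl(v(\varphi\circ\delta)\bigr),\nabla\delta\ra - (\varphi\circ\delta)\la\nabla v,\nabla\delta\ra.
\end{equation*}

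For the first summand, since $\mathrm{supp}\,\varphi\subset[a,b]\Subset(0,\infty)$, the support of $v(\varphi\circ\delta)$ sits in the compact set $\{\delta\ge a\}\Subset\Omega$, which is uniformly separated from $\partial\Omega$; together with the Lipschitz, bounded character of $v$ on $\Omega$, this yields $v(\varphi\circ\delta)\in\Lip_{bs}(\X\setminus\partial\Omega)$ (extended by zero). The defining property of $\pmb\Delta\delta$ on $\X\setminus\partial\Omega$, combined with the explicit formula for $\delta_\#(v\pmb\Delta\delta)$ given in the statement, then produces
\begin{equation*}
\int_\Omega \la\nabla\bigl(v(\varphi\circ\delta)\bigr),\nabla\delta\ra\,\d\mm = -\delta_\#(v\pmb\Delta\delta)(\varphi).
\end{equation*}

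For the second summand, I would introduce the Lipschitz auxiliary function $\Psi\colon\R\to\R$, defined by $\Psi(s):=\int_0^s\varphi(\tau)\,\d\tau$ for $s\ge 0$ and $\Psi(s):=0$ for $s<0$. Its composition $\Psi\circ\delta$ lies in $\Lip_{bs}(\Omega)$ (same support considerations as above), and the chain rule on the open set $\{\delta>0\}$ together with the locality of the differential on $\{\delta\le 0\}$ (where $\Psi\circ\delta$ vanishes identically) yield $\nabla(\Psi\circ\delta)=(\varphi\circ\delta)\nabla\delta$ $\mm$-a.e. The defining property of $\Delta\restr{\Omega}v$ and one Fubini step then give
\begin{equation*}
\int_\Omega(\varphi\circ\delta)\la\nabla v,\nabla\delta\ra\,\d\mm = -\int_\Omega\Psi(\delta)\,\Delta\restr{\Omega}v\,\d\mm = -\int_0^\infty\varphi(r)\int_{\{\delta>r\}}\Delta\restr{\Omega}v\,\d\mm\,\d r.
\end{equation*}
Summing the contributions from both summands recovers the claimed identity~\eqref{eq:Fsecond_original}.

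The main delicate points will be: (i) verifying that $v(\varphi\circ\delta)$ and $\Psi\circ\delta$ legitimately belong to the appropriate spaces of compactly supported Lipschitz functions, which crucially uses that $\varphi$ is compactly supported strictly away from $0$ so that the relevant supports stay at positive distance from $\partial\Omega$; and (ii) the chain rule for the Lipschitz, non-$C^1$ function $\Psi$, handled by combining the standard chain rule on the open set $\{\delta>0\}$ (where $\Psi$ is smooth) with the locality property $\chi_{\{f=g\}}\d f=\chi_{\{f=g\}}\d g$ recalled in Section~\ref{sec:preliminaries} to kill the gradient on $\{\delta\le 0\}$.
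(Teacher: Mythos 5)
Your proof is correct and takes essentially the same route as the paper: express $\la F'',\varphi\ra$ via Proposition~\ref{cor:Fprime}, coarea, and $|\nabla\delta|=1$; split with the Leibniz rule of Proposition~\ref{prop:product_sobolev}; identify one summand as $-\delta_\#(v\pmb\Delta\delta)(\varphi)$; and treat the other by integrating against the primitive $\Psi$ of $\varphi$ and invoking the defining property of $\Delta\restr{\Omega}v$. Your cautionary point (ii) is actually moot: since $\varphi\in C_c^\infty(0,\infty)$ vanishes near $0$, your $\Psi$ is $C^\infty(\R)$ (indeed identically zero on a neighborhood of $(-\infty,0]$), so the ordinary chain rule applies directly, exactly as in the paper's treatment of $\psi$.
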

\begin{proof}
Let us compute the second derivative of the function $F$ in the sense of distributions. Fix $\varphi\in C_c^\infty(0,\infty)$, then, applying Proposition \ref{cor:Fprime}, we have:
\begin{equation}
\label{eq:terms}
\begin{split}
\la F'',\varphi\ra &=\int_0^\infty\varphi'(r) \int v \,{\d \rm Per}(\{ \delta > r \},\cdot)\, \d r \stackrel{ \eqref{eq:coarea}}{=} \int_{\Omega} (\varphi'\circ\delta) v |\nabla \delta|\,\d \mm \stackrel{\eqref{eq:eikonal_weak}}{=} \int_{\Omega} (\varphi'\circ\delta) v |\nabla \delta|^2\,\d \mm
    \\
    &= \int_{\Omega}  v \la \nabla ( \varphi\circ\delta ),\nabla \delta \ra \,\d \mm \stackrel{\text{Prop.}\,\ref{prop:product_sobolev}}{=}\underbrace{- \int_{\Omega}  (\varphi\circ\delta) \la \nabla v,\nabla \delta \ra \,\d \mm}_{(\mathrm{A})}+\int_{\Omega}   \la \nabla [ v(\varphi\circ\delta) ],\nabla \delta \ra \,\d \mm
\end{split}
\end{equation}
For the term $(\mathrm{A})$ in \eqref{eq:terms}, we would like to apply the definition of restricted Laplacian, since $v\in D(\Delta\restr{\Omega})$. To do so, define: 
\begin{equation}
    \psi(r)=\int_0^r\varphi(s)\,\d s,\qquad \forall\,r>0. 
\end{equation}
Notice that $\psi\in C^\infty(0,\infty)$ but it does not have compact support. Nonetheless, its support is separated from $0$, therefore $\psi\circ\delta\in \Lip_{bs}(\Omega)$ and, by chain rule, the following identity holds
\begin{equation}
   \nabla(\psi\circ\delta)=(\psi'\circ\delta)\nabla\delta=(\varphi\circ\delta)\nabla\delta.
\end{equation}
Therefore, we may discuss the term $(\mathrm{A})$ in \eqref{eq:terms} as follows: 
\begin{align}
  \int_{\Omega} (\varphi\circ\delta) \la \nabla v, \nabla \delta \ra \,\d \mm &= \int_{\Omega} \la \nabla v, \nabla(\psi\circ \delta) \ra \,\d \mm\\ 
  &=-\int_\Omega (\psi\circ \delta)\Delta\restr{\Omega}v\,\d\mm= -\int_{\Omega} \int_0^\infty \chi_{(0,\delta(x))}(r) \varphi(r)\,\d r \Delta \restr{\Omega} v(x)\,\d \mm(x)\\ 
  &= -\int_0^\infty \varphi(r) \int_{\{ \delta > r \}} \Delta \restr{\Omega} v\,\d \mm\, \d r.
\end{align}
Therefore, the following holds
\begin{equation}
\label{eq:first_computation_secondder}
    \la F'',\varphi\ra= \int_0^\infty \varphi(r)\,\int_{\{\delta > r\}} \Delta \restr{\Omega} v \,\d \mm\,\d r + \int_\Omega \la \nabla [v (\varphi \circ \delta)], \nabla \delta \ra\,\d \mm, \qquad\forall\, \varphi \in C^\infty_c(0,\infty).
\end{equation}
Regarding the last term in \eqref{eq:first_computation_secondder}, notice that $v,\delta\in\Lip(\Omega)$, thus, according to \eqref{eq:multiplication_radonfunctional} and \eqref{eq:pushforward_radonfunctional}, it corresponds to the Radon functional $-\delta_\#(v\pmb\Delta\delta)$. Indeed we have 
\begin{equation}
\label{eq:delta_pushforward_Deltadelta}
   \delta_\#(v\pmb\Delta\delta)(\varphi)\stackrel{\eqref{eq:pushforward_radonfunctional}}{=} (v\pmb\Delta\delta)(\varphi\circ\delta)\stackrel{\eqref{eq:multiplication_radonfunctional}}{=}\pmb\Delta\delta(v(\varphi\circ\delta))=-\int_{\Omega}   \la \nabla [ v(\varphi\circ\delta) ],\nabla \delta \ra \,\d \mm,
\end{equation}
by definition of distributional Laplacian \eqref{eq:distributional_laplacian}, since $v(\varphi\circ\delta)\in\Lip_{bs}(\Omega)$. This concludes the proof.
\end{proof}

\begin{remark}
Let $(M,g)$ be a complete Riemannian manifold and let $\Omega\subset M$ be open and bounded, equipped with a smooth measure $\mm$. Assuming that $\partial\Omega$ is smooth, $\delta$ is a smooth function in a tubular neighborhood $U=\{0\le\delta<\epsilon\}$ of $\partial\Omega$. Therefore, the term $\delta_\#(v\pmb{\Delta}\delta)$ can be rewritten in a simpler form. Indeed, first of all, for every $\phi\in \Lip_{bs}(U)$, we have that
\begin{equation}
\label{eq:reg_lapl_delta}
    \pmb\Delta \delta(\phi)=\int_U\phi\Delta\delta\,\d \mm,
\end{equation}
and thus, for $\varphi\in C_c^\infty((0,\epsilon))$, we deduce that
\begin{align}
       \delta_\#(v\pmb{\Delta}\delta)(\varphi) &= \int_{\Omega} (\varphi\circ\delta) v \,\Delta\delta \,\d \mm \stackrel{\text{Prop.}\, \ref{prop:coarea}}{=}\int_0^\infty \varphi(r) \int v \,\Delta\delta\d {\rm Per}(\{ \delta > r\},\cdot)\d r
       \\
       &=\int_0^\infty \varphi(r) \int_{\{\delta=r\}} v \,\Delta\delta\d\sigma_r\d r,
\end{align}
where $\sigma_r$ is the Riemannian perimeter measure on the level sets of $\delta$. In our setting, it is still possible to prescribe regularity on $\Omega$ to obtain an analogue to \eqref{eq:reg_lapl_delta}. 
\end{remark}
%

The next goal is to prove \eqref{eq:Fsecond_original} dropping the assumption of the Lipschitz regularity of $v$. To do so, we need the following approximation lemma.
\begin{lemma}
\label{lemma:approximation_to_ext_radfctl}
Let $(\X,\sfd,\mm)$ be an $\RCD(K,N)$ space and let $\epsilon>0$. Let $\Omega \subset \X$ be an open and bounded set and $v \in L^\infty(\Omega,\mm) \cap D(\Delta \restr{\Omega})$ with ${\rm supp} (v) \subset \{ 0 < \delta < \epsilon \}$. Then, there exists a sequence $\{v_n\} \subset {\rm Lip}(\Omega)$ with ${\rm supp}(v_n)\subset \{ 0 < \delta < \epsilon \}$ and $\| v_n \|_{ L^\infty(\Omega,\mm)} \le \| v \|_{ L^\infty(\Omega,\mm)}$ such that $v_n \to v$ in $W^{1,2}(\Omega)$.
\end{lemma}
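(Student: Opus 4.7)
My plan is to obtain $v_n$ in three stages: localize $v$ with a Lipschitz cutoff built from $\delta$, approximate in $W^{1,2}(\Omega)$-norm by Lipschitz functions, and finally truncate to enforce the $L^\infty$ bound. First, since $\Omega$ is bounded and the ambient $\RCD(K,N)$ space is proper, $\supp v$ is a compact subset of the open set $\{0 < \delta < \epsilon\}$, so there exist $0 < a < b < \epsilon$ with $\supp v \subset \{a \le \delta \le b\}$. I would choose $\phi\colon\R\to [0,1]$ Lipschitz with $\phi \equiv 1$ on $[a,b]$ and $\supp \phi \subset (a/2, (b+\epsilon)/2)$, and set $\eta := \phi \circ \delta \in \Lip(\X)$. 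Then $\eta v = v$ $\mm$-a.e., $0 \le \eta \le 1$, and $\supp \eta \subset \{0 < \delta < \epsilon\}$, with $\supp \eta$ separated from $\partial \Omega$.

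Next, using the definition of $\Ch_\Omega$ together with the PI-space identity $\lip u = |Du|$ $\mm$-a.e.\ from \cite{Cheeger00}, I would select $u_n \in \Lip_{loc}(\Omega) \cap W^{1,2}(\Omega)$ with $u_n \to v$ in $L^2(\Omega,\mm)$ and $\Ch_\Omega(u_n) \to \Ch_\Omega(v)$. Since $\X$ is infinitesimally Hilbertian, $W^{1,2}(\Omega)$ is Hilbert (cf.\ Remark \ref{rmk:W12_Hilbert}), so along a subsequence $u_n \rightharpoonup v$ weakly in $W^{1,2}(\Omega)$; combined with the identity $\|u_n\|_{W^{1,2}(\Omega)}^2 = \|u_n\|_{L^2(\Omega,\mm)}^2 + 2\Ch_\Omega(u_n) \to \|v\|_{W^{1,2}(\Omega)}^2$, this upgrades to strong convergence $u_n \to v$ in $W^{1,2}(\Omega)$. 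Setting $w_n := \eta u_n$, the compactness of $\supp \eta \Subset \Omega$ forces $u_n \restr{\supp \eta}$ to be Lipschitz, so $w_n \in \Lip(\Omega)$ with $\supp w_n \subset \supp \eta \subset \{0 < \delta < \epsilon\}$; the Leibniz rule combined with $\eta, |D\eta| \in L^\infty(\mm)$ gives $w_n \to \eta v = v$ in $W^{1,2}(\Omega)$.

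Finally, I would truncate to control the $L^\infty$ norm. Let $M := \|v\|_{L^\infty(\Omega,\mm)}$ and $T_M(s) := \max(-M, \min(s,M))$, which is $1$-Lipschitz with $T_M(0) = 0$. Set $v_n := T_M(w_n) \in \Lip(\Omega)$; then $\|v_n\|_{L^\infty(\Omega,\mm)} \le M$ and $\supp v_n \subset \supp w_n \subset \{0 < \delta < \epsilon\}$. Since $|v| \le M$ $\mm$-a.e., $T_M(v) = v$, and the $1$-Lipschitz property of $T_M$ yields $\|v_n - v\|_{L^2(\Omega,\mm)} \le \|w_n - v\|_{L^2(\Omega,\mm)} \to 0$. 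The chain rule for Sobolev functions gives $|Dv_n| \le |Dw_n|$ $\mm$-a.e., so $\Ch_\Omega(v_n) \le \Ch_\Omega(w_n) \to \Ch_\Omega(v)$, while the $L^2(\Omega,\mm)$-lower semicontinuity of $\Ch_\Omega$ forces $\liminf_n \Ch_\Omega(v_n) \ge \Ch_\Omega(v)$. Hence $\Ch_\Omega(v_n) \to \Ch_\Omega(v)$, and the same weak-plus-norm argument as above yields strong convergence $v_n \to v$ in $W^{1,2}(\Omega)$.

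The main obstacle is the twofold upgrade from convergence in $L^2(\Omega,\mm)$ plus convergence of Cheeger energies to strong convergence in $W^{1,2}(\Omega)$ (first for $u_n \to v$, then for $v_n \to v$). Both instances crucially rely on the Hilbert structure of $W^{1,2}(\Omega)$ inherited from infinitesimal Hilbertianity together with the $L^2(\Omega,\mm)$-lower semicontinuity of $\Ch_\Omega$. Some care is also needed to verify that the chain rule $|DT_M(w_n)| \le |Dw_n|$ and the Leibniz rule for $\eta u_n$ genuinely hold at the level of minimal weak upper gradients in the \emph{local} Sobolev space $W^{1,2}(\Omega)$, rather than only in $W^{1,2}(\X)$.
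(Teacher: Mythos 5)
Your proof is correct and follows a genuinely different route from the paper's in two places. For the Lipschitz approximants, you stay entirely inside $W^{1,2}(\Omega)$: you extract a near-minimizing sequence from the relaxation definition of $\Ch_\Omega$, observe that energy convergence plus $L^2$-convergence gives weak $W^{1,2}(\Omega)$-convergence, and use the Hilbert structure of $W^{1,2}(\Omega)$ to upgrade to strong convergence. The paper instead invokes the characterization $W^{1,2}(\Omega)=H^{1,2}(\Omega)$ to pass to $\eta v\in W^{1,2}(\X)$ and uses density of global Lipschitz functions in $W^{1,2}(\X)$; this is more direct but requires the equivalence theorem from the appendix. For the truncation, you use the two-sided cut $T_M$, which handles both $L^\infty$ bounds simultaneously and dispenses with the paper's reduction to $v\geq 0$. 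This is actually more than a cosmetic improvement: the paper defines $v_n:=\tilde u_n\wedge v$, and since $v$ is only assumed to be in $L^\infty\cap D(\Delta\restr\Omega)$ (not Lipschitz), their $v_n$ is in general \emph{not} Lipschitz, which is a gap since the Lemma's conclusion requires $v_n\in\Lip(\Omega)$ (and Corollary \ref{coro:Fsecond} then uses $v_n(\varphi\circ\delta)\in\Lip_{bs}$); your $T_M(w_n)$ is an honest composition of a Lipschitz map with a Lipschitz function and resolves this. Your final convergence step (energy convergence from $|D T_M(w_n)|\le|Dw_n|$ plus lower semicontinuity, upgraded via the Hilbert structure) is a reasonable substitute for the paper's locality-of-gradient argument, which does not transfer cleanly to $T_M$ because $\mm(\{|w_n|\ge M\})$ need not vanish. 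Two points you should verify when writing this up: (i) the Leibniz estimate $|D(\eta(u_n-v))|\le\eta|D(u_n-v)|+|u_n-v||D\eta|$ formally requires $u_n-v\in L^\infty$, which is not automatic since $u_n\in\Lip_{loc}(\Omega)$ may blow up near $\partial\Omega$; one fixes this by inserting a further $\Lip_{bs}(\Omega)$ cutoff equal to $1$ on $\supp\eta$, under which $\chi u_n$ is bounded (this same subtlety is present in the paper's proof); (ii) the claim that a locally Lipschitz function on $\Omega$ restricted to the compact set $\supp\eta\Subset\Omega$ is Lipschitz deserves a word of justification (finite cover plus a geodesic chaining argument, using properness and that $\supp\eta$ has positive distance to $\partial\Omega$). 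Also a small typo: the $W^{1,2}(\Omega)$-norm in the paper is $\|f\|_{L^2}^2+\Ch_\Omega(f)$, not $\|f\|_{L^2}^2+2\Ch_\Omega(f)$; this does not affect the argument.
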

\begin{proof}
We can assume, without loss of generality, that $v\ge 0$ (otherwise it is enough to write $v=v^+-v^-$ and use linearity).
Let $\eta \in {\rm Lip}_{bs}(\Omega)$ such that $\eta = 1$ on ${\rm supp}(v)$ and ${\rm supp}(\eta) \subset \{ 0 < \delta < \epsilon \}$. Since $v \in W^{1,2}(\Omega)$, by definition $\eta v\in W^{1,2}(\X)$. Thus, there exists a sequence $\{u_n\} \subset {\rm Lip}(\X) \cap W^{1,2}(\X)$ such that $u_n \to \eta v$ in $W^{1,2}(\X)$. Define $\tilde{u}_n:=\eta u_n$, we show that $\tilde{u}_n \to v$ in $W^{1,2}(\Omega)$. Indeed, $\tilde{u}_n-v = \eta u_n-\eta^2 v = \eta (u_n-\eta v)$, thus $\|\tilde{u}_n-v \|_{L^2(\Omega,\mm)} \le \|u_n-\eta v \|_{L^2(\mm)}\, \| \eta\|_{L^\infty(\mm)} \to 0$ as $n \to \infty$ and
\begin{equation}
\| |\nabla(\tilde u_n-v)| \|_{L^2(\Omega,\mm)} \le \| |\nabla ( u_n-\eta v)| \|_{L^2(\mm)}\,\|\eta\|_{ L^\infty(\mm)}+\| u_n-\eta v \|_{L^2(\mm)}\,\||\nabla\eta| \|_{L^\infty(\mm)} \to 0,
\end{equation}
as $n \to \infty$. To conclude, define $v_n:=\tilde{u}_n \wedge v$: since $v\ge 0$, we have that $\supp{v_n}\subset\supp{\tilde u_n}$ and $\| v_n \|_{ L^\infty(\Omega,\mm)} \le \| v \|_{ L^\infty(\Omega,\mm)}$ by construction. Thus, it is only left to prove that $\| v_n - v \|_{W^{1,2}(\Omega)}\to 0$. Therefore, we rewrite
\[ v_n -v = \tilde{u}_n \wedge v-v = \frac{\tilde{u}_n+v-|\tilde{u}_n-v|}{2}-v = \frac{\tilde{u}_n-v}{2}-\frac{|\tilde{u}_n-v|}{2},\]
thus easily proving that $v_n \to v$ in $L^2(\Omega,\mm)$. Using the locality property of the gradient of elements of $W^{1,2}(\Omega)$, we get $\| | \nabla (\tilde{u}_n\wedge v-v) | \|_{L^2(\Omega,\mm)} = \| |\nabla (\tilde{u}_n-v)|\,\chi_{\{ \tilde{u}_n<v \}} \|_{L^2(\Omega,\mm)}\to 0$ as $n \to \infty$, concluding the proof.
\end{proof}
\begin{corollary}
\label{coro:Fsecond}
Let $(\X,\sfd,\mm)$ be an $\RCD(K,N)$ space. Let $\Omega\subset X$ be an open and bounded. Assume $\partial\Omega$ satisfies the $\mIGC{\epsilon}$ condition. Then, for every $v \in D(\Delta\restr{\Omega})\cap L^\infty(\Omega,\mm)$ such that $\supp {v}\subset\{0\le \delta <\epsilon\}$, 
\begin{equation}
\label{eq:Fsecond}
    F''= \int_{\{\delta > \cdot\}} \Delta \restr{\Omega} v \,\d \mm -\delta_\#(v [\pmb\Delta\delta]^{reg}\,\mm)
\end{equation}
in the sense of distributions in $(0,\infty)$.
\end{corollary}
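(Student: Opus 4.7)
The plan is to revisit the proof of Proposition~\ref{prop:Fsecond} and observe that the Lipschitz regularity of $v$ is used only at its very last step, at identity \eqref{eq:delta_pushforward_Deltadelta}, where $v(\varphi\circ\delta)$ is employed as a test function for the distributional Laplacian $\pmb\Delta\delta$. All the preceding manipulations in \eqref{eq:terms}--\eqref{eq:first_computation_secondder} remain valid under the weaker hypothesis $v\in D(\Delta\restr{\Omega})\cap L^\infty(\Omega,\mm)$: indeed $v\in W^{1,2}(\Omega)\cap L^\infty(\Omega,\mm)$ makes the Leibniz rule for the pairing of gradients with $\varphi\circ\delta\in\Lip_{bs}(\Omega)$ available; the auxiliary cutoff $\psi\circ\delta$ still belongs to $\Lip_{bs}(\Omega)$, since $\Omega$ is bounded and $\psi$ vanishes near zero; and $v\in D(\Delta\restr{\Omega})$ permits the integration by parts against $\psi\circ\delta$. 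Thus, for every $\varphi\in C^\infty_c(0,\infty)$ I still obtain
\begin{equation}\label{eq:plan_reduction}
\la F'',\varphi\ra = \int_0^\infty \varphi(r)\int_{\{\delta>r\}}\Delta\restr{\Omega} v\,\d\mm\,\d r + \int_\Omega \la\nabla[v(\varphi\circ\delta)],\nabla\delta\ra\,\d\mm.
\end{equation}

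By the very definition of the pushforward, $\delta_\#(v[\pmb\Delta\delta]^{reg}\mm)(\varphi) = \int_\Omega (\varphi\circ\delta)\,v\,[\pmb\Delta\delta]^{reg}\,\d\mm$, so the proof reduces to the identity
\begin{equation}\label{eq:plan_target}
\int_\Omega \la\nabla[v(\varphi\circ\delta)],\nabla\delta\ra\,\d\mm = -\int_\Omega v(\varphi\circ\delta)\,[\pmb\Delta\delta]^{reg}\,\d\mm.
\end{equation}
At this point the $\mIGC{\epsilon}$ assumption enters crucially through \eqref{eq:IGCeps_to_reg_Deltadelta} in Remark~\ref{rmk:IGCeps_to_reg_Deltadelta}: on the tubular neighbourhood $\{0<\delta<\epsilon\}$, the Radon functional $\pmb\Delta\delta$ is represented by the $L^1$ density $[\pmb\Delta\delta]^{reg}\,\mm$, integrability being guaranteed by Corollary~\ref{cor:mild_integrability_laplacian}. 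The idea is therefore to extend \eqref{eq:plan_target}, which a priori holds only for test functions in $\Lip_{bs}(\{0<\delta<\epsilon\})$, to functions in $W^{1,2}_0\cap L^\infty$ with support in $\{0<\delta<\epsilon\}$ via a Lipschitz approximation.

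Concretely, I apply Lemma~\ref{lemma:approximation_to_ext_radfctl} to produce a sequence $\{v_n\}\subset \Lip(\Omega)$ with $\supp v_n\subset \{0<\delta<\epsilon\}$, $\|v_n\|_{L^\infty(\Omega,\mm)}\le \|v\|_{L^\infty(\Omega,\mm)}$, and $v_n\to v$ in $W^{1,2}(\Omega)$. Since $v_n(\varphi\circ\delta)\in\Lip_{bs}(\{0<\delta<\epsilon\})$, Remark~\ref{rmk:IGCeps_to_reg_Deltadelta} yields
\begin{equation}
\int_\Omega \la\nabla[v_n(\varphi\circ\delta)],\nabla\delta\ra\,\d\mm = -\int_\Omega v_n(\varphi\circ\delta)\,[\pmb\Delta\delta]^{reg}\,\d\mm.
\end{equation}
The left-hand side passes to the limit because the Leibniz rule, combined with the uniform $L^\infty$-bound on $v_n$, gives $v_n(\varphi\circ\delta)\to v(\varphi\circ\delta)$ strongly in $W^{1,2}(\Omega)$, and $|\nabla\delta|=1\in L^\infty$. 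The right-hand side passes to the limit by dominated convergence: $v_n(\varphi\circ\delta)$ is bounded by $\|v\|_{L^\infty(\Omega,\mm)}\sup|\varphi|$, supported in $\{0<\delta<\epsilon\}$, and converges $\mm$-a.e.\ along a subsequence, while $[\pmb\Delta\delta]^{reg}\in L^1(\{0<\delta<\epsilon\})$ furnishes the integrable majorant. Plugging \eqref{eq:plan_target} back into \eqref{eq:plan_reduction} yields \eqref{eq:Fsecond}.

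The sole non-routine point is the passage to the limit just described; its two ingredients—the $W^{1,2}$-convergence of the products $v_n(\varphi\circ\delta)$, and the existence of an integrable majorant on the right-hand side—rest respectively on the uniform $L^\infty$-bound from Lemma~\ref{lemma:approximation_to_ext_radfctl} and on the $L^1$-integrability of $[\pmb\Delta\delta]^{reg}$ over the tubular neighbourhood, which is precisely what the $\mIGC{\epsilon}$ condition provides through Corollary~\ref{cor:mild_integrability_laplacian}.
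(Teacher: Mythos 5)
Your proof is correct and follows essentially the same route as the paper: repeat Proposition~\ref{prop:Fsecond} up to \eqref{eq:first_computation_secondder}, approximate $v$ via Lemma~\ref{lemma:approximation_to_ext_radfctl}, invoke Remark~\ref{rmk:IGCeps_to_reg_Deltadelta} for each Lipschitz approximant, and pass to the limit by dominated convergence on the right-hand side (your explicit justification of the left-hand-side convergence via strong $W^{1,2}$-convergence of $v_n(\varphi\circ\delta)$ supplies a detail the paper leaves implicit). One small omission: Lemma~\ref{lemma:approximation_to_ext_radfctl} requires $\supp v\subset\{0<\delta<\epsilon\}$ whereas the corollary only gives $\supp v\subset\{0\le\delta<\epsilon\}$, so, as the paper does, you should first multiply $v$ by a cut-off equal to $1$ on $\supp(\varphi\circ\delta)$ and supported in $\{0<\delta<\epsilon\}$—harmless since $\varphi\circ\delta$ already vanishes near $\delta=0$.
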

\begin{proof}
Given $v \in L^\infty(\Omega,\mm) \cap D(\Delta \restr{\Omega})$, we can repeat verbatim the proof of Proposition \ref{prop:Fsecond} up to the identity \eqref{eq:first_computation_secondder}.
To conclude, it is enough to show that, for every $v \in L^\infty(\Omega,\mm) \cap D(\Delta \restr{\Omega})$,
\begin{equation}
    \int \la \nabla [v (\varphi \circ \delta)], \nabla \delta \ra\,\d \mm =- \delta_\# (v [\pmb{\Delta}\delta]^{reg}\mm)(\varphi).
\end{equation}
Indeed, since $\supp {(v(\varphi\circ\delta))}\subset\{0<\delta<\epsilon\}$, up to multiplying for a suitable cut-off function, we may assume that $\supp {v}\subset\{0<\delta<\epsilon\}$.
We consider the approximation  given by Lemma \ref{lemma:approximation_to_ext_radfctl} obtaining a sequence $\{v_n\}\subset\Lip_{bs}(\Omega)$ such that
\begin{equation}
    \supp{v_n}\subset\{0<\delta<\epsilon\},\quad \|v_n\|_{L^\infty(\Omega,\mm)}\le \|v\|_{L^\infty(\Omega,\mm)}\quad\text{and}\quad v_n\xrightarrow[W^{1,2}(\Omega)]{}v \text{ as }n\to \infty.
\end{equation}
Since $v_n\in\Lip_{bs}(\Omega)$, recalling \eqref{eq:delta_pushforward_Deltadelta}, we obtain 
\begin{equation}
    \int \la \nabla [v_n (\varphi \circ \delta)],\nabla \delta\ra\,\d \mm =- \delta_\#(v_n\,\pmb{\Delta} \delta)(\varphi) =- \int v_n\,(\varphi \circ \delta)\,[\pmb{\Delta} \delta]^{reg}\,\d \mm
\end{equation}
where in the second equality, we used \eqref{eq:IGCeps_to_reg_Deltadelta}, which holds since $v_n(\varphi\circ\delta)\in\Lip_{bs}(\{0<\delta<\epsilon\})$ and the $\mIGC{\epsilon}$ condition holds on $\partial \Omega$. Then, we take the limit as $n\to \infty$, using dominated convergence theorem on the right hand side, concluding the claim and therefore the proof, using that $\int v\,(\varphi \circ \delta)\,[\pmb{\Delta} \delta]^{reg}\,\d \mm = \delta_\# (v [\pmb{\Delta} \delta]^{reg}\mm)(\varphi)$.
\end{proof}

\section{First-order asymptotics}
\label{sec:first_order_asymptotics}

This section is devoted to the proof of Theorem \ref{thm:first_order_asymptotics}. We start by providing a few technical tools that are used in the proof.  

\begin{theorem}[Weak Gauss-Green formula]
\label{thm:gauss_green_distancefromaset}
Let $(\X,\sfd,\mm)$ be an $\RCD(K,N)$ space and let $\Omega\subset\X$ be open and bounded. Then, for every $w\in D(\div)$, it holds
\begin{equation}
\label{eq:gauss_green_distancefromaset}
\int_{\{ \delta > r \}} \div(w)\,\d \mm=-\int\langle w,\nabla \delta\rangle
\,\d {\rm Per}(\{ \delta > r \},\cdot),\quad\text{ for a.e.\ }r>0.
\end{equation}
\end{theorem}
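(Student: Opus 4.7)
The plan is to test the divergence against a Lipschitz approximation of $\chi_{\{\delta>r\}}$ and then pass to the limit using Lebesgue differentiation combined with the coarea formula in Remark \ref{rmk:coarea_distance}.

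For $r>0$ fixed and $\epsilon>0$ small, define $\varphi_\epsilon\colon\R\to[0,1]$ by $\varphi_\epsilon(s)=0$ for $s\le r$, $\varphi_\epsilon(s)=(s-r)/\epsilon$ for $r\le s\le r+\epsilon$, and $\varphi_\epsilon(s)=1$ for $s\ge r+\epsilon$. Set $g_\epsilon:=\varphi_\epsilon\circ\delta$. Since $\{\delta>r\}\subset\Omega$ is bounded and $\delta\in\Lip(\X)$, the function $g_\epsilon$ is Lipschitz with bounded support, in particular $g_\epsilon\in W^{1,2}(\X)$. By the chain rule and the eikonal identity from Proposition \ref{prop:eikonal_weak}, we get $\nabla g_\epsilon=\varphi_\epsilon'(\delta)\nabla\delta=\tfrac{1}{\epsilon}\chi_{\{r<\delta<r+\epsilon\}}\nabla\delta$ $\mm$-a.e. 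Testing the definition of divergence (Definition \ref{def:divergence}) with $g_\epsilon$ yields
\begin{equation}
-\int g_\epsilon\,\div(w)\,\d\mm=\int\la w,\nabla g_\epsilon\ra\,\d\mm=\frac{1}{\epsilon}\int_{\{r<\delta<r+\epsilon\}}\la w,\nabla\delta\ra\,\d\mm.
\end{equation}

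As $\epsilon\to 0^+$, we have $g_\epsilon\to\chi_{\{\delta>r\}}$ pointwise and $0\le g_\epsilon\le\chi_{\{\delta>r/2\}}$; since $\{\delta>r/2\}$ is bounded and $\div(w)\in L^2(\mm)\subset L^1_{\loc}(\mm)$, dominated convergence gives
\begin{equation}
\lim_{\epsilon\to 0^+}\int g_\epsilon\,\div(w)\,\d\mm=\int_{\{\delta>r\}}\div(w)\,\d\mm.
\end{equation}
For the other side, set $f:=\chi_\Omega\la w,\nabla\delta\ra$, which belongs to $L^1(\mm)$ since $\Omega$ is bounded, $w\in L^2(T\X)$ and $|\nabla\delta|=1$ $\mm$-a.e. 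The coarea formula from Remark \ref{rmk:coarea_distance} applied with the bounded Borel weight $\tfrac{1}{\epsilon}\chi_{(r,r+\epsilon)}$ gives
\begin{equation}
\frac{1}{\epsilon}\int_{\{r<\delta<r+\epsilon\}}\la w,\nabla\delta\ra\,\d\mm=\frac{1}{\epsilon}\int_r^{r+\epsilon}\Big(\int\la w,\nabla\delta\ra\,\d\mathrm{Per}(\{\delta>s\},\cdot)\Big)\,\d s.
\end{equation}
Since the map $s\mapsto\int\la w,\nabla\delta\ra\,\d\mathrm{Per}(\{\delta>s\},\cdot)$ is integrable on $(0,\infty)$ (again by coarea applied to $|f|$), Lebesgue's differentiation theorem ensures that the right-hand side converges, for a.e.\ $r>0$, to $\int\la w,\nabla\delta\ra\,\d\mathrm{Per}(\{\delta>r\},\cdot)$. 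Combining the two limits proves \eqref{eq:gauss_green_distancefromaset}.

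The only nontrivial point is to justify that the $\epsilon\to 0$ limit of the trace-type integrals exists for a.e.\ $r$; this is where the coarea formula of Proposition \ref{prop:coarea} (via the cutoff trick of Remark \ref{rmk:coarea_distance}) is essential, since it reduces the statement to the elementary Lebesgue differentiation theorem on $(0,\infty)$. Everything else is a direct use of the definition of divergence, Lipschitz chain rule, the eikonal identity $|\nabla\delta|=1$, and dominated convergence.
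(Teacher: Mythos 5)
Your argument is correct and rests on the same ingredients as the paper's proof: the definition of $D(\div)$, the Lipschitz chain rule, the eikonal identity $|\nabla\delta|=1$ from Proposition \ref{prop:eikonal_weak}, and the coarea formula of Remark \ref{rmk:coarea_distance}. The packaging differs slightly. The paper tests the divergence against $\psi\circ\delta$, where $\psi$ is a primitive of an arbitrary $\varphi\in C_c^\infty(0,\infty)$, applies coarea on both sides, and concludes by arbitrariness of $\varphi$; you instead test against a Lipschitz ramp $\varphi_\epsilon\circ\delta$ approximating $\chi_{\{\delta>r\}}$ and obtain the a.e.\ identity by passing to the limit $\epsilon\to 0^+$ via dominated convergence on one side and the Lebesgue differentiation theorem on the other. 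These are equivalent ways of upgrading an integrated identity to a pointwise-a.e.\ one; the paper's route is slightly shorter because it avoids the explicit $\epsilon$-limit, while yours makes the source of the ``a.e.'' more transparent. One small technical remark: your ramp $\varphi_\epsilon$ is Lipschitz but only piecewise $C^1$, so the chain rule must be invoked in its Lipschitz form as stated in Section \ref{sec:preliminaries}; the composition $\varphi_\epsilon'\circ\delta$ is then well-defined $\mm$-a.e.\ on $\Omega$ because $\delta_\#(\mm\restr{\Omega})\ll\mathcal{L}^1$ (a consequence of coarea), so the computation of $\nabla g_\epsilon$ is legitimate.
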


\begin{proof}
Let $\varphi\in C_c^\infty(0,\infty)$, then, by coarea formula, we have 
\begin{equation}
\label{eq:psi_div}
    \int_0^\infty \varphi(s)\int_{\{\delta>s\}}\div(w)\d\mm\,\d s=\int \int_0^{\delta(x)}\varphi(s)\,\d s\,\div(w)\d\mm(x)=\int (\psi\circ\delta)\, \div(w)\,\d\mm,
\end{equation}
where $\psi(t)=\int_0^t\varphi(s)\d s$. Since, $w\in D(\div)$, we may integrate by parts the last integral in \eqref{eq:psi_div}, obtaining:
\begin{equation}
    \int (\psi\circ\delta)\, \div(w)\,\d\mm=-\int \la\nabla(\psi\circ\delta),w\ra\,\d\mm=-\int (\varphi\circ\delta)\la\nabla\delta,w\ra\,\d\mm.
\end{equation}
An application of the coarea formula, cf. Proposition \ref{prop:coarea}, together with Proposition \ref{prop:eikonal_weak}, finally yields:
\begin{equation}
    \int_0^\infty \varphi(s)\int_{\{\delta>s\}}\div(w)\,\d\mm\,\d s=-\int_0^\infty  \varphi(s)\int\la\nabla\delta,w\ra
\,\d {\rm Per}(\{ \delta > r \},\cdot)\,\d s.
\end{equation}
Since $\varphi$ is arbitrary, we conclude. 
\end{proof}

\begin{remark}
This result can be regarded as a weaker form of the Gauss-Green formula of \cite[Prop.\ 2.30]{APP21}, since is formulated for \emph{almost every} level set of $\delta$. The advantage is that we don't need neither the notion of Sobolev vector field, nor the one of its trace. Moreover, in the smooth setting, formula \eqref{eq:gauss_green_distancefromaset} identifies the (outward-pointing) unit normal to $\{\delta>s\}$ as $-\nabla\delta$. 
\end{remark}

Under the regularity assumption $\mIGC{\epsilon}$ for $\partial\Omega$, we deduce a second variation formula for the measure of the super level set of the distance function, as a corollary of Theorem \ref{thm:gauss_green_distancefromaset}. 

\begin{corollary}
\label{coro:Linfty_est_perimeter}
Let $(\X,\sfd,\mm)$ be an $\RCD(K,N)$ space for $K\in\R$ and $N\in (1,\infty)$ and let $\Omega\subset\X$ be open and bounded. 
Assume $\partial\Omega$ satisfies the $\mIGC{\epsilon}$ condition. Then, the function $r\mapsto {\rm Per}(\{\delta>r\})$ is $W^{1,1}(0,\epsilon)$ and its distributional derivative is 
\begin{equation}
\label{eq:distributional_perimeter}
    \frac{d}{dr}{\rm Per}(\{\delta>r\})=\int[\pmb\Delta\delta]^{reg}\,\d{\rm Per}(\{\delta>r\},\cdot),\qquad\text{for a.e. }r\in (0,\epsilon).
\end{equation}
In particular, $(0,\infty) \ni r\mapsto {\rm Per}(\{\delta>r\})$ has a representative which is continuous at the point $r=0$ and $\Omega$ has finite perimeter in $\X$.
\end{corollary}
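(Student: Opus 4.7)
The plan is to apply the mean value lemma (Corollary \ref{coro:Fsecond}) to a good cutoff concentrated near $\partial\Omega$ and then invoke the weak Gauss--Green formula (Theorem \ref{thm:gauss_green_distancefromaset}) to cancel the spurious term, thereby identifying the distributional derivative of $r\mapsto{\rm Per}(\{\delta>r\})$. Fix $0<\alpha<\beta<\epsilon$ and, by Proposition \ref{good_cut_off}, pick $\eta\in D(\Delta)\cap L^\infty(\mm)$ with $\Delta\eta\in L^\infty(\mm)$, $\supp\eta\subset\{0<\delta<\epsilon\}$, and $\eta\equiv1$ on an open set $U=\{\alpha'<\delta<\beta'\}$ with $\alpha'<\alpha<\beta<\beta'<\epsilon$; via \eqref{eq:rel_laplacians}, $\eta\in D(\Delta\restr\Omega)$ with $\Delta\restr\Omega\eta=\Delta\eta$. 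Set $F(r):=\int_{\{\delta>r\}}\eta\,\d\mm$. Proposition \ref{cor:Fprime} (applied both to $\eta$ and to $\Delta\eta$) shows that $F$ and $r\mapsto\int_{\{\delta>r\}}\Delta\eta\,\d\mm$ are absolutely continuous, while coarea represents $\delta_\#(\eta[\pmb\Delta\delta]^{reg}\mm)$ by the $L^1$ density $r\mapsto\int\eta\,[\pmb\Delta\delta]^{reg}\,\d{\rm Per}(\{\delta>r\},\cdot)$. Corollary \ref{coro:Fsecond} then gives $F''\in L^1_{\rm loc}(0,\infty)$, so $F'$ is absolutely continuous and for a.e.\ $r$
\begin{equation*}
\frac{d}{dr}\!\!\int\eta\,\d{\rm Per}(\{\delta>r\},\cdot)=-\int_{\{\delta>r\}}\Delta\eta\,\d\mm+\int\eta\,[\pmb\Delta\delta]^{reg}\,\d{\rm Per}(\{\delta>r\},\cdot).
\end{equation*}

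Next, restrict to $r\in(\alpha,\beta)$. By Proposition \ref{prop:parameterization_tubular_neighborhood}, the measure ${\rm Per}(\{\delta>r\},\cdot)$ is concentrated on $\{\delta=r\}\subset U$; since $\eta\equiv1$ on $U$, the $\eta$-weighted integrals in the display reduce to the un-weighted ones. To handle the residual $\int_{\{\delta>r\}}\Delta\eta\,\d\mm$, observe that $\nabla\eta\in D(\div)$ with $\div(\nabla\eta)=\Delta\eta$, so Theorem \ref{thm:gauss_green_distancefromaset} yields
\begin{equation*}
\int_{\{\delta>r\}}\Delta\eta\,\d\mm=-\int\langle\nabla\eta,\nabla\delta\rangle\,\d{\rm Per}(\{\delta>r\},\cdot),\qquad\text{a.e.\ }r.
\end{equation*}
By locality of the gradient, $|\nabla\eta|=0$ $\mm$-a.e.\ on $U$, and hence the right-hand side vanishes for a.e.\ $r\in(\alpha',\beta')\supset(\alpha,\beta)$. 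Combining the three displays produces identity \eqref{eq:distributional_perimeter} for a.e.\ $r\in(\alpha,\beta)$; exhausting $(0,\epsilon)$ by countably many such intervals extends it to a.e.\ $r\in(0,\epsilon)$.

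To close, the right-hand side of \eqref{eq:distributional_perimeter} is integrable on $(0,\epsilon)$: by coarea and Corollary \ref{cor:mild_integrability_laplacian},
\begin{equation*}
\int_0^\epsilon\!\!\bigg|\!\int[\pmb\Delta\delta]^{reg}\,\d{\rm Per}(\{\delta>r\},\cdot)\bigg|\,\d r\le\int_{\{0<\delta<\epsilon\}}|[\pmb\Delta\delta]^{reg}|\,\d\mm<\infty,
\end{equation*}
so $r\mapsto{\rm Per}(\{\delta>r\})\in W^{1,1}(0,\epsilon)$ and admits an absolutely continuous representative extending continuously to $[0,\epsilon)$. Since $\chi_{\{\delta>r\}}\to\chi_\Omega$ in $L^1(\mm)$ as $r\to0^+$ (because $\mm(\{0<\delta\le r\})\downarrow0$ by monotone convergence on the decreasing family with empty intersection), lower semicontinuity of the perimeter gives ${\rm Per}(\Omega)\le\lim_{r\to0^+}{\rm Per}(\{\delta>r\})<\infty$. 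I expect the main technical obstacle to be the cancellation of the residual $\int_{\{\delta>r\}}\Delta\eta\,\d\mm$: this hinges both on rewriting the Laplacian as a divergence to access the Gauss--Green identity and on the locality $\nabla\eta\equiv0$ on~$U$.
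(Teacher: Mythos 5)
Your proof is correct, but it takes a genuinely different route from the paper's. The paper applies the weak Gauss--Green formula (Theorem \ref{thm:gauss_green_distancefromaset}) once, with the vector field $w=\nabla(\phi\delta)$; this forces it to establish $\phi\delta\in D(\Delta)$ by computing $\Delta(\phi\delta)=\delta\Delta\phi+2\la\nabla\phi,\nabla\delta\ra+\phi[\pmb\Delta\delta]^{reg}$, and in particular to invoke the two-sided Laplacian comparison \eqref{eq:bound_low_upp_laplacian_distance} to prove that $\phi[\pmb\Delta\delta]^{reg}\in L^\infty$ on the cut-off's support, which is where the $\mIGC{\epsilon}$ condition is used; the relation \eqref{eq:distributional_perimeter} then drops out because $\la\nabla(\phi\delta),\nabla\delta\ra=|\nabla\delta|^2=1$ near the relevant level sets. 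Your argument instead feeds a plain cut-off $\eta$ into the already-proved mean-value lemma (Corollary \ref{coro:Fsecond}), importing the $\mIGC{\epsilon}$ condition through that result, so you never need $\phi\delta\in D(\Delta)$ nor any $L^\infty$ control of $[\pmb\Delta\delta]^{reg}$; you then apply Gauss--Green with the much simpler vector field $w=\nabla\eta$ merely to kill the residual $\int_{\{\delta>r\}}\Delta\eta\,\d\mm$. This is leaner and trades the Laplacian comparison for Corollary \ref{coro:Fsecond}. One place that deserves a bit more care: the integral $\int\la\nabla\eta,\nabla\delta\ra\,\d{\rm Per}(\{\delta>r\},\cdot)$ is against a measure singular with respect to $\mm$, so ``$\nabla\eta=0$ $\mm$-a.e.\ on $U$'' does not by itself force the integrand to vanish ${\rm Per}$-a.e.\ — you should fix a Borel representative of $\la\nabla\eta,\nabla\delta\ra$ that vanishes pointwise on $U$ (possible by locality) and observe that, since the coarea identity underlying Theorem \ref{thm:gauss_green_distancefromaset} is insensitive to $\mm$-negligible changes of representative outside an $\mathcal L^1$-null set of $r$'s, the Gauss--Green identity holds with this representative for a.e.\ $r$, whence the residual vanishes for a.e.\ $r\in(\alpha',\beta')$ because ${\rm Per}(\{\delta>r\},\cdot)$ is concentrated on $\{\delta=r\}\subset U$. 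The paper handles the same issue explicitly by fixing a representative and exhibiting the null set $N_0\subset J$.
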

\begin{proof}
We first show that $(0,\epsilon) \ni r \mapsto {\rm Per}(\{ \delta > r \}) \in \R$ belongs to $W^{1,1}_{loc}(0,\epsilon)$\footnote{Given an open interval $I\subset \R$, we recall that $f\in W^{1,1}_{loc}(I)$ if, for any $J\Subset I$ open interval, $f\in W^{1,1}(J)$.}. Fix $J=(\bar s,\bar r)\Subset (0,\epsilon)$. Consider a good cut-off $\phi\in \Lip_{bs}(\Omega) \cap D(\Delta)$ (as in Proposition \ref{good_cut_off}), identically equal to $1$ on $\delta^{-1}(J)$ and with compact support in $\{\bar s/2<\delta<(\epsilon+\bar r)/2\}$. We claim that
\begin{equation}
\label{eq:claim_phidelta}
    \phi\delta\in D(\Delta)\qquad\text{and}\qquad \Delta(\phi\delta)=\delta\Delta\phi+2\la\nabla\phi,\nabla\delta\ra+\phi[\pmb\Delta\delta]^{reg}.
\end{equation}
Indeed $\phi\delta\in \Lip_{bs}(\Omega)$ and, for any $g\in \Lip_{bs}(\X)$,
\begin{align}
    \int_\X\la\nabla(\phi\delta),\nabla g\ra\,\d\mm &=\int_\X\la\phi\nabla\delta,\nabla g\ra\,\d\mm+\int_\X\la\delta\nabla\phi,\nabla g\ra\,\d\mm\\
    &=\int_\X\la\nabla\delta,\nabla (\phi g)\ra\,\d\mm+\int_\X\la\nabla\phi,\nabla (\delta g)\ra\,\d\mm-2\int_\X g\la\nabla\phi,\nabla\delta\ra\,\d\mm\\
    &=-\la \pmb\Delta\delta,g\phi\ra-\int_\X\delta g\Delta \phi\,\d\mm-2\int_\X g\la\nabla\phi,\nabla\delta\ra\,\d\mm.
\end{align}
We discuss the term involving the Radon functional $\pmb\Delta\delta$. By our choice of $\phi$, it follows that, for any $g\in \Lip_{bs}(\X)$, $
\supp{\phi g}\subset\supp{\phi}\cap\Omega\subset B_{\epsilon}(\partial\Omega)\cap\Omega.$
Therefore, using \eqref{eq:IGCeps_to_reg_Deltadelta} we get
\begin{equation}
\label{eq:assolutamente_da_spostare}
    \la \pmb\Delta\delta,g\phi\ra=\int_\Omega g\phi[\pmb\Delta\delta]^{reg}\,\d\mm.
\end{equation}
In addition, recalling that $Q'=\mathfrak{Q}(\Omega\cap T_\delta^{nb})$, on the one hand, for $\sfq$-a.e.\ $\alpha\in Q'$, $\sfd(a(X_\alpha),x)>(\epsilon-\bar r)/2$ for any $x\in\supp{\phi}$ by Proposition \ref{lem:equivalence_migceps_lengthtransp}, on the other hand, recalling that $\delta(b(X_\alpha))\leq 0$, we have that, for $\sfq$-a.e.\ $\alpha\in Q'$, $\sfd(b(X_\alpha),x)>\bar s/2$ for any $x\in\supp{\phi}$. As a consequence, using the bounds \eqref{eq:bound_low_upp_laplacian_distance} on the regular part of $\pmb\Delta\delta$ (note that the disintegration of $\mm\restr{T_\delta^{nb}}$ in $\Omega$ involves only transport rays in $Q'$), we obtain $\phi[\pmb\Delta\delta]^{reg}\in L^\infty(\Omega,\mm)$, proving the claim \eqref{eq:claim_phidelta}.
Since $\phi\delta\in D(\Delta)$, we can apply \eqref{eq:gauss_green_distancefromaset} with $w=\nabla(\phi\delta)$. We fix a representative of $\la \nabla(\phi\delta),\nabla\delta\ra$; then, there exists $N_0\subset J$ with $\mathcal{L}^1(N_0) = 0$ such that 
\begin{equation}
    \int_{\{\delta>r\}} \div(w)\,\d \mm = \int\langle \nabla(\phi\delta),\nabla \delta\rangle
    \,\d {\rm Per}(\{ \delta > r \},\cdot), \qquad\text{for every }r \in J \setminus N_0.
\end{equation}
Furthermore, by Remark \ref{rmk:mesasure_level_sets}, there exists $N_1\subset (0,\epsilon)$ with $\mathcal{L}^1(N_0) = 0$ such that $\mm(\{ \delta =r \}) = 0$, for every $r \in (0,\epsilon) \setminus N_1$. Set $N:= N_1 \cup N_0$; then, for every $s,r \in J \setminus N$ with $s<r$, we obtain: 
\begin{equation}
\label{eq:perimeter_laplacian}
\begin{split}
    \int_{\{ s<\delta < r \}}[\pmb\Delta\delta]^{reg}\,\d\mm& =\int_{\{ s<\delta < r \}} \div(w)\,\d \mm =-\int_{\{ \delta > r \}} \div(w)\,\d \mm+\int_{\{\delta>s\}} \div(w)\,\d \mm\\
    &=\int\langle \nabla(\phi\delta),\nabla \delta\rangle
\,\d {\rm Per}(\{ \delta > r \},\cdot)-\int\langle \nabla(\phi\delta),\nabla \delta\rangle
\,\d {\rm Per}(\{ \delta > s \},\cdot)\\
    &={\rm Per}(\{ \delta > r \})-{\rm Per}(\{ \delta > s \}).
\end{split}
\end{equation}
From this identity, applying coarea formula \eqref{eq:coarea}, we finally obtain, for every $s,r \in J \setminus N$ with $s<r$,
\begin{equation}
    {\rm Per}(\{ \delta > r \}) - {\rm Per}(\{ \delta > s \})=\int_s^r\, \int[\pmb\Delta\delta]^{reg}\,\d{\rm Per}(\{\delta>t\},\cdot)\,\d t.
\end{equation}
Exchanging the roles of $r,s$, we deduce an analogue formula for every $s,r \in J \setminus N$ with $r<s$.
Hence, for a fixed $s\in J\setminus N$, we have that, for every $r\in J\setminus N$,
\begin{equation}
\label{eq:absolutely_continuous_representative_rhs_perimeter}
{\rm Per}(\{ \delta > r \})  ={\rm Per}(\{ \delta > s \})+ \int_s^r\, \int[\pmb\Delta\delta]^{reg}\,\d{\rm Per}(\{\delta>t\},\cdot)\,\d t.
\end{equation}
Since $\|[\pmb{\Delta} \delta]^{reg}\|_{L^1(\{ 0 < \delta <\epsilon \})} < \infty$ by Corollary \ref{cor:mild_integrability_laplacian}, the right hand side of \eqref{eq:absolutely_continuous_representative_rhs_perimeter} extends to an absolutely continuous function on all $J$, being the integral of a function in $L^1(J)$, plus a constant. Therefore, also recalling that $ {\rm Per}(\{ \delta > \cdot \})\in L^1(0,\epsilon)$ by coarea, it belongs to $W^{1,1}(J)$. In addition, \eqref{eq:absolutely_continuous_representative_rhs_perimeter} is an absolutely continuous representative of $r\mapsto {\rm Per}(\{\delta>r\})$, thus its distributional derivative on $J$ is given by formula \eqref{eq:distributional_perimeter}. Since the distributional derivative in \eqref{eq:distributional_perimeter} is actually in $L^1(0,\epsilon)$ by Corollary \ref{cor:mild_integrability_laplacian}, we deduce that ${\rm Per}(\{\delta>\cdot\})\in W^{1,1}(0,\epsilon)$, as claimed. Finally, the lower semicontinuity of the perimeter implies that ${\rm Per}(\{ \delta > 0 \}) = {\rm Per}(\Omega)<\infty$.
\end{proof}

Recall the definition of $e(t,r,s)$, the Neumann heat kernel on the half-line, that is 
\begin{equation}
    e(t,r,s) = \frac{1}{\sqrt{4 \pi t}} \left(e^{-\frac{(r-s)^2}{4t}}+e^{-\frac{(r+s)^2}{4t}}\right)\quad\text{for }(t,r,s) \in (0,\infty)\times [0,\infty)\times [0,\infty).
\end{equation}

With a slight abuse of notation, we denote the space $L^\infty_t\left((0,\infty),L^1_r(0,\infty)\right)$ as $L^\infty_t L^1_r$.

\begin{lemma}[Duhamel's principle]
\label{prop:duhamel}
Let $\varsigma(t,r) \in L^\infty_t L^1_r$ and $v_0,v_1 \in C^\infty(0,\infty) \cap L^\infty(0,\infty)$. Then, there exists a unique weak solution $v\in L^\infty_{t,{loc}}L^\infty_r$ to the non-homogeneous heat equation on the half-line:
\begin{equation}
\label{eq:inh_heat_halfline}
\begin{cases}
(\partial_t -\partial_r^2)v(t,r) = \varsigma(t,r) &\qquad \text{ in } \mathscr{D}'((0,\infty)\times(0,\infty)),\\
v(0,r) = v_0(r) &\qquad \text{for }r >0,\\
\partial_r v(t,0) = v_1(t) &\qquad \text{for }t >0.\\
\end{cases}
\end{equation}
In particular, $v\in C^\infty((0,\infty)\times[0,\infty))$ and we have 
\begin{equation}
\label{eq:sol_inh_heat_halfline}
    v(t,r) = \int_0^\infty e(t,r,s)\,v_0(s)\,\d s +\int_0^t\int_0^\infty e(t-\tau,r,s)\varsigma(\tau,s)\,\d s\,\d \tau - \int_0^t e(t-\tau,r,0)\,v_1(\tau)\,\d \tau.
\end{equation}
\end{lemma}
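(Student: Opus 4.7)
The plan is to construct the solution explicitly via the formula \eqref{eq:sol_inh_heat_halfline}, splitting it into three pieces that handle respectively the initial data, the source term, and the boundary data, and then to obtain uniqueness by a standard energy/maximum principle argument on the half-line. Write $v = v^{(1)} + v^{(2)} + v^{(3)}$ with
\begin{equation*}
v^{(1)}(t,r) = \int_0^\infty e(t,r,s)\,v_0(s)\,\d s,\quad v^{(2)}(t,r) = \int_0^t\!\!\int_0^\infty e(t-\tau,r,s)\,\varsigma(\tau,s)\,\d s\,\d\tau,\quad v^{(3)}(t,r) = -\int_0^t e(t-\tau,r,0)\,v_1(\tau)\,\d\tau.
\end{equation*}

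First I would record the basic properties of the Neumann heat kernel obtained by reflection from the Gaussian: for $s>0$, the function $(t,r)\mapsto e(t,r,s)$ solves $(\partial_t-\partial_r^2)e=0$ on $(0,\infty)\times(0,\infty)$, and by construction $\partial_r e(t,0,s)=0$ and $\int_0^\infty e(t,r,s)\,\d s =1$; as $t\to 0^+$, $e(t,r,\cdot)\rightharpoonup \delta_r$ on $[0,\infty)$. All subsequent differentiations under the integral sign are justified by the Gaussian decay of $e$ and its derivatives in $r$, together with the assumed $L^\infty$ bounds on $v_0$, $v_1$ and $L^\infty_tL^1_r$ bound on $\varsigma$; this also delivers the claimed regularity $v\in C^\infty((0,\infty)\times[0,\infty))$ and the $L^\infty_{t,\loc}L^\infty_r$ control.

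Next I would verify each piece. For $v^{(1)}$, differentiation under the integral gives $(\partial_t-\partial_r^2)v^{(1)}=0$, $\partial_r v^{(1)}(t,0)=0$ by the Neumann property of $e$, and $v^{(1)}(0,r)=v_0(r)$ by the approximation-of-identity property of $e(t,r,\cdot)$. For $v^{(2)}$ one checks the standard Duhamel identity: splitting the time integral and differentiating yields $(\partial_t-\partial_r^2)v^{(2)}=\varsigma$ in the sense of distributions, with $v^{(2)}(0,r)=0$ and $\partial_r v^{(2)}(t,0)=0$. The decisive step, and the main obstacle, is computing $\partial_r v^{(3)}(t,0)$. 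For $r>0$ one has
\begin{equation*}
\partial_r v^{(3)}(t,r) = \int_0^t \frac{r}{(t-\tau)\sqrt{4\pi(t-\tau)}}\,e^{-r^2/(4(t-\tau))}\,v_1(\tau)\,\d\tau,
\end{equation*}
and after the substitution $\sigma=r^2/(4(t-\tau))$, this becomes
\begin{equation*}
\partial_r v^{(3)}(t,r) = \frac{1}{\sqrt\pi}\int_{r^2/(4t)}^\infty \sigma^{-1/2}\,e^{-\sigma}\,v_1\!\left(t-\tfrac{r^2}{4\sigma}\right)\d\sigma \xrightarrow{r\to 0^+} \frac{v_1(t)}{\sqrt\pi}\int_0^\infty \sigma^{-1/2}e^{-\sigma}\,\d\sigma = v_1(t),
\end{equation*}
using the continuity of $v_1$ and $\Gamma(1/2)=\sqrt\pi$. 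Meanwhile $(\partial_t-\partial_r^2)v^{(3)}=0$ and $v^{(3)}(0,r)=0$ follow by a direct (and classical) estimate on $v^{(3)}$ as $t\to 0^+$.

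Finally, for uniqueness I would argue by linearity: a weak solution $w\in L^\infty_{t,\loc}L^\infty_r$ of \eqref{eq:inh_heat_halfline} with $v_0\equiv 0$, $v_1\equiv 0$, $\varsigma\equiv 0$ must vanish. One way is to extend $w$ by even reflection $\tilde w(t,r)=w(t,|r|)$: since $\partial_r w(t,0)=0$, the extension is a bounded distributional solution of the homogeneous heat equation on $(0,\infty)\times\mathbb R$ with zero initial data, hence $\tilde w\equiv 0$ by the uniqueness of bounded solutions of the heat equation on $\mathbb R$ (Widder/Tychonoff). Alternatively an energy estimate on bounded spatial intervals with a suitable cut-off, combined with the Neumann boundary condition producing no boundary term, yields the same conclusion.
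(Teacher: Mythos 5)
Your proof is correct in substance but follows a genuinely different route from the paper's. Where the paper first invokes the classical theory for \emph{smooth} source terms (citing Evans), then mollifies $\varsigma$ and extracts a weak-$*$ limit to handle rough $\varsigma$, and finally proves uniqueness by mollifying the difference and re-invoking classical uniqueness, you instead \emph{directly verify} that the explicit formula \eqref{eq:sol_inh_heat_halfline} solves the system — checking the kernel identities, the Duhamel convolution structure, and, as the crucial step, the boundary trace
\begin{equation}
\partial_r v^{(3)}(t,0^+)=v_1(t)
\end{equation}
via the substitution $\sigma=r^2/(4(t-\tau))$ and $\Gamma(1/2)=\sqrt\pi$. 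That computation is correct and isolates exactly where the single-layer potential picks up the Neumann data. For uniqueness you even-reflect across $r=0$ and appeal to Widder/Tychonoff uniqueness of bounded solutions on the full line. This is arguably \emph{cleaner} than the paper's argument: the paper mollifies $w=v-\tilde v$ with a kernel $\eta_\epsilon(r-s)$ but never states what extension of $w$ to $r<0$ is used, and without an even extension the mollified $w_\epsilon$ does not obviously satisfy $\partial_r w_\epsilon(t,0)=0$, which is needed to invoke classical half-line uniqueness. Your reflection makes this explicit.

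One shared caveat: both you and the paper assert $v\in C^\infty((0,\infty)\times[0,\infty))$ ``by differentiation under the integral.'' This is fine for $v^{(1)}$ and $v^{(3)}$, but for the Duhamel piece $v^{(2)}$ the source $\varsigma$ is only in $L^\infty_t L^1_r$, so $\partial_t v^{(2)}$ inherits the roughness of $\varsigma$ and $v^{(2)}$ is not $C^\infty$ in general (e.g., take $\varsigma(t,r)=g(t)h(r)$ with $g$ discontinuous). You are careful to phrase the PDE for $v^{(2)}$ only in $\mathscr D'$, which is the correct level of generality; it would be worth flagging that the $C^\infty$ claim in the lemma statement really only pertains to $v^{(1)}+v^{(3)}$ and should be weakened for $v^{(2)}$. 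Since the paper has the same overstatement and only uses the formula (not the regularity) downstream, this does not affect the main theorem, but it is a loose end in both arguments.
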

\begin{proof}
If $\varsigma$ is smooth, then a classical solution $v$ to \eqref{eq:inh_heat_halfline} exists and is unique. In particular, $v$ is given by \eqref{eq:sol_inh_heat_halfline} and $v\in C^\infty((0,\infty)\times[0,\infty))$. We refer to \cite{Evans10} for further details.
Thus, by linearity, it is enough to study the case where $v_0 = v_1 = 0$: we wish to prove that
\begin{equation}
\label{eq:sol_inh_heat_halfline_data0}
    v(t,r) = \int_0^t\int_0^\infty e(t-\tau,r,s)\varsigma(\tau,s)\,\d s\,\d \tau
\end{equation} 
is the unique solution to \eqref{eq:inh_heat_halfline} in this situation.
We begin by proving existence in $L^\infty_{t,loc} L^\infty_r$. Consider a family of mollifiers $\{\eta_\epsilon\}$ such that $\eta_\epsilon \ge 0$, $\| \eta_\epsilon \|_{L^1(\mathbb{R})} = 1$ and ${\rm supp} \,\eta_\epsilon \subset (-\epsilon,\epsilon)$, and define 
\begin{equation}
T_\epsilon(f)(t,r):= \int \int f(\tau,s)\,\eta_\epsilon(t-\tau)\,\eta_\epsilon(r-s)\,\d \tau\,\d s\,\qquad\forall\,f \in L^1_{loc}((0,\infty)\times (0,\infty)),
\end{equation}
with the convention that $f(t,\cdot)=0$ for $t<0$. Set $\varsigma_\epsilon := T_\epsilon(\varsigma)$, then, by standard properties of the convolution, $\varsigma_\epsilon \in L^\infty_{t}L^1_r$ with
\begin{equation}
    \| \varsigma_\epsilon \|_{L^\infty_t L^1_r} \le \| \varsigma \|_{L^\infty_t L^1_r}\qquad\text{and}\qquad\varsigma_\epsilon \xrightarrow{\epsilon\to0}\varsigma\quad \text{in}\quad L^1_{{loc},t}L^1_r.
\end{equation}
For any $\epsilon>0$, denote by $v_\epsilon$ the unique solution to \eqref{eq:inh_heat_halfline} with $\varsigma_\epsilon$ as source term and $v_0=v_1=0$, that is $v_\epsilon(t,r)=\int_0^t\int_0^\infty e(t-\tau,r,s)\varsigma_\epsilon(\tau,s)\,\d s\,\d \tau$.
For a fixed $T>0$, we estimate for $t \le T$
\begin{equation}
|v_\epsilon(t,r)|\le \int_0^t \| e(t-\tau,\cdot,\cdot) \|_{L^\infty}\,\d \tau\,\| \varsigma_\epsilon \|_{L^\infty_t L^1_r} \le \int_0^t \frac{1}{\sqrt{\pi(t-\tau)}}\,\d \tau\,\| \varsigma \|_{L^\infty_t L^1_r} \le C(T)\,\| \varsigma \|_{L^\infty_t L^1_r},
\end{equation}
therefore $\{v_\epsilon\}$ is equibounded in $L^\infty((0,T) \times (0,\infty))$, hence there exists $\tilde v \in L^\infty((0,T) \times (0,\infty))$ such that $v_\epsilon \xrightharpoonup{*} \tilde v$ in $L^\infty((0,T) \times (0,\infty))$, which together with the convergence of $\varsigma_\epsilon$ gives that $\tilde v(t,r)=v(t,r)$, where $v$ is defined in \eqref{eq:sol_inh_heat_halfline_data0}. Moreover, by passing in the limit in the distributional formulation for $v_\epsilon$, we have that $v$ solves \eqref{eq:inh_heat_halfline}, giving existence of a weak solution.
We prove uniqueness in $L^\infty_{t,loc}L^\infty_r$. Let $v,\tilde v \in L^\infty((0,\infty)\times(0,\infty))$ be two solutions to \eqref{eq:inh_heat_halfline} and set $w:= v-\tilde v$. Then, by linearity, $w$ solves \eqref{eq:inh_heat_halfline} with $v_0 = v_1 = 0$ and $\varsigma = 0$. We consider the family $\{\eta_\epsilon\}$ as above and define $w_\epsilon := T_\epsilon(w)$ which belongs to $C^\infty((0,\infty)\times(0,\infty))$ and solves \eqref{eq:inh_heat_halfline}. On the one hand, by uniqueness of classical solutions, $w_\epsilon =0$, on the other hand $w_\epsilon \xrightharpoonup{*} w$ in $L^\infty((0,T)\times(0,\infty))$. Therefore, $w = 0$.
To conclude, notice that by the explicit formula \eqref{eq:sol_inh_heat_halfline} and the regularity properties of the Neumann heat kernel $e$, we get that $v\in C^\infty((0,\infty)\times[0,\infty))$.
\end{proof}
Regarding the notation in the next theorem, we refer the reader to Definitions \ref{def:dirichlet_heat_flow} and \ref{def:heat_content}.
\begin{theorem}
\label{thm:first_order_asymptotics_main}
Let $(\X,\sfd,\mm)$ be an $\RCD(K,N)$ space for $K\in\R$ and $N\in (1,\infty)$ and let $\Omega\subset\X$ be open and bounded. 
Assume $\partial\Omega$ satisfies the $\mIGC{\epsilon}$ condition. Moreover, assume that there exists $\rho >0$ such that 
\begin{equation}
\label{eq:higher_int_delta}
    [\pmb{\Delta} \delta]^{reg} \in L^{1+\rho}(\{ 0 < \delta < \epsilon \}).
\end{equation}
Then, the heat content associated with $\Omega$ admits the following asymptotic expansion
\begin{equation}
\label{eq:heat_content_asymptotics_mainthm}
    Q_\Omega(t)=\mm(\Omega)-\sqrt{\frac{4t}{\pi}}{\rm Per}(\Omega)+O\left(t^{\frac{2(1+\rho)-1}{2(1+\rho)}}\right)\qquad\text{as }t\to 0^+.
\end{equation}
\end{theorem}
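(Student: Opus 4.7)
The strategy, adapting Savo's approach, is to introduce the auxiliary function
\begin{equation*}
F(t,r) := \int_{\{\delta > r\}} u_t\,\d\mm, \qquad t > 0,\ r \ge 0,
\end{equation*}
derive for it a non-homogeneous heat PDE in $(t,r)$, solve it via Duhamel's principle (Lemma \ref{prop:duhamel}), and extract the asymptotics at $r = 0$, where $F(t,0) = Q_\Omega(t)$ (using $\mm(\partial\Omega)=0$, which follows from $\mIGC{\epsilon}$ via Propositions \ref{prop:ball_vs_IGC_epsilon} and \ref{prop:bd_measure_zero}). The central claim is that, distributionally on $(0,\infty)\times(0,\epsilon)$,
\begin{equation*}
(\partial_t - \partial_r^2) F(t,r) = \delta_\#\bigl(u_t\,[\pmb{\Delta}\delta]^{reg}\mm\bigr)(r) =: \varsigma(t,r),
\end{equation*}
with initial datum $F(0,r) = \mm(\{\delta > r\})$ and Neumann condition $\partial_r F(t,0) = 0$. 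The $t$-derivative commutes with $\int_{\{\delta > r\}}$ thanks to $\partial_t u_t = \Delta_\Omega u_t$ in $L^2$; the second $r$-derivative identity is the mean value lemma (Corollary \ref{coro:Fsecond}). Although $u_t$ is not compactly supported in $\{0 \le \delta < \epsilon\}$, testing only against $\phi \in C_c^\infty((0,\epsilon))$ shifts the support condition onto $u_t(\phi\circ\delta)$, which lies in $W_0^{1,2}(\Omega) \cap L^\infty$ with support in the tubular neighborhood and can be approximated in $W^{1,2}$ by $\Lip_{bs}$ functions via Lemma \ref{lemma:approximation_to_ext_radfctl}; under $\mIGC{\epsilon}$ the singular part of $\pmb{\Delta}\delta$ does not enter, by Corollary \ref{coro:uniform_ball_cond}. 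The Neumann condition follows from Proposition \ref{cor:Fprime} by letting $r \to 0^+$, using the continuity of $r\mapsto {\rm Per}(\{\delta > r\})$ at $0$ (Corollary \ref{coro:Linfty_est_perimeter}) and the fact that ${\sf QCR}(u_t) = 0$ ${\rm Per}(\Omega,\cdot)$-a.e.\ by Proposition \ref{prop:trace_W120}.

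After extending $F$ and $\varsigma$ to the half-line—the correction to the value at $r=0$ being controlled by Kac's principle (Corollary \ref{coro:kacs_principle}) and standard Gaussian heat kernel bounds on $\RCD(K,N)$ spaces, both of order $o(t)$—Lemma \ref{prop:duhamel} yields
\begin{equation*}
Q_\Omega(t) = \int_0^\infty e(t,0,s)\,\mm(\{\delta > s\})\,\d s + \int_0^t \int_0^\infty e(t-\tau,0,s)\,\varsigma(\tau,s)\,\d s\,\d\tau + o(t).
\end{equation*}
Setting $g(s) := \mm(\{\delta>s\})$, one has $g(0) = \mm(\Omega)$ and $g(0) - g(s) = \int_0^s {\rm Per}(\{\delta > r\})\,\d r$ from Proposition \ref{cor:Fprime}; the identity $\int_0^\infty e(t,0,s)\,s\,\d s = \sqrt{4t/\pi}$ then extracts $\mm(\Omega) - \sqrt{4t/\pi}\,{\rm Per}(\Omega)$ as the leading contribution of the first integral, plus a remainder governed by $|{\rm Per}(\Omega) - {\rm Per}(\{\delta > r\})|$.

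Both remaining error terms are controlled via the $L^{1+\rho}$ assumption. By Corollary \ref{coro:Linfty_est_perimeter}, coarea, and Hölder,
\begin{equation*}
|{\rm Per}(\Omega) - {\rm Per}(\{\delta > r\})| \le \int_{\{0 < \delta < r\}} |[\pmb{\Delta}\delta]^{reg}|\,\d\mm \le \|[\pmb{\Delta}\delta]^{reg}\|_{L^{1+\rho}}\,(C r)^{\rho/(1+\rho)},
\end{equation*}
since $\mm(\{0<\delta<r\}) \le C r$ (as $r\mapsto {\rm Per}(\{\delta>r\}) \in W^{1,1}(0,\epsilon) \hookrightarrow L^\infty$); Gaussian integration in $s = \sqrt{t}\,y$ then delivers a remainder of order $t^{(2(1+\rho)-1)/(2(1+\rho))}$. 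For the Duhamel source, unfolding the pushforward and using $\|u_\tau\|_\infty \le 1$ from Proposition \ref{prop:weak_max_principle}, Hölder gives
\begin{equation*}
\Bigl|\int_0^\infty e(t-\tau,0,s)\varsigma(\tau,s)\,\d s\Bigr| \le \|e(t-\tau,0,\delta)\|_{L^{(1+\rho)/\rho}(\{0<\delta<\epsilon\},\mm)}\,\|[\pmb{\Delta}\delta]^{reg}\|_{L^{1+\rho}};
\end{equation*}
coarea reduces the first factor to an explicit one-dimensional Gaussian integral bounded by $C(t-\tau)^{-1/(2(1+\rho))}$, and integration in $\tau$ reproduces the same rate $t^{(2(1+\rho)-1)/(2(1+\rho))}$. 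Collecting all contributions yields \eqref{eq:heat_content_asymptotics_mainthm}.

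The main obstacle is the rigorous derivation of the distributional PDE for $F$ in the first paragraph, since $u_t$ lies only in $W_0^{1,2}(\Omega) \cap L^\infty$ and satisfies neither the Lipschitz assumption of Proposition \ref{prop:Fsecond} nor the tubular-support hypothesis of Corollary \ref{coro:Fsecond}. The localisation scheme above overcomes this by transferring the support constraint from $u_t$ onto the product $u_t(\phi\circ\delta)$ and invoking Lemma \ref{lemma:approximation_to_ext_radfctl}. A secondary technicality is the passage from the bounded strip $(0,\epsilon)$ to the half-line Duhamel formula of Lemma \ref{prop:duhamel}, which is absorbed into the Kac-type remainder via Gaussian heat-kernel bounds away from $\partial\Omega$.
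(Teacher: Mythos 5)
Your route differs genuinely from the paper's. You apply the mean value lemma directly to $u_t$, so that $F(t,r):=\int_{\{\delta>r\}}u_t\,\d\mm$ carries the nontrivial initial datum $F(0,r)=\mm(\{\delta>r\})$ and the vanishing Neumann datum $\partial_rF(t,0)=0$ (from Proposition \ref{prop:trace_W120}); the paper instead first discards a Kac-negligible interior piece, works with the compactly supported $v=(1-u_t)\phi$, and gets $F(0,\cdot)\equiv 0$ with Neumann datum $-{\rm Per}(\Omega)$. The leading term $\sqrt{4t/\pi}\,{\rm Per}(\Omega)$ thus emerges for you from Gaussian integration of the initial datum via $\mm(\Omega)-\mm(\{\delta>s\})=\int_0^s{\rm Per}(\{\delta>r\})\,\d r$, whereas in the paper it is the boundary term of Duhamel's formula. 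Your remainder estimates --- H\"older against $L^{1+\rho}$, coarea, the bounds $\mm(\{0<\delta<r\})\le Cr$ and $|{\rm Per}(\{\delta>r\})-{\rm Per}(\Omega)|\le Cr^{\rho/(1+\rho)}$ from Corollary \ref{coro:Linfty_est_perimeter}, and a 1D Gaussian integration --- are correct and reproduce the exponent $\tfrac{2(1+\rho)-1}{2(1+\rho)}$.

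The genuine gap is the passage to the half-line Duhamel formula. You establish $(\partial_t-\partial_r^2)F=\delta_\#\big(u_t[\pmb\Delta\delta]^{reg}\mm\big)$, with source in $L^\infty_tL^1_r$, only on $(0,\infty)\times(0,\epsilon)$: precisely there Remark \ref{rmk:IGCeps_to_reg_Deltadelta} and Corollary \ref{coro:uniform_ball_cond} suppress the singular part of $\pmb\Delta\delta$. For $r\ge\epsilon$ the distributional derivative $\partial_r^2F$ picks up the Dirac masses of $\pmb\Delta\delta$ at the ridge points $a(X_\alpha)\cap\Omega$, so the source becomes a genuine measure and Lemma \ref{prop:duhamel} does not apply as stated. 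The step ``after extending $F$ and $\varsigma$ to the half-line --- the correction \dots being controlled by Kac's principle and standard Gaussian heat-kernel bounds on $\RCD(K,N)$ spaces'' is asserted, not carried out, and moreover imports a tool the paper deliberately avoids. To close the gap without it, introduce a one-dimensional cutoff $\chi\in C_c^\infty\big([0,\epsilon)\big)$ with $\chi\equiv 1$ near $0$ and apply Duhamel to $\chi F$: the source becomes $\chi\varsigma-2\chi'F'-\chi''F\in L^\infty_tL^1_r$ (note $F'(t,r)=-\int u_t\,\d{\rm Per}(\{\delta>r\},\cdot)$ obeys $\|F'(t,\cdot)\|_{L^1(0,\infty)}\le\mm(\Omega)$ by coarea), the Neumann datum is unchanged, and the commutator terms together with the corrected initial datum $\chi\,\mm(\{\delta>\cdot\})$ differ from yours only for $r$ bounded away from $0$, contributing $O(e^{-c/t})$ through the explicit half-line kernel $e$. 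A minor further point: $v_0=\mm(\{\delta>\cdot\})$ is only $W^{1,1}$, not $C^\infty$ as Lemma \ref{prop:duhamel} formally requires, so its statement has to be mildly relaxed. Alternatively, simply adopt the paper's spatial cutoff so that $F\equiv 0$ for $r\ge\epsilon/2$ from the outset.
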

\begin{remark}
Let $E\subset\Omega$ be  a closed set with ${\rm Cap}(E)=0$, if \eqref{eq:heat_content_asymptotics_mainthm} holds for $\Omega$, then the same expansion holds for the set $\Omega\subset E$ with the same reminder term. Indeed, on the one hand, due to Remark \ref{rem:heat_content_under_changeofcaprep}, $Q_{\Omega}(t)=Q_{\Omega\setminus E}(t)$ for every $t>0$. On the other hand, 
\begin{equation}
    \mm(\Omega)=\mm(\Omega\setminus E)\qquad\text{and}\qquad {\rm Per}(\Omega)={\rm Per}(\Omega\setminus E)
\end{equation}
since $\mm(\Omega\, \triangle\, (\Omega\setminus E))=0$.
As a consequence of this remark, the expansion in \eqref{eq:heat_content_asymptotics_mainthm} holds when considering as $\Omega$ the set in the second picture in Figure \ref{fig:B}.

\end{remark}
\begin{proof}[Proof of Theorem \ref{thm:first_order_asymptotics_main}]
Consider the compact set $E:=\overline{ \{ \delta > \epsilon/2 \}}\subset\Omega$ and let $\varphi$ be a good cut-off function (in the sense of Proposition \ref{good_cut_off}) such that $\varphi = 1$ on $E$ and $\supp{\varphi} \subset \Omega$.
We define $\phi:= 1-\varphi$ and, since $\varphi \in \Lip_{bs}(\Omega) \cap D(\Delta)$, $\Delta \varphi \in L^\infty(\mm)$, we have that $\phi \in D(\Delta\restr{\Omega})$ with $\Delta\restr{\Omega} \phi \in L^\infty(\Omega,\mm)$. Letting $u_t:=h^\Omega_t\chi_\Omega$, we compute
\begin{equation}
    \int_\Omega (1-u_t)\,\d \mm = \int_\Omega (1-u_t) \phi\,\d \mm +\int_\Omega (1-u_t)\varphi\,\d \mm = (\mathrm{A})+(\mathrm{B}).
\end{equation}
As consequence of Lemma \ref{coro:kacs_principle}, $(\mathrm{B}) = o(t)$, as $t\to 0^+$. We focus on $(\mathrm{A})$. Defining the auxiliary function $F(t,r):= \int_{\{ \delta > r \}}(1-u_t) \phi \,\d \mm $, we can rewrite $(\mathrm{A}) = F(t,0)$. Therefore, in order to conclude, it is enough to show that 
\begin{equation}
\label{eq:final_claim_F}
F(t,0) =\sqrt{\frac{4t}{\pi}}{\rm Per}(\Omega) + O\left(t^{\frac{2(1+\rho)-1}{2(1+\rho)}}\right),\qquad\text{as }t\to 0^+.
\end{equation}
To deduce such an equality, we apply the Duhamel's principle, cf. Lemma \ref{prop:duhamel}, to the function $F(t,r)$, then evaluate at $r=0$. 
First of all, the initial datum is given by $F(0,r) = 0$. Second of all, for the boundary condition, Proposition \ref{cor:Fprime} gives that $\partial_r F(t,0) = -\int \phi\,(1- u_t)\,\d {\rm Per}(\Omega,\cdot)$. In addition, since $u_t \in W^{1,2}_0(\Omega)$, we can apply Proposition \ref{prop:trace_W120} and obtain $u_t = 0$, ${\rm Per}(\Omega,\cdot)$-a.e., thus, recalling that $\phi\equiv 1$ on $\partial\Omega$, $(1-u_t)\phi = 1$, ${\rm Per}(\Omega,\cdot)$-a.e.. Therefore, $\partial_r F(t,0) = - {\rm Per}(\Omega)$ for every $t>0$.
Third of all, for what concerns the source term, note that we may apply Corollary \ref{coro:Fsecond} to $F$, since $(1-u_t)\phi\in D(\Delta\restr{\Omega})\cap L^\infty(\Omega,\mm)$ (see Propositions \ref{prop:weak_max_principle} and \ref{prop:chain_rule_restricted_laplacian}), obtaining that
\begin{equation}
\label{eq:foa_1}
    \partial^2_{r} F(t,\cdot)= \int_{\{\delta > \cdot\}} \Delta \restr{\Omega} [\phi(1-u_t)] \,\d \mm -\delta_\#(\phi(1-u_t) [\pmb{\Delta} \delta]^{reg}\mm)
\end{equation}
holds in the sense of distribution. Moreover, since $u_t$ solves \eqref{eq:dirichlet_heat_equation}, we get that
\begin{equation}
\label{eq:foa_2}
    \partial_t F(t,r) = -\int_{\{ \delta > r\}} \phi \Delta \restr{\Omega} u_t\,\d \mm,
\end{equation}
where $\Delta \restr{\Omega} u_t =\Delta_\Omega u_t$ as a consequence of \eqref{eq:rel_laplacians}.
Using \eqref{eq:foa_1} and \eqref{eq:foa_2}, we obtain 
\begin{equation}
\label{eq:foa_3}
\begin{aligned}
    \partial_t F(t,r) -\partial^2_{r} F(t,r) &= -\int_{\{ \delta > r\}} \phi \Delta \restr{\Omega} u_t\,\d \mm - \int_{\{\delta > r\}} \Delta \restr{\Omega} [\phi(1-u_t)] \,\d \mm +\delta_\#(\phi(1-u_t) [\pmb{\Delta} \delta]^{reg}\mm)\\
    & \stackrel{\eqref{eq:chain_restricted_laplacian}}{=}  \int_{\{\delta >r\} }(2 \la \nabla u_t, \nabla \phi \ra-(1-u_t)\Delta \restr{\Omega} \phi)\,\d \mm + \delta_\#(\phi(1-u_t) [\pmb{\Delta} \delta]^{reg}\mm) =:\mu_t(r).
\end{aligned}
\end{equation}
We now show that $\mu_t$ satisfies the hypotheses of Lemma \ref{prop:duhamel}. We write $\mu_t = \mu^1_t+\mu^2_t$, where $\mu_t^1:=g_t \mathcal{L}^1= \left(\int_{\{\delta >\cdot\} }(2 \la \nabla u_t, \nabla \phi \ra-(1-u_t)\Delta \restr{\Omega} \phi)\,\d \mm\right) \mathcal{L}^1$ and $\mu_t^2:=\delta_\#(\phi(1-u_t)[\pmb{\Delta} \delta]^{reg}\mm)$. We claim that the total variation of $\mu_t$ is uniformly bounded, namely
\begin{equation}
\label{eq:firstclaim_tv}
    \sup_{t>0} |\mu_t|(0,\infty)< \infty.
\end{equation} 
We compute the total variation of $\mu_t^1$: notice that, up to a cut-off argument with a good cut-off function which is $1$ on $\Omega$, we have $\phi\in D(\Delta)\cap L^\infty(\mm)$, so by definition one has $\nabla\phi\in D(\div)$ and $\Delta\phi=\div( \nabla \phi )$, $\mm$-a.e. in $\Omega$. Analogously, we can extend $1-u_t$ to a function in $W^{1,2}(\X)\cap L^\infty(\mm)$. Thus, by Proposition \ref{prop:leibniz_divergence}, $(1-u_\tau)\nabla\phi\in D(\div)$ and we can apply the weak Gauss-Green formula, cf. Theorem \ref{thm:gauss_green_distancefromaset}, obtaining
\begin{equation}
    \begin{aligned}
    \label{eq:computation_for_R1}
    g_t(r) &=\int_{\{\delta >r\} }(2 \la \nabla u_t, \nabla \phi \ra-(1-u_t)\Delta \restr{\Omega} \phi)\,\d \mm=\int_{\{ \delta > r \}}\big((1-u_t)\Delta\restr{\Omega}\phi-2\div((1-u_t)\nabla\phi)\big)\,\d \mm\\
    & =\int_{\{ \delta > r \}}(1-u_t)\Delta\restr{\Omega}\phi\,\d \mm-\int 2(1-u_t) \la \nabla\phi, \nabla \delta \ra\,\d {\rm Per}(\{\delta > r \},\cdot)
    \end{aligned}
\end{equation} 
for a.e.\ $r>0$. Hence, using the weak maximum principle, cf. Proposition \ref{prop:weak_max_principle}, and coarea formula \eqref{eq:coarea}, we have
\begin{equation}
\begin{aligned}
    \|g_{t}\|_{L^1(0,\infty)} &\le \int_0^\infty \int 2 |1-u_t|\,|\nabla \phi|\,\d {\rm Per}(\{ \delta > s \},\cdot)\,\d s + \int_0^\infty \int_{\{ \delta > s \}} |1-u_t||\Delta\restr{\Omega} \phi|\,\d \mm\,\d s\\
    &\le 2 {\rm Lip}(\phi) \mm(\Omega)+\| \Delta\restr{\Omega} \phi \|_{L^\infty(\Omega,\mm)}\,\mm(\Omega)\,{\rm diam}(\Omega),
\end{aligned}
\end{equation}
and this bound is uniform as $t>0$. This proves that $g_t(r) \in L^\infty_t L^1_r$. It remains to treat the term $\mu_t^2$.
Recall that, given two metric spaces $(\Y,\sfd_{\Y}),({\rm Z},\sfd_{\rm Z})$, a signed measure $\mu$ on $\Y$ and a Borel map $f \colon \Y \to {\rm Z}$, then $|f_{\#}\mu| \le f_{\#}|\mu|$. 
Hence, using once again the weak maximum principle, we get:
\begin{equation}
\begin{aligned}
    |\mu_t^2|(0,\infty) \le \delta_{\#}(|\phi(1-u_t)[\pmb{\Delta} \delta]^{reg}| \mm)(0,\infty) = \int_{\Omega} |\phi(1-u_t) [\pmb{\Delta} \delta]^{reg}|\,\d\mm \le \| \phi \|_{L^\infty} \|[\pmb{\Delta} \delta]^{reg}\|_{L^1(\{ 0 < \delta < \epsilon \})}.
\end{aligned}
\end{equation}
This implies that $\sup_t |\mu_t^2|(0,\infty)<\infty$, proving \eqref{eq:firstclaim_tv}. 

We claim that $\mu_t\ll\mathcal{L}^1$: $\mu_t^1$ is already absolutely continuous. The same is true for $\mu_t^2$, for every $t>0$. Indeed, let $N \in \mathscr{B}(\R)$ such that $\mathcal{L}^1(N) = 0$. If $t \notin N$, then ${\rm Per}(\{ \delta > t \},\delta^{-1}(N)) =0$, being ${\rm Per}(\{ \delta > t \},\cdot)$ concentrated on the set $\{ \delta = t \}$. Therefore, by coarea formula, we have 
\begin{equation}
    \mm(\delta^{-1}(N)) = \int_0^\infty {\rm Per}(\{ \delta > t \},\delta^{-1}(N))\,\d t = \int_N {\rm Per}(\{ \delta > t \},\delta^{-1}(N))\,\d t=0.
\end{equation}
As a consequence, $\delta^{-1}(N)$ is $\mm$-negligible, hence we finally deduce 
\begin{equation}
    \delta_\#(\phi(1-u_t)[\pmb{\Delta} \delta]^{reg}\mm)(N) = \int_{\delta^{-1}(N)}\phi(1-u_t)[\pmb{\Delta} \delta]^{reg}\,\d\mm = 0,
\end{equation}
proving that $\mu_t^2 \ll \mathcal{L}^1$, and so $\mu_t \ll \mathcal{L}^1$. Finally, combining this property and \eqref{eq:firstclaim_tv}, we get that 
\begin{equation}
    \mu_t=\varsigma\mathcal{L}^1\qquad\text{and}\qquad\varsigma(t,r)\in L_t^\infty L^1_r.
\end{equation}
We are in position to apply Lemma \ref{prop:duhamel} with $\mu_t$ as above, $v_0(r) :=F(0,r)=0$ and $v_1(t) := \partial_rF(t,0)= -{\rm Per}(\Omega)$, and we have
\begin{multline}
    F(t,0) = \overbrace{\int_0^t \int_0^\infty e(t-\tau,0,s) g_\tau(s)\,\d s\,\d \tau}^{\mytag{$(\mathrm{R}_1)$}{termR1}}\\
     +\underbrace{\int_0^t \int_0^\infty e(t-\tau,0,s)\,\d\delta_\#[(1-u_\tau)\phi [\pmb{\Delta} \delta]^{reg}\mm](s)\,\d \tau}_{\mytag{$(\mathrm{R}_2)$}{termR2}}+\underbrace{\int_0^t e(t-\tau,0,0)\,{\rm Per}(\Omega)\,\d \tau}_{\mytag{$(\mathrm{P})$}{termP}}.
     \end{multline}
The principal term \ref{termP} is explicitly given by: 
\begin{equation}
\text{\ref{termP}} = \int_0^t \frac{1}{\sqrt{\pi(t-\tau)}}\,\d \tau\,{\rm Per}(\Omega) = \sqrt{\frac{4t}{\pi}}\,{\rm Per}(\Omega).
\end{equation}
In order to conclude, it is enough to prove that $|\text{\ref{termR1}}|+|\text{\ref{termR2}}|= O\Big(t^{\frac{2(1+\rho)-1}{2(1+\rho)}}\Big)$. We start with \ref{termR1}:
by \eqref{eq:computation_for_R1}, we can rewrite this term as follows
\begin{equation}
    \label{eq:expression_R1}
    \text{\ref{termR1}} = \int_0^t \int_0^\epsilon e(t-\tau,0,s)\Bigg(\int_{\{ \delta > s \}}(1-u_\tau)\Delta\restr{\Omega}\phi\,\d \mm-\int 2(1-u_\tau)\la\nabla\phi,\nabla\delta\ra\,\d {\rm Per}(\{\delta > s\},\cdot)\Bigg)\,\d s\,\d \tau
\end{equation}
Therefore, after the change of variable $\tau=t\theta$, we have that
\begin{equation}
\begin{aligned}
\frac{\text{\ref{termR1}}}{t} =&\underbrace{\int_0^1 \int_0^\epsilon e(t(1-\theta),0,s)\int_{\{ \delta > s \}}(1-u_{t\theta})\Delta\restr{\Omega}\phi\,\d \mm\,\d s\,\d \theta}_{\mytag{$(\mathrm{R}_{11})$}{termR11}}\\
&\underbrace{-\int_0^1 \int_0^\epsilon e(t(1-\theta),0,s)\int 2(1-u_{t \theta})\la \nabla\phi, \nabla \delta \ra\,\d {\rm Per}(\{\delta > s\},\cdot)\,\d s\,\d \theta}_{\mytag{$(\mathrm{R}_{12})$}{termR12}}.
\end{aligned}
\end{equation}
We first consider the term \ref{termR11}. For simplicity of notations, we define $\tilde{G}_t(s):=\int_{\{\delta > s\}} (1-u_t)\Delta\restr{\Omega} \phi\,\d \mm$ and $G_t(s): = \chi_{(0,\epsilon)}(s)\,\tilde{G}_t(s)+\chi_{(-\epsilon,0)}(s)\,\tilde{G}_t(-s)$ and
\begin{equation}
\label{eq:formula_computation_R11}
    \int_0^1 \left[ \int_0^\epsilon \frac{1}{\sqrt{\pi t (1-\theta)}}e^{-\frac{s^2}{4t(1-\theta)}} \tilde{G}_{t\theta}(s) \,\d s\right]\,\d \theta = \int_0^1 \left(P_{t(1-\theta)} G_{t\theta}\right)(0)\,\d \theta,
\end{equation}
where $P_t$ denotes the heat flow on $\R$. Fix $\theta\in (0,1)$. We check that $\| G_{t\theta} \|_{L^\infty} \to 0$ as $t \to 0$, indeed for $s\in\R$, 
\begin{equation}
    |G_{t\theta}(s)| \le \int_{\{\epsilon/2 < \delta < \epsilon\}} |1-u_t|\,|\Delta\restr{\Omega} \phi|\,\d \mm \le \int_{\{\epsilon/2 < \delta < \epsilon\}} |1-u_t|\,\d \mm \,\| \Delta\restr{\Omega} \phi \|_{L^\infty(\{ \epsilon/2 <\delta < \epsilon \})} \stackrel{\eqref{eq:first_order_kacs_principle}}{=} o(t).
\end{equation}
Moreover, by the maximum principle, we have that $\| P_{t(1-\theta)} G_{t\theta} \|_{L^\infty} \le \| G_{t\theta} \|_{L^\infty}$ for every $t>0$, thus
$|P_{t(1-\theta)} G_{t\theta}(0)| \to 0$, as $t\to 0^+$.
Therefore, since
\begin{equation}
\begin{aligned}
 |P_{t(1-\theta)} G_{t\theta}(0)| &\le \|P_{t(1-\theta)} G_{t\theta}\|_{L^\infty} \le \| G_{t\theta} \|_{L^\infty} \le \int_{\{\epsilon/2 < \delta < \epsilon\}} |1-u_t|\,\d \mm \,\| \Delta\restr{\Omega} \phi \|_{L^\infty(\{ \epsilon/2 <\delta < \epsilon \})} \\
 &\le \mm(\{\epsilon/2 < \delta < \epsilon\})\,\,\| \Delta\restr{\Omega} \phi \|_{L^\infty(\{ \epsilon/2 <\delta < \epsilon \})}
\end{aligned}
\end{equation}
we can apply dominated convergence theorem in \eqref{eq:formula_computation_R11} and we get that $|\text{\ref{termR11}}|= o(1)$. We study the term \ref{termR12}. We compute, for $t$ sufficiently small,
\begin{equation}
\begin{aligned}
    |\text{\ref{termR12}}|&\,\,\,= \left|\int_0^1 \int_0^\epsilon e(t(1-\theta),0,s)\int 2(1-u_{t \theta})\la \nabla\phi,\nabla \delta \ra\,\d {\rm Per}(\{\delta > s\},\cdot)\,\d s\,\d \theta\right| \\
    &\stackrel{\eqref{eq:coarea}}{=} \left|\int_0^1 \int_\Omega 2 e(t(1-\theta),0,\delta) (1-u_{t \theta})\la \nabla\phi,\nabla \delta \ra\,\d \mm\,\d \theta\right|\\
    & \,\,\,\le \Lip\, \phi\, \int_0^1 \int_{\{ \frac{\epsilon}{2} < \delta < \epsilon \}} e^{-\frac{\delta^2}{4t (1-\theta)}} \frac{2}{\sqrt{\pi t (1-\theta)}} (1-u_{t\theta})\,\d \mm \,\d \theta \\
    & \,\,\,\le  \frac{2\Lip\, \phi}{\sqrt{\pi t}}\int_0^1 \frac{1}{\sqrt{1-\theta}}\, \| 1-u_{t\theta} \|_{L^1(\{ \epsilon /2 < \delta < \epsilon\})}\,\d \theta  \stackrel{\eqref{eq:first_order_kacs_principle}}{\le} C \sqrt{t}\int_0^1 \frac{\theta}{\sqrt{1-\theta}}\,\d \theta  = C \sqrt{t},\\
\end{aligned}
\end{equation}
where $C>0$ is constant independent of $t$. This shows that $|\text{\ref{termR1}}|=o(t)$ as $t\to 0^+$.

We study the term \ref{termR2}. Since $\partial \Omega$ verifies $\mIGC{\epsilon}$ condition, by Corollary \ref{coro:Linfty_est_perimeter}, we have that 
\begin{equation}
    \label{eq:perimeter_linf}
    \| {\rm Per}(\{ \delta > s \})\|_{L^\infty(0,\epsilon)}<\infty.
\end{equation}
Hence, for $t$ small, we compute
\begin{align}
    |\text{\ref{termR2}}|&=\left|\int_0^t \frac{1}{\sqrt{\pi(t-\tau)}} \int_{\{ 0<\delta <\epsilon\}} e^{-\frac{\delta^2}{4(t-\tau)}}\,[\pmb\Delta\delta]^{reg}\,\d \mm \,\d \tau\right| \\
    &\le \int_0^t \frac{1}{\sqrt{\pi(t-\tau)}} \left( \int_{\{ 0 < \delta < \epsilon \}}e^{-\frac{\delta^2}{4(t-\tau)}\frac{1+\rho}{\rho}} \,\d\mm\right)^{\frac{\rho}{1+\rho}} \left( \int_{\{ 0 < \delta < \epsilon \}} |[\pmb\Delta\delta]^{reg} |^{1+\rho}\,\d \mm \right)^{\frac{1}{1+\rho}}\,\d \tau\\
    &\stackrel{\eqref{eq:higher_int_delta}}{\le} C\int_0^t \frac{1}{\sqrt{\pi(t-\tau)}}\left( \int_0^\epsilon e^{-\frac{s^2}{4(t-\tau)}\frac{1+\rho}{\rho}} {\rm Per}(\{ \delta > s\})\,\d s \right)^{\frac{\rho}{1+\rho}}\,\d \tau\\
    & \stackrel{\eqref{eq:perimeter_linf}}{\le} C \int_0^t \frac{1}{\sqrt{t-\tau}} \left( \int_0^\epsilon e^{-\frac{s^2}{4(t-\tau)}\frac{1+\rho}{\rho}}\,\d s \right)^{\frac{\rho}{1+\rho}}\,\d \tau \\
    &\le C \int_0^t \frac{1}{\sqrt{t-\tau}} \left( \int_0^\infty e^{-s^2}\,\d s \right)^{\frac{\rho}{1+\rho}}\left( \sqrt{\frac{\rho(t-\tau)}{1+\rho}} \right)^{\frac{\rho}{1+\rho}}\,\d \tau\\
    &\le C\int_0^t (t-\tau)^{-\frac{1}{2(1+\rho)}}\,\d \tau \le C\int_0^t \tau^{-\frac{1}{2(1+\rho)}}\,\d \tau\le C \frac{t^{1-\frac{1}{2(1+\rho)}}}{1-\frac{1}{2(1+\rho)}} = C t^{\frac{2(1+\rho)-1}{2(1+\rho)}},
\end{align}
where the constant $C>0$ changes line by line and is independent of $t$. This proves that $|\text{\ref{termR2}}|=O\Big(t^{\frac{2(1+\rho)-1}{2(1+\rho)}}\Big)$ as $t\to 0^+$, and thus the same holds for $|\text{\ref{termR1}}|+|\text{\ref{termR2}}|$. Finally, this implies that \eqref{eq:final_claim_F} is verified, concluding the proof.
\end{proof}
\appendix

\section{On the definitions of local Sobolev spaces}
\label{app:local_sobolev_spaces}

For the sake of completeness, we compare the relevant definitions of local Sobolev spaces present in literature and we prove they are equivalent. The following definition can be found in \cite[Def.\ 2.14]{AmbrosioHonda18}.

\begin{definition}
\label{def:local_sob_AH}
Let $\X$ be a metric measure space and let $\Omega\subset \X$ be open. We say that $f\in L^2(\Omega,\mm)$ belongs to $H^{1,2}(\Omega)$ if the following conditions hold: 
\begin{itemize}
    \item[i)] $\varphi f\in W^{1,2}(\X)$, for any $\varphi\in \Lip_{bs}(\Omega)$;
    \item[ii)] $|D f|\in L^1(\Omega,\mm)$.
\end{itemize}
We equip this space with the following norm 
\begin{equation}\label{eq:norm_sob_space_AH}
    \|f\|^2_{H^{1,2}(\Omega)}=\|f\|^2_{L^2(\Omega,\mm)}+\int_\Omega |D f|^2\,\d\mm.
\end{equation}
\end{definition}
For $f\in L^2(\Omega,\mm)$ verifying item $(1)$ of Definition \ref{def:local_sob_AH}, the local minimal weak upper gradient $|Df|$ can be defined as follows: choose a sequence $\{\chi_n\}\subset\Lip_{bs}(\Omega)$ such that $\{\chi_n=1\}\nearrow \Omega$, and set 
\begin{equation}
\label{eq:local_up_grad}
    |D f|=|D(f\chi_n)|,\qquad \mm\text{-a.e. on }\{\chi_n=1\}\setminus\{\chi_{n-1}=1\},
\end{equation}
where $f\chi_n\in W^{1,2}(\X)$ by item $(1)$ of Definition \ref{def:local_sob_AH}. It can be easily checked that \eqref{eq:local_up_grad} is independent on the choice of the sequence $\{\chi_n\}$. 
\begin{theorem}
\label{thm:equivalence_local_sobolev}
Let $(X,\sfd,\mm)$ be a metric measure space and let $\Omega\subset X$ be open. Then $W^{1,2}(\Omega)=H^{1,2}(\Omega)$ and it holds that
\begin{equation}
\label{eq:equiv_norm_W_H}
    \frac{1}{2}\int_\Omega |D f|^2\,\d\mm = {\rm Ch}_{\Omega}(f).
\end{equation}
\end{theorem}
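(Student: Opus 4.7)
The plan is to prove the set equality and matching energies by establishing the two inclusions separately. Fix throughout an exhaustion $\{\chi_n\}\subset\Lip_{bs}(\Omega)$ with values in $[0,1]$ satisfying $A_n:=\{\chi_n=1\}\nearrow\Omega$ and $\chi_n\to 1$ pointwise $\mm$-a.e.\ on $\Omega$. The two main ingredients are $L^2$-lower semicontinuity of the Cheeger energies on $\X$ and on $\Omega$, density in energy of $\Lip_{loc}(\X)$ in $W^{1,2}(\X)$, and the observation that for $g\in\Lip_{loc}(\X)$ the slopes on $\X$ and on the open set $\Omega$ agree pointwise on $\Omega$.

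For the direction $W^{1,2}(\Omega)\subseteq H^{1,2}(\Omega)$ with $\tfrac{1}{2}\int_\Omega|Df|^2\,\d\mm\leq{\rm Ch}_\Omega(f)$, I would pick, for $f\in W^{1,2}(\Omega)$, a recovery sequence $\{f_k\}\subset\Lip_{loc}(\Omega)$. For any $\varphi\in\Lip_{bs}(\Omega)$ the products $\varphi f_k\in\Lip_{bs}(\X)$ are uniformly bounded in $W^{1,2}(\X)$ by the pointwise slope Leibniz rule $\lip(\varphi f_k)\leq\|\varphi\|_\infty\lip f_k+\|\lip\varphi\|_\infty|f_k|$, so $\varphi f\in W^{1,2}(\X)$ by $L^2$-lsc, verifying item~(i) of Definition~\ref{def:local_sob_AH}. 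For the quantitative estimate, on the open set $A_n$ one has $\lip(\chi_n f_k)\equiv\lip f_k$ because $\chi_n\equiv 1$ on an open neighborhood of $A_n$, so $L^2$-lsc of the minimal weak upper gradient on $\X$ combined with the definition \eqref{eq:local_up_grad} yields
\begin{equation*}
    \int_{A_n}|Df|^2\,\d\mm=\int_{A_n}|D(\chi_n f)|^2\,\d\mm\leq\liminf_k\int_\Omega(\lip f_k)^2\,\d\mm=2{\rm Ch}_\Omega(f);
\end{equation*}
letting $n\to\infty$ by MCT completes this direction and also gives $f\in H^{1,2}(\Omega)$.

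The reverse inclusion $H^{1,2}(\Omega)\subseteq W^{1,2}(\Omega)$ with the opposite energy inequality is the delicate one. The naive attempt of producing a recovery sequence for $f\in H^{1,2}(\Omega)$ via the truncations $f_n:=\chi_n f\in W^{1,2}(\X)$, followed by Lipschitz approximation on $\X$ via density in energy, fails in general: already for $f\equiv 1$ on $\Omega=(0,1)\subset\R$ one has $\int_\Omega|D(\chi_n\cdot 1)|^2\to\infty$ while $\int_\Omega|Df|^2=0$, because $\chi_n f$ introduces an artificial ``Dirichlet jump'' at $\partial\Omega$ which is absent from $f$. My plan is to proceed instead locally: on $A_n$, the Lipschitz approximations $g_n^k\in\Lip_{loc}(\X)$ of $\chi_{n+1}f\in W^{1,2}(\X)$ provided by density in energy satisfy $g_n^k\to f$ and $\lip g_n^k\to |Df|$ strongly in $L^2(A_n,\mm)$, since $\chi_{n+1}f\equiv f$ and $|D(\chi_{n+1}f)|=|Df|$ on $A_n$ by \eqref{eq:local_up_grad}.

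The main obstacle is then to patch these localized Lipschitz approximations into a single sequence in $\Lip_{loc}(\Omega)$ converging to $f$ in $L^2(\Omega,\mm)$ whose total slope energy is controlled by $\tfrac{1}{2}\int_\Omega|Df|^2\,\d\mm+o(1)$. I would perform a diagonal extraction $g_n^{k(n)}$ good on $A_n$ and, on the complement $\Omega\setminus A_n$ where $g_n^{k(n)}$ deviates from $f$, modify the extension via a Lipschitz surrogate (for instance a McShane-type extension of the trace $g_n^{k(n)}|_{\partial A_n}$ interpolated toward $f$) whose slope on $\Omega\setminus A_n$ is bounded by the $L^2$-tail $\big(\int_{\Omega\setminus A_n}|Df|^2\,\d\mm\big)^{1/2}$, which vanishes as $n\to\infty$ by DCT. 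The resulting patched sequence, together with $L^2$-lsc of ${\rm Ch}_\Omega$, yields ${\rm Ch}_\Omega(f)\leq\tfrac{1}{2}\int_\Omega|Df|^2\,\d\mm$; combined with the first direction this proves \eqref{eq:equiv_norm_W_H}.
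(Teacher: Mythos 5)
Your first direction is correct, and is in fact cleaner than the paper's own computation: by letting the sequence $\{\chi_n\}$ exhaust $\Omega$ with open level sets $A_n = \{\chi_n=1\}$, you get $\lip(\chi_n f_k) = \lip f_k$ on $A_n$ and hence $\int_{A_n}|Df|^2 \le \liminf_k \int_\Omega(\lip f_k)^2 = 2\,{\rm Ch}_\Omega(f)$ by local $L^2$-lower semicontinuity of minimal weak upper gradients, after which monotone convergence closes the argument. (The paper's version of this step introduces an unnecessary factor of $2$ via $(a+b)^2\le 2(a^2+b^2)$ on a set where the second term vanishes; your version avoids that.)

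The reverse inclusion is where your plan diverges from the paper's ``Meyers--Serrin type-strategy,'' and here there is a genuine gap. Your diagnosis of why the naive truncation $\chi_n f$ fails is exactly right. But the proposed fix — diagonally extracting a single $g_n := g_n^{k(n)}$ that is good on $A_n$, then modifying it outside $A_n$ by a ``McShane-type extension of the trace $g_n|_{\partial A_n}$ interpolated toward $f$'' with slope controlled by the $L^2$-tail $\big(\int_{\Omega\setminus A_n}|Df|^2\big)^{1/2}$ — does not stand up. A McShane extension preserves the pointwise Lipschitz constant inherited from the trace, which is an $L^\infty$-type quantity; it cannot be made small in an $L^2$-averaged sense, nor can a pointwise slope bound be compared to an integral tail. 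And ``interpolating toward $f$'' is not meaningful when $f$ is merely Sobolev. More fundamentally, a single global approximant per level cannot work, because the error contributed by $g_n$ off $A_n$ is not small: $g_n$ approximates $\chi_{n+1}f$ in $W^{1,2}(\X)$, and the cross-term $|f|\,|D\chi_{n+1}|$ in $|D(\chi_{n+1}f)|$ on the annulus $A_{n+1}\setminus A_n$ is controlled by $\Lip(\chi_{n+1})\|f\|_{L^2(\Omega\setminus A_n)}$, a quantity that need not vanish as $n\to\infty$ when $\Lip(\chi_{n+1})$ grows. The Meyers--Serrin device that the paper alludes to is different in kind: one takes a locally finite Lipschitz partition of unity $\{\psi_j\}$ subordinate to the exhaustion (e.g.\ $\psi_j = \chi_{j+1}-\chi_j$), uses density in energy of Lipschitz functions in $W^{1,2}(\X)$ to approximate each $\psi_j f \in W^{1,2}(\X)$ by a Lipschitz function $g_j$ supported in a slightly larger annulus with error at most $\epsilon 2^{-j}(1+\Lip\eta_j)^{-1}$ in $L^2$ and in slope, and then sums: $f_\epsilon := \sum_j \eta_j g_j \in \Lip_{loc}(\Omega)$, with errors summing geometrically. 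This produces a recovery sequence for ${\rm Ch}_\Omega(f)$ and yields ${\rm Ch}_\Omega(f)\le\tfrac12\int_\Omega|Df|^2$. Your proposal would need to be replaced by this partition-of-unity construction to close the second direction.
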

\begin{proof}
First of all, we prove the inclusion $W^{1,2}(\Omega)\subset H^{1,2}(\Omega)$. Let $f\in W^{1,2}(\Omega)$, then, by definition, there exists an optimal sequence $\{f_k\}\subset \Lip_{loc}(\Omega)$ such that
\begin{equation}
    f_k\xrightarrow[L^2(\Omega,\mm)]{} f\qquad\text{and}\qquad  2{\rm Ch}_{\Omega}(f)=\lim_{k\to \infty}\int_\Omega (\lip\, f_k)^2\,\d\mm.
\end{equation}
As a consequence, for any $\varphi\in \Lip_{bs}(\Omega)$, the sequence $\{\varphi f_k\}\subset\Lip_{bs}(X)$ approximates $\varphi f$ in $L^2(X,\mm)$, and also 
\begin{equation}
    {\rm Ch}(\varphi f)\leq \limi_{k\to \infty}{\rm Ch}(\varphi f_k)\leq \limi_{k\to \infty}\int_\Omega\left[\varphi^2(\lip\, f_k)^2+f_k^2(\lip\,\varphi)^2\right]\,\d\mm<+\infty.
\end{equation}
Thus, item $(1)$ of Definition \ref{def:local_sob_AH} is verified. Now consider a sequence $ \{\chi_n\}$ as in \eqref{eq:local_up_grad}, then, denoting by $\Omega_n=\{\chi_n=1\}\setminus\{\chi_{n-1}=1\}$, we have 
\begin{align}
    \int_\Omega |Df|\,\d\mm &=\sum_{n=1}^\infty \int_{\Omega_n} |D(f \chi_n)|^2\,\d\mm\leq
    \sum_{n=1}^\infty \limi_{k\to \infty } \int_{\Omega_n} (\lip (f_k \chi_n))^2\,\d\mm\\ 
    & \leq 2 \sum_{n=1}^\infty \limi_{k\to \infty } \int_{\Omega_n}\left[\chi_n^2(\lip f_k)^2+f_k^2(\lip \chi_n)^2\right]\,\d\mm\\
    &= 2 \sum_{n=1}^\infty \limi_{k\to \infty } \int_{\Omega_n}(\lip f_k)^2\,\d\mm\leq 2 \limi_{k\to\infty}\int_\Omega(\lip f_k)^2\,\d\mm<+\infty.
\end{align}
This settles also item $(2)$ of Definition \ref{def:local_sob_AH}, proving that $f\in H^{1,2}(\Omega)$ and $\le$ in \eqref{eq:equiv_norm_W_H} holds.
For the converse inclusion, let $f \in H^{1,2}(\Omega)$; using a Meyers-Serrin type-strategy, we find a sequence $\{f_m\} \subset {\rm Lip}_{bs}(\Omega)$ such that $f_m \to f$ in $H^{1,2}(\Omega)$. Using the $L^2(\Omega,\mm)$-lower semicontinuity of ${\rm Ch}_{\Omega}$ we have that:
\begin{equation}
    {\rm Ch}_\Omega (f)\le \limi_{m\to\infty} {\rm Ch}_\Omega (f_m) = \limi_{m\to\infty}\frac{1}{2}\int_\Omega  |D f_m|^2\,\d \mm=\frac{1}{2}\int_{\Omega} |D f|^2\,\d \mm<\infty,
\end{equation}
where the second to last equality holds since $f_m \in W^{1,2}(\X)$, with bounded support in $\Omega$. This implies that $f\in W^{1,2}(\Omega)$ and $\ge$ in \eqref{eq:equiv_norm_W_H} holds.
\end{proof}
Assume $(\X,\sfd,\mm)$ is infinitesimally Hilbertian. We can define the gradient operator restricted to $\Omega$, $\nabla\colon H^{1,2}(\Omega)\to L^2(T\X)\restr{\Omega}$ as follows: let $f\in H^{1,2}(\Omega)$ and pick a sequence $\{\chi_n\}$ as above, then set
\begin{equation}
\label{eq:def_grad_omega}
\nabla f := \sum_{n=1}^\infty \nabla (f\chi_n)\chi_{\{\chi_n=1\}\setminus\{\chi_{n-1}=1\}}.
\end{equation}
On the one hand, the series \eqref{eq:def_grad_omega} converges in $L^2(T\X)$; indeed,  
\begin{equation}
\label{eq:aux_seq_grad_omega}
    \left|\sum_{n=m}^k \nabla (f\chi_n)\chi_{\Omega_n}\right|\le \sum_{n=m}^k \left|\nabla (f\chi_n)\right|\chi_{\Omega_n}=|Df|\sum_{n=m}^k \chi_{\Omega_n},
\end{equation}
where $\Omega_n=\{\chi_n=1\}\setminus\{\chi_{n-1}=1\}$. Therefore, by dominated convergence theorem and using $|Df|\in L^2(\Omega,\mm)$, we conclude that the sequence \eqref{eq:aux_seq_grad_omega} is Cauchy, thus proving the claim. On the other hand, by locality of the global gradient in $L^2(T\X)$, \eqref{eq:def_grad_omega} does not depend on the choice of the sequence $\{\chi_n\}$.
We also point out that, whenever $f \in W^{1,2}(\X)$, its restriction in $\Omega$ belongs to $H^{1,2}(\Omega)$; in this case, \eqref{eq:def_grad_omega} is consistent with the gradient of $f \in W^{1,2}(\X)$ multiplied by $\chi_{\Omega}$.

\begin{remark}
\label{rmk:W12_Hilbert}
It is convenient to introduce Definition \ref{def:local_sob_AH}, since if $\X$ is infinitesimally Hilbertian, meaning that $W^{1,2}(\X)$ is a Hilbert space, the same property is inherited by $H^{1,2}(\Omega)$. Indeed, it can be readily checked that
\begin{equation}
\label{eq:from_carre_to_scprod}
    |Df|^2 = \la \nabla f, \nabla f \ra  \quad \mm\text{-a.e.\ in }\Omega,\text{ if }f \in H^{1,2}(\Omega).
\end{equation}
In particular, this implies that $H^{1,2}(\Omega)$ is a Hilbert space, since $f\mapsto \||Df|\|^2_{L^2(\Omega,\mm)}$ defines a quadratic form. Thus, by Theorem \ref{thm:equivalence_local_sobolev}, we immediately deduce that $W^{1,2}(\Omega)$ is a Hilbert space. 
\end{remark}

\begin{proposition}
\label{prop:product_sobolev}
Let $(\X,\sfd,\mm)$ be an infinitesimally Hilbertian metric measure space and let $\Omega\subset \X$ be open. Let $u,v\in H^{1,2}(\Omega)\cap L^\infty(\Omega,\mm)$, then $uv\in H^{1,2}(\Omega)$ and
\begin{equation}
\label{eq:chain_localgradient_appendix}
    \nabla (u v) = v\nabla u + u \nabla v.
\end{equation}
\end{proposition}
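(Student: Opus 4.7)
The plan is to reduce the statement to the Leibniz rule for $W^{1,2}(\X)$, which has been recalled in the preliminaries, by systematically exploiting cut-offs from $\Lip_{bs}(\Omega)$ and the locality of the minimal weak upper gradient. The main (mild) obstacle is to reconcile the two notions of gradient at play: the global one on $W^{1,2}(\X)$ and the localized one on $H^{1,2}(\Omega)$ defined in \eqref{eq:def_grad_omega}.

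First, I show that $uv \in H^{1,2}(\Omega)$. To verify item (i) of Definition \ref{def:local_sob_AH}, fix $\varphi \in \Lip_{bs}(\Omega)$ and choose $\psi \in \Lip_{bs}(\Omega)$ with $\psi \equiv 1$ on $\supp\varphi$. Since $u,v \in H^{1,2}(\Omega)\cap L^\infty(\Omega,\mm)$, the functions $\varphi u$ and $\psi v$ belong to $W^{1,2}(\X)\cap L^\infty(\mm)$, so the global Leibniz rule yields $(\varphi u)(\psi v)\in W^{1,2}(\X)$. As $\psi = 1$ on $\supp \varphi$, it holds $(\varphi u)(\psi v) = \varphi u v$ $\mm$-a.e., proving (i). Item (ii), which in fact we will establish in the stronger $L^2$-form, follows from the pointwise estimate coming from the Leibniz formula below:
\begin{equation}
    |D(uv)| \le \|u\|_{L^\infty(\Omega,\mm)}|Dv| + \|v\|_{L^\infty(\Omega,\mm)}|Du|,\qquad \mm\text{-a.e.\ in }\Omega,
\end{equation}
whose right-hand side is in $L^2(\Omega,\mm)$.

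To obtain the Leibniz identity \eqref{eq:chain_localgradient_appendix}, pick a sequence $\{\chi_n\}\subset \Lip_{bs}(\Omega)$ with $\{\chi_n = 1\}\nearrow \Omega$ and, for each $n$, auxiliary cut-offs $\tilde\chi_n\in \Lip_{bs}(\Omega)$ with $\tilde\chi_n \equiv 1$ on $\supp \chi_n$. The functions $\chi_n u$ and $\tilde\chi_n v$ belong to $W^{1,2}(\X)\cap L^\infty(\mm)$, and the global Leibniz rule gives
\begin{equation}
    \nabla\bigl((\chi_n u)(\tilde\chi_n v)\bigr) = \chi_n u\,\nabla(\tilde\chi_n v) + \tilde\chi_n v\,\nabla(\chi_n u) \qquad \mm\text{-a.e.\ on }\X.
\end{equation}
Restricting to the set $\{\chi_n = 1\}$, the equalities $\chi_n u = u$, $\tilde\chi_n v = v$ hold $\mm$-a.e., and therefore by locality of the minimal weak upper gradient in $W^{1,2}(\X)$ and the very definition \eqref{eq:def_grad_omega} of the localized gradient, we have $\nabla(\chi_n u) = \nabla u$ and $\nabla(\tilde\chi_n v) = \nabla v$ $\mm$-a.e.\ on $\{\chi_n = 1\}$. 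Moreover, $(\chi_n u)(\tilde\chi_n v) = \chi_n uv$ on $\{\chi_n =1\}$, whence, again by locality, $\nabla((\chi_n u)(\tilde\chi_n v)) = \nabla(\chi_n uv) = \nabla(uv)$ $\mm$-a.e.\ on $\{\chi_n = 1\}$. Combining these identities yields
\begin{equation}
    \nabla(uv) = u\nabla v + v\nabla u \qquad \mm\text{-a.e.\ on }\{\chi_n = 1\}.
\end{equation}

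Finally, since $\{\chi_n = 1\}\nearrow \Omega$, the equality propagates to all of $\Omega$, proving \eqref{eq:chain_localgradient_appendix}. As noted above, this in particular delivers the $L^2$-bound on $|D(uv)|$, completing the proof that $uv\in H^{1,2}(\Omega)$.
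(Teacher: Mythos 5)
Your proof is correct and follows essentially the same route as the paper's: verify item (i) of Definition \ref{def:local_sob_AH} by writing $\varphi uv = (\varphi u)(\psi v)$ with a nested cut-off, then establish the Leibniz identity piecewise on the exhaustion $\{\chi_n = 1\}\nearrow\Omega$ using locality, which also yields the required integrability of $|D(uv)|$. The only cosmetic difference is that the paper multiplies by $\chi_n^2$ and splits it as $(\chi_n u)(\chi_n v)$, whereas you use two nested cut-offs $\chi_n,\tilde\chi_n$; both reduce to the global Leibniz rule in $W^{1,2}(\X)$ on each piece.

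One presentational caveat: you invoke $\nabla(uv)$ and the pointwise estimate for $|D(uv)|$ before having established that $uv\in H^{1,2}(\Omega)$, which technically presupposes membership in the domain of the localized gradient. This is harmless here because item (i) already makes $\chi_n uv\in W^{1,2}(\X)$, so $\nabla(uv)$ is defined locally on each $\Omega_n$ as an element of $L^0(T\X)\restr{\Omega}$ and the bound then upgrades it to $L^2(T\X)\restr\Omega$; but it is cleaner to phrase it as: derive the pointwise bound directly from locality and the global Leibniz rule once item (i) is known, conclude $uv\in H^{1,2}(\Omega)$, and only then state the gradient identity, which is what the paper does.
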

\begin{proof}
For a given $\varphi\in \Lip_{bs}(\Omega)$, choose $\psi\in \Lip_{bs}(\Omega)$ such that $\psi\equiv 1$ on ${\rm supp}(\varphi)$. Then, 
\begin{equation}
\varphi uv=\varphi\psi uv=(\varphi u)(\psi v)\in W^{1,2}(\X),   
\end{equation}
by basic properties of $W^{1,2}(\X)$. Then, item $(1)$ of Definition \ref{def:local_sob_AH} is verified. On the other hand, by locality of the minimal weak upper gradient, it follows that $|D(uv)|\in L^1(\Omega,\mm)$ and also item $(2)$ of Definition \ref{def:local_sob_AH} is satisfied.
It is left to prove \eqref{eq:chain_localgradient_appendix}.
We consider $\{\chi_n\}$ as above, with $\chi_n\ge 0$, and 
\begin{equation}
    u  v  \chi_n^2 = (u  \chi_n) (v  \chi_n).
\end{equation}
Then, using the definition \eqref{eq:def_grad_omega} of gradient restricted to $\Omega$, we compute
\begin{equation}
\begin{aligned}
    \nabla (u  v) &= \sum_{n=1}^\infty \nabla (u  v   \chi_n^2) \chi_{\{ \chi_n^2 = 1 \} \setminus \{ \chi_{n-1}^2 = 1 \}} = \sum_{n=1}^\infty \left( \nabla (u \chi_n) v \chi_n+ u \chi_n  \nabla (v \chi_n) \right) \chi_{\{ \chi_n = 1 \} \setminus \{ \chi_{n-1} = 1 \}} \\
    & = \left( \sum_{n=1}^\infty \nabla (u \chi_n) \chi_{\{ \chi_n = 1 \} \setminus \{ \chi_{n-1} = 1 \}} \right) v + u \left( \sum_{n=1}^\infty \nabla (v \chi_n) \chi_{\{ \chi_n = 1 \} \setminus \{ \chi_{n-1} = 1 \}} \right)\\
    &= v\nabla u +u  \nabla v.
\end{aligned}
\end{equation}
This concludes the proof. 
\end{proof}
We introduce here a useful calculus rule for $W^{1,2}_0(\Omega)$. We refer to Section \ref{sec:heat_on_Omega} for the precise definition. 

\begin{proposition}
\label{prop:composition_W120}
Let $(\X,\sfd,\mm)$ be a metric measure space and let $\Omega\subset \X$ be open. Let $\psi \in C^1(\mathbb{R})$ with $\psi(0)=0$ and bounded derivative. Then, for every $f \in W^{1,2}_0(\Omega)$ we have $\psi \circ f \in W^{1,2}_0(\Omega)$.
\end{proposition}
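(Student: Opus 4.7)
The plan is to use the very definition $W^{1,2}_0(\Omega)=\overline{\Lip_{bs}(\Omega)}^{W^{1,2}(\Omega)}$: take $\{f_n\}\subset\Lip_{bs}(\Omega)$ with $f_n\to f$ in $W^{1,2}(\Omega)$ and prove $\psi\circ f_n\to\psi\circ f$ in the same norm, which will place $\psi\circ f$ in $W^{1,2}_0(\Omega)$. Since $\psi'$ is bounded, $\psi$ is Lipschitz with $\Lip(\psi)=\|\psi'\|_{L^\infty}$; together with $\psi(0)=0$ this gives $|\psi(t)|\le\Lip(\psi)\,|t|$, so $\supp(\psi\circ f_n)\subset\supp(f_n)$ is bounded in $\Omega$, and composition with $f_n$ preserves the Lipschitz character, so $\psi\circ f_n\in\Lip_{bs}(\Omega)$.

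To obtain $W^{1,2}(\Omega)$-convergence I would transfer the problem to the whole space via the extension isometry $T\colon W^{1,2}_0(\Omega)\to W^{1,2}(\X)$ of Proposition \ref{prop:extension_op}, in order to apply the global chain rule available for $S^2(\X)$. Since $\psi(0)=0$ and $Tf_n$, $Tf$ vanish outside $\Omega$, one has $\psi\circ Tf_n=T(\psi\circ f_n)$ and $\psi\circ Tf=T(\psi\circ f)$; by the isometry, it suffices to prove $\psi\circ Tf_n\to\psi\circ Tf$ in $W^{1,2}(\X)$. The $L^2$-convergence is immediate from the Lipschitz bound $|\psi\circ Tf_n-\psi\circ Tf|\le\Lip(\psi)\,|Tf_n-Tf|$, and the global chain rule yields $|D(\psi\circ Tg)|=|\psi'(Tg)|\,|DTg|$ $\mm$-a.e.\ for $g\in\{f_n,f\}$.

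For the convergence of the gradients in $L^2(\mm)$, I would split
\[
\psi'(Tf_n)|DTf_n|-\psi'(Tf)|DTf|=\psi'(Tf_n)\bigl(|DTf_n|-|DTf|\bigr)+\bigl(\psi'(Tf_n)-\psi'(Tf)\bigr)|DTf|.
\]
The first summand is controlled by $\|\psi'\|_{L^\infty}\,\||DTf_n|-|DTf|\|_{L^2(\mm)}\to 0$. For the second, extract a (not relabeled) subsequence with $Tf_n\to Tf$ $\mm$-a.e.; by continuity of $\psi'$ one gets $\psi'(Tf_n)\to\psi'(Tf)$ $\mm$-a.e., while the uniform domination $|\psi'(Tf_n)-\psi'(Tf)|^2|DTf|^2\le 4\|\psi'\|_{L^\infty}^2|DTf|^2\in L^1(\mm)$ allows dominated convergence. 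Combining the two pieces (and pulling back through the isometry $T$) yields $\psi\circ f_n\to\psi\circ f$ in $W^{1,2}(\Omega)$, so $\psi\circ f\in W^{1,2}_0(\Omega)$.

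The main subtlety is that the chain rule in the excerpt is stated globally on $S^2(\X)$, so it cannot be invoked na\"ively on $W^{1,2}_0(\Omega)$; this forces the detour through the extension $T$. The two hypotheses $\psi(0)=0$ and $\psi'\in L^\infty$ are precisely what make that detour work: the former ensures $T$ commutes with composition by $\psi$ and that supports are preserved, while the latter simultaneously furnishes the Lipschitz estimate driving the $L^2$-convergence and the integrable dominator needed in the final dominated-convergence argument.
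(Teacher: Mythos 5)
Your overall strategy coincides with the paper's: approximate $f$ by $f_n\in\Lip_{bs}(\Omega)$, observe $\psi\circ f_n\in\Lip_{bs}(\Omega)$ with compact support thanks to $\psi(0)=0$, and show $\psi\circ f_n\to\psi\circ f$ in $W^{1,2}$ by combining the Lipschitz bound on $\psi$ (for the piece involving $\d f_n-\d f$) with dominated convergence along an $\mm$-a.e.\ convergent subsequence (for the piece involving $\psi'\circ f_n-\psi'\circ f$). The detour through the extension isometry $T$ is not a different method -- the paper uses the same identification $W^{1,2}_0(\Omega)\hookrightarrow W^{1,2}(\X)$ implicitly when it writes $f_n\to f$ in $W^{1,2}(\X)$ -- so there is no genuine deviation in approach.

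However, there is a gap in the execution. What must be shown is that $\||D(\psi\circ Tf_n-\psi\circ Tf)|\|_{L^2(\mm)}\to 0$, and the quantity you decompose,
\[
\psi'(Tf_n)|DTf_n|-\psi'(Tf)|DTf|,
\]
is not this. It only estimates the difference of the minimal weak upper gradients $|D(\psi\circ Tf_n)|-|D(\psi\circ Tf)|$ (and, as written, even that only after reinstating the absolute values around $\psi'$). Convergence $|Dg_n|\to|Dg|$ in $L^2$ together with $g_n\to g$ in $L^2$ does \emph{not} imply $g_n\to g$ in $W^{1,2}(\X)$ for a general metric measure space, since $W^{1,2}(\X)$ is only Banach here and no uniform convexity or Hilbertianity is assumed. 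The repair is to work at the level of the differential in the cotangent module, as the paper does: by linearity and the chain rule, $\d(\psi\circ Tf_n-\psi\circ Tf)=(\psi'\circ Tf_n)\,\d Tf_n-(\psi'\circ Tf)\,\d Tf$, and the pointwise triangle inequality for $|\cdot|_*$ gives
\[
|\d(\psi\circ Tf_n-\psi\circ Tf)|_*\le |\psi'\circ Tf_n|\,|D(Tf_n-Tf)|+|\psi'\circ Tf_n-\psi'\circ Tf|\,|DTf|.
\]
Note the crucial replacement of $|DTf_n|-|DTf|$ by $|D(Tf_n-Tf)|$ in the first summand. With this correction, your two estimates (the $\|\psi'\|_{L^\infty}$ bound and the dominated-convergence argument) apply verbatim and the proof closes. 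One further minor point: the paper's proof finishes with a subsequence-of-subsequence remark to upgrade the a.e.-subsequential argument to convergence of the full sequence in $W^{1,2}(\X)$; you should include this observation as well, since the dominated-convergence step as you present it is only along a subsequence.
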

\begin{proof}
Since $f\in W^{1,2}_0(\Omega)$, there exists a sequence $\{f_n\} \in \Lip_{bs}(\Omega)$ such that $f_n \to f$ in $W^{1,2}(\X)$. Then $\psi \circ f_n\in \Lip_{bs}(\Omega)$ and, in addition $\supp{(\psi\circ f_n)}\subset \supp{f_n}\Subset \Omega$, using that $\psi(0)=0$. To conclude, it is enough to check that $\psi \circ f_n \to \psi \circ f$ in $W^{1,2}(\X)$. First of all, we have 
\begin{equation}
    \| \psi\circ f_n - \psi\circ f \|^2_{L^2(\Omega,\mm)} \le \Lip(\psi)\,\| f_n- f \|^2_{L^2(\Omega,\mm)}\xrightarrow{n\to\infty}0.
\end{equation}
Second of all, by the chain rule $\d(\psi\circ f_n)=(\psi'\circ f_n)\d f_n$, $\mm$-a.e. in $\X$, therefore we have 
\begin{align}
    \int_{\Omega} |\d (\psi\circ f_n-&\psi\circ f)|_*^2\,\d \mm = \int_{\Omega} |(\psi'\circ f_n)\d f_n - (\psi'\circ f)\d f|_*^2\,\d \mm\\
    &\le  2\int_{\Omega} |\psi'\circ f_n-\psi'\circ f|^2 |\d f|_*^2\,\d \mm +2\int_{\Omega} |\psi'\circ f_n|^2 |\d (f_n - f)|_*^2\,\d \mm \\
    & \le 2\int_{\Omega} |\psi'\circ f_n-\psi'\circ f|^2 |\d f|_*^2\,\d \mm +  2\Lip (\psi)^2\,\||\d f_n - \d f|_*\|_{L^2(\Omega,\mm)}^2.\label{eq:continuity_psi}
\end{align}
Taking the limit as $n \to \infty$, the second term converges to $0$, since $f_n \to f$ in $W^{1,2}_0(\Omega)$. For the first term, notice that $\psi'\in C(\R)$, thus $\psi'\circ f_n \to \psi'\circ f$ $\mm$-a.e. up to a subsequence, and 
\begin{equation}
    |\psi'\circ f_n-\psi'\circ f|^2 |\d f|_*^2\le 4 \Lip(\psi)|\d f|_*^2\in L^1(\Omega,\mm),
\end{equation}
hence we conclude by dominated convergence theorem. Notice that, this same argument proves that, for any subsequence, there exists a further subsequence converging to $\psi\circ f$ in $W^{1,2}(\X)$. As a consequence, the full sequence converges in $W^{1,2}(\X)$.
\end{proof}
\begin{remark}
\label{rmk:improvement_prop_composition}
Adapting the proof of Proposition \ref{prop:composition_W120}, one can prove that, for any $v\in W_0^{1,2}(\Omega)$ such that $v\ge 0$, $\mm$-a.e., there exists an approximating sequence $\{v_n\}\subset\Lip_{bs}(\Omega)$ consisting of non-negative functions.
Indeed, let $\{u_n\}\subset\Lip_{bs}(\Omega)$ such that $u_n\to u$ in $W_0^{1,2}(\Omega)$ and consider $\psi(t)=\max\{0,t\}\in \Lip(\R)$. 
Then, $v_n:=u_n^+ =\psi\circ u_n\in \Lip_{bs}(\Omega)$. To prove $v_n\to v$ in $W_0^{1,2}(\Omega)$, we follow the same strategy of Proposition \ref{prop:composition_W120}: the only problem may arise at the discontinuity point of $\psi'$. However, since by locality $\d v=0$, $\mm$-a.e. in $\{v=0\}$, in \eqref{eq:continuity_psi}, we have
\begin{equation}
    \int_{\Omega} |\psi'\circ v_n-\psi'\circ v|^2 |\d v|_*^2=\int_{\{v>0\}} |\psi'\circ v_n-\psi'\circ v|^2 |\d v|_*^2,
\end{equation}
and, up to a subsequence, we have dominated convergence as before.
\end{remark}
\bibliographystyle{alphaabbr}
\bibliography{biblio.bib}

\end{document}